\documentclass[12pt]{amsart}
\usepackage{amsmath,amsthm,amssymb,amscd,cite,array}
\usepackage[dvips]{graphicx}

\oddsidemargin=-0.2in \evensidemargin=-0.2in \textwidth=6.8in
\topmargin=0.0in \textheight=9in

\newtheorem{pr}{Proposition}
\newtheorem{lem}[pr]{Lemma}
\newtheorem{thm}[pr]{Theorem}
\newtheorem{s}[pr]{Corollary}
\theoremstyle{remark}
\newtheorem{zam}{Remark}
\newtheorem*{obozn}{{\rm\bf Denotation}}
\newtheorem*{obozns}{{\rm\bf Denotations}}

\renewcommand\div{\text{ }\vdots\text{ }}
\newcommand\ndiv{\not\vdots\text{ }}
\newcommand{\myChar}{\mathrm{char\,}K}
\newcommand{\myNod}{\text{{\rm gcd}}}
\newcommand{\Hom}{\mathrm{Hom}}
\renewcommand{\Im}{\mathrm{Im}}
\newcommand{\Ker}{\mathrm{Ker}}
\newcommand{\HH}{\mathrm{HH}}
\newcommand{\N}{\mathbb{N}}
\newcommand{\Z}{\mathbb{Z}}
\newcommand{\cl}{\mathrm{cl}}

\def\a{\alpha}
\def\b{\beta}
\def\g{\gamma}
\def\le{\leqslant}
\def\ge{\geqslant}
\def\ra{\rightarrow}

\begin{document}

\title{Hochschild cohomology ring for self-injective algebras of tree class $E_6$}
\author{Mariya Pustovykh}
\email{masha@lsi.ru}

\begin{abstract}
We describe the Hochschild cohomology ring for one of the two self-injective
algebras of tree class $E_6$ in terms of generators and relations.
\end{abstract}
\maketitle

\tableofcontents

\section{Introduction}

Consider a self-injective basic algebra of finite representation
type over an algebraically closed field. According to Riedtmann's
classification, the stable $AR$-quiver of such an algebra can be
described with the help of an associated tree, which must be
congruent with one of the Dynkin diagrams $A_n, D_n, E_6, E_7$, or
$E_8$ (see \cite{Riedt}). The complete description of the Hochschild
cohomology ring was obtained for an algebras of the types $A_n$ and
$D_n$, see \cite{Erd,Gen&Ka,Ka,Pu} (type $A_n$) and
\cite{Volkov1,Volkov2,Volkov3,Volkov4,Volkov5,Volkov6} (type $D_n$).
Consider algebras of tree class $E_6$. Any algebra of the class
$E_6$ is derived equivalent to the path algebra for some quiver with
relations. Namely, let $\mathcal Q_s$ ($s\in\N$) is the following
quiver:

\begin{figure}[h]
\includegraphics[width=10cm, scale=1]{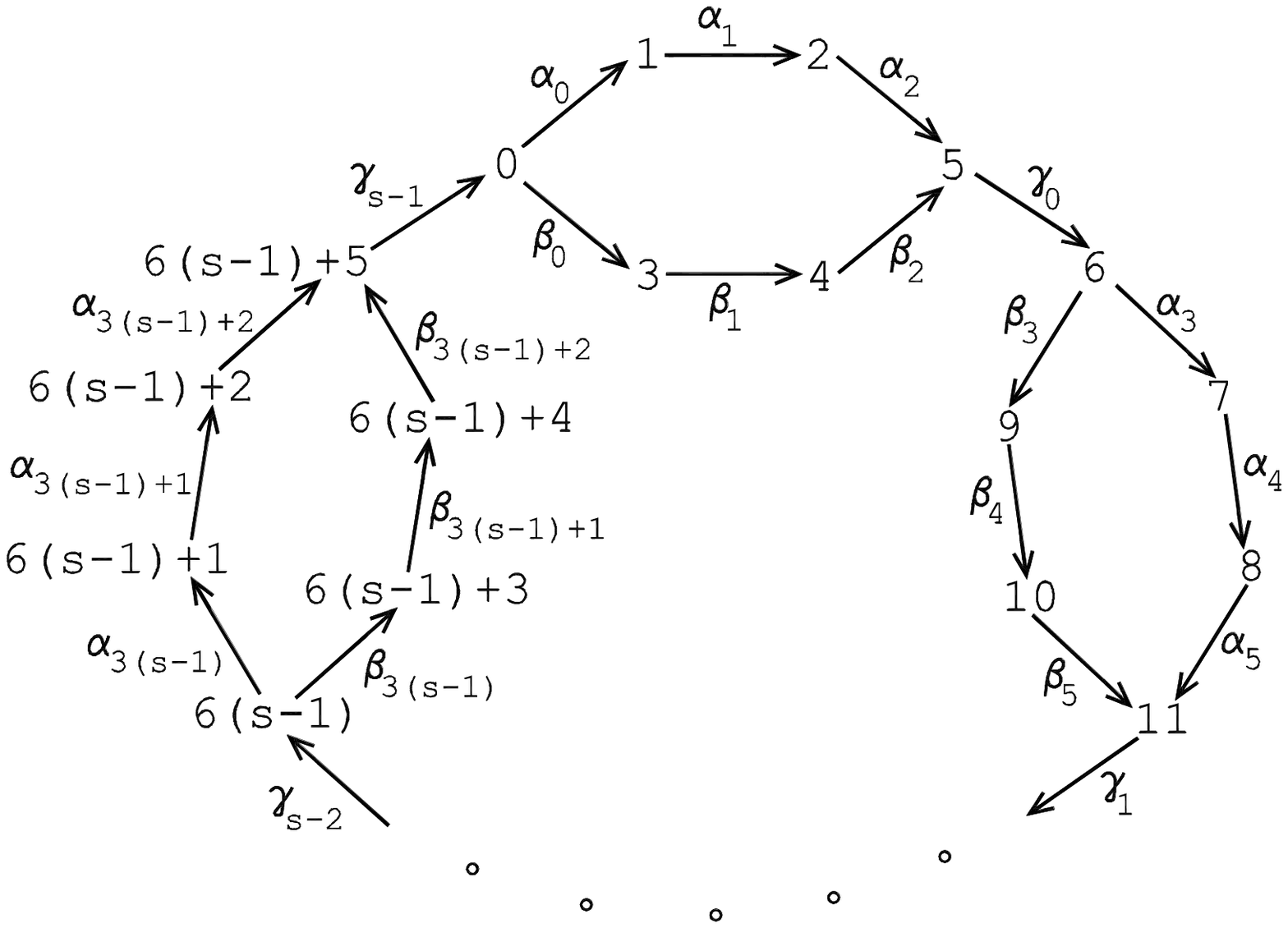}
\end{figure}

Then any algebra of the class $E_6$ is derived equivalent to one of the two following algebras:

1) $R_s=K\left[\mathcal Q_s\right]/I$, where $K$ is a field, and $I$ is the ideal in the path
algebra $K\left[\mathcal Q_s\right]$ of the quiver $\mathcal Q_s$, generated by

a) all the paths of length $5$;

b) the expressions of the form $\a^3-\b^3$, $\a\g\b$, $\b\g\a$.

2) $R_s^\prime=K\left[\mathcal Q_s\right]/I^\prime$, where $K$ is a field, and $I^\prime$ is the
ideal in the path algebra $K\left[\mathcal Q_s\right]$ of the quiver $\mathcal Q$, generated by

a) all the paths of length $5$;

b) the expressions of the form $\a^3-\b^3$, $\a_{3t}\g_{t-1}\b_{3t-1}$, $\b_{3t}\g_{t-1}\a_{3t-1}$
($1\le t\le s-1$), $\a_0\g_{s-1}\a_{3(s-1)+2}$, $\b_0\g_{s-1}\b_{3(s-1)+2}$.

Henceforth we will often omit indexes in arrows $\a_i$, $\b_i$ and $\g_i$ as long as subscripts are
clear from the context.

The present paper is dedicated to the study of Hochschild cohomology ring structure for algebra
$R_s$. For this algebra we obtain the description of Hochschild cohomology ring structure in terms
of generators and relations. In studies of the structure of cohomology ring we will construct the
bimodule resolution of $R_s$, which could  be seen as a whole result.

\section{Statement of the main results}

In what follows, we assume $n=6$.

Let $\HH^t(R)$ is the $t$th group of the Hochschild cohomology ring of $R$ with coefficients in
$R$. Let $\ell$ be the aliquot, and $r$ be the residue of division of $t$ by $11$, $m$ be the
aliquot of division of $r$ by $2$.

Consider the case of $s>1$. To describe Hochschild cohomology ring of algebra $R_s$ we must
introduce the following conditions on an arbitrary degree $t$:

$($1$)$ $r=0$, $\ell\div 2$, $\ell n+m\equiv 0(s)$ or $s=1$;\label{degs}

$($2$)$ $r=0$, $\ell\div 2$, $\myChar=3$, $\ell n+m\equiv 1(s)$ or $s=1$;

$($3$)$ $r=1$, $\ell\div 2$, $\ell n+m\equiv 0(s)$ or $s=1$;

$($4$)$ $r=1$, $\ell\ndiv 2$, $\ell n+m\equiv 0(s)$ or $s=1$;

$($5$)$ $r=2$, $\ell\ndiv 2$, $\ell n+m\equiv 1(s)$ or $s=1$;

$($6$)$ $r=3$, $\ell\div 2$, $\myChar=2$, $\ell n+m\equiv 0(s)$ or $s=1$;

$($7$)$ $r=3$, $\ell\ndiv 2$, $\ell n+m\equiv 0(s)$ or $s=1$;

$($8$)$ $r=4$, $\ell\ndiv 2$, $\ell n+m\equiv 1(s)$ or $s=1$;

$($9$)$ $r=4$, $\ell\ndiv 2$, $\myChar=3$, $\ell n+m\equiv 0(s)$ or $s=1$;

$($10$)$ $r=4$, $\ell\div 2$, $\myChar=2$, $\ell n+m\equiv 1(s)$ or $s=1$;

$($11$)$ $r=5$, $\ell\div 2$, $\myChar=3$, $\ell n+m\equiv 0(s)$ or $s=1$;

$($12$)$ $r=5$, $\ell\ndiv 2$, $\myChar=3$, $\ell n+m\equiv 0(s)$ or $s=1$;

$($13$)$ $r=6$, $\ell\div 2$, $\ell n+m\equiv 0(s)$ or $s=1$;

$($14$)$ $r=6$, $\ell\div 2$, $\myChar=3$, $\ell n+m\equiv 1(s)$ or $s=1$;

$($15$)$ $r=6$, $\ell\ndiv 2$, $\myChar=2$, $\ell n+m\equiv 0(s)$ or $s=1$;

$($16$)$ $r=7$, $\ell\div 2$, $\ell n+m\equiv 0(s)$ or $s=1$;

$($17$)$ $r=7$, $\ell\ndiv 2$, $\myChar=2$, $\ell n+m\equiv 0(s)$ or $s=1$;

$($18$)$ $r=8$, $\ell\div 2$, $\ell n+m\equiv 0(s)$ or $s=1$;

$($19$)$ $r=9$, $\ell\div 2$, $\ell n+m\equiv 0(s)$ or $s=1$;

$($20$)$ $r=9$, $\ell\ndiv 2$, $\ell n+m\equiv 0(s)$ or $s=1$;

$($21$)$ $r=10$, $\ell\ndiv 2$, $\ell n+m\equiv 1(s)$ or $s=1$;

$($22$)$ $r=10$, $\ell\ndiv 2$, $\myChar=3$, $\ell n+m\equiv 0(s)$ or $s=1$.

Let $M=11\frac{2s}{\myNod(2n,s)}$.

\begin{zam}
We will prove in paragraph \ref{sect_res} that the minimal period of bimodule resolution of $R_s$
is $M$.
\end{zam}

Let $\{t_{1, i},\dots,t_{\alpha_i, i} \}$ be a set of all degrees $t$, that satisfy the conditions
of item $i$ from the above list, and such that $0\le t_{j, i}<M$ $(j=1,\dots,\alpha_i)$. Consider
the set $$\mathcal
X=\bigcup_{i=1}^{22}\left\{X^{(i)}_{t_{j,i}}\right\}_{j=1}^{\alpha_i}\cup\{T\},$$ and define a
graduation of polynomial ring $K[\mathcal X]$ such that
\begin{align*}\label{degs2}
&\deg X^{(i)}_{t_{j,i}}=t_{j, i} \:\text{for all} \: i=1,\dots,22 \:\text{and}\: j=1,\dots,\alpha_i;\tag{$\circ$}\\
&\deg T=M.
\end{align*}

\begin{zam}\label{brief_notation}
Hereafter we shall use simplified denotation $X^{(i)}$ instead of $X^{(i)}_{t_{j,i}}$, since lower
indexes are clear from context.
\end{zam}

\begin{obozn}
$$\widetilde X^{(i)}= \begin{cases}X^{(i)},\quad \deg\widetilde
X^{(i)}<\deg T\\TX^{(i)},\quad\text{otherwise.}\end{cases}$$
\end{obozn}

Define a graduate $K$-algebra $\mathcal A=K[\mathcal X]/I$, where $I$ is the ideal generated by
homogeneous elements corresponding to the following relations.
\begin{align*}
&X^{(3)}X^{(1)}=X^{(3)}X^{(2)}=X^{(3)}X^{(3)}=X^{(3)}X^{(5)}=X^{(3)}X^{(8)}=0;\\
&X^{(3)}X^{(9)}=X^{(3)}X^{(10)}=X^{(3)}X^{(11)}=X^{(3)}X^{(12)}=X^{(3)}X^{(14)}=0;\\
&X^{(3)}X^{(16)}=X^{(3)}X^{(17)}=X^{(3)}X^{(19)}=X^{(3)}X^{(21)}=X^{(3)}X^{(22)}=0;\\
&X^{(3)}X^{(4)}=\widetilde X^{(5)};\quad X^{(3)}X^{(6)}=\widetilde
X^{(10)};\\
&X^{(3)}X^{(7)}=2\widetilde X^{(8)};\quad
X^{(3)}X^{(13)}=2\widetilde
X^{(16)};\\
&X^{(3)}X^{(15)}=\widetilde X^{(17)};\quad
X^{(3)}X^{(18)}=\widetilde X^{(19)};\quad X^{(3)}X^{(20)}=\widetilde
X^{(21)}.
\end{align*}
\begin{align*}
X^{(4)}X^{(7)}&=\begin{cases}\widetilde X^{(10)},\quad\myChar
=2,\\0,\quad\text{otherwise};\end{cases}&\text{(r1)}\\
X^{(7)}X^{(7)}&=\begin{cases}s\widetilde X^{(14)},\quad\myChar
=3,\\0,\quad\text{otherwise};\end{cases}&\text{(r2)}\\
X^{(4)}X^{(13)}&=\begin{cases}\widetilde X^{(17)},\quad\myChar
=2,\\0,\quad\text{otherwise};\end{cases}&\text{(r3)}\\
X^{(7)}X^{(18)}&=\begin{cases}-s\widetilde X^{(2)},\quad\myChar
=3,\\0,\quad\text{otherwise};\end{cases}&\text{(r4)}\\
X^{(13)}X^{(20)}&=\begin{cases}\widetilde X^{(10)},\quad\myChar
=2,\\0,\quad\text{otherwise};\end{cases}&\text{(r5)}\\
X^{(18)}X^{(18)}&=\begin{cases}-\widetilde X^{(12)},\quad\myChar
=3,\\0,\quad\text{otherwise};\end{cases}&\text{(r6)}\\
X^{(18)}X^{(20)}&=\begin{cases}-s\widetilde X^{(14)},\quad\myChar
=3,\\0,\quad\text{otherwise}.\end{cases}&\text{(r7)}
\end{align*}

Describe the rest relations as a tables (numbers (r1)--(r7) in tables cells are the number of
relation that defines a multiplication of the following elements).

\setlength{\extrarowheight}{1mm}
\begin{tabular}{c|c|c|c|c|c|c|c|c|c}
&$X^{(1)}$&$X^{(2)}$&$X^{(4)}$&$X^{(6)}$&$X^{(7)}$&$X^{(8)}$&$X^{(9)}$&$X^{(11)}$&$X^{(12)}$\\
\hline
$X^{(1)}$&$\widetilde X^{(1)}$&$\widetilde
X^{(2)}$&$\widetilde X^{(4)}$&$\widetilde X^{(6)}$
&$\widetilde X^{(7)}$&$\widetilde X^{(8)}$&$\widetilde X^{(9)}$&$\widetilde X^{(11)}$&$\widetilde X^{(12)}$ \\
\hline
$X^{(2)}$& &0&0&0&0&0&$\widetilde X^{(8)}$&0&0 \\
\hline
$X^{(4)}$& & & 0&$\widetilde X^{(8)}$&(1)&0&$\widetilde X^{(11)}$&0&$s\widetilde X^{(14)}$ \\
\hline
$X^{(6)}$& & & &0&0&0&0&0&0\\
\hline
$X^{(7)}$& & & & &(2)&0&$s\widetilde X^{(16)}$&0&0 \\
\hline
$X^{(8)}$& & & & & &0&0&0&0\\
\hline
$X^{(9)}$& & & & & & &0&0&$-s\widetilde X^{(19)}$\\
\hline
$X^{(11)}$& & & & & &  & &0&$-s\widetilde X^{(21)}$ \\
\end{tabular}

$\quad$

$\quad$

\begin{tabular}{c|c|c|c|c|c|c|c}
&$X^{(13)}$&$X^{(14)}$&$X^{(15)}$&$X^{(16)}$&$X^{(18)}$&$X^{(20)}$&$X^{(22)}$\\
\hline
$X^{(1)}$&$\widetilde X^{(13)}$&$\widetilde X^{(14)}$&$\widetilde X^{(15)}$&$\widetilde X^{(16)}$&$\widetilde X^{(18)}$&$\widetilde X^{(20)}$&$\widetilde X^{(22)}$ \\
\hline
$X^{(2)}$&$\widetilde X^{(14)}$&0&0&0&0&0&$\widetilde X^{(21)}$ \\
\hline
$X^{(4)}$&(3)&0&$\widetilde X^{(16)}$&0&$\widetilde X^{(20)}$&0&0 \\
\hline
$X^{(6)}$&$\widetilde X^{(19)}$&0&$\widetilde X^{(20)}$&0&0&0&0\\
\hline
$X^{(7)}$&$-2\widetilde X^{(20)}$&0&$\widetilde X^{(19)}$&$-\widetilde X^{(21)}$&(4)&0&$-s\widetilde X^{(5)}$ \\
\hline
$X^{(8)}$&$-\widetilde X^{(21)}$&0&0&0&0&0&0\\
\hline
$X^{(9)}$&$-\widetilde X^{(22)}$&$-\widetilde X^{(21)}$&0&0&$s\widetilde X^{(3)}$&$s\widetilde X^{(5)}$&0\\
\hline
$X^{(11)}$&0&0&0&0&$s\widetilde X^{(5)}$&0&0 \\
\end{tabular}

$\quad$

$\quad$

\begin{tabular}{c|c|c|c|c|c|c|c|c}
&$X^{(12)}$&$X^{(13)}$&$X^{(14)}$&$X^{(15)}$&$X^{(16)}$&$X^{(18)}$&$X^{(20)}$&$X^{(22)}$\\
\hline
$X^{(12)}$&0&$-s\widetilde X^{(2)}$&0&0&0&0&0&$s\widetilde X^{(8)}$ \\
\hline
$X^{(13)}$& &$2\widetilde X^{(4)}$&0&$\widetilde X^{(3)}$&$\widetilde X^{(5)}$&$-\widetilde X^{(7)}$&(5)&$\widetilde X^{(11)}$ \\
\hline
$X^{(14)}$& & & 0&0&0&0&0&0 \\
\hline
$X^{(15)}$& & & &$\widetilde X^{(4)}$&0&$\widetilde X^{(6)}$&$\widetilde X^{(8)}$&0\\
\hline
$X^{(16)}$& & & & &0&$-\widetilde X^{(8)}$&0&0 \\
\hline
$X^{(18)}$& & & & & &(6)&(7)&$s\widetilde X^{(16)}$\\
\hline
$X^{(20)}$& & & & & & &0&0\\
\hline
$X^{(22)}$& & & & & &  & &0 \\
\end{tabular}

\begin{thm}\label{main_thm}
Let $s>1$, $R=R_s$ is algebra of the type $E_6$. Then the Hochschild cohomology ring $\HH^*(R)$ is
isomorphic to $\mathcal A$ as a graded $K$-algebra.
\end{thm}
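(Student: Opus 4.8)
The plan is to prove Theorem~\ref{main_thm} by explicitly constructing a minimal projective resolution of $R_s$ as an $R_s$-$R_s$-bimodule (equivalently, as a module over the enveloping algebra $R_s^e=R_s\otimes_K R_s^{\mathrm{op}}$), computing the Hochschild cohomology groups $\HH^t(R_s)$ from it, and then determining the cup product on the resulting graded space. First I would set up the bimodule resolution $(Q_\bullet,d_\bullet)$ of $R_s$. Since the algebra is given by a quiver with relations and is self-injective of finite type, one expects the resolution to be eventually periodic; the natural approach is to identify the projective bimodules $Q_0=\bigoplus_i P_i\otimes P_i$, $Q_1$ built from the arrows, $Q_2$ from the minimal relations (the generators of $I$ listed in the definition of $R_s$), and then to continue by computing syzygies. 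One shows by direct computation that $\Omega^{11}$ of $R_s$ (suitably twisted by an automorphism) reproduces the module up to a shift governed by the combinatorics of the quiver $\mathcal Q_s$, which yields the stated minimal period $M=11\cdot\frac{2s}{\gcd(2n,s)}$ claimed in the Remark; this is the content of the section referenced as \ref{sect_res}.

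Next I would compute the cohomology. Applying $\Hom_{R_s^e}(-,R_s)$ to the resolution $Q_\bullet$ turns the bimodule maps into $K$-linear maps between spaces of the form $\Hom_{R_s^e}(P_i\otimes P_j,R_s)\cong e_j R_s e_i$, and the differentials are given by left/right multiplication by the arrows and relation-elements appearing in $d_\bullet$. The dimension of $\HH^t(R_s)$ is then read off as $\dim\Ker - \dim\Im$ at each spot. The bookkeeping is organized modulo $11$ in the residue $r$ of $t$, with the auxiliary quantities $\ell$, $m$, and the congruence conditions $\ell n+m\equiv 0,1 \pmod{s}$ together with the characteristic constraints; the twenty-two cases $(1)$--$(22)$ are precisely the $(r,\ell\bmod 2,\myChar,\ell n+m\bmod s)$-patterns for which a one-dimensional extra cohomology class $X^{(i)}$ survives. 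I would present this as a case analysis over $r=0,1,\dots,10$, in each case writing down the relevant portion of the complex, the matrices of the differentials (which depend on $\myChar$ through coefficients like $3$ and $2$ and on $s$ through the $\g$-arrows), and extracting the kernel and cokernel. The generator $T$ of degree $M$ arises as the periodicity element coming from the isomorphism $\Omega^M R_s\cong R_s$ in the bimodule category.

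The final and most substantial step is the determination of the multiplicative structure, i.e.\ identifying the cup product $\HH^*(R_s)\times\HH^*(R_s)\to\HH^*(R_s)$ with the product in $\mathcal A=K[\mathcal X]/I$. For this I would fix explicit cocycle representatives for each generator $X^{(i)}$ and for $T$ as chain maps $Q_\bullet\to Q_{\bullet-t}$ lifting the cohomology class, then compute composites of these chain maps and read off their classes in terms of the basis. The relations in $\mathcal A$ — the vanishing products $X^{(3)}X^{(j)}=0$, the identifications $X^{(3)}X^{(4)}=\widetilde X^{(5)}$ etc., the characteristic-dependent relations (r1)--(r7), and the two multiplication tables — are then verified one entry at a time; showing that these relations generate \emph{all} relations amounts to a dimension count in each degree, comparing $\dim_K\mathcal A_t$ with $\dim_K\HH^t(R_s)$ computed in the previous step, together with the observation that the listed products already determine the whole ring since $X^{(1)}$ acts (up to the $T$-twist) as a "shift by $11$" and hence every element lies in the span of the $X^{(i)}$'s times powers of $T$. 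The main obstacle, as usual for such Hochschild-ring computations, is this last step: constructing the chain-map liftings of the periodicity and low-degree classes explicitly enough to compute composites, and then carefully handling the numerous characteristic-$2$ and characteristic-$3$ special cases and the congruence conditions modulo $s$ so that every cell of the tables is justified; the $\g$-arrows linking the two $A$-type branches of $\mathcal Q_s$ make the syzygy computation genuinely $s$-dependent and are the source of most of the technical difficulty.
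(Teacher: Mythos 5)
Your plan follows essentially the same route as the paper: construct the explicit minimal projective bimodule resolution with period $M$, read off the additive structure of $\HH^t(R)$ from the complex obtained by applying $\Hom_\Lambda(-,R)$, and then fix explicit cocycle representatives whose $\Omega$-translates are composed via the formula $\cl f_2\cdot\cl f_1=\cl(\Omega^0(f_2)\Omega^{t_2}(f_1))$ to verify the multiplication tables and the characteristic-dependent relations. The only difference of detail is that the paper determines the terms $Q_i$ from the minimal resolutions of the simple modules together with Happel's lemma and establishes exactness by the criterion of Volkov--Generalov--Ivanov (it suffices to check $d_md_{m+1}=0$), rather than computing syzygies directly, but this is the same approach in substance.
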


Consider the case of $s=1$.

Let us introduce the set $$\mathcal X^\prime=\begin{cases}\mathcal X\cup \left\{X^{(23)}_0,
X^{(24)}_0,
X^{(25)}_0, X^{(26)}_0, X^{(27)}_0, X^{(28)}_0\right\},\quad\myChar\ne 3;\\
\mathcal X\cup \left\{X^{(24)}_0, X^{(25)}_0, X^{(26)}_0, X^{(27)}_0, X^{(28)}_0\right\}, \quad
\myChar=3;\end{cases}$$ and define a graduation of polynomial ring $K[\mathcal X^\prime]$ such that
\begin{align*}
&\deg X^{(i)}_{t_{j,i}}=t_{j, i} \:\text{for all} \: i=1,\dots,22 \:\text{and}\: j=1,\dots,\alpha_i;\\
&\deg T=M \text{ (similar to (\ref{degs2}))};\\&\deg X^{(23)}_0=\deg X^{(24)}_0=\deg
X^{(25)}_0=\deg X^{(26)}_0=\deg X^{(27)}_0=\deg X^{(28)}_0=0.
\end{align*}

Define a graduate $K$-algebra $\mathcal A^\prime=K[\mathcal X^\prime]/I^\prime$, where $I^\prime$
is the ideal generated by homogeneous elements corresponding to the relations described in the case
of $s>1$, and by the following relations:
\begin{align*}
X^{(1)}X^{(i)}=&\begin{cases}\widetilde X^{(i)},\quad
t_1=0;\\0,\quad\text{otherwise},\end{cases}\quad i\in\{23,24,28\};\\
X^{(1)}X^{(i)}=&\begin{cases}\widetilde X^{(i)},\quad
t_1=0;\\\widetilde X^{(2)},\quad t_1>0\text{ and
}\myChar=3;\\0,\quad\text{otherwise},\end{cases}
i\in\{25,26,27\};\\
X^{(9)}X^{(i)}=&\widetilde X^{(8)},\quad i\in\{25,26,27\};\\
X^{(13)}X^{(i)}=&\begin{cases}\widetilde X^{(14)},\quad
\myChar=3;\\0,\quad\text{otherwise},\end{cases} i\in\{25,26,27\};\\
X^{(22)}X^{(i)}=&\widetilde X^{(21)},\quad i\in\{25,26,27\};\\
X^{(j)}X^{(i)}=&0,\quad j\in[2, 28]\setminus\{9,13,22\},\quad
i\in[23, 28],
\end{align*}
where $t_1$ denotes a degree of the element $X^{(1)}$.

\begin{thm}\label{main_thm2}
Let $s=1$, $R=R_1$ is algebra of the type $E_6$. Then the Hochschild cohomology ring $\HH^*(R)$ is
isomorphic to $\mathcal A^\prime$ as a graded $K$-algebra.
\end{thm}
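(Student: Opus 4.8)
The plan is to derive Theorem \ref{main_thm2} as a corollary of the machinery used for Theorem \ref{main_thm}, specialized to $s=1$. The overall strategy is the standard one for this type of result: (i) construct an explicit minimal projective bimodule resolution $P_\bullet \to R_1$ of period $M$ (built in paragraph \ref{sect_res}), (ii) apply $\Hom_{R_1\text{-}R_1}(-,R_1)$ and compute the cohomology groups $\HH^t(R_1)$ together with a $K$-basis of each, (iii) identify explicit cocycle representatives for the generators $X^{(i)}$, and (iv) compute all products of basis elements via the comparison morphisms (chain maps lifting the identity through shifts of the resolution, or equivalently via an explicit diagonal approximation / Yoneda splicing), thereby verifying the relations defining $\mathcal A^\prime$ and checking that no further relations are needed. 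Steps (i)--(iv) parallel exactly what must already be done for $s>1$; the task here is to track the extra phenomena that appear only when $s=1$.

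The first concrete step is to compute the dimensions of the $\HH^t(R_1)$ and compare them with the $s>1$ case. Setting $s=1$, every congruence condition ``$\ell n + m \equiv \cdot \ (s)$'' becomes vacuously true, so the 22 families of degrees $t$ contributing basis elements in the $s>1$ description all survive; this accounts for the copy of $\mathcal X$ inside $\mathcal X^\prime$. However, when $s=1$ the relations $\a_{3t}\g_{t-1}\b_{3t-1}$ etc.\ of $I^\prime$ degenerate (there is a single central vertex configuration $\a_0\g_0\a_2$, $\b_0\g_0\b_2$), and one expects the bimodule resolution to acquire extra rank in certain low homological degrees — concretely, additional basis elements in degree $0$, i.e.\ in $\HH^0(R_1)=Z(R_1)$, and correspondingly in degrees congruent to $0$ modulo $M$. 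These are the new generators $X^{(23)},\dots,X^{(28)}$ (with $X^{(23)}$ present only when $\myChar \ne 3$, reflecting the extra $\a^3-\b^3$-type central element that collapses in characteristic $3$). So the first real step is: pin down $Z(R_1)$ by direct computation in the finite-dimensional algebra $R_1$, exhibit a $K$-basis, and match it against $X^{(1)}$ (the identity $1_{R_1}$, degree $0$) together with the six (resp.\ five) new generators.

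Next I would compute the products. The relations inherited from the $s>1$ presentation hold for $R_1$ by continuity of the construction (the chain-map computations are uniform in $s$, with $s=1$ substituted), so the only genuinely new computations are the products $X^{(j)}X^{(i)}$ with $i\in[23,28]$. Most of these are forced to vanish for degree reasons once one knows the $\HH^t$ are zero in the relevant degrees, or because the relevant cocycle, multiplied into the socle-controlled part of $R_1$, lands in a space that is exact; the nonzero ones — $X^{(1)}X^{(i)}$ reproducing $\widetilde X^{(i)}$ or $\widetilde X^{(2)}$, and the products with $X^{(9)}$, $X^{(13)}$, $X^{(22)}$ landing in $\widetilde X^{(8)}$, $\widetilde X^{(14)}$ (char $3$), $\widetilde X^{(21)}$ — require identifying the cup product $\HH^0 \otimes \HH^t \to \HH^t$, i.e.\ the module structure of $\HH^t(R_1)$ over the center, which is the easiest kind of cup product to compute (it is just the action of $Z(R_1)$ on the cohomology of the complex). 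The characteristic-$3$ case splits off the $X^{(1)}X^{(i)}=\widetilde X^{(2)}$ branch for $i\in\{25,26,27\}$ because in that characteristic the corresponding degree-$0$ central element times $1$ is already congruent (modulo exact terms) to the element represented by $\widetilde X^{(2)}$; this has to be checked by hand in $Z(R_1)$.

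Finally, one must verify completeness: that $\mathcal A^\prime \to \HH^*(R_1)$, the $K$-algebra map sending each $X^{(i)}$ to its chosen representative and $T$ to the periodicity element, is a well-defined isomorphism. Well-definedness is the relation check above; surjectivity follows because the $X^{(i)}$ and $T$ by construction generate each graded piece (the periodicity operator $T$ of degree $M$ is an isomorphism $\HH^t \cong \HH^{t+M}$, so it suffices to hit degrees $0 \le t < M$, and there the chosen generators span by the dimension count); injectivity follows by comparing Hilbert series of $\mathcal A^\prime$ and of $\HH^*(R_1)$ degree by degree — having imposed enough relations to make $\dim_K \mathcal A^\prime_t \le \dim_K \HH^t(R_1)$ and surjectivity giving the reverse inequality. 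I expect the main obstacle to be the bookkeeping in the last step: showing the listed relations for the new generators are \emph{exactly} sufficient, i.e.\ that $K[\mathcal X^\prime]/I^\prime$ has the right Hilbert function and no ``hidden'' products among $X^{(23)},\dots,X^{(28)}$ themselves or between them and the surviving old generators have been missed — in particular correctly handling the $\myChar=3$ dichotomy, where both the generating set and several product formulas change. The resolution construction in paragraph \ref{sect_res} and the $s>1$ product computations should make all the individual verifications routine; the difficulty is organizational, ensuring the $s=1$ degeneration has been accounted for in every degree.
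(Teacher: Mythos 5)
Your outline follows the paper's own route: the same bimodule resolution specialized to $s=1$, the additive computation of $\HH^t(R_1)$, explicit degree-zero cocycle representatives for $X^{(23)},\dots,X^{(28)}$ coming from $Z(R_1)$ (with the $\myChar=3$ dichotomy handled exactly as you describe), and all products computed through the $\Omega$-translates via formula \eqref{mult_formula} — so this is essentially the paper's proof. One small correction: the terms $Q_i$ of the resolution do not acquire extra rank at $s=1$ (they are the same formulas with $s=1$ substituted); the larger cohomology there (e.g.\ $\dim_K\HH^0(R_1)=7$ and the two-dimensional groups in characteristic $3$) comes from the congruence conditions becoming vacuous and the ranks of the induced differentials dropping, not from a change in the resolution itself.
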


\begin{zam}
From the descriptions of rings $\HH^*(R)$ given in theorems
\ref{main_thm} and \ref{main_thm2} it implies, in particular, that
they are commutative.
\end{zam}

\section{Bimodule resolution}\label{sect_res}

We will construct the minimal projective bimodule resolution of the $R$ in the following form: $$
\dots\longrightarrow Q_3\stackrel{d_2}\longrightarrow Q_2\stackrel{d_1}\longrightarrow
Q_1\stackrel{d_0}\longrightarrow Q_0\stackrel\varepsilon\longrightarrow R\longrightarrow 0
$$

Let $\Lambda$ be an enveloping algebra of algebra $R$. Then $R$--$R$-bimodules can be considered as
left $\Lambda$-modules.

\begin{obozns}$\quad$

(1) Let $e_i,\text{ }i\in \Z_{ns}=\{0, 1,\dots, ns-1\},$ be the idempotents of the algebra
$K\left[\mathcal Q_s\right]$, that correspond to the vertices of the quiver $\mathcal Q_s$.

(2) Denote by $P_{i,j}=R(e_i\otimes e_j)R=\Lambda(e_i\otimes e_j)$, $i,j\in \Z_{ns}$. Note that the
modules $P_{i,j}$, forms the full set of the (pairwise non-isomorphic
 by) indecomposable projective $\Lambda$-modules.

(3) For $a\in\Z$, $t\in\N$ we denote the smallest nonnegative deduction of $a$ modulo $t$ with
$(a)_t$ (in particular, $0\le(a)_t\le t-1$).

\end{obozns}

Let $R=R_s$. We introduce an automorphism $\sigma\text{: }R\rightarrow R$, which is mapping as
follows:
$$\sigma(e_i)=\begin{cases}
 e_{i+n^2},\quad i\equiv 0,5(n);\\
 e_{i+n^2+2},\quad i\equiv 1,2(n);\\
 e_{i+n^2-2},\quad i\equiv 3,4(n),
 \end{cases}$$
 $$\sigma(\g_i)=-\g_{i+n},$$
$$\sigma(\a_i)=\begin{cases}
 -\b_{i+3n},\quad i\equiv 0,1(3);\\
 \b_{i+3n},\quad i\equiv 2(3),
 \end{cases} \sigma(\b_i)=\begin{cases}
 -\a_{i+3n},\quad i\equiv 1,2(3);\\
 \a_{i+3n},\quad i\equiv 0(3).
 \end{cases}$$

Define the helper functions $f\text{: }\Z\times\Z\rightarrow\Z$, $h\text{:
}\Z\times\Z\rightarrow\Z$ and $\lambda\text{: }\Z\rightarrow\Z$, which act in the following way:
$$f(x,y)=\begin{cases}1,\quad x=y;\\0,\quad x\ne y,\end{cases}\quad
 h(x,y)=\begin{cases}
 1,\quad x\div 2,\text{ }x<y;\\
 0,\quad x\ndiv 2,\text{ }x<y;\\
 1,\quad x\ndiv 2,\text{ }x\ge y;\\
 0,\quad x\div 2,\text{ }x\ge y,
\end{cases}$$
$$\lambda(i)=\begin{cases}i,\quad i\equiv 0,5(\mathrm{mod}\text{ } 6);\\
 i+2,\quad i\equiv 1,2(\mathrm{mod}\text{ } 6);\\
 i-2,\quad i\equiv 3,4(\mathrm{mod}\text{ } 6).\end{cases}$$

Introduce $Q_r\text{ }(r\le 10)$. Let $m$ be the aliquot of division of $r$ by $2$ for considered
degree $r$. We have
\begin{align*}
Q_{2m}&=\bigoplus_{r=0}^{s-1} Q_{2m,r}^\prime,\quad 0\le m\le n-1,\\
Q_{2m+1}&=\bigoplus_{r=0}^{s-1} Q_{2m+1,r}^\prime,\quad 0\le m\le
n-2,
\end{align*}
where
\begin{multline*}
Q_{2m,r}^\prime=\left(\bigoplus_{i=0}^{f(m,2)}P_{(r+m)n-1+h(m,2)+i,rn}\right)\\
\oplus\bigoplus_{i=0}^{f(m,3)}\left(P_{(r+m)n+1+(m)_3+3i,rn+1}
\oplus P_{\lambda((r+m)n+1+(m)_3+3i),rn+3}\right)\\
\oplus\bigoplus_{i=0}^{f(m,2)}\big(P_{(r+m)n+1+(m+1)_3+3f(m,5)+3i,rn+2}
\oplus P_{\lambda((r+m)n+1+(m+1)_3+3f(m,5)+3i),rn+4}\big)\\
\oplus\left(\bigoplus_{i=0}^{f(m,3)}P_{(r+m+1)n-h(m,3)+i,rn+5}\right),
\end{multline*}
\begin{multline*}
Q_{2m+1,r}^\prime=\left(\bigoplus_{i=0}^{1-f(m,4)}P_{(r+m)n+1+h(m,0)+4f(m,4)+2i,rn}\right)\\
\oplus P_{(r+m+1)n-1+h(m,5)-4f(m,0),rn+1}\oplus P_{\lambda((r+m+1)n-1+h(m,5)-4f(m,0)),rn+3}\\
\oplus P_{(r+m+1)n-1+h(m,0)+4f(m,4),rn+2}\oplus
P_{\lambda((r+m+1)n-1+h(m,0)+4f(m,4),rn+2),rn+4}\\
\oplus\left(\bigoplus_{i=0}^{1-f(m,0)}P_{(r+m+1)n+1+h(m,5)-2f(m,0)+2i,rn+5}\right).\\
\end{multline*}

Now we shall describe differentials $d_r$ for $r\le 10$. Since $Q_i$ are direct sums, their
elements can be concerned as column vectors, hence differentials can be described as matrixes
(which are being multiplied by column vectors from the right). Now let us describe the matrixes of
differentials componentwisely.

\begin{zam}
\textup{Numeration of lines and columns always starts with zero.}
\end{zam}

\begin{obozns}$\quad$

(1) Denote by $w_{i\ra j}$ the way that starts in $i$th vertex and ends in $j$th.

(2) Denote by $w_{i\ra}^{(m)}$ the way that starts in $i$th vertex and has length $m$.

(3) Denote by $w_{\ra i}^{(m)}$ the way that ends in $i$th vertex and has length $m$.
\end{obozns}

Define the helper functions $g\text{: }\Z\rightarrow\Z$ $f_0\text{:
}\Z\times\Z\rightarrow\Z$ and $f_1\text{: }\Z\times\Z\rightarrow\Z$,
which act in the following way:
$$
g(j)=\begin{cases}1,\quad (j)_{2s}<s;\\3\quad\text{otherwise},\end{cases}\quad
f_0(x,y)=\begin{cases}1,\quad x<y;\\0\quad x\ge y,\end{cases}\quad f_1(x,y)=\begin{cases}1,\quad
x<y;\\-1\quad x\ge y.\end{cases}
$$
\centerline{\bf Description of the $d_0$} \centerline{$d_0:Q_1\rightarrow Q_0\text{ -- is an }
(7s\times 6s)\text{ matrix}.$}

If $0\le j<2s$, then
$$(d_0)_{ij}=\begin{cases}
w_{(j+m)n\ra (j+m)n+g(j)}\otimes e_{jn},\quad i=(j)_s;\\
-e_{(j+m)n+g(j)}\otimes w_{jn\ra jn+g(j)},\quad i=j+s;\\
0\quad\text{otherwise.}\end{cases}$$

If $2s\le j<4s$, then $$(d_0)_{ij}=\begin{cases}
w_{(j+m)n+g(j)\ra (j+m)n+g(j)+1}\otimes e_{jn+g(j)},\quad i=j-s;\\
-e_{(j+m)n+g(j)+1}\otimes w_{jn+g(j)\ra jn+g(j)+1},\quad i=j+s;\\
0\quad\text{otherwise.}\end{cases}$$

If $4s\le j<6s$, then $$(d_0)_{ij}=\begin{cases}
w_{(j+m)n+g(j)+1\ra (j+m)n+5}\otimes e_{jn+g(j)+1},\quad i=j-s;\\
-e_{(j+m)n+5}\otimes w_{jn+g(j)+1\ra jn+5},\quad i=5s+(j)_s;\\
0\quad\text{otherwise.}\end{cases}$$

If $6s\le j<7s$, then $$(d_0)_{ij}=\begin{cases}
w_{(j+m)n+5\ra (j+m)n+6}\otimes e_{jn+5},\quad i=j-s;\\
-e_{(j+m)n+6}\otimes w_{jn+5\ra (j+1)n},\quad i=(j+1)_s;\\
0\quad\text{otherwise.}\end{cases}$$

\centerline{\bf Description of the $d_1$} \centerline{$d_1:Q_2\rightarrow Q_1\text{ -- is an }
(6s\times 7s)\text{ matrix}.$}

If $0\le j<s$, then
$$(d_1)_{ij}=\begin{cases}
w_{jn+1+j_1+2f(j_1,2)\ra jn+5}\otimes w_{jn\ra jn+j_1},\quad  i=j+2sj_1,\text{ }0\le j_1< 3;\\
-w_{jn+3+j_1\ra jn+5}\otimes w_{jn\ra jn+j_1+2(1-f(j_1,0))},\quad  i=j+2sj_1+s,\text{ }0\le j_1< 3;\\
0\quad\text{otherwise.}\end{cases}$$

If $s\le j<3s$, then $$(d_1)_{ij}=\begin{cases}
w_{\ra (j+1)n+1+g(j+s)}^{(4-j_1)}\otimes w_{jn+g(j+s)\ra}^{(j_1)},\\\quad\quad\quad  i=j+2sj_1+s-s(1-f_0(j,2s))f(j_1,2),\text{ }0\le j_1< 3;\\
w_{\ra (j+1)n+1+g(j+s)}^{(1-j_1)}\otimes w_{jn+g(j+s)\ra}^{(j_1+3)},\\\quad\quad\quad  i=(j+1)_s+2sj_1+s(1-f_0(j,2s)),\text{ }0\le j_1< 2;\\
0\quad\text{otherwise.}\end{cases}$$

If $3s\le j<5s$, then $$(d_1)_{ij}=\begin{cases}
w_{\ra (j+1)n+g(j)}^{(2)}\otimes e_{jn+g(j+s)+1},\quad i=j+s;\\
w_{\ra (j+1)n+g(j)}^{(1)}\otimes w_{jn+g(j+s)+1\ra}^{(1)},\quad i=(j)_s+6s;\\
e_{(j+1)n+g(j)}\otimes w_{jn+g(j+s)+1\ra}^{(2)},\quad i=(j+1)_s+sf_0(j,4s);\\
0\quad\text{otherwise.}\end{cases}$$

If $5s\le j<6s$, then $$(d_1)_{ij}=\begin{cases}
w_{(j+1)n\ra (j+2)n}\otimes e_{jn+5},\quad i=j+s;\\
w_{(j+1)n+1+j_1+2f(j_1,3)+2f(j_1,2)\ra (j+2)n}\otimes w_{jn+5\ra (j+1)n+j_1+2f(j_1,3)},\\\quad\quad\quad  i=(j+1)_s+2sj_1,\text{ }0\le j_1< 4;\\
0\quad\text{otherwise.}\end{cases}$$

\centerline{\bf Description of the $d_2$} \centerline{$d_2:Q_3\rightarrow Q_2\text{ -- is an }
(8s\times 6s)\text{ matrix}.$}

If $0\le j<2s$, then
$$(d_2)_{ij}=\begin{cases}
w_{(j+m-1)n+5\ra (j+m)n+g(j)+1}\otimes e_{jn},\quad i=(j)_s;\\
-f_1(j,s)e_{(j+m)n+g(j)+1}\otimes w_{jn\ra jn+g(j)},\quad i=j+s;\\
f_1(j,s)w_{(j+m)n+g(j)\ra (j+m)n+g(j)+1}\otimes w_{jn\ra jn+1+g(j+s)},\quad  i=(j+s)_{2s}+3s;\\
0\quad\text{otherwise.}\end{cases}$$

If $2s\le j<4s$, then $$(d_2)_{ij}=\begin{cases}
w_{(j+m)n+g(j)+1\ra (j+m)n+5}\otimes e_{jn+g(j)},\quad i=j-s;\\
-w_{(j+m)n+g(j+s)\ra (j+m)n+5}\otimes w_{jn+g(j)\ra jn+g(j)+1},\quad i=j+s;\\
-f_1(j,3s)e_{(j+m)n+5}\otimes w_{jn+g(j)\ra (j+1)n},\quad i=(j+1)_s;\\
0\quad\text{otherwise.}\end{cases}$$

If $4s\le j<6s$, then $$(d_2)_{ij}=\begin{cases}
w_{(j+m)n+g(j+s)\ra (j+m+1)n}\otimes e_{jn+g(j)+1},\quad i=j-s;\\
-e_{(j+m+1)n}\otimes w_{jn+g(j)+1\ra jn+5},\quad i=5s+(j)_s;\\
w_{(j+m)n+5\ra (j+m+1)n}\otimes w_{jn+g(j)+1\ra (j+1)n},\quad i=(j+1)_s,\text{ }j<5s;\\
0\quad\text{otherwise.}\end{cases}$$

If $6s\le j<8s$, then $$(d_2)_{ij}=\begin{cases}
w_{(j+m+1)n\ra (j+m+1)n+g(j)}\otimes e_{jn+5},\quad i=5s+(j)_s;\\
-w_{(j+m+1)n-1\ra (j+m+1)n+g(j)}\otimes w_{jn+5\ra (j+1)n},\quad i=(j+1)_s,\text{ }j<7s;\\
-e_{(j+m+1)n+g(j)}\otimes w_{jn+5\ra (j+1)n+g(j+s)+1},\quad i=3s+(j+1)_s+sf_0(j,7s);\\
0\quad\text{otherwise.}\end{cases}$$

\centerline{\bf Description of the $d_3$} \centerline{$d_3:Q_4\rightarrow Q_3\text{ -- is an }
(9s\times 8s)\text{ matrix}.$}

If $0\le j<s$, then
$$(d_3)_{ij}=\begin{cases}
w_{(j+m)n+2\ra (j+m)n+5}\otimes e_{jn},\quad i=j;\\
-w_{(j+m)n+4\ra (j+m)n+5}\otimes e_{jn},\quad i=j+s;\\
e_{(j+m)n+5}\otimes w_{jn\ra jn+1},\quad i=j+2s;\\
e_{(j+m)n+5}\otimes w_{jn\ra jn+3},\quad i=j+3s;\\
0\quad\text{otherwise.}\end{cases}$$

If $s\le j<2s$, then $$(d_3)_{ij}=\begin{cases}
w_{(j+m)n+2+j_1+2(1-f(j_1,0))\ra (j+m+1)n}\otimes w_{jn\ra jn+j_1},\quad  i=j-s+2sj_1,\text{ }0\le j_1< 3;\\
-e_{(j+m+1)n}\otimes w_{jn\ra jn+4},\quad i=j+4s;\\
0\quad\text{otherwise.}\end{cases}$$

If $2s\le j<4s$, then $$(d_3)_{ij}=\begin{cases}
w_{\ra (j+m+1)n+g(j+s)}^{(2-j_1)}\otimes w_{jn+g(j)\ra}^{(j_1)},\quad  i=2j_1s+j,\text{ }0\le j_1< 2;\\
e_{(j+m+1)n+g(j+s)}\otimes w_{jn+g(j)\ra}^{(2)},\quad i=6s+(j+s)_{2s};\\
0\quad\text{otherwise.}\end{cases}$$

If $4s\le j<6s$, then $$(d_3)_{ij}=\begin{cases}
w_{\ra (j+m+1)n+g(j)}^{(1-j_1)}\otimes w_{jn+g(j)+1\ra}^{(j_1)},\quad  i=j+2sj_1,\text{ }0\le j_1< 2;\\
0\quad\text{otherwise.}\end{cases}$$

If $6s\le j<8s$, then $$(d_3)_{ij}=\begin{cases}
w_{\ra (j+m+1)n+g(j+s)+1}^{(2)}\otimes e_{jn+g(j)+1},\quad i=j-2s;\\
w_{\ra (j+m+1)n+g(j+s)+1}^{(1)}\otimes w_{jn+g(j)+1\ra}^{(1)},\quad i=(j+s)_{2s}+6s;\\
-f_1(j,7s)e_{(j+m+1)n+g(j+s)+1}\otimes w_{jn+g(j)+1\ra}^{(2)},\quad  i=(j+1)_s+sf_0(j,7s);\\
0\quad\text{otherwise.}\end{cases}$$

If $8s\le j<9s$, then $$(d_3)_{ij}=\begin{cases}
w_{(j+m+1)n+2\ra (j+m+1)n+5}\otimes w_{jn+5\ra (j+1)n},\quad i=(j+1)_s;\\
e_{(j+m+1)n+5}\otimes w_{jn+5\ra (j+1)n+1},\quad i=2s+(j+1)_s;\\
w_{(j+m+1)n+1\ra (j+m+1)n+5}\otimes e_{jn+5},\quad i=j-2s;\\
-w_{(j+m+1)n+3\ra (j+m+1)n+5}\otimes e_{jn+5},\quad i=j-s;\\
0\quad\text{otherwise.}\end{cases}$$

\centerline{\bf Description of the $d_4$} \centerline{$d_4:Q_5\rightarrow Q_4\text{ -- is an }
(8s\times 9s)\text{ matrix}.$}

If $0\le j<2s$, then
$$(d_4)_{ij}=\begin{cases}
w_{(j+m-1)n+5\ra (j+m)n+1}\otimes e_{jn},\quad i=j,\text{ }j<s;\\
-f_1(j,s)w_{(j+m)n\ra (j+m)n+g(j)}\otimes e_{jn},\quad i=(j)_s+s;\\
-e_{(j+m)n+g(j)}\otimes w_{jn\ra jn+g(j+s)},\quad i=(j+s)_{2s}+2s;\\
e_{(j+m)n+g(j)}\otimes w_{jn\ra jn+1+g(j)},\quad i=j+4s;\\
0\quad\text{otherwise.}\end{cases}$$

If $2s\le j<4s$, then $$(d_4)_{ij}=\begin{cases}
w_{(j+m)n+5\ra (j+m+1)n}\otimes w_{jn+1\ra (j+1)n},\quad i=(j+1)_s,\text{ }j<3s;\\
w_{(j+m)n+g(j+s)\ra (j+m+1)n}\otimes e_{jn+g(j)},\quad i=j;\\
-w_{(j+m)n+g(j+s)+1\ra (j+m+1)n}\otimes w_{jn+g(j)\ra jn+g(j)+1},\quad i=4s+j;\\
-f_1(j,3s)e_{(j+m+1)n}\otimes w_{jn+g(j)\ra (j+1)n},\quad i=(j+1)_s+s;\\
0\quad\text{otherwise.}\end{cases}$$

If $4s\le j<6s$, then $$(d_4)_{ij}=\begin{cases}
e_{(j+m)n+5}\otimes w_{jn+g(j)+1\ra (j+1)n},\quad i=(j+1)_s,\text{ }j<5s;\\
w_{(j+m)n+g(j)\ra (j+m)n+5}\otimes e_{jn+g(j)+1},\quad i=j;\\
-w_{(j+m)n+g(j+s)+1\ra (j+m)n+5}\otimes e_{jn+g(j)+1},\quad i=j+2s;\\
-f_1(j,5s)e_{(j+m)n+5}\otimes w_{jn+g(j)+1\ra jn+5},\quad i=8s+(j)_s;\\
0\quad\text{otherwise.}\end{cases}$$

If $6s\le j<8s$, then $$(d_4)_{ij}=\begin{cases}
-w_{(j+m+1)n-1\ra (j+m+1)n+g(j)+1}\otimes w_{jn+5\ra (j+1)n},\quad i=(j+1)_s,\text{ }j<7s;\\
w_{(j+m+1)n-1\ra (j+m+1)n+g(j)+1}\otimes e_{jn+5},\quad i=8s+(j)_s;\\
f_1(j,7s)w_{(j+m+1)n+g(j)\ra (j+m+1)n+g(j)+1}\otimes w_{jn+5\ra (j+1)n+g(j+s)},\\\quad\quad\quad  i=2s+(j+1)_s+sf_0(j,7s);\\
-f_1(j,7s)e_{(j+m+1)n+g(j)+1}\otimes w_{jn+5\ra (j+1)n+g(j+s)+1},\\\quad\quad\quad  i=6s+(j+1)_s+sf_0(j,7s);\\
0\quad\text{otherwise.}\end{cases}$$

\centerline{\bf Description of the $d_5$} \centerline{$d_5:Q_6\rightarrow Q_5\text{ -- is an }
(9s\times 8s)\text{ matrix}.$}

If $0\le j<s$, then
$$(d_5)_{ij}=\begin{cases}
w_{(j+m)n+1\ra (j+m+1)n}\otimes e_{jn},\quad i=j;\\
w_{(j+m)n+3\ra (j+m+1)n}\otimes e_{jn},\quad i=j+s;\\
(2f(j_1,0)-1)w_{(j+m+1)n-j_1\ra (j+m+1)n}\otimes w_{jn\ra jn+1+j_1},\\\quad\quad\quad  i=j+2s(1+j_1),\text{ }0\le j_1< 2;\\
(2f(j_1,0)-1)w_{(j+m+1)n-j_1\ra (j+m+1)n}\otimes w_{jn\ra jn+3+j_1},\\\quad\quad\quad  i=j+2s(1+j_1)+s,\text{ }0\le j_1< 2;\\
0\quad\text{otherwise.}\end{cases}$$

If $s\le j<3s$, then $$(d_5)_{ij}=\begin{cases}
w_{\ra (j+m+1)n+g(j+s)}^{(1)}\otimes e_{jn+g(j+s)},\quad i=j+s;\\
-e_{(j+m+1)n+g(j+s)}\otimes w_{jn+g(j+s)\ra}^{(3)},\quad i=(j+1)_s+s(1-f_0(j,2s));\\
0\quad\text{otherwise.}\end{cases}$$

If $3s\le j<5s$, then $$(d_5)_{ij}=\begin{cases}
-w_{\ra (j+m+1)n+g(j)+1}^{(3)}\otimes w_{jn+g(j+s)\ra}^{(1)},\quad i=j+s;\\
w_{\ra (j+m+1)n+g(j)+1}^{(2)}\otimes e_{jn+g(j+s)},\quad i=j-s;\\
w_{\ra (j+m+1)n+g(j)+1}^{(1)}\otimes w_{jn+g(j+s)\ra}^{(3)},\quad i=(j+1)_s+sf_0(j,4s);\\
-f_1(j,4s)e_{(j+m+1)n+g(j)+1}\otimes w_{jn+g(j+s)\ra}^{(2)},\quad i=(j)_{2s}+6s;\\
0\quad\text{otherwise.}\end{cases}$$

If $5s\le j<7s$, then $$(d_5)_{ij}=\begin{cases}
f_1(j,6s)e_{(j+m+1)n+g(j+s)+1}\otimes w_{jn+g(j+s)+1\ra}^{(1)},\quad i=j+s;\\
w_{\ra (j+m+1)n+g(j+s)+1}^{(3)}\otimes e_{jn+g(j+s)+1},\quad i=j-s;\\
0\quad\text{otherwise.}\end{cases}$$

If $7s\le j<8s$, then $$(d_5)_{ij}=\begin{cases}
w_{(j+m+1)n+1\ra (j+m+1)n+5}\otimes w_{jn+5\ra (j+1)n},\quad i=(j+1)_s;\\
w_{(j+m+1)n+3\ra (j+m+1)n+5}\otimes w_{jn+5\ra (j+1)n},\quad i=s+(j+1)_s;\\
-e_{(j+m+1)n+5}\otimes w_{jn+5\ra (j+1)n+2},\quad i=4s+(j+1)_s;\\
-e_{(j+m+1)n+5}\otimes w_{jn+5\ra (j+1)n+4},\quad i=5s+(j+1)_s;\\
w_{(j+m+1)n+2\ra (j+m+1)n+5}\otimes e_{jn+5},\quad i=j-s;\\
-w_{(j+m+1)n+4\ra (j+m+1)n+5}\otimes e_{jn+5},\quad i=j;\\
0\quad\text{otherwise.}\end{cases}$$

If $8s\le j<9s$, then $$(d_5)_{ij}=\begin{cases}
w_{(j+m+1)n+2\ra (j+m+2)n}\otimes e_{jn+5},\quad i=j-2s;\\
-e_{(j+m+2)n}\otimes w_{jn+5\ra (j+1)n+3},\quad i=3s+(j+1)_s;\\
0\quad\text{otherwise.}\end{cases}$$

\centerline{\bf Description of the $d_6$} \centerline{$d_6:Q_7\rightarrow Q_6\text{ -- is an }
(8s\times 9s)\text{ matrix}.$}

If $0\le j<2s$, then
$$(d_6)_{ij}=\begin{cases}
w_{(j+m)n\ra (j+m)n+g(j)+1}\otimes e_{jn},\quad i=(j)_s;\\
-w_{(j+m)n+g(j)\ra (j+m)n+g(j)+1}\otimes w_{jn\ra jn+g(j)},\quad i=j+s;\\
-e_{(j+m)n+g(j)+1}\otimes w_{jn\ra jn+g(j+s)},\quad i=(j+s)_{2s}+3s;\\
e_{(j+m)n+g(j)+1}\otimes w_{jn\ra jn+g(j)+1},\quad i=j+5s;\\
0\quad\text{otherwise.}\end{cases}$$

If $2s\le j<4s$, then $$(d_6)_{ij}=\begin{cases}
w_{(j+m)n+g(j)\ra (j+m)n+5}\otimes e_{jn+g(j)},\quad i=j-s;\\
-w_{(j+m)n+g(j+s)+1\ra (j+m)n+5}\otimes e_{jn+g(j)},\quad i=j+s;\\
-w_{(j+m)n+g(j)+1\ra (j+m)n+5}\otimes w_{jn+g(j)\ra jn+g(j)+1},\quad i=j+3s;\\
e_{(j+m)n+5}\otimes w_{jn+g(j)\ra jn+5},\quad i=(j)_s+7s;\\
0\quad\text{otherwise.}\end{cases}$$

If $4s\le j<6s$, then $$(d_6)_{ij}=\begin{cases}
w_{(j+m)n+g(j)+1\ra (j+m+1)n}\otimes e_{jn+g(j)+1},\quad i=j+s;\\
-f_1(j,5s)e_{(j+m+1)n}\otimes w_{jn+g(j)+1\ra jn+5},\quad i=(j)_s+8s;\\
e_{(j+m+1)n}\otimes w_{jn+4\ra (j+1)n},\quad i=(j+1)_s,\text{ }j\ge  5s;\\
-w_{(j+m)n+5\ra (j+m+1)n}\otimes w_{jn+4\ra jn+5},\quad i=j+2s,\text{ }j\ge  5s;\\
0\quad\text{otherwise.}\end{cases}$$

If $6s\le j<8s$, then $$(d_6)_{ij}=\begin{cases}
w_{(j+m)n+5\ra (j+m+1)n+1}\otimes e_{jn+5},\quad i=j+s,\text{ }j<7s;\\
-w_{(j+m+1)n\ra (j+m+1)n+1}\otimes w_{jn+5\ra (j+1)n},\quad i=(j+1)_s,\text{ }j<7s;\\
-f_1(j,7s)w_{(j+m+1)n\ra (j+m+1)n+g(j)}\otimes e_{jn+5},\quad i=(j)_s+8s;\\
e_{(j+m+1)n+g(j)}\otimes w_{jn+5\ra (j+1)n+g(j)},\quad i=(j+1)_s+s+s(1-f_0(j,7s));\\
0\quad\text{otherwise.}\end{cases}$$

\centerline{\bf Description of the $d_7$} \centerline{$d_7:Q_8\rightarrow Q_7\text{ -- is an }
(6s\times 8s)\text{ matrix}.$}

If $0\le j<s$, then
$$(d_7)_{ij}=\begin{cases}
w_{(j+m)n+2\ra (j+m)n+5}\otimes e_{jn},\quad i=j;\\
-w_{(j+m)n+4\ra (j+m)n+5}\otimes e_{jn},\quad i=j+s;\\
e_{(j+m)n+5}\otimes w_{jn\ra jn+1},\quad i=j+2s;\\
-e_{(j+m)n+5}\otimes w_{jn\ra jn+3},\quad i=j+3s;\\
0\quad\text{otherwise.}\end{cases}$$

If $s\le j<3s$, then $$(d_7)_{ij}=\begin{cases}
-e_{(j+m+1)n+g(j+s)+1}\otimes w_{jn+g(j+s)\ra}^{(3)},\quad i=(j+1)_s+s(1-f_0(j,2s));\\
w_{\ra (j+m+1)n+g(j+s)+1}^{(3)}\otimes e_{jn+g(j+s)},\quad i=j+s;\\
w_{\ra (j+m+1)n+g(j+s)+1}^{(2)}\otimes w_{jn+g(j+s)\ra}^{(1)},\quad i=j+3s;\\
-w_{\ra (j+m+1)n+g(j+s)+1}^{(1)}\otimes w_{jn+g(j+s)\ra}^{(2)},\quad i=j+5s;\\
0\quad\text{otherwise.}\end{cases}$$

If $3s\le j<5s$, then $$(d_7)_{ij}=\begin{cases}
w_{\ra (j+m+1)n+g(j)}^{(1)}\otimes e_{jn+g(j+s)+1},\quad i=j+s;\\
e_{(j+m+1)n+g(j)}\otimes w_{jn+g(j+s)+1\ra}^{(1)},\quad i=(j)_{2s}+6s;\\
0\quad\text{otherwise.}\end{cases}$$

If $5s\le j<6s$, then $$(d_7)_{ij}=\begin{cases}
w_{(j+m+1)n+4\ra (j+m+2)n}\otimes w_{jn+5\ra (j+1)n},\quad i=(j+1)_s+s;\\
-w_{(j+m+1)n+5\ra (j+m+2)n}\otimes w_{jn+5\ra (j+1)n+1},\quad i=(j+1)_s+2s;\\
-e_{(j+m+2)n}\otimes w_{jn+5\ra (j+1)n+2},\quad i=(j+1)_s+4s;\\
-e_{(j+m+2)n}\otimes w_{jn+5\ra (j+1)n+4},\quad i=(j+1)_s+5s;\\
w_{(j+m+1)n+1\ra (j+m+2)n}\otimes e_{jn+5},\quad i=j+s;\\
w_{(j+m+1)n+3\ra (j+m+2)n}\otimes e_{jn+5},\quad i=j+2s;\\
0\quad\text{otherwise.}\end{cases}$$

\centerline{\bf Description of the $d_8$} \centerline{$d_8:Q_9\rightarrow Q_8\text{ -- is an }
(7s\times 6s)\text{ matrix}.$}

If $0\le j<s$, then
$$(d_8)_{ij}=\begin{cases}
w_{(j+m-1)n+5\ra (j+m)n+5}\otimes e_{jn},\quad i=j;\\
-e_{(j+m)n+5}\otimes w_{jn\ra (j+1)n},\quad i=(j+1)_s;\\
-w_{(j+m)n+2\ra (j+m)n+5}\otimes w_{jn\ra jn+1},\quad i=j+s;\\
w_{(j+m)n+4\ra (j+m)n+5}\otimes w_{jn\ra jn+3},\quad i=j+2s;\\
w_{(j+m)n+3\ra (j+m)n+5}\otimes w_{jn\ra jn+2},\quad i=j+3s;\\
-w_{(j+m)n+1\ra (j+m)n+5}\otimes w_{jn\ra jn+4},\quad i=j+4s;\\
0\quad\text{otherwise.}\end{cases}$$

If $s\le j<3s$, then $$(d_8)_{ij}=\begin{cases}
w_{(j+m)n+5\ra (j+m+1)n}\otimes w_{jn+g(j+s)\ra (j+1)n},\quad i=(j+1)_s,\text{ }j<2s;\\
w_{(j+m)n+g(j+s)+1\ra (j+m+1)n}\otimes e_{jn+g(j+s)},\quad i=j;\\
-w_{(j+m)n+g(j)\ra (j+m+1)n}\otimes w_{jn+g(j+s)\ra jn+g(j+s)+1},\quad i=j+2s;\\
e_{(j+m+1)n}\otimes w_{jn+g(j+s)\ra jn+5},\quad i=(j)_s+5s;\\
0\quad\text{otherwise.}\end{cases}$$

If $3s\le j<5s$, then $$(d_8)_{ij}=\begin{cases}
-w_{(j+m)n+5\ra (j+m+1)n+g(j)}\otimes w_{jn+g(j+s)+1\ra (j+1)n},\quad i=(j+1)_s,\text{ }j<4s;\\
w_{(j+m)n+g(j)\ra (j+m+1)n+g(j)}\otimes e_{jn+g(j+s)+1},\quad i=j;\\
-e_{(j+m+1)n+g(j)}\otimes w_{jn+g(j+s)+1\ra (j+1)n+g(j+s)+1},\\\quad\quad\quad  i=(j+1)_s+3s+s(1-f_0(j,4s));\\
-w_{(j+m+1)n\ra (j+m+1)n+g(j)}\otimes w_{jn+g(j+s)+1\ra jn+5},\\\quad\quad\quad i=(j)_s+5s;\\
0\quad\text{otherwise.}\end{cases}$$

If $5s\le j<7s$, then $$(d_8)_{ij}=\begin{cases}
w_{(j+m+1)n\ra (j+m+1)n+g(j+s)+1}\otimes e_{jn+5},\quad i=(j)_s+5s;\\
w_{(j+m)n+5\ra (j+m+1)n+g(j+s)+1}\otimes w_{jn+5\ra (j+1)n},\quad  i=(j+1)_s,\text{ }j\ge 6s;\\
e_{(j+m+1)n+g(j+s)+1}\otimes w_{jn+5\ra (j+1)n+g(j+s)},\\\quad\quad\quad i=(j+1)_s+s+s(1-f_0(j,6s));\\
w_{(j+m+1)n+g(j+s)\ra (j+m+1)n+g(j+s)+1}\otimes w_{jn+5\ra (j+1)n+g(j)+1},\\\quad\quad\quad  i=(j+1)_s+3s+sf_0(j,6s);\\
0\quad\text{otherwise.}\end{cases}$$

\centerline{\bf Description of the $d_9$} \centerline{$d_9:Q_{10}\rightarrow Q_9\text{ -- is an }
(6s\times 7s)\text{ matrix}.$}

If $0\le j<s$, then
$$(d_9)_{ij}=\begin{cases}
w_{(j+m)n+5\ra (j+m+1)n}\otimes e_{jn},\quad i=j;\\
e_{(j+m+1)n}\otimes w_{jn\ra jn+1},\quad i=j+s;\\
-e_{(j+m+1)n}\otimes w_{jn\ra jn+3},\quad i=j+2s;\\
0\quad\text{otherwise.}\end{cases}$$

If $s\le j<3s$, then $$(d_9)_{ij}=\begin{cases}
w_{\ra (j+m+1)n+g(j)}^{(1)}\otimes e_{jn+g(j+s)},\quad i=j;\\
e_{(j+m+1)n+g(j)}\otimes w_{jn+g(j+s)\ra}^{(1)},\quad i=j+2s;\\
0\quad\text{otherwise.}\end{cases}$$

If $3s\le j<5s$, then $$(d_9)_{ij}=\begin{cases}
w_{\ra (j+m+1)n+g(j)+1}^{(1)}\otimes e_{jn+g(j+s)+1},\quad i=j;\\
e_{(j+m+1)n+g(j)+1}\otimes w_{jn+g(j+s)+1\ra}^{(1)},\quad i=(j)_{2s}+5s;\\
0\quad\text{otherwise.}\end{cases}$$

If $5s\le j<6s$, then $$(d_9)_{ij}=\begin{cases}
e_{(j+m+1)n+5}\otimes w_{jn+5\ra (j+1)n},\quad i=(j+1)_s;\\
w_{(j+m+1)n+2\ra (j+m+1)n+5}\otimes e_{jn+5},\quad i=j;\\
-w_{(j+m+1)n+4\ra (j+m+1)n+5}\otimes e_{jn+5},\quad i=j+s;\\
0\quad\text{otherwise.}\end{cases}$$

\centerline{\bf Description of the $d_{10}$} \centerline{$d_{10}:Q_{11}\rightarrow Q_{10}\text{ --
is an } (6s\times 6s)\text{ matrix}.$}

If $0\le j<s$, then
$$(d_{10})_{ij}=\begin{cases}
w_{(j+m)n\ra (j+m+1)n}\otimes e_{jn},\quad i=j;\\
-e_{(j+m+1)n}\otimes w_{jn\ra (j+1)n},\quad i=(j+1)_s;\\
-w_{(j+m)n+3\ra (j+m+1)n}\otimes w_{jn\ra jn+1},\quad i=j+s;\\
w_{(j+m)n+1\ra (j+m+1)n}\otimes w_{jn\ra jn+3},\quad i=j+2s;\\
w_{(j+m)n+4\ra (j+m+1)n}\otimes w_{jn\ra jn+2},\quad i=j+3s;\\
-w_{(j+m)n+2\ra (j+m+1)n}\otimes w_{jn\ra jn+4},\quad i=j+4s;\\
w_{(j+m)n+5\ra (j+m+1)n}\otimes w_{jn\ra jn+5},\quad i=j+5s;\\
0\quad\text{otherwise.}\end{cases}$$

If $s\le j<3s$, then $$(d_{10})_{ij}=\begin{cases}
f_1(j,2s)w_{(j+m+1)n\ra (j+m+1)n+g(j)}\otimes w_{jn+g(j+s)\ra (j+1)n},\quad i=(j+1)_s;\\
w_{(j+m)n+g(j)\ra (j+m+1)n+g(j)}\otimes e_{jn+g(j+s)},\quad i=j;\\
-e_{(j+m+1)n+g(j)}\otimes w_{jn+g(j+s)\ra (j+1)n+g(j+s)},\quad i=(j+1)_s+s+s(1-f_0(j,2s));\\
-w_{(j+m)n+g(j)+1\ra (j+m+1)n+g(j)}\otimes w_{jn+g(j+s)\ra jn+g(j+s)+1},\quad i=j+2s;\\
-f_1(j,2s)w_{(j+m)n+5\ra (j+m+1)n+g(j)}\otimes w_{jn+g(j+s)\ra jn+5},\quad i=(j)_s+5s;\\
0\quad\text{otherwise.}\end{cases}$$

If $3s\le j<5s$, then $$(d_{10})_{ij}=\begin{cases}
-f_1(j,4s)w_{(j+m+1)n\ra (j+m+1)n+g(j)+1}\otimes w_{jn+g(j+s)+1\ra (j+1)n},\quad i=(j+1)_s;\\
w_{(j+m)n+g(j)+1\ra (j+m+1)n+g(j)+1}\otimes e_{jn+g(j+s)+1},\quad i=j;\\
w_{(j+m+1)n+g(j)\ra (j+m+1)n+g(j)+1}\otimes w_{jn+g(j+s)+1\ra (j+1)n+g(j+s)},\\\quad\quad\quad  i=(j+1)_s+s+s(1-f_0(j,4s));\\
-e_{(j+m+1)n+g(j)+1}\otimes w_{jn+g(j+s)+1\ra (j+1)n+g(j+s)+1},\\\quad\quad\quad  i=(j+1)_s+3s+s(1-f_0(j,4s));\\
f_1(j,4s)w_{(j+m)n+5\ra (j+m+1)n+g(j)+1}\otimes w_{jn+g(j+s)+1\ra jn+5},\quad i=(j)_s+5s;\\
0\quad\text{otherwise.}\end{cases}$$

If $5s\le j<6s$, then $$(d_{10})_{ij}=\begin{cases}
-w_{(j+m+1)n\ra (j+m+1)n+5}\otimes w_{jn+5\ra (j+1)n},\quad i=(j+1)_s;\\
w_{(j+m)n+5\ra (j+m+1)n+5}\otimes e_{jn+5},\quad i=j;\\
w_{(j+m+1)n+3\ra (j+m+1)n+5}\otimes w_{jn+5\ra (j+1)n+1},\quad i=(j+1)_s+s;\\
-w_{(j+m+1)n+1\ra (j+m+1)n+5}\otimes w_{jn+5\ra (j+1)n+3},\quad i=(j+1)_s+2s;\\
-w_{(j+m+1)n+4\ra (j+m+1)n+5}\otimes w_{jn+5\ra (j+1)n+2},\quad i=(j+1)_s+3s;\\
w_{(j+m+1)n+2\ra (j+m+1)n+5}\otimes w_{jn+5\ra (j+1)n+4},\quad i=(j+1)_s+4s;\\
-e_{(j+m+1)n+5}\otimes w_{jn+5\ra (j+1)n+5},\quad i=(j+1)_s+5s;\\
0\quad\text{otherwise.}\end{cases}$$

\begin{thm}\label{resol_thm}
Let $R=R_s$ is algebra of the type $E_6$. Then the minimal projective resolution of the
$\Lambda$-module $R$ is of the form:
\begin{equation}\label{resolv}\tag{$+$} \dots\longrightarrow
Q_3\stackrel{d_2}\longrightarrow Q_2\stackrel{d_1}\longrightarrow
Q_1\stackrel{d_0}\longrightarrow
Q_0\stackrel\varepsilon\longrightarrow R\longrightarrow
0,
\end{equation}
where $\varepsilon$ is the multiplication map $(\varepsilon(a\otimes b)=ab)$; $Q_r\text{ }(r\le
10)$ and $d_r\text{ }(r\le 10)$ were described before; further $Q_{11\ell+r}$, where $\ell\in \N$
and $0\le r\le 10$, is obtained from $Q_r$ by replacing every direct summand $P_{i,j}$ to
$P_{\sigma^\ell(i),j}$ correspondingly $($here $\sigma(i)=j$, if $\sigma(e_i)=e_j)$, and the
differential $d_{11\ell+r}$ is obtained from $d_r$ by act of $\sigma^\ell$ by all left tensor
components of the corresponding matrix.
\end{thm}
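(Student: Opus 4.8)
The plan is to prove directly that the displayed complex \eqref{resolv} is a minimal projective resolution of $R$ over $\Lambda$, using the automorphism $\sigma$ to reduce all of the work to the single period $0\le t\le 11$. I would organise the argument in four parts. \textbf{Minimality.} First I would note that every nonzero entry of each matrix $d_r$ with $r\le 10$ has the form $w\otimes w'$ with at least one of the paths $w$, $w'$ of positive length, hence lies in $\mathrm{rad}\,\Lambda=\mathrm{rad}\,R\otimes R^{\mathrm{op}}+R\otimes\mathrm{rad}\,R^{\mathrm{op}}$; the same then holds for every $d_{11\ell+r}$, being obtained from $d_r$ by applying the algebra automorphism $\sigma^{\ell}$ to left tensor factors. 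Thus each differential maps $Q_{t+1}$ into $\mathrm{rad}\,\Lambda\cdot Q_{t}$, so once exactness is established the resolution is automatically minimal and each $Q_{t}$ is the projective cover of the corresponding syzygy.

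\textbf{The complex property.} Next I would check the finitely many identities $\varepsilon d_0=0$, $d_{r-1}d_r=0$ for $1\le r\le 10$, and the ``wrap-around'' identity $d_{10}d_{11}=0$ (that is, $d_{10}$ composed with the matrix obtained from $d_0$ by acting with $\sigma$ on left tensor factors vanishes). Since applying $\sigma^{\ell}$ to left tensor factors commutes with matrix multiplication, these imply $d_{11\ell+r-1}d_{11\ell+r}=0$ for all $\ell\in\N$, so the list is exhaustive. Each identity is a finite entry-by-entry computation: for fixed indecomposable summands $P_{a,b}\subseteq Q_{r+1}$ and $P_{c,d}\subseteq Q_{r-1}$ one collects the contributions running over the summands of $Q_{r}$ and checks cancellation, using only the defining relations of $R_s$ (every path of length $5$ is zero, $\a^3=\b^3$, $\a\g\b=\b\g\a=0$) together with the elementary identities satisfied by the auxiliary functions $f,h,\lambda,g,f_0,f_1$ and the congruences that govern which summands occur.

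\textbf{Exactness of one period.} Then I would prove exactness of \eqref{resolv} at $Q_t$ for $0\le t\le 10$ (with $\varepsilon$ in the role of ``$d_{-1}$'') and surjectivity of $\varepsilon$, by induction on $t$. Assuming exactness through $Q_{t-1}$, the syzygy $\Omega^{t}_{\Lambda}(R)=\mathrm{im}\,d_{t-1}$ is described explicitly as a sub-bimodule of $Q_{t-1}$ generated by the entries of the columns of $d_{t-1}$, and one verifies that $d_t$ carries the canonical generators $e_a\otimes e_b$ of the summands $P_{a,b}$ of $Q_{t+1}$ onto a $\Lambda$-generating set of $\Ker d_{t-1}$; together with the inclusion $\mathrm{im}\,d_t\subseteq\Ker d_{t-1}$ from the complex property, and with minimality, this gives exactness at $Q_t$ and shows $Q_{t+1}\to\Omega^{t+1}_{\Lambda}(R)$ is a projective cover. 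The bookkeeping is carried out degree by degree and summand by summand, with the $s$ direct summands $Q'_{t,\rho}$ ($0\le\rho\le s-1$) in the decompositions $Q_{2m}=\bigoplus_{\rho}Q'_{2m,\rho}$, $Q_{2m+1}=\bigoplus_{\rho}Q'_{2m+1,\rho}$ treated uniformly in $\rho$, which keeps the casework finite. At the end of the period one obtains $\Omega^{11}_{\Lambda}(R)=\Ker d_9=\mathrm{im}\,d_{10}$ and identifies it, via its generators, with the $\Lambda$-module obtained from $R$ by twisting the left action along $\sigma$; its projective cover is $\sigma(Q_0)=Q_{11}$, with covering map $d_{10}$, exactly as prescribed.

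\textbf{Passage to all degrees.} Finally, twisting the left action by $\sigma$ is an exact autoequivalence of the category of left $\Lambda$-modules that sends $P_{i,j}$ to $P_{\sigma(i),j}$ and acts on a matrix of homomorphisms between projectives by applying $\sigma$ to the left tensor factors. Applying it to the truncated resolution $Q_0\leftarrow\dots\leftarrow Q_{11}$ produced above yields a minimal projective resolution of $\Omega^{11}_{\Lambda}(R)$ whose terms and differentials are precisely those prescribed for degrees $11\le t\le 22$; splicing and iterating $\sigma^{\ell}$ then yields \eqref{resolv} in all degrees, exact by construction and minimal by the first part. I expect the main obstacle to be the exactness step: although each stage is elementary, explicitly describing all eleven syzygies and matching generators is a long and delicate computation — but it is the only substantial work, since the automorphism $\sigma$ propagates it to every degree.
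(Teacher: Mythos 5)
Your plan is sound and would prove the theorem, but it takes a genuinely different route from the paper. The paper does not verify exactness by hand: it first computes the minimal projective resolutions of the simple modules $S_i$ (lemmas \ref{lem_s0}--\ref{lem_s5}), uses Happel's lemma (Lemma \ref{lem_Ha}) to conclude that the bimodule terms of the minimal resolution are exactly the $Q_m$ described, and then invokes the criterion of \cite{VGI}: once the terms agree with those forced by Happel's lemma, exactness at $Q_m$ for $m\le 11$ follows from the single relation $d_m d_{m+1}=0$, so only matrix products have to be checked. After that it identifies $\Omega^{11}({}_\Lambda R)\simeq{}_1R_\sigma$ and propagates exactness to all higher degrees by twisting with $\sigma$, just as you do in your last step. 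You instead dispense with the simple-module lemmas, Happel's lemma and \cite{VGI}, and prove exactness directly within one period by induction: describing each syzygy as the submodule generated by the columns of $d_{t-1}$ and checking that the canonical generators of $Q_{t+1}$ map onto a generating set of $\Ker d_{t-1}$, with minimality extracted from the radical-entry observation. What your approach buys is self-containedness (no external criterion, and the shape of the $Q_m$ and minimality come out as by-products rather than inputs); what it costs is precisely the heavy part you flag yourself, namely an explicit determination of every kernel $\Ker d_{t-1}$ over $\Lambda$ in eleven homological degrees, which is the computation the Happel/VGI mechanism used in the paper is designed to avoid. Both arguments handle periodicity identically, via $\Omega^{11}({}_\Lambda R)\simeq{}_1R_\sigma$ and the fact that twisting by $\sigma$ is an exact autoequivalence sending $P_{i,j}$ to $P_{\sigma(i),j}$.
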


To prove that the terms $Q_i$ are of this form we introduce $P_i=Re_i$ is the projective cover of
the simple $R$-modules $S_i$, corresponding to the vertices of the quiver $\mathcal Q_s$. We will
find projective resolutions of the simple $R$-modules $S_i$.

\begin{obozn}
For $R$-module $M$ its $m$th syzygy is denoted by $\Omega^m(M)$.
\end{obozn}

\begin{zam}\label{note_brev}
From here we denote the multiplication homomorphism from the right by an element $w$ by $w$.
\end{zam}

\begin{lem}\label{lem_s0}
The begin of the minimal projective resolution of $S_{rn}$ is of the form
\begin{multline*}
\dots\longrightarrow P_{(r+3)n+5}
\stackrel{\binom{\a}{-\b}}\longrightarrow P_{(r+3)n+2}\oplus
P_{(r+3)n+4}
\stackrel{(\a^2\text{ }\b^2)}\longrightarrow\\
\longrightarrow P_{(r+3)n}
\stackrel{\binom{\g\a^2}{\g\b^2}}\longrightarrow P_{(r+2)n+1}\oplus
P_{(r+2)n+3} \stackrel{\binom{\a\g\phantom{-}0}{-\a\phantom{-}\b}}
\longrightarrow\\\longrightarrow P_{(r+1)n+5}\oplus P_{(r+2)n}
\stackrel{\binom{\phantom{-}\a\phantom{-}\g\a}{-\b\phantom{-}0}}\longrightarrow
P_{(r+1)n+2}\oplus P_{(r+1)n+4} \stackrel{(\a^2\g\text{
}\b^2\g)}\longrightarrow\\\longrightarrow
P_{rn+5}\stackrel{\binom{\a^2}{-\b^2}}\longrightarrow P_{rn+1}\oplus
P_{rn+3} \stackrel{(\a\text{ }\b)}\longrightarrow
P_{rn}\longrightarrow S_{rn}\longrightarrow 0.
\end{multline*}
At that $\Omega^{9}(S_{rn})\simeq S_{(r+4)n+5}$.
\end{lem}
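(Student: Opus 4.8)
The plan is to verify directly that the displayed sequence is the beginning of the minimal projective resolution of $S_{rn}$. Concretely I would check that (i) it is a complex, (ii) it is exact at every spot shown, and (iii) every differential has image inside the radical; together with the fact that $P_{rn}\to S_{rn}$ is a projective cover, this shows it is the minimal projective resolution, and then automatically $\Omega^9(S_{rn})$ is the kernel of the leftmost displayed differential $\binom{\a}{-\b}\colon P_{(r+3)n+5}\to P_{(r+3)n+2}\oplus P_{(r+3)n+4}$.

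First I would describe the relevant indecomposable projectives layer by layer. Since $I$ is generated by the paths of length $5$, by $\a^3-\b^3$, and by $\a\g\b$ and $\b\g\a$, the module $P_i=Re_i$ has a $K$-basis consisting of the paths of length $\le 4$ starting at $i$ that contain no subpath $\a\g\b$ or $\b\g\a$, taken modulo the single identification $\a^3=\b^3$. Writing these bases out for the projectives occurring in the sequence, namely $P_{rn},P_{rn+1},P_{rn+3},P_{rn+5}$ and their analogues in the blocks $r+1,r+2,r+3$ up to $P_{(r+3)n+5}$, records in particular their tops, their (simple) socles, and the dimension of each radical layer; all of this is read off from $\mathcal Q_s$ restricted to the vertices $rn,\dots,(r+3)n+5$.

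Second, I would check that consecutive differentials compose to zero. Each composite is a matrix whose entries are products of the paths listed above, and every such entry vanishes for one of exactly three reasons: it has length $5$; it contains $\a\g\b$ or $\b\g\a$ as a subpath; or it is a left-and-right multiple of $\a^3-\b^3$. For instance $(\a\ \b)\binom{\a^2}{-\b^2}=\a^3-\b^3=0$; the composite through $P_{rn+5}$ has all of its entries of length $5$; and the composites higher up involving the matrices carrying a $\g$ have their off-diagonal entries killed by $\a\g\b$ or $\b\g\a$. Going through all the compositions this way shows the sequence is a complex, and since every nonzero matrix entry is a path of positive length, each differential maps into the radical, so the complex is minimal.

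Third --- and this is the substantive step --- I would prove exactness. It suffices to show at each spot that the incoming differential surjects onto the kernel of the outgoing one. Starting from $\Omega^1(S_{rn})=\operatorname{rad}P_{rn}$, whose top is spanned by the two arrows $\a,\b$ leaving $rn$, one computes the syzygies in order; at each step the generators of the next kernel are forced either by a defining relation of $I$ --- this is where the columns $\binom{\a^2}{-\b^2}$, $(\a^2\ \b^2)$, $\binom{\a}{-\b}$ and the entries carrying a $\g$ come from --- or by the vanishing of a maximal-length path, and the Loewy data from the first step confirm that the kernel has no further generators. Carrying this out, and keeping track of socles, identifies $\Omega^9(S_{rn})$ with $\Ker\!\bigl(\binom{\a}{-\b}\colon P_{(r+3)n+5}\to P_{(r+3)n+2}\oplus P_{(r+3)n+4}\bigr)$, which by the explicit description of $P_{(r+3)n+5}$ is its one-dimensional bottom layer $\operatorname{soc}P_{(r+3)n+5}\cong S_{(r+4)n+5}$. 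The main obstacle is precisely this third step: simultaneously tracking the radical layers of all the projectives involved, pinning down the signs in the matrices, and verifying there are no extra kernel generators; the rest is routine bookkeeping.
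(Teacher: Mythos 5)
Your plan — verify the displayed sequence is a complex, that each differential lands in the radical, and that it is exact by computing the syzygies step by step from the defining relations, then read off $\Omega^9(S_{rn})$ as the kernel of $\binom{\a}{-\b}$, which is the simple socle $S_{(r+4)n+5}$ of $P_{(r+3)n+5}$ — is exactly the paper's argument: the paper proves Lemmas \ref{lem_s0}--\ref{lem_s5} by the same direct check of exactness, stated without further detail. Your elaboration is correct, so this matches the paper's proof in substance and approach.
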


\begin{lem}
The begin of the minimal projective resolution of $S_{rn+1}$ is of the form $$\dots\longrightarrow
P_{rn+2} \stackrel{\a}\longrightarrow P_{rn+1}\longrightarrow S_{rn+1}\longrightarrow 0.$$ At that
$\Omega^{2}(S_{rn+1})\simeq S_{(r+1)n+2}$.
\end{lem}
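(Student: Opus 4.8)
The plan is to read off the Loewy structure of the indecomposable projective modules $P_{rn+1}$ and $P_{rn+2}$ from the defining relations of $R_s$, and then to produce $\Omega(S_{rn+1})$ and $\Omega^2(S_{rn+1})$ by a direct computation in the path algebra, in the same spirit as Lemma~\ref{lem_s0}.

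First I would record the relevant part of $\mathcal Q_s$: the vertex $rn+1$ has a single outgoing arrow $\a\colon rn+1\ra rn+2$, the vertex $rn+2$ a single outgoing arrow $\a\colon rn+2\ra rn+5$, the vertex $rn+5$ a single outgoing arrow $\g\colon rn+5\ra (r+1)n$, and from $(r+1)n$ issue the two arrows $\a\colon (r+1)n\ra (r+1)n+1$ and $\b\colon (r+1)n\ra (r+1)n+3$ (the last two being the arrows out of $rn$ shifted by $n$, as in Lemma~\ref{lem_s0}). Using that all paths of length~$5$ are zero and that $\b\g\a=0$ --- which annihilates the branch leaving $(r+1)n$ along $\b$ --- one checks that the paths starting at $rn+1$ that survive in $R_s$ are exactly $e_{rn+1},\ \a,\ \a^2,\ \g\a^2,\ \a\g\a^2$, so that $P_{rn+1}=Re_{rn+1}$ is uniserial of length~$5$ with composition factors, from the top, $S_{rn+1},S_{rn+2},S_{rn+5},S_{(r+1)n},S_{(r+1)n+1}$. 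The same bookkeeping gives that $P_{rn+2}$ is uniserial of length~$5$ with composition factors $S_{rn+2},S_{rn+5},S_{(r+1)n},S_{(r+1)n+1},S_{(r+1)n+2}$; in particular $\mathrm{soc}(P_{rn+2})\simeq S_{(r+1)n+2}$, consistent with $R_s$ being self-injective.

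Since $\a$ is the unique arrow out of $rn+1$, we have $\Omega(S_{rn+1})=\mathrm{rad}\,P_{rn+1}$, which is uniserial with top $S_{rn+2}$; hence $P_{rn+2}$ is its projective cover, and since $\Hom_R(P_{rn+2},P_{rn+1})\cong e_{rn+2}Re_{rn+1}$ is one-dimensional and spanned by right multiplication by $\a$, the covering map is $\a$ --- this yields the displayed sequence and its minimality. It remains to compute $\Omega^2(S_{rn+1})=\Ker\big(\a\colon P_{rn+2}\ra P_{rn+1}\big)$. Right multiplication by $\a$ sends the four basis elements $e_{rn+2},\ \a,\ \g\a,\ \a\g\a$ of $P_{rn+2}$ onto the four basis elements $\a,\ \a^2,\ \g\a^2,\ \a\g\a^2$ of $\mathrm{rad}\,P_{rn+1}$, and kills the remaining basis element $\a^2\g\a$ (a path of length~$4$), since multiplying it by $\a$ produces a path of length~$5$. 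Thus the kernel is one-dimensional, hence simple, and equals $\mathrm{soc}(P_{rn+2})\simeq S_{(r+1)n+2}$, which is the claim. (Alternatively one can avoid the explicit kernel: from $\dim_K P_{rn+2}=\dim_K P_{rn+1}=5$ and the fact that $P_{rn+2}$ covers $\mathrm{rad}\,P_{rn+1}$, a dimension count forces $\Omega^2(S_{rn+1})$ to be one-dimensional, hence the socle $S_{(r+1)n+2}$ of $P_{rn+2}$ by self-injectivity.)

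The main --- and essentially only --- point needing care is the first step: one must verify that $\b\g\a=0$ together with the length-$5$ relation is exactly what truncates $P_{rn+1}$ and $P_{rn+2}$ to the two uniserial modules above, and, when $s>1$, that the arrow $\g$ carries the Loewy walk from block~$r$ into block~$r+1$, so that the socles genuinely occur at the vertices $(r+1)n+1$ and $(r+1)n+2$ and do not wrap back to $rn+1,rn+2$. Once the local structure of these two projectives is pinned down, the rest is a routine unwinding of the path algebra.
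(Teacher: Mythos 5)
Your argument is correct and is exactly the kind of direct verification the paper itself invokes (its proof of the lemmas is just the statement that exactness is checked immediately): you pin down the uniserial structure of $P_{rn+1}$ and $P_{rn+2}$ from the relations (length-$5$ paths and $\b\g\a=0$), identify the map as right multiplication by $\a$, and read off the one-dimensional kernel $\a^2\g\a=\mathrm{soc}\,P_{rn+2}\simeq S_{(r+1)n+2}$. No gaps; the only point worth a word, which you already flag, is that $\a\g\a^2$ and $\a^2\g\a$ are not killed by the remaining relations ($\a^3-\b^3$ cannot produce a single path and $\a\g\b$, $\b\g\a$ involve the wrong arrow after or before $\g$), so the two projectives really have dimension $5$.
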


\begin{lem}
The begin of the minimal projective resolution of $S_{rn+2}$ is of the form
\begin{multline*}
\dots\longrightarrow P_{(r+4)n+3} \stackrel{\b}\longrightarrow
P_{(r+4)n}\stackrel{\g\a}\longrightarrow
P_{(r+3)n+2}\stackrel{\a^2\g}\longrightarrow
P_{(r+2)n+5}\stackrel{\binom{\a^2}{-\b}}\longrightarrow\\\longrightarrow
P_{(r+2)n+1}\oplus P_{(r+2)n+4}\stackrel{(\a\text{
}\b^2)}\longrightarrow
P_{(r+2)n}\stackrel{\g\b^2}\longrightarrow\\
\longrightarrow
P_{(r+1)n+3}\stackrel{\b\g}\longrightarrow
P_{rn+5}\stackrel{\a}\longrightarrow P_{rn+2}\longrightarrow
S_{rn+2}\longrightarrow 0.
\end{multline*}
At that $\Omega^{9}(S_{rn+2})\simeq S_{(r+5)n+3}$.
\end{lem}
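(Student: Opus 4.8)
The plan is to build the minimal projective resolution of $S_{rn+2}$ one term at a time, computing successive syzygies
$$\Omega^{k+1}(S_{rn+2})=\ker\bigl(P(\Omega^k(S_{rn+2}))\twoheadrightarrow \Omega^k(S_{rn+2})\bigr)$$
inside the finite-dimensional indecomposable projectives $P_i=Re_i$. The only preliminary input is the explicit radical (Loewy) structure of each $P_i$: since $R=R_s$ is defined by killing all paths of length $5$ together with $\a^3-\b^3$, $\a\g\b$ and $\b\g\a$, each $P_i$ is finite-dimensional and its structure depends only on the residue $j\in\{0,\dots,5\}$ of $i$ modulo $n=6$, so I would first record these six local pictures once and for all (equivalently, the Cartan matrix of $R$ and the Nakayama permutation $\nu$, using self-injectivity so that $\mathrm{soc}\,P_i=S_{\nu(i)}$).

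Granting this, the computation is mechanical. Start from $\Omega^0=S_{rn+2}$ with cover $P_{rn+2}$, so $\Omega^1=\mathrm{rad}\,P_{rn+2}$; reading its top off the structure of $P_{rn+2}$ gives the next cover $P_{rn+5}$, the connecting map is right multiplication by the arrow $\a$ (hence the label in the statement), and $\Omega^2=\ker(\a\colon P_{rn+5}\to P_{rn+2})$. Iterating, at each step the top of the current syzygy determines the next projective cover — one indecomposable summand per simple summand of the top — the connecting homomorphism is, componentwise, right multiplication by a single path (up to sign), and the new syzygy is its kernel, now computed inside a known projective. The relations $\a\g\b=\b\g\a=0$ and $\a^3=\b^3$ are precisely what force certain entries of these maps to vanish or coincide, producing the matrices $\binom{\a^2}{-\b}$, $(\a\ \b^2)$, $\g\b^2$, $\b\g$, $\g\a$, $\a^2\g$ displayed above; along the way one checks that consecutive maps compose to zero and that there is no homology, the latter most efficiently by the dimension count $\dim\Omega^{k}+\dim\Omega^{k+1}=\dim P(\Omega^k)$ read from the Cartan data. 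The one step deserving extra care is the passage through the direct sum $P_{(r+2)n+1}\oplus P_{(r+2)n+4}$ in homological degree $4$: there one must verify that the two strands of $\mathrm{rad}\,P_{(r+2)n}$ issuing from the two arrows out of $(r+2)n$ do not interfere, which is again an application of $\a\g\b=0$, so that $(\a\ \b^2)$ is a genuine minimal cover with kernel exactly the image of $\g\b^2$.

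The resolution closes after nine steps: $\Omega^8(S_{rn+2})=\Im(\b\colon P_{(r+4)n+3}\to P_{(r+4)n})$ with cover $P_{(r+4)n+3}$, and $\Omega^9(S_{rn+2})=\ker(\b\colon P_{(r+4)n+3}\to P_{(r+4)n})$. By the structure of the residue-$3$ projective $P_{(r+4)n+3}$ this kernel is one-dimensional — right multiplication by $\b$ annihilates exactly the bottom composition factor of the longest $\b$-strand, which the length-$5$ relation together with $\a^3=\b^3$ cuts off — and that factor is $S_{(r+5)n+3}$, so $\Omega^9(S_{rn+2})\simeq S_{(r+5)n+3}$. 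This is what lets one continue the "$\dots$" on the left: it feeds into the analogous lemma for the simples $S_{rn+3}$, and, via the automorphism $\sigma$, the whole pattern repeats periodically.

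The main obstacle is organisational rather than conceptual: every index lives in the cyclic quiver $\mathcal Q_s$ with $ns$ vertices, so each of the syzygy computations above must be carried out uniformly in $r$ and in $s\ge 1$, with the wrap-around at a full period absorbed by $\sigma$ (one checks the displayed resolution is $\sigma$-equivariant in the relevant sense). Keeping the six local projective pictures, the residues modulo $n$, $3$ and $2$, and the auxiliary functions $\lambda$, $g$, $h$ straight throughout this bookkeeping — and, in particular, checking that the boundary syzygy identifications $\Omega^9(S_{rn})\simeq S_{(r+4)n+5}$, $\Omega^2(S_{rn+1})\simeq S_{(r+1)n+2}$, $\Omega^9(S_{rn+2})\simeq S_{(r+5)n+3}$ and their residue-$3,4,5$ companions are mutually consistent, so that the six simple-module resolutions glue into one coherent periodic picture — is where essentially all the effort goes.
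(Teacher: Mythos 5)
Your proposal is correct and takes essentially the same route as the paper, whose proof is just the remark that exactness of the displayed sequence is checked directly from the structure of the projectives $P_i$ (determined by the relations: paths of length $5$, $\a^3-\b^3$, $\a\g\b$, $\b\g\a$) — exactly the syzygy-by-syzygy computation with dimension counts that you outline, ending with $\Omega^{9}(S_{rn+2})=\Ker(\b)\subset P_{(r+4)n+3}$ being the one-dimensional socle $S_{(r+5)n+3}$. Only a slip of wording in the degree-$4$ step: the relation $\a\g\b=0$ gives that the image of $(\a\ \b^2)$ equals the kernel of $\g\b^2$ (equivalently, the kernel of $(\a\ \b^2)$ is the image of $\binom{\a^2}{-\b}$), not that its kernel is the image of $\g\b^2$.
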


\begin{lem}
The begin of the minimal projective resolution of $S_{rn+3}$ is of the form $$\dots\longrightarrow
P_{rn+4} \stackrel{\b}\longrightarrow P_{rn+3}\longrightarrow S_{rn+3}\longrightarrow 0.$$ At that
$\Omega^{2}(S_{rn+3})\simeq S_{(r+1)n+4}$.
\end{lem}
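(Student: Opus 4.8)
The plan is to compute the first two syzygies of $S_{rn+3}$ by hand, using the general fact that for a module with simple top the first syzygy is the radical of its projective cover, together with the explicit $K$-basis of $R=R_s$ furnished by the classes of those paths of $\mathcal Q_s$ that avoid every path of length $5$ and every subpath $\a\g\b$, $\b\g\a$ (the relation $\a^3=\b^3$ identifying one remaining pair of basis paths). First I would read off from the shape of $\mathcal Q_s$ that the vertex $rn+3$ is the target of exactly one arrow, namely $\b$ with source $rn+4$; consequently $\operatorname{rad}P_{rn+3}=\operatorname{rad}(R)e_{rn+3}$ is the cyclic left $R$-module $R\b$, whose top is $S_{rn+4}$. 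Right multiplication by $\b$ therefore provides a projective cover $P_{rn+4}\xrightarrow{\,\b\,}\operatorname{rad}P_{rn+3}$, and splicing it with the projective cover $P_{rn+3}\twoheadrightarrow S_{rn+3}$ yields the asserted presentation
\[
P_{rn+4}\xrightarrow{\,\b\,}P_{rn+3}\longrightarrow S_{rn+3}\longrightarrow 0 ,
\]
which is minimal since its only nonzero differential entry is an arrow, so its image lies in $\operatorname{rad}P_{rn+3}$, and each term is the projective cover of the preceding syzygy.

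Next I would identify $\Omega^2(S_{rn+3})=\ker\big(\b\colon P_{rn+4}\to P_{rn+3}\big)$. Expanding an arbitrary element of $P_{rn+4}=Re_{rn+4}$ in the path basis, the condition $p\b=0$ for a basis path $p$ ending at $rn+4$ means, by the defining relations, that the path $p\b$ either reaches length $5$ or acquires a subpath $\a\g\b$ or $\b\g\a$. Since every path of length $5$ already vanishes, there are only finitely many paths leaving $rn+4$ to inspect, and a direct check shows that the basis classes annihilated by right multiplication by $\b$ are precisely the scalar multiples of a single path $w$ with source $(r+1)n+4$, and that $w$ lies in the socle of $R$ (every arrow composed on the left with $w$ is killed by the same relations). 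Hence $\ker\b=Kw$, a one-dimensional submodule of $P_{rn+4}$ with top $S_{(r+1)n+4}$, so $\Omega^2(S_{rn+3})=Kw\simeq S_{(r+1)n+4}$, as claimed.

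The only genuine difficulty is the combinatorial bookkeeping on the quiver $\mathcal Q_s$: one has to describe precisely the arrows incident with $rn+3$ and $rn+4$ — in particular how the arrow $\g$ joining the $r$th and $(r+1)$st blocks enters — enumerate the paths out of $rn+4$ modulo $I$, and verify that after right multiplication by $\b$ the kernel collapses to the line $Kw$. This is a finite verification once a path basis of $R$ has been fixed, and it runs completely parallel to the corresponding computation for $S_{rn+1}$, with the arrow $\a$ and the vertex $rn+1$ playing the roles of $\b$ and $rn+3$; the remaining assertions — first syzygy equals radical, minimality from the differentials being arrows, and a socle element generating a simple submodule — are formal.
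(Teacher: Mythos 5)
Your proposal is correct and amounts to exactly the direct check that the paper itself leaves to the reader ("direct check that given sequences are exact"): $\mathrm{rad}\,P_{rn+3}$ is uniserial with top $S_{rn+4}$, so $P_{rn+4}\stackrel{\b}{\to}P_{rn+3}$ is a minimal presentation, and the kernel of right multiplication by $\b$ is the one-dimensional socle of $P_{rn+4}$, hence $\Omega^2(S_{rn+3})\simeq S_{(r+1)n+4}$. One immaterial caveat: with the paper's conventions ($w_{i\ra j}$ a path from $i$ to $j$, $P_i=Re_i$ spanned by paths starting at $i$, homomorphisms given by right multiplication), the arrow $\b$ in question goes from $rn+3$ to $rn+4$ (not from $rn+4$ to $rn+3$) and the spanning socle path of the kernel ends, rather than starts, at $(r+1)n+4$; your statements use the opposite orientation, which only relabels sources and targets and does not affect the argument.
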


\begin{lem}
The begin of the minimal projective resolution of $S_{rn+4}$ is of the form
\begin{multline*}
\dots\longrightarrow P_{(r+4)n+1} \stackrel{\a}\longrightarrow
P_{(r+4)n}\stackrel{\g\b}\longrightarrow
P_{(r+3)n+4}\stackrel{\b^2\g}\longrightarrow
P_{(r+2)n+5}\stackrel{\binom{\a}{-\b^2}}\longrightarrow\\\longrightarrow
P_{(r+2)n+2}\oplus P_{(r+2)n+3}\stackrel{(\a^2\text{
}\b)}\longrightarrow
P_{(r+2)n}\stackrel{\g\a^2}\longrightarrow\\
\longrightarrow P_{(r+1)n+1}\stackrel{\a\g}\longrightarrow
P_{rn+5}\stackrel{\b}\longrightarrow P_{rn+4}\longrightarrow
S_{rn+4}\longrightarrow 0.
\end{multline*}
At that $\Omega^{9}(S_{rn+4})\simeq S_{(r+5)n+1}$.
\end{lem}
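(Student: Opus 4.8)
I would deduce this lemma from the resolution of $S_{rn+2}$ by exploiting a symmetry of $R=R_s$. The defining relations are symmetric under the interchange $\a\leftrightarrow\b$: replacing every arrow $\a_i$ by the parallel $\b$-arrow and every $\b_i$ by the parallel $\a$-arrow, together with the permutation of vertices that fixes the residue classes $\equiv 0,5\pmod n$, swaps the class $\equiv 1$ with $\equiv 3$ and the class $\equiv 2$ with $\equiv 4$ inside every block, and fixes the $\g$-arrows up to rescaling, carries $\a^3-\b^3$ to $-(\a^3-\b^3)$, interchanges $\a\g\b$ with $\b\g\a$, and permutes the length-$5$ paths; so it preserves the ideal $I$ and defines an algebra automorphism $\phi\colon R\to R$ with $\phi(e_{rn+2})=e_{rn+4}$. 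Restriction of scalars along $\phi$ is an exact self-equivalence of $R$-mod that preserves projectives, indecomposables and radicals and sends $S_{rn+2}$ to $S_{rn+4}$; hence applying it to the minimal projective resolution of $S_{rn+2}$ --- which I would establish independently, by the direct computation sketched below applied to $S_{rn+2}$ --- produces a minimal projective resolution of $S_{rn+4}$.

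The remaining step is to check that this transported resolution is the displayed one. Its $k$-th term comes from the $k$-th term of the $S_{rn+2}$-resolution by the above vertex permutation: the summands $P_{rn+5}$ and the ``trivalent'' projectives $P_{(r+j)n}$ are fixed, while $P_{(r+j)n+1}\leftrightarrow P_{(r+j)n+3}$ and $P_{(r+j)n+2}\leftrightarrow P_{(r+j)n+4}$; its differentials come by the substitution $\a\leftrightarrow\b$. Reordering the two summands of the degree-$4$ term and rescaling one or two generators by $-1$ --- which changes neither minimality nor the isomorphism type of the complex --- puts it in exactly the form of the statement, and $\Omega^{9}(S_{rn+4})$ is $\phi$ applied to $\Omega^{9}(S_{rn+2})\simeq S_{(r+5)n+3}$, namely $S_{(r+5)n+1}$.

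The direct computation referred to above runs as follows: one computes the syzygies $\Omega^{k}(S_{rn+4})$ inductively, starting from $\Omega^{1}(S_{rn+4})=\mathrm{rad}\,P_{rn+4}$; at each step the projective cover of the current syzygy is read off from its top, and the kernel of the covering map is identified from the Loewy structure of the indecomposable projectives (all of Loewy length $\le 5$, with radical layers dictated by the paths of $\mathcal Q_s$ and the relations), the connecting homomorphisms being the right multiplications by the displayed paths. I expect the main obstacle to be the control of the syzygies that pass through a trivalent vertex, where $P_{rn}$ is not uniserial and one must use the relations $\a\g\b=\b\g\a=0$ and $\a^3=\b^3$ to pin the kernel down exactly --- which is precisely why I would rather carry out this analysis once, for $S_{rn+2}$, and transport it by $\phi$. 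After nine steps the syzygy becomes the simple module $S_{(r+5)n+1}$, closing the induction and yielding the periodicity statement.
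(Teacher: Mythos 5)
Your proposal is correct, but it takes a different route from the paper. The paper disposes of all six lemmas (for $S_{rn},\dots,S_{rn+5}$) at once by a direct check that the displayed sequences are exact, which is immediate because the indecomposable projectives have Loewy length at most $5$ and their radical filtrations are read off from the quiver and the relations $\a^3=\b^3$, $\a\g\b=\b\g\a=0$. You instead reduce the $S_{rn+4}$ case to the already stated $S_{rn+2}$ case via the algebra automorphism $\phi$ that swaps the two branches of each block ($\a_i\leftrightarrow\b_i$, vertices $rn+1\leftrightarrow rn+3$, $rn+2\leftrightarrow rn+4$, with $rn$, $rn+5$ and the $\g$'s fixed); this does preserve the ideal $I$ (it sends $\a^3-\b^3$ to its negative and interchanges $\a\g\b$ with $\b\g\a$), so twisting by $\phi$ is an exact self-equivalence preserving projective covers, and your bookkeeping of the transported terms, differentials and signs, as well as of $\Omega^{9}(S_{rn+4})\simeq\phi\bigl(S_{(r+5)n+3}\bigr)=S_{(r+5)n+1}$, is accurate. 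What your approach buys is economy and a conceptual explanation of the visible $\a\leftrightarrow\b$ symmetry between the two ``long'' resolutions (a symmetry the paper only exploits implicitly, through the automorphism $\sigma$, which composes this swap with a shift); what it costs is that you still need the direct syzygy computation once, for $S_{rn+2}$, exactly as in the paper, plus the extra verification that $\phi$ is well defined and the sign/reordering adjustments, so for a single lemma the paper's straightforward check is shorter.
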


\begin{lem}\label{lem_s5}
The begin of the minimal projective resolution of $S_{rn+5}$ is of the form $$\dots\longrightarrow
P_{(r+1)n} \stackrel{\g}\longrightarrow P_{rn+5}\longrightarrow S_{rn+5}\longrightarrow 0.$$ At
that $\Omega^{2}(S_{rn+5})\simeq S_{(r+2)n}$.
\end{lem}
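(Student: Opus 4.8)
The plan is to prove Lemma~\ref{lem_s5}, like the five preceding lemmas of this section, by a direct syzygy computation inside $R$-mod from the explicit presentation $R=R_s=K[\mathcal Q_s]/I$. The preliminary step is to record, for each relevant vertex $i$, a $K$-basis of the indecomposable projective $P_i=Re_i$ given by the classes of the paths of $\mathcal Q_s$ with source $i$ that are not in $I$: the vanishing of all paths of length $5$ makes each basis finite, while the relations $\a^3=\b^3$ (which fuses the two length-$3$ branches issuing from a hub vertex) and $\a\g\b=\b\g\a=0$ (which forbids switching branches across a $\g$) pin down the radical layers of each $P_i$ exactly. For the vertex $rn+5$ --- where the branches $\a^3$ and $\b^3$ reconverge, and which is the source of a single arrow $\g=\g_r\colon rn+5\to(r+1)n$ --- this reading shows at once that $\Omega^1(S_{rn+5})=\operatorname{rad}P_{rn+5}$ is cyclic, generated by the class of $\g$, with simple top $S_{(r+1)n}$; hence its projective cover is $P_{(r+1)n}$, with covering map right multiplication by $\g$. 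Since $\g\in\operatorname{rad}R$, this is a radical map, so it is genuinely the first term of the \emph{minimal} resolution, namely the displayed map $P_{(r+1)n}\xrightarrow{\g}P_{rn+5}$.

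It then remains to identify $\Omega^2(S_{rn+5})=\Ker\bigl(P_{(r+1)n}\xrightarrow{\g}P_{rn+5}\bigr)$. From the basis $e_{(r+1)n},\ \a,\ \b,\ \a^2,\ \b^2,\ \a^3=\b^3,\ \a^3\g=\b^3\g$ of $P_{(r+1)n}$ (seven elements, by a short count using the two relations) one checks that right multiplication by $\g$ carries the first six elements to six distinct nonzero paths with source $rn+5$, which together span $\operatorname{rad}P_{rn+5}$, and carries the seventh, $\a^3\g$, to a path of length $5$, i.e.\ to $0$. Hence the kernel is one-dimensional, spanned by the class of $\a^3\g$; as a longest path of $P_{(r+1)n}$ this element lies at the vertex $(r+2)n$ and generates a submodule isomorphic to $S_{(r+2)n}$. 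Thus $\Omega^2(S_{rn+5})\simeq S_{(r+2)n}$, and since every connecting map of the resolution is right multiplication by a path of positive length, the resolution is minimal; this proves the lemma. (Equivalently, one may invoke self-injectivity of $R$ and just read the Nakayama-permutation value $\nu((r+1)n)=(r+2)n$ off the socle of $P_{(r+1)n}$.)

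There is no genuine obstacle specific to this lemma --- among the six it is the shortest, a rank computation for a single morphism of $7$-dimensional projectives --- and the only points demanding care are the path bookkeeping in $P_{(r+1)n}$ (applying $\a^3=\b^3$ and the length-$5$ relation correctly, so as to count the basis exactly right) and the verification that the six images exhaust $\operatorname{rad}P_{rn+5}$ rather than a proper submodule, which here is immediate from $\dim_K P_{rn+5}=\dim_K P_{(r+1)n}=7$. The substantial work of the section lies rather in the three ``type $0,2,4$'' lemmas, where the fusing relation $\a^3=\b^3$ and the $\g$-bridges interact to produce length-$9$ periodicity; the present lemma, with the ``type $1,3$'' ones, then closes the cycle --- feeding $\Omega^2(S_{rn+5})\simeq S_{(r+2)n}$ into Lemma~\ref{lem_s0} (with $r+2$ in place of $r$) gives $\Omega^{11}(S_{rn+5})\simeq S_{(r+6)n+5}=S_{\sigma(rn+5)}$, the periodicity underlying the shape of the bimodule resolution in Theorem~\ref{resol_thm}.
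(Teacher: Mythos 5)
Your argument is correct and is exactly the paper's route: the paper disposes of Lemmas \ref{lem_s0}--\ref{lem_s5} with the single remark that exactness is checked directly, and your path-by-path computation (basis of $7$ paths in $P_{(r+1)n}$, image of right multiplication by $\g$ equal to $\operatorname{rad}P_{rn+5}$, kernel the socle $S_{(r+2)n}$) is precisely that direct check written out. The only blemish is notational: with the paper's right-to-left composition the length-$4$ basis element of $P_{(r+1)n}$ is $\g\a^3$ rather than $\a^3\g$, which does not affect the argument.
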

\begin{proof}
Proofs of the lemmas consist of direct check that given sequences are exact, and it is immediate.
\end{proof}
We shall need the Happel's lemma (see \cite{Ha}), as revised in \cite{Gen&Ka}:

\begin{lem}[Happel]\label{lem_Ha}
Let
$$\dots\rightarrow Q_m\rightarrow Q_{m-1}\rightarrow\dots\rightarrow
Q_1\rightarrow Q_0\rightarrow R\rightarrow 0$$ be the minimal projective resolution of $R$. Then
$$Q_m\cong\bigoplus_{i,j}P_{i,j}^{\dim\mathrm{Ext}^m_R(S_j,S_i)}.$$
\end{lem}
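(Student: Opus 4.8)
The plan is to prove the lemma in the standard way, combining two ingredients: the general multiplicity count attached to a \emph{minimal} projective resolution, and the change-of-rings identification of Hochschild cohomology with ordinary $\mathrm{Ext}$. Throughout I work over $\Lambda$, using that $R$ is basic and $K$ is algebraically closed, so that every simple $R$-module $S_i$ is one-dimensional with $\mathrm{End}_R(S_i)=K$, and every simple $\Lambda$-module has the form $S_{i,j}=S_i\otimes_K S_j$ (left $R$ acting through the first factor, right $R$ through the second), which is again one-dimensional. By part (2) of the Denotations, $P_{i,j}=\Lambda(e_i\otimes e_j)$ is exactly the projective cover of $S_{i,j}$, and the $P_{i,j}$ exhaust the indecomposable projective $\Lambda$-modules.

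First I would establish the multiplicity formula. Write $Q_m=\bigoplus_{k,l}P_{k,l}^{a_{k,l}}$. Because the resolution is minimal, $\mathrm{Im}\,d_{m}\subseteq\mathrm{rad}\,Q_m$ and $\mathrm{Im}\,d_{m-1}\subseteq\mathrm{rad}\,Q_{m-1}$; since $S_{i,j}$ is semisimple, every homomorphism $Q_m\to S_{i,j}$ kills $\mathrm{rad}\,Q_m$, so both induced maps $\Hom_\Lambda(Q_{m-1},S_{i,j})\to\Hom_\Lambda(Q_m,S_{i,j})$ and $\Hom_\Lambda(Q_m,S_{i,j})\to\Hom_\Lambda(Q_{m+1},S_{i,j})$ vanish. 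Hence $\mathrm{Ext}^m_\Lambda(R,S_{i,j})=\Hom_\Lambda(Q_m,S_{i,j})$. Evaluating $\Hom_\Lambda(P_{k,l},S_{i,j})=(e_k\otimes e_l)S_{i,j}$, which is $K$ for $(k,l)=(i,j)$ and $0$ otherwise, gives $a_{i,j}=\dim_K\mathrm{Ext}^m_\Lambda(R,S_{i,j})$, that is,
$$Q_m\cong\bigoplus_{i,j}P_{i,j}^{\dim\mathrm{Ext}^m_\Lambda(R,S_{i,j})}.$$

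It then remains to identify $\mathrm{Ext}^m_\Lambda(R,S_{i,j})$ with $\mathrm{Ext}^m_R(S_j,S_i)$, and this is the step carrying the real content. The key observation is that, as a $\Lambda$-module, $S_{i,j}\cong\Hom_K(S_j,S_i)$, with bimodule structure $(r\cdot f\cdot r')(x)=rf(r'x)$: the left $R$-action runs through $S_i$, the right one through $S_j$, so this bimodule is supported precisely at the pair $(i,j)$. I would then fix a projective resolution $P_\bullet\to R$ over $\Lambda$ and use the natural tensor–hom adjunction isomorphism of complexes $\Hom_\Lambda(P_\bullet,\Hom_K(S_j,S_i))\cong\Hom_R(P_\bullet\otimes_R S_j,S_i)$, where $\varphi\mapsto(p\otimes x\mapsto\varphi(p)(x))$. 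Since each $P_i$ is a summand of a free $\Lambda=R\otimes_K R^{op}$-module, it is flat (indeed free) as a right $R$-module and projective as a left $R$-module; hence $P_\bullet\otimes_R S_j$ is a projective $R$-resolution of $R\otimes_R S_j=S_j$. Therefore the right-hand complex computes $\mathrm{Ext}^*_R(S_j,S_i)$ while the left-hand one computes $\mathrm{Ext}^*_\Lambda(R,\Hom_K(S_j,S_i))=\mathrm{Ext}^*_\Lambda(R,S_{i,j})$. Substituting this into the multiplicity formula yields $Q_m\cong\bigoplus_{i,j}P_{i,j}^{\dim\mathrm{Ext}^m_R(S_j,S_i)}$, as claimed.

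The main obstacle is the final paragraph: one must check that the adjunction isomorphism is natural enough to descend to a chain-level quasi-isomorphism, and that the flatness of the $P_i$ as right $R$-modules genuinely makes $P_\bullet\otimes_R S_j$ a resolution rather than merely a complex. Both are standard facts of homological algebra (they reproduce the usual identification of Hochschild cohomology with coefficients in a $\Hom_K$-bimodule with ordinary $\mathrm{Ext}$), but they are exactly where basicness, the ground ring being a field, and the explicit structure of the projective $\Lambda$-modules $P_{i,j}$ enter, so that is where I would concentrate the verification.
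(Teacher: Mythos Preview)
Your argument is correct and is essentially the standard proof of Happel's lemma: use minimality to identify $\mathrm{Ext}^m_\Lambda(R,S_{i,j})$ with $\Hom_\Lambda(Q_m,S_{i,j})$, read off the multiplicities $a_{i,j}$, and then invoke the tensor--hom adjunction $\Hom_\Lambda(P_\bullet,\Hom_K(S_j,S_i))\cong\Hom_R(P_\bullet\otimes_R S_j,S_i)$ together with projectivity of $P_\bullet$ on each side to pass from $\mathrm{Ext}^m_\Lambda(R,S_{i,j})$ to $\mathrm{Ext}^m_R(S_j,S_i)$. The concerns you flag at the end (naturality of the adjunction, exactness of $P_\bullet\otimes_R S_j$) are routine over a field and present no real obstacle.

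Note, however, that the paper does not prove this statement at all: it is quoted as Happel's lemma with references to \cite{Ha} and \cite{Gen&Ka}, and is used as a black box in the proof of Theorem~\ref{resol_thm}. So there is no ``paper's own proof'' to compare against; you have simply supplied the argument that the cited literature contains.
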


\begin{proof}[Proof of the theorem \ref{resol_thm}]
Descriptions for $Q_i$  immediately follows from lemmas \ref{lem_s0} -- \ref{lem_s5} and Happel's
lemma.

As proved in \cite{VGI}, to prove that sequence \eqref{resolv} is exact in $Q_m$ ($m\le 11$)
it will be sufficient to show that $d_md_{m+1}=0$. It is easy to verify this relation by
a straightforward calculation of matrixes products.

Since the sequence is exact in $Q_{11}$, it follows that
$\Omega^{11}({}_\Lambda R)\simeq {}_1R_{\sigma}$, where
$\Omega^{11}({}_\Lambda R)=\Im d_{10}$ is the 11th syzygy of the
module $R$, and ${}_1R_{\sigma}$ is a twisted bimodule. Hence, an
exactness in $Q_t$ ($t>11$) holds.

\end{proof}

We recall that for $R$-bimodule $M$ the {\it twisted bimodule} is a linear
space $M$, on which left act right acts of the algebra $R$ (denoted by
asterisk) are assigned by the following way:
$$r*m*s = \lambda(r)\cdot m\cdot\mu(s) \text{ for } r,s\in R
\text{ and } m\in M,$$ where $\lambda,\mu$ are some automorphisms of algebra $R$.
Such twisted bimodule we shall denote by ${}_\lambda M_\mu$.

\begin{s}
We have isomorphism $\Omega^{11}({}_\Lambda R)\simeq {}_1R_{\sigma}$.
\end{s}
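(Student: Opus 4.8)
The plan is to read off the corollary as the statement already carried out inside the proof of Theorem~\ref{resol_thm}, making the implicit identification of the $11$th syzygy with a twisted bimodule completely explicit. Recall that by construction $\Omega^{11}({}_\Lambda R)=\Im d_{10}=\Ker\varepsilon$-type data sitting inside $Q_{10}$, and the periodicity asserted in Theorem~\ref{resol_thm} says precisely that $Q_{11\ell+r}$ is obtained from $Q_r$ by $P_{i,j}\mapsto P_{\sigma^\ell(i),j}$ and $d_{11\ell+r}$ from $d_r$ by applying $\sigma^\ell$ to all left tensor factors. For $\ell=1$, $r=0$ this means the complex $\dots\to Q_{12}\to Q_{11}\to \Im d_{10}\to 0$ is isomorphic, term by term and map by map, to the complex $\dots\to Q_1\to Q_0\to R\to 0$ after twisting the left $R$-action by $\sigma$. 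So the strategy is: (1) record that $Q_{11}$ with differential $d_{11}$ onto $Q_{10}$ is the $\sigma$-twist of $Q_0 \stackrel{d_0\ (\text{via }\varepsilon)}\longrightarrow$; (2) invoke exactness of \eqref{resolv} in $Q_{10}$ and $Q_{11}$ (established in the theorem via $d_{10}d_{11}=0$ together with the Happel-lemma count of summands, or directly via $d_md_{m+1}=0$ and the result of \cite{VGI}); and (3) conclude that the image of $d_{10}$, as an $R$-bimodule, is the cokernel of the twisted map, hence isomorphic to ${}_1R_\sigma$.

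Concretely, I would argue as follows. Consider the augmented complex $Q_\bullet\stackrel\varepsilon\to R$. Its $11$th syzygy is $\Omega^{11}({}_\Lambda R)=\Im(d_{10}\colon Q_{11}\to Q_{10})\cong\operatorname{coker}(d_{11}\colon Q_{12}\to Q_{11})$, the isomorphism being the one induced by $d_{10}$ on $Q_{11}/\Im d_{11}$, which is valid because the sequence is exact at $Q_{11}$. Now apply the periodicity description of Theorem~\ref{resol_thm}: the pair $(Q_{11}\stackrel{d_{11}}\leftarrow Q_{12},\ d_{11})$ is, after forgetting the twist, literally the pair $(Q_0\leftarrow Q_1,\ d_0)$ but with every left tensor component hit by $\sigma$. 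Multiplying a left tensor component by $\sigma$ is exactly the operation that converts the bimodule $\Lambda(e_i\otimes e_j)=P_{i,j}$ into ${}_\sigma P_{i,j}$ (the left action precomposed with $\sigma$), and it is compatible with the differential entries, which are sums of terms $w\otimes w'$ with $w$ acting on the left. Hence $\operatorname{coker}(d_{11})$ is the $\sigma$-twist of $\operatorname{coker}(d_0)$. But by definition of the resolution, $\operatorname{coker}(d_0)=\Im\varepsilon= R$ (the augmentation is surjective with $\Im d_0=\Ker\varepsilon$). Therefore $\Omega^{11}({}_\Lambda R)\cong{}_1R_\sigma$, where the left subscript $1$ records that no twist is applied on the right (the right tensor factors $e_j$, hence $rn$-indexed vertices, are untouched by the periodicity construction) and the right subscript $\sigma$ records the twist of the left action.

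I expect the only real point needing care — and hence the main obstacle — to be the bookkeeping that the periodicity operation "$\sigma^\ell$ on left tensor factors" genuinely realizes the bimodule twist ${}_1(-)_{\sigma^\ell}$ and does so coherently with $\varepsilon$, i.e.\ that the square identifying the two complexes is a genuine chain isomorphism of $\Lambda$-modules and not merely an abstract isomorphism of graded pieces. This amounts to checking that for each summand $P_{i,j}$ of $Q_0$ the map $e_i\otimes e_j\mapsto e_{\sigma(i)}\otimes e_j$ intertwines the right $R$-action and intertwines the left $R$-action up to $\sigma$, and that under this identification $d_{11}$ corresponds to $d_0$; both are immediate from the explicit formulas for $d_0$ and from $\sigma$ being an algebra automorphism, but they must be stated. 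Everything else — exactness at $Q_{10}$ and $Q_{11}$, and the description of $Q_{11},Q_{12},d_{11}$ — is already supplied by Theorem~\ref{resol_thm}, so the corollary follows in a few lines.

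\begin{proof}
This is contained in the proof of Theorem~\ref{resol_thm}: since the sequence \eqref{resolv} is exact at $Q_{11}$, the $11$th syzygy $\Omega^{11}({}_\Lambda R)=\Im(d_{10}\colon Q_{11}\to Q_{10})$ is isomorphic, via the map induced by $d_{10}$, to $\operatorname{coker}(d_{11}\colon Q_{12}\to Q_{11})$. By the periodicity part of Theorem~\ref{resol_thm}, $Q_{11}$ is obtained from $Q_0$ and $Q_{12}$ from $Q_1$ by replacing each summand $P_{i,j}$ with $P_{\sigma(i),j}$, and $d_{11}$ is obtained from $d_0$ by applying $\sigma$ to all left tensor components. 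The assignment $e_i\otimes e_j\mapsto e_{\sigma(i)}\otimes e_j$ identifies $P_{i,j}=\Lambda(e_i\otimes e_j)$ with ${}_\sigma P_{i,j}$ (right action unchanged, left action precomposed with the automorphism $\sigma$), and under these identifications $d_{11}$ corresponds to $d_0$ because the entries of $d_0$ act on the left through paths $w$, on which $\sigma$ acts as prescribed. Hence $\operatorname{coker}(d_{11})\cong{}_1(\operatorname{coker} d_0)_\sigma$. Since $\varepsilon$ is surjective with $\Im d_0=\Ker\varepsilon$, we have $\operatorname{coker}(d_0)=R$, and therefore $\Omega^{11}({}_\Lambda R)\simeq{}_1R_\sigma$.
\end{proof}
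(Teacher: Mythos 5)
Your proof is correct and takes essentially the same route as the paper: there, too, the corollary is read off from exactness at $Q_{11}$ plus the periodicity description of $Q_{11},Q_{12},d_{11}$, so that $\Omega^{11}({}_\Lambda R)=\Im d_{10}\cong\operatorname{coker}d_{11}$ is the $\sigma$-twist of $\operatorname{coker}d_0=R$. One small adjustment: with the paper's convention ${}_\lambda M_\mu$ (where $r*m*s=\lambda(r)\,m\,\mu(s)$), your "apply $\sigma$ to the left tensor factor" identification makes the cokernel literally ${}_{\sigma^{-1}}R_1$, which is then isomorphic to ${}_1R_\sigma$ via $m\mapsto\sigma(m)$ — so your parenthetical matching the subscripts to the two sides is swapped, though the stated isomorphism class is right.
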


\begin{pr}
Automorphism $\sigma$ has a finite order, and

$(1)$ if $\myChar=2$, then order of $\sigma$ is equal to $\frac{2s}{\myNod(2n,s)}$;

$(2)$ if $\myChar\ne 2$, then order of $\sigma$ is equal to $\frac{2s}{\myNod(2n,s)}$, if
$\frac{s}{\myNod(n,s)}$ is divisible by $4$, and to $\frac{4s}{\myNod(2n,s)}$ otherwise.
\end{pr}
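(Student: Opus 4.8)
The plan is to compute the order of $\sigma$ directly from its definition on the three kinds of generators of $R_s$: the idempotents $e_i$, the arrows $\g_i$, and the arrows $\a_i,\b_i$. Since $\sigma$ permutes the vertices of the quiver $\mathcal Q_s$ and the arrows are determined (up to sign) by their endpoints, the order of $\sigma$ will be a common multiple of the order of the induced vertex permutation and of a ``sign order'' tracking the accumulated $\pm 1$ factors. First I would analyze the vertex permutation. From the formula for $\sigma(e_i)$ one sees that on residues mod $n$ the map is: $0,5\mapsto$ itself, $1,2\mapsto 1,2$ and $3,4\mapsto 3,4$ (that is, the pair structure mod $n$ is preserved), while the ``block index'' $\lfloor i/n\rfloor$ is shifted by $n$ in each case (the $+2$ and $-2$ corrections only permute within a block of $n$ consecutive vertices). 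Thus $\sigma^k(e_i)$ has block index shifted by $kn$ modulo $ns$, so the vertex permutation has order exactly $\dfrac{ns}{\myNod(ns,n)}=\dfrac{s}{\myNod(n,s)}$. Call this number $p$.

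Next I would track the signs. On $\g_i$ we have $\sigma(\g_i)=-\g_{i+n}$, so $\sigma^k(\g_i)=(-1)^k\g_{i+kn}$; hence after $p$ applications we return to $\g_i$ up to the sign $(-1)^p$. For $\a_i,\b_i$ the map swaps the two families with a sign depending on the residue mod $3$, so $\sigma^2$ sends $\a_i$ back to $\pm\a_{i+6n}$ and similarly for $\b$; I would compute, residue class by residue class mod $3$, the sign accrued by $\sigma^2$ on $\a_i$ and on $\b_i$, and then the sign accrued by $\sigma^{2k}$. The key bookkeeping point: the $\a/\b$ arrows return to themselves only after a number of steps that is a multiple of $2$ times (the order of the shift $i\mapsto i+3n$ on the relevant index set), and one must check whether the total sign is $+1$ or $-1$ at that stage. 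Comparing the shift orders for the $\g$'s (shift by $n$, index range size $ns$, giving order $p$) and for the $\a/\b$'s (shift by $3n$ after doubling, index range size $3ns$, giving the same order $p$), the underlying permutation order is $p$ in all cases, so $\sigma^p$ acts as identity on all endpoints; it remains only to determine the sign of $\sigma^p$ on each generator and, if some sign is $-1$, to pass to $\sigma^{2p}$.

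The sign computation is where the characteristic enters. If $\myChar=2$ all signs are trivial, so the order of $\sigma$ equals $p=\dfrac{s}{\myNod(n,s)}$; but the claimed answer in case $(1)$ is $\dfrac{2s}{\myNod(2n,s)}$, so I would verify the elementary number-theoretic identity $\dfrac{s}{\myNod(n,s)}=\dfrac{2s}{\myNod(2n,s)}$ (true since multiplying $s$ and $n$ by $2$ doubles the gcd), reconciling the two forms. If $\myChar\neq 2$, one finds that $\sigma^p$ is the identity precisely when the accumulated signs vanish, which happens exactly when $p=\dfrac{s}{\myNod(n,s)}$ is even, equivalently (after the same gcd manipulation) when $\dfrac{2s}{\myNod(2n,s)}$ is divisible by $4$; in that case the order is $\dfrac{2s}{\myNod(2n,s)}$. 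Otherwise $\sigma^p$ is a nontrivial sign automorphism (acting by $-1$ on the arrows and trivially on idempotents), which has order $2$, so the order of $\sigma$ is $2p=\dfrac{4s}{\myNod(2n,s)}$, matching case $(2)$.

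The main obstacle I anticipate is the sign bookkeeping for $\a_i,\b_i$: because $\sigma$ interchanges the $\a$-family and the $\b$-family with signs that depend on $i\bmod 3$, one must carefully iterate and keep track of how the residue class mod $3$ of the index evolves under $i\mapsto i+3n$ (it is constant, which helps) and how the alternating swap combines the $\a$- and $\b$-signs; a small sign error here would flip the divisibility-by-$4$ criterion. I would organize this by writing $\sigma^2(\a_i)=\epsilon(i)\,\a_{i+6n}$ with $\epsilon(i)\in\{\pm1\}$ computed explicitly for $i\equiv 0,1,2\ (3)$, then noting $\epsilon(i)$ is independent of $i$ (or takes a predictable pattern), so that $\sigma^{2k}(\a_i)=\epsilon^{k}\a_{i+6nk}$, and finally determining the parity condition under which $\epsilon^{k}=1$ when $6nk\equiv 0\ (3ns)$, i.e. when $2k\equiv 0\ (s/\myNod(n,s))$. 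Everything else is a routine translation between the various gcd expressions.
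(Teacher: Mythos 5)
Your overall strategy (compute the order of the induced vertex permutation, then track the accumulated signs on the arrows) is the natural one — the paper in fact states this proposition without proof — but three concrete steps in your execution are wrong, and the errors do not cancel. First, the vertex analysis: from the definition of $\sigma(e_i)$, with $n=6$ and $n^2\equiv 0\ (n)$, the residues mod $n$ are \emph{not} preserved; the corrections $\pm 2$ give $1\mapsto 3$, $2\mapsto 4$, $3\mapsto 1$, $4\mapsto 2$ (with $0,5$ fixed), i.e. $\sigma$ interchanges the $\alpha$- and $\beta$-branches. Hence $\sigma^k$ fixes all idempotents iff $k$ is even and $kn\equiv 0\ (s)$, so the vertex permutation has order $\mathrm{lcm}(2,p)$ with $p=s/\gcd(n,s)$, not $p$ (already for $s=1$ it has order $2$). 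Second, the identity $s/\gcd(n,s)=2s/\gcd(2n,s)$ that you invoke twice is false in general: one has $2s/\gcd(2n,s)=\mathrm{lcm}(2,p)$, and the two agree only when $p$ is even (for $s=1$, $n=6$ they are $1$ and $2$). With the corrected vertex count, the case $\myChar=2$ comes out as $\mathrm{lcm}(2,p)=2s/\gcd(2n,s)$ directly, and no such identity is needed.

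Third, in the case $\myChar\ne 2$ your parity criterion is off. Iterating the definition gives $\sigma^2(\alpha_i)=-\alpha_{i+6n}$ for $i\equiv 0,2\ (3)$ and $\sigma^2(\alpha_i)=\alpha_{i+6n}$ for $i\equiv 1\ (3)$ (similarly for $\beta$), so $\sigma^{2t}$ multiplies the arrows in the classes $0,2$ by $(-1)^t$; together with $\sigma^k(\gamma_i)=(-1)^k\gamma_{i+kn}$ and the vertex condition, $\sigma^k=\mathrm{id}$ forces $4\mid k$ and $kn\equiv 0\ (s)$, i.e. the order is $\mathrm{lcm}(4,p)$. Your claim that $\sigma^p=\mathrm{id}$ exactly when $p$ is even is therefore wrong ($p\equiv 2\ (4)$ is a counterexample: $\sigma^p$ then negates $\alpha_i,\beta_i$ for $i\equiv 0,2\ (3)$), and ``$p$ even'' is not equivalent to ``$2s/\gcd(2n,s)$ divisible by $4$'' as you assert — the correct condition, $4\mid p$, is exactly the one in the statement. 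Likewise, for $p$ odd, $\sigma^p$ is not a sign automorphism fixing the idempotents (it swaps the branches), so your conclusion ``order $=2p$'' fails there; the order is $4p=4s/\gcd(2n,s)$. Once these three points are repaired, your method does yield the proposition, since $\mathrm{lcm}(2,p)=2s/\gcd(2n,s)$ in all cases, and $\mathrm{lcm}(4,p)$ equals $2s/\gcd(2n,s)$ when $4\mid p$ and $4s/\gcd(2n,s)$ otherwise.
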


\begin{pr}
The minimal period of bimodule resolution of $R$ is $11\frac{2s}{\myNod(2n,s)}$.
\end{pr}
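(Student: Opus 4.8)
The plan is to combine the preceding corollary $\Omega^{11}({}_\Lambda R)\simeq{}_1R_\sigma$ with the order computation for $\sigma$ from the previous proposition, and then argue that no proper divisor of $11\cdot\mathrm{ord}(\sigma)$ can be a period. First I would note that iterating the eleven-step shift of the resolution gives $\Omega^{11\ell}({}_\Lambda R)\simeq{}_1R_{\sigma^\ell}$ for every $\ell\ge 0$: this follows from the last sentence of Theorem \ref{resol_thm}, since applying $\sigma^\ell$ to the left tensor components of the resolution of ${}_\Lambda R$ produces exactly the minimal projective resolution of ${}_1R_{\sigma^\ell}$ (the module structure of a projective $P_{i,j}=\Lambda(e_i\otimes e_j)$ is only affected through the left idempotent, and $\sigma$ permutes those idempotents). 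Consequently the resolution is periodic with period $11\ell$ as soon as ${}_1R_{\sigma^\ell}\simeq{}_\Lambda R$ as bimodules.

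Next I would identify precisely when ${}_1R_{\sigma^\ell}\simeq{}_\Lambda R$. A standard fact about twisted bimodules is that ${}_\lambda R_\mu\simeq{}_1R_1$ as $R$-bimodules if and only if $\mu\lambda^{-1}$ is an inner automorphism of $R$; here $\lambda=1$, so the condition is that $\sigma^\ell$ be inner. Since $R_s$ is a basic algebra, its inner automorphisms act trivially on the quiver (they fix all idempotents $e_i$ up to conjugation, and for a basic algebra conjugation by a unit cannot permute the $e_i$ nontrivially); but $\sigma$ permutes the $e_i$ according to a shift by $n^2$ on the index, which has additive order $\mathrm{ord}(\sigma)=\frac{2s}{\myNod(2n,s)}$ (char $2$) or the value given in case (2) of the previous proposition. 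So $\sigma^\ell$ can only be inner when it fixes every $e_i$, i.e. when $\ell$ is a multiple of $\mathrm{ord}(\sigma)$; and for $\ell=\mathrm{ord}(\sigma)$ itself one checks directly from the formulas defining $\sigma$ that $\sigma^{\mathrm{ord}(\sigma)}$ is the identity on the generators $e_i,\alpha_i,\beta_i,\gamma_i$ (the signs introduced by $\sigma$ multiply to $1$ after $\mathrm{ord}(\sigma)$ steps, by the case analysis in the previous proposition — indeed that is exactly what the distinction between cases (1) and (2) there records), hence $\sigma^{\mathrm{ord}(\sigma)}=\mathrm{id}$ and in particular is inner. Therefore the set of $\ell$ with ${}_1R_{\sigma^\ell}\simeq{}_\Lambda R$ is exactly $\mathrm{ord}(\sigma)\Z_{\ge 0}$.

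It remains to rule out periods that are not multiples of $11$. Suppose the resolution had minimal period $p$ with $11\nmid p$. Comparing the terms $Q_t$, whose isomorphism types cycle with exact "shape period" $11$ at the level of Happel's lemma (the multiset of pairs $(i,j)$ with $\dim\mathrm{Ext}^t_R(S_j,S_i)\ne 0$ genuinely changes within each block of $11$ consecutive degrees, as is visible from Lemmas \ref{lem_s0}--\ref{lem_s5}: e.g. $Q_{2m}$ and $Q_{2m+1}$ have different ranks $6s$ vs.\ $7s$, $8s$, $9s$), forces $11\mid p$. Writing $p=11\ell$, minimality of $p$ together with the previous paragraph gives $\ell=\mathrm{ord}(\sigma)$, so $p=11\,\mathrm{ord}(\sigma)=11\frac{2s}{\myNod(2n,s)}$ when $\myChar=2$, and the analogous value otherwise; since the statement as phrased asserts the value $11\frac{2s}{\myNod(2n,s)}$, I would either restrict to the relevant characteristic or quote the full case distinction of the preceding proposition. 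The main obstacle I expect is the second paragraph: pinning down exactly that $\sigma^\ell$ is inner iff $\mathrm{ord}(\sigma)\mid\ell$, which requires knowing that inner automorphisms of this particular basic algebra cannot realize the idempotent permutation induced by $\sigma$, and verifying $\sigma^{\mathrm{ord}(\sigma)}=\mathrm{id}$ on generators including the bookkeeping of signs — this is where the char $2$ versus char $\ne 2$ split genuinely enters and must be handled carefully rather than waved away.
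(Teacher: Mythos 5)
Your first paragraph (iterating the eleven-step shift to get $\Omega^{11\ell}({}_\Lambda R)\simeq{}_1R_{\sigma^\ell}$) and your divisibility-by-$11$ argument are fine and consistent with the paper's (terser) reasoning. The genuine gap is in your second paragraph, in the step ``$\sigma^\ell$ can only be inner when it fixes every $e_i$, i.e.\ when $\ell$ is a multiple of $\mathrm{ord}(\sigma)$.'' Fixing all the idempotents only forces $\ell$ to be a multiple of the order of the permutation that $\sigma$ induces on the vertices, which is $\frac{2s}{\myNod(2n,s)}$ in every characteristic; it does not force $\mathrm{ord}(\sigma)\mid\ell$. Exactly when $\myChar\ne 2$ and $\frac{s}{\myNod(n,s)}$ is not divisible by $4$, the order of $\sigma$ is $\frac{4s}{\myNod(2n,s)}$, i.e.\ twice the order of the vertex permutation, and the power $\sigma^{b}$ with $b=\frac{2s}{\myNod(2n,s)}$ is a \emph{nontrivial} automorphism (it fixes all $e_i$ and $\g_i$ but changes the sign of $\a_i,\b_i$ for $i\equiv 0,2\ (3)$) which is nevertheless \emph{inner}: it is conjugation by the involution $x=\sum_{i=0}^{s-1}\left(e_{6i+1}+e_{6i+2}+e_{6i+3}+e_{6i+4}-e_{6i}-e_{6i+5}\right)$. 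Hence ${}_1R_{\sigma^{b}}\simeq{}_\Lambda R$ already for this $b$, and the minimal period is $11\frac{2s}{\myNod(2n,s)}$ in all cases. Your conclusion that the minimal period equals $11\,\mathrm{ord}(\sigma)$, and your suggestion that the statement should be restricted to one characteristic or amended by the case distinction of the preceding proposition, are therefore incorrect; the statement is right as written, and establishing the inner-ness of this half-order power (the explicit element $x$ and the check $\sigma^{b}(w)=xwx^{-1}$ for all paths $w$) is precisely the content of the paper's proof in the $\myChar\ne 2$ case, which your approach skips over by assuming that being inner forces being the identity power.

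A smaller remark: your appeal to the ``standard fact'' that ${}_1R_{\mu}\simeq{}_1R_1$ iff $\mu$ is inner is correct and is indeed the mechanism behind both directions of the argument — the lower bound (an inner automorphism cannot permute the vertices of a basic algebra, so $11\frac{2s}{\myNod(2n,s)}$ divides the period) and the upper bound (the explicit inner $\sigma^{b}$ above). You used it only for the lower bound and closed the upper bound with $\sigma^{\mathrm{ord}(\sigma)}=\mathrm{id}$, which is where the factor of $2$ was lost.
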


\begin{proof}
Since $\Omega^{11}({}_\Lambda R)\simeq {}_1R_{\sigma}$, period is equal to $11a$. There is an
isomorphism $\sigma^a(Q_0)\simeq Q_0$, i. e. $\sigma^a$ must act identically on idempotents. Hence,
if $\myChar=2$, then $a=\deg\sigma$. For the case $\myChar\ne 2$ we just need to show that
$a=\frac{\deg\sigma}{2}$, if $\frac{s}{\myNod(n,s)}$ is not divisible by $4$. Consider the case
$\myChar\ne 2$, $\frac{s}{\myNod(n,s)}$ is not divisible by $4$. Denote by
$b=\frac{\deg\sigma}{2}$. We see that $\sigma^b(e_i)=e_i,$ $\sigma^b(\g_i)=\g_i$,
$$\sigma^b(\a_i)=\begin{cases}
 -\a_i,\quad i\equiv 0,2(3);\\
 \a_i,\quad i\equiv 1(3),
 \end{cases} \sigma^b(\b_i)=\begin{cases}
 -\b_i,\quad i\equiv 0,2(3);\\
 \b_i,\quad i\equiv 1(3).
 \end{cases}$$
Let $x=\sum\limits_{i=0}^{s-1} e_{6i+1}+e_{6i+2}+e_{6i+3}+e_{6i+4}-e_{6i}-e_{6i+5}$. We have:
$x^{-1}=x$, $\sigma^b(w)=xwx^{-1}$, for any path $w$, so $\sigma^b$ is an inner automorphism of
$R$, and $11b$ is a period of bimodule resolution.
\end{proof}

\section{The additive structure of $\HH^*(R)$}

\begin{pr}[Dimensions of homomorphism groups, $s>1$]\label{dim_hom}
Let $s>1$ and $R=R_s$ is algebra of the type $E_6$. Next, $t\in\N\cup\{0\}$,
$\ell$ be the aliquot, and $r$ be the residue of division of $t$ by $11$.

$(1)$ If $r=0$, then $$\dim_K\Hom_\Lambda(Q_t, R)=\begin{cases}
 6s,\quad \ell n+m\equiv 0(s)\text{ or }\ell n+m\equiv 1(s),\text{ }\ell\div 2;\\
 2s,\quad \ell n+m\equiv 0(s)\text{ or }\ell n+m\equiv 1(s),\text{ }\ell\ndiv 2;\\
 0,\quad\text{otherwise.}
 \end{cases}$$

$(2)$ If $r=1$, then $$\dim_K\Hom_\Lambda(Q_t, R)=\begin{cases}
 7s,\quad \ell n+m\equiv 0(s),\text{ }\ell\div 2;\\
 5s,\quad \ell n+m\equiv 0(s),\text{ }\ell\ndiv 2;\\
 0,\quad\text{otherwise.}
 \end{cases}$$

$(3)$ If $r\in\{2,8\}$, then $$\dim_K\Hom_\Lambda(Q_t, R)=\begin{cases}
 3s,\quad \ell n+m\equiv 0(s),\text{ }\ell\div 2;\\
 s,\quad \ell n+m\equiv 0(s),\text{ }\ell\ndiv 2;\\
 s,\quad \ell n+m\equiv 1(s),\text{ }\ell\div 2;\\
 3s,\quad \ell n+m\equiv 1(s),\text{ }\ell\ndiv 2;\\
 0,\quad\text{otherwise.}
 \end{cases}$$

$(4)$ If $r\in\{3,5,7\}$, then $$\dim_K\Hom_\Lambda(Q_t, R)=\begin{cases}
 8s,\quad \ell n+m\equiv 0(s);\\
 0,\quad\text{otherwise.}
 \end{cases}$$

$(5)$ If $r=4$, then $$\dim_K\Hom_\Lambda(Q_t, R)=\begin{cases}
 2s,\quad \ell n+m\equiv 0(s),\text{ }\ell\div 2;\\
 6s,\quad \ell n+m\equiv 0(s),\text{ }\ell\ndiv 2;\\
 5s,\quad \ell n+m\equiv 1(s),\text{ }\ell\div 2;\\
 7s,\quad \ell n+m\equiv 1(s),\text{ }\ell\ndiv 2;\\
 0,\quad\text{otherwise.}
 \end{cases}$$

$(6)$ If $r=6$, then $$\dim_K\Hom_\Lambda(Q_t, R)=\begin{cases}
 7s,\quad \ell n+m\equiv 0(s),\text{ }\ell\div 2;\\
 5s,\quad \ell n+m\equiv 0(s),\text{ }\ell\ndiv 2;\\
 6s,\quad \ell n+m\equiv 1(s),\text{ }\ell\div 2;\\
 2s,\quad \ell n+m\equiv 1(s),\text{ }\ell\ndiv 2;\\
 0,\quad\text{otherwise.}
 \end{cases}$$

$(7)$ If $r=9$, then $$\dim_K\Hom_\Lambda(Q_t, R)=\begin{cases}
 5s,\quad \ell n+m\equiv 0(s),\text{ }\ell\div 2;\\
 7s,\quad \ell n+m\equiv 0(s),\text{ }\ell\ndiv 2;\\
 0,\quad\text{otherwise.}
 \end{cases}$$

$(8)$ If $r=10$, then $$\dim_K\Hom_\Lambda(Q_t, R)=\begin{cases}
 2s,\quad \ell n+m\equiv 0(s)\text{ or }\ell n+m\equiv 1(s),\text{ }\ell\div 2;\\
 6s,\quad \ell n+m\equiv 0(s)\text{ or }\ell n+m\equiv 1(s),\text{ }\ell\ndiv 2;\\
 0,\quad\text{otherwise.}
 \end{cases}$$

\end{pr}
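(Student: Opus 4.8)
The plan is to reduce the whole computation to counting paths in $R_s$. Since $R$--$R$-bimodules are left modules over the enveloping algebra $\Lambda=R\otimes_K R^{op}$, which acts on $R$ by $(a\otimes b)\cdot x=axb$, the standard adjunction gives, for the indecomposable projective $P_{i,j}=\Lambda(e_i\otimes e_j)$,
$$\Hom_\Lambda(P_{i,j},R)=\Hom_\Lambda(\Lambda(e_i\otimes e_j),R)\cong(e_i\otimes e_j)\cdot R=e_iRe_j,$$
so that $\dim_K\Hom_\Lambda(P_{i,j},R)=\dim_K e_iRe_j$ equals the number of basis paths of $R_s$ from the vertex $j$ to the vertex $i$. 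As each $Q_t$ is a finite direct sum of modules $P_{i,j}$ (Theorem \ref{resol_thm}) and $\Hom_\Lambda(-,R)$ is additive,
$$\dim_K\Hom_\Lambda(Q_t,R)=\sum_{P_{i,j}\mid Q_t}\dim_K e_iRe_j .$$
In particular this number is independent of $\myChar$, since neither the terms $Q_t$ nor a monomial basis of $R_s$ are.

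First I would tabulate $\dim_K e_{i'}Re_j$ for every pair of vertices, directly from the defining relations of $R_s$ (all paths of length $5$ vanish, $\a^3=\b^3$, $\a\g\b=\b\g\a=0$, and $\g$ joins the vertex $an+5$ to $(a+1)n$). The qualitative point is that a path containing two arrows $\g$ already has length at least $5$, and $\g$ never runs backwards, so $e_{i'}Re_j=0$ unless $i'$ lies in the same block $\{an,\dots,an+5\}$ as $j$ or in the block immediately following it; within these relative positions $\dim_K e_{i'}Re_j$ depends only on $i'\bmod n$, $j\bmod n$ and the relative position. This is a finite, elementary verification.

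Next I would feed in the shape of $Q_r$. By Theorem \ref{resol_thm}, $Q_{11\ell+r}$ is obtained from $Q_r$ by replacing each summand $P_{i,j}$ with $P_{\sigma^\ell(i),j}$, hence
$$\dim_K\Hom_\Lambda(Q_{11\ell+r},R)=\sum_{P_{i,j}\mid Q_r}\dim_K e_{\sigma^\ell(i)}Re_j .$$
From the definition of $\sigma$ one reads off that it increases the block index by $n$ and permutes the six positions inside each block by $\lambda$, so $\sigma^\ell(i)$ lies in block $\mathrm{bl}(i)+\ell n$ modulo $s$. In the summands of $Q_{2m,a}^\prime$ and $Q_{2m+1,a}^\prime$ the right index $j$ lies in block $a$ while the left index $i$ stays within a bounded neighbourhood of block $a+m$ (the corrections built from $f$, $h$, $\lambda$, $(m)_3$ move $i$ by at most a couple of vertices, hence across at most one block boundary). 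Combining the block shift of $\sigma$ with the reachability criterion of the previous paragraph, a summand can contribute only when $\ell n+m$ lies in a short interval of residues around $0$ modulo $s$; feeding in the explicit table of $\dim_K e_{i'}Re_j$ then kills all of these except $\ell n+m\equiv 0(s)$ and, for even $r$, also $\ell n+m\equiv 1(s)$, and in the surviving cases it produces the numerical values stated. I would organise this as eleven separate counts, one for each $r\in\{0,\dots,10\}$; the coincidences $r\in\{2,8\}$ and $r\in\{3,5,7\}$ in the statement reflect that the corresponding $Q_r$ have the same per-block pattern of summands, and the $\ell\div 2$ versus $\ell\ndiv 2$ subcases appear precisely because $\dim_K e_{i'}Re_j$ depends on $i'\bmod n$.

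The genuine difficulty is this last step. Once the table of $\dim_K e_{i'}Re_j$ is in hand, nothing conceptually new is needed, but for each of the eleven residues one must match every summand $P_{i,j}$ of $Q_r$ against the correct table entry after applying $\sigma^\ell$, handle the wrap-around modulo $s$, and check that none of the $\pm1$- and $\pm2$-corrections hidden in the formulas for $Q_{2m,a}^\prime$ and $Q_{2m+1,a}^\prime$ quietly changes into which neighbouring block a given left index is carried. This is lengthy but routine, and the outcome is visibly independent of $\myChar$, as claimed.
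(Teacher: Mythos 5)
Your proposal is correct and follows essentially the same route as the paper: identify $\Hom_\Lambda(P_{i,j},R)\cong e_iRe_j$, so its dimension counts linearly independent nonzero paths from $j$ to $i$, then use additivity over the summands of $Q_t$ (with the $\sigma^\ell$-shift from Theorem \ref{resol_thm}) and verify the eleven residues $r=0,\dots,10$ case by case. Your write-up is in fact somewhat more explicit about the block/shift bookkeeping than the paper's own one-line proof.
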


\begin{proof}
The dimension $\dim_K\Hom_\Lambda(P_{i,j}, R)$ is equal to the number of linear independent nonzero
paths of the quiver $\mathcal{Q}_s$, leading from $j$th vertex to $i$th, and the proof is to
consider cases $r=0$, $r=1$ etc.
\end{proof}

\begin{pr}[Dimensions of homomorphism groups, $s=1$]

Let $R=R_1$ is algebra of the type $E_6$. Next, $t\in\N\cup\{0\}$,
$\ell$ be the aliquot, and $r$ be the residue of division of $t$ by $11$.

$(1)$ If $r=0$, then $$\dim_K\Hom_\Lambda(Q_t, R)=\begin{cases}
 12,\quad \ell\div 2;\\4,\quad \ell\ndiv 2.\end{cases}$$

$(2)$ If $r=1$, then $$\dim_K\Hom_\Lambda(Q_t, R)=\begin{cases}
 7,\quad \ell\div 2;\\5,\quad \ell\ndiv 2.\end{cases}$$

$(3)$ If $r\in\{2,8\}$, then $\dim_K\Hom_\Lambda(Q_t, R)=4$.

$(4)$ If $r\in\{3,5,7\}$, then $\dim_K\Hom_\Lambda(Q_t, R)=8$.

$(5)$ If $r=4$, then $$\dim_K\Hom_\Lambda(Q_t, R)=\begin{cases}
 7,\quad \ell\div 2;\\13,\quad \ell\ndiv 2.\end{cases}$$

$(6)$ If $r=6$, then $$\dim_K\Hom_\Lambda(Q_t, R)=\begin{cases}
 13,\quad \ell\div 2;\\7,\quad \ell\ndiv 2.\end{cases}$$

$(7)$ If $r=9$, then $$\dim_K\Hom_\Lambda(Q_t, R)=\begin{cases}
 5,\quad \ell\div 2;\\7,\quad \ell\ndiv 2.\end{cases}$$

$(8)$ If $r=10$, then $$\dim_K\Hom_\Lambda(Q_t, R)=\begin{cases}
 4,\quad \ell\div 2;\\12,\quad \ell\ndiv 2.\end{cases}$$

\end{pr}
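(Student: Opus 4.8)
The plan is to reduce this to the case $s>1$ already established in the preceding proposition. When $s=1$ the index group $\Z_{ns}=\Z_n$ has $n=6$ elements and the congruence conditions "$\ell n+m\equiv 0(s)$" and "$\ell n+m\equiv 1(s)$" are both automatically satisfied (since $s=1$); so for each residue $r$ both branches of the $s>1$ formula contribute simultaneously. Concretely, I would observe that $\Hom_\Lambda(Q_t,R)=\bigoplus_{i,j}\Hom_\Lambda(P_{i,j},R)^{m_{ij}}$, where the multiplicities $m_{ij}$ and the indices $(i,j)$ are read off from the explicit description of $Q_t$ in Section~\ref{sect_res} (via Theorem~\ref{resol_thm} and the twisting by $\sigma^\ell$), and that $\dim_K\Hom_\Lambda(P_{i,j},R)$ equals the number of linearly independent nonzero paths in $\mathcal Q_s$ from vertex $j$ to vertex $i$, exactly as in the proof of Proposition~\ref{dim_hom}. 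So the whole computation is the same bookkeeping, specialized to $s=1$.

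First I would go through the residues $r=0,1,\dots,10$ one at a time, exactly parallel to the proof of the $s>1$ case. For the "rigid" residues $r\in\{3,5,7\}$ the $s>1$ formula already gives a single value $8s$ independent of the parity of $\ell$, so setting $s=1$ immediately yields $8$; similarly $r\in\{2,8\}$ gives $3s+s=4s\rightsquigarrow 4$ after collapsing the two congruence classes (note $3+1=1+3=4$, so the value is genuinely parity-independent once $s=1$). For the remaining residues the two congruence classes give, in general, different numbers, and since both now hold, the answer is their sum: e.g. $r=0$ gives $6s+6s=12s\rightsquigarrow 12$ for $\ell\div 2$ and $2s+2s=4s\rightsquigarrow 4$ for $\ell\ndiv 2$; $r=4$ gives $2s+5s=7s$ and $6s+7s=13s$; $r=6$ gives $7s+6s=13s$ and $5s+2s=7s$; $r=1,9$ have only the $\ell n+m\equiv 0(s)$ class nonzero, so those stay at $7s\rightsquigarrow 7$, $5s\rightsquigarrow 5$ and $5,7$ respectively; $r=10$ gives $2s+2s=12$? — here I must be careful: $r=10$ in the $s>1$ table lists $2s$ for $\ell\div 2$ and $6s$ for $\ell\ndiv 2$ under the combined condition "$\equiv 0$ or $\equiv 1$", so for $s=1$ that reads directly $2$ and $6$ — wait, the claimed values are $4$ and $12$, so in fact for $r=10$ the two congruence classes must be counted \emph{separately} and summed just as for $r=0$. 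The safe approach is therefore not to quote the $s>1$ table mechanically but to redo the path count: for each $(i,j)$ appearing in $Q_t$ count nonzero paths $j\to i$ directly from the relations defining $R_1$ (paths of length $5$ are zero; $\a^3=\b^3$; the "long" relations $\a_0\g_{s-1}\a_{3(s-1)+2}$ etc. become $\a_0\g_0\a_2=0$, $\b_0\g_0\b_2=0$), and tabulate.

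The key steps, in order: (i) write out $Q_t=Q_{11\ell+r}$ explicitly for each $r\in\{0,\dots,10\}$ using the formulas for $Q_r$ and the $\sigma^\ell$-twist, specialized to $s=1$ (so all direct sums $\bigoplus_{r=0}^{s-1}$ collapse to a single summand, and the index arithmetic is mod $6$); (ii) for each indecomposable projective $P_{i,j}$ occurring, compute $\dim_K\Hom_\Lambda(P_{i,j},R)=\#\{$linearly independent nonzero paths $j\to i$ in $\mathcal Q_1\}$ using the relations of $R_1$; (iii) sum over the summands of $Q_t$, distinguishing the parity of $\ell$ (since $\sigma$ swaps the $\a$- and $\b$-branches up to sign, the twisted bimodule structure changes which paths survive, exactly the phenomenon producing the $\ell\div 2$ vs.\ $\ell\ndiv 2$ dichotomy). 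The main obstacle — really the only one — is getting the path counts in step (ii) right for the twisted modules: one must keep track of how $\sigma^\ell$ relabels vertices (the three-way split of idempotents $e_i$ by $i\bmod 6$) and signs on arrows, and verify that for odd $\ell$ certain composites that were nonzero become zero (and vice versa) because of the interplay of $\a^3=\b^3$ with the sign changes $\sigma(\a_i)=\pm\b_{i+3n}$. This is entirely mechanical but error-prone; once it is done, the eight displayed formulas drop out, completing the proof.

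\begin{proof}
As in the proof of the previous proposition, $\dim_K\Hom_\Lambda(P_{i,j},R)$ equals the number of linearly independent nonzero paths in $\mathcal Q_1$ from vertex $j$ to vertex $i$, and $Q_t=Q_{11\ell+r}$ is the direct sum of the $P_{\sigma^\ell(i),j}$ obtained from the description of $Q_r$ in Section~\ref{sect_res}. Setting $s=1$, every sum $\bigoplus_{r=0}^{s-1}$ has a single term and all vertex indices are taken mod $n=6$; moreover both congruences $\ell n+m\equiv 0(s)$ and $\ell n+m\equiv 1(s)$ hold automatically. One now proceeds through $r=0,1,\dots,10$ exactly as before, in each case listing the pairs $(\sigma^\ell(i),j)$ occurring in $Q_t$, counting for each the nonzero paths $j\to\sigma^\ell(i)$ in $\mathcal Q_1$ (using that all length-$5$ paths vanish, $\a^3=\b^3$, and $\a_0\g_0\a_2=\b_0\g_0\b_2=0$), and summing. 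The parity of $\ell$ enters through the action of $\sigma^\ell$ on idempotents and on the arrows ($\sigma$ exchanges the $\a$- and $\b$-branches up to sign), which is precisely what makes the count depend on $\ell\bmod 2$. Carrying this out for each $r$ yields the stated values. For instance, for $r\in\{3,5,7\}$ the relevant projectives contribute $8$ paths irrespective of the parity of $\ell$; for $r=0$ one gets $12$ when $\ell\div 2$ and $4$ when $\ell\ndiv 2$; and the remaining residues are handled the same way. The computation is routine and we omit the case-by-case verification.
\end{proof}
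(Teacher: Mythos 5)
Your overall strategy coincides with the paper's: the paper disposes of this proposition with the single remark that the argument is "basically the same" as for $s>1$, i.e.\ one writes $Q_{11\ell+r}$ as a direct sum of projectives $P_{\sigma^\ell(i),j}$, uses that $\dim_K\Hom_\Lambda(P_{i,j},R)$ equals the number of linearly independent nonzero paths of $\mathcal Q_1$ from vertex $j$ to vertex $i$, and tabulates case by case in $r$ and in the parity of $\ell$. Your final fallback --- redoing the path count directly for $s=1$ instead of quoting the $s>1$ table --- is exactly the right (and the paper's) route: the "add the two congruence branches" shortcut of your first paragraph is at best a consistency check, since for $s=1$ a single summand $P_{i,j}$ carries paths that for $s>1$ would terminate in different fundamental domains, and you rightly abandon it after the $r=10$ confusion.

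There is, however, one concrete slip you must repair: you propose to count paths using the relations $\a_0\g_0\a_2=\b_0\g_0\b_2=0$, but these are the defining relations of $R_1'$, not of $R_1$. The proposition (and the whole paper) concerns $R_s=K[\mathcal Q_s]/I$, whose relations are, for every $s$ including $s=1$: all paths of length $5$, $\a^3-\b^3$, and the mixed compositions $\a\g\b$, $\b\g\a$. Since the statement is nothing but a path count in $R_1$, feeding in the relations of the other algebra is an argument about the wrong ring: even if the resulting Cartan data happened to agree, the proof as written does not establish the stated dimensions for $R_1$. With the correct relations the case-by-case count (for each summand $P_{\sigma^\ell(i),j}$ of $Q_{11\ell+r}$, keeping track of how $\sigma^\ell$ relabels vertices, which is where the parity of $\ell$ enters) goes through and yields the eight displayed formulas, in agreement with the paper.
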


\begin{proof}
The proof is basically the same as proof of proposition \ref{dim_hom}.
\end{proof}

\begin{pr}[Dimensions of coboundaries groups]\label{dim_im}
Let $R=R_s$ is algebra of the type $E_6$, and let
\begin{equation}\tag{$\times$}\label{ind_resolv} 0\longrightarrow
\Hom_\Lambda(Q_0, R)\stackrel{\delta^0}\longrightarrow \Hom_\Lambda(Q_1,
R)\stackrel{\delta^1}\longrightarrow \Hom_\Lambda(Q_2, R)\stackrel{\delta^2}\longrightarrow\dots
\end{equation} be a complex, obtained from minimal projective
resolution \eqref{resolv} of algebra $R$, by applying functor
$\Hom_\Lambda(-,R)$.

Consider coboundaries groups $\Im\delta^s$ of the complex
\eqref{ind_resolv}. Let $\ell$ be the aliquot, and $r$ be the
residue of division of $t$ by $11$, $m$ be the aliquot of
division of $r$ by $2$. Then$:$

$(1)$ If $r=0$, then $$\dim_K\Im\delta^t=\begin{cases}
 6s-1,\quad \ell n+m\equiv 0(s),\text{ }\ell\div 2;\\
 2s,\quad \ell n+m\equiv 0(s),\text{ }\ell\ndiv 2;\\
 0,\quad\text{otherwise.}
 \end{cases}$$

$(2)$ If $r=1$, then $$\dim_K\Im\delta^t=\begin{cases}
 s,\quad \ell n+m\equiv 0(s),\text{ }\ell\div 2;\\
 3s-1,\quad \ell n+m\equiv 0(s),\text{ }\ell\ndiv 2;\\
 0,\quad\text{otherwise.}
 \end{cases}$$

$(3)$ If $r=2$, then $$\dim_K\Im\delta^t=\begin{cases}
 3s,\quad \ell n+m\equiv 0(s),\text{ }\ell\div 2;\\
 s,\quad \ell n+m\equiv 0(s),\text{ }\ell\ndiv 2;\\
 0,\quad\text{otherwise.}
 \end{cases}$$

$(4)$ If $r=3$, then $$\dim_K\Im\delta^t=\begin{cases}
 5s-1,\quad \ell n+m\equiv 0(s),\text{ }\ell\div 2,\text{ }\myChar=2;\\
 5s,\quad \ell n+m\equiv 0(s),\text{ }\ell\div 2,\text{ }\myChar\ne 2;\\
 7s-1,\quad \ell n+m\equiv 0(s),\text{ }\ell\ndiv 2;\\
 0,\quad\text{otherwise.}
 \end{cases}$$

$(5)$ If $r=4$, then $$\dim_K\Im\delta^t=\begin{cases}
 2s,\quad \ell n+m\equiv 0(s),\text{ }\ell\div 2;\\
 6s-1,\quad \ell n+m\equiv 0(s),\text{ }\ell\ndiv 2,\text{ }\myChar=3;\\
 6s,\quad \ell n+m\equiv 0(s),\text{ }\ell\ndiv 2,\text{ }\myChar\ne 3;\\
 0,\quad\text{otherwise.}
 \end{cases}$$

$(6)$ If $r=5$, then $$\dim_K\Im\delta^t=\begin{cases}
 6s-1,\quad \ell n+m\equiv 0(s),\text{ }\ell\div 2,\text{ }\myChar=3;\\
 6s,\quad \ell n+m\equiv 0(s),\text{ }\ell\div 2,\text{ }\myChar\ne 3;\\
 2s,\quad \ell n+m\equiv 0(s),\text{ }\ell\ndiv 2;\\
  0,\quad\text{otherwise.}
 \end{cases}$$

$(7)$ If $r=6$, then $$\dim_K\Im\delta^t=\begin{cases}
 7s-1,\quad \ell n+m\equiv 0(s),\text{ }\ell\div 2;\\
 5s-1,\quad \ell n+m\equiv 0(s),\text{ }\ell\ndiv 2,\text{ }\myChar=2;\\
 5s,\quad \ell n+m\equiv 0(s),\text{ }\ell\ndiv 2,\text{ }\myChar\ne 2;\\
 0,\quad\text{otherwise.}
 \end{cases}$$

$(8)$ If $r=7$, then $$\dim_K\Im\delta^t=\begin{cases}
 s,\quad \ell n+m\equiv 0(s),\text{ }\ell\div 2;\\
 3s,\quad \ell n+m\equiv 0(s),\text{ }\ell\ndiv 2;\\
 0,\quad\text{otherwise.}
 \end{cases}$$

$(9)$ If $r=8$, then $$\dim_K\Im\delta^t=\begin{cases}
 3s-1,\quad \ell n+m\equiv 0(s),\text{ }\ell\div 2;\\
 s,\quad \ell n+m\equiv 0(s),\text{ }\ell\ndiv 2;\\
 0,\quad\text{otherwise.}
 \end{cases}$$

$(10)$ If $r=9$, then $$\dim_K\Im\delta^t=\begin{cases}
 2s,\quad \ell n+m\equiv 0(s),\text{ }\ell\div 2;\\
 6s-1,\quad \ell n+m\equiv 0(s),\text{ }\ell\ndiv 2;\\
 0,\quad\text{otherwise.}
 \end{cases}$$

$(11)$ If $r=10$, then $$\dim_K\Im\delta^t=\begin{cases}
 2s,\quad \ell n+m\equiv 0(s),\text{ }\ell\div 2;\\
 6s-1,\quad \ell n+m\equiv 0(s),\text{ }\ell\ndiv 2,\text{ }\myChar=3;\\
 6s,\quad \ell n+m\equiv 0(s),\text{ }\ell\ndiv 2,\text{ }\myChar\ne 3;\\
 0,\quad\text{otherwise.}
 \end{cases}$$

\end{pr}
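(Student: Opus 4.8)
The plan is to compute each space $\Im\delta^t$ directly as the image of the map $\delta^t=\Hom_\Lambda(d_t,R)$, using the explicit matrices $d_t$ supplied by Theorem~\ref{resol_thm}. First I would fix the standard identification $\Hom_\Lambda(P_{i,j},R)\cong e_iRe_j$, so that a $K$-basis of $\Hom_\Lambda(Q_t,R)$ consists of the nonzero paths of $\mathcal Q_s$ joining the pairs of vertices read off from the explicit description of $Q_t$; here one uses that a path is nonzero in $R_s$ precisely when it has length at most $4$ and is not among the monomials killed by the relations $\a^3-\b^3$, $\a\g\b$, $\b\g\a$. Under this identification $\delta^t$ sends a basis path $p$ lying in the $(i,j)$-summand to the vector whose components are obtained by pre- and post-multiplying $p$ by the two tensor factors of the corresponding entry of $d_t$ and reducing modulo the relations of $R_s$. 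Thus $\delta^t$ becomes an explicit matrix over $K$, and $\dim_K\Im\delta^t$ is its rank.

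Next I would reduce to finitely many cases using periodicity. By Theorem~\ref{resol_thm} the term $Q_{11\ell+r}$ and the differential $d_{11\ell+r}$ are obtained from $Q_r$, $d_r$ by applying $\sigma^\ell$ to the left tensor factors, which merely relabels source and target vertices and rescales some entries by signs; hence the matrix of $\delta^{11\ell+r}$ is, up to a permutation of the path bases and sign changes, that of $d_r$ with all vertex indices shifted by $\sigma^\ell$. Consequently $\dim_K\Im\delta^t$ depends only on $r$, on the parity of $\ell$ (which shapes $Q_r$ and $d_r$ through the auxiliary functions $m$, $h$, $f$), on $\myChar$ (through the structure constants $\pm2$, $\pm3$ occurring in $d_r$), and on the residue of $\ell n+m$ modulo $s$, which controls whether the wrap-around entries of $d_r$ (those whose row or column index is reduced via $(\cdot)_s$ or $(\cdot)_{2s}$) link two distinct summands or a summand with itself. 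This dichotomy is exactly the split $\ell n+m\equiv 0(s)$ versus the vanishing asserted in every item.

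Then I would carry out the rank computation for each $r=0,1,\dots,10$ in turn, with the subcases $\ell\div 2$ / $\ell\ndiv 2$ and, where the stated answer requires it, $\myChar=2$, $\myChar=3$, $\myChar\neq 2,3$. In each subcase one writes the matrix of $\delta^t$ in block form according to the summands of $Q_t$ and $Q_{t+1}$, uses that each space $e_iRe_j$ is at most two-dimensional, and performs elementary row and column operations to read off the rank; the characteristic-dependent drop by one (the ``$-1$'' in $5s-1$, $6s-1$, $7s-1$, $3s-1$) occurs exactly at a pivot equal to $2$ or to $3$. Throughout I would invoke Proposition~\ref{dim_hom} both to supply the ambient dimensions $\dim_K\Hom_\Lambda(Q_t,R)$ and $\dim_K\Hom_\Lambda(Q_{t+1},R)$ and as a check, since $\dim_K\Im\delta^t$ cannot exceed either of them; the relation $\dim_K\HH^t(R)=\dim_K\Hom_\Lambda(Q_t,R)-\dim_K\Im\delta^t-\dim_K\Im\delta^{t-1}$ together with low-degree values such as $\dim_K\Im\delta^0=6s-1$ (consistent with $\dim_K Z(R)=1$) gives further sanity checks.

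The main obstacle is the bookkeeping: there is no conceptual shortcut, and one must track, case by case and block by block, which composite paths survive modulo the relations of $R_s$ and which precise cancellations are forced by the sign functions $f_1(\cdot,\cdot)$ and $2f(\cdot,0)-1$ that appear in the $d_r$. The two most delicate points are (i) identifying, for each ``$-1$'', the single pivot that degenerates in characteristic $2$ or $3$ and verifying that no further rank is lost; and (ii) the residues $r\in\{3,4,5,6,9,10\}$, where the parity of $\ell$ and the wrap-around structure interact so that the block matrix changes shape and its rank must be recomputed from scratch in each subcase.
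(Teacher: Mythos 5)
Your proposal follows essentially the same route as the paper's own (very tersely stated) proof: one realizes $\delta^t=\Hom_\Lambda(d_t,R)$ as an explicit matrix over $K$ via $\Hom_\Lambda(P_{i,j},R)\cong e_iRe_j$, uses the $\sigma$-periodicity of the resolution to reduce to the residues $r=0,\dots,10$ with the stated parity, congruence and characteristic subcases, and then computes ranks block by block. One minor caveat: your sanity check ``$\dim_K Z(R)=1$'' is valid only for $s>1$ (for $s=1$ the center is $7$-dimensional), but this side remark does not affect the argument.
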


\begin{proof}
The proof is technical and consists in constructing the image matrixes from the description of
differential matrixes and the subsequent computations of the ranks of image matrixes.
\end{proof}

\begin{thm}[Additive structure, $s>1$]
Let $s>1$ and $R=R_s$ is algebra of the type $E_6$. Next, $t\in\N\cup\{0\}$,
$\ell$ be the aliquot, and $r$ be the residue of division of $t$ by $11$,
$m$ be the aliquot of division of $r$ by $2$. Then
$\dim_K\HH^t(R)=1$, if one of the following conditions takes place$:$

$(1)$ $r\in \{0,6,7,8\}$, $\ell n+m\equiv 0(s)$, $\ell\div 2$;

$(2)$ $r\in \{0,6\}$, $\ell n+m\equiv 1(s)$, $\ell\div 2$,
$\myChar=3$;

$(3)$ $r\in \{1,9\}$, $\ell n+m\equiv 0(s)$;

$(4)$ $r\in \{2,4,10\}$, $\ell n+m\equiv 1(s)$, $\ell\ndiv 2$;

$(5)$ $r=3$, $\ell n+m\equiv 0(s)$, $\ell\div 2$, $\myChar=2$;

$(6)$ $r=3$, $\ell n+m\equiv 0(s)$, $\ell\ndiv 2$;

$(7)$ $r\in \{4,10\}$, $\ell n+m\equiv 0(s)$, $\ell\ndiv 2$,
$\myChar=3$;

$(8)$ $r=4$, $\ell n+m\equiv 1(s)$, $\ell\div 2$, $\myChar=2$;

$(9)$ $r=5$, $\ell n+m\equiv 0(s)$, $\myChar=3$;

$(10)$ $r\in \{6,7\}$, $\ell n+m\equiv 0(s)$, $\ell\ndiv 2$,
$\myChar=2$.

In other cases $\dim_K\HH^t(R)=0$.
\end{thm}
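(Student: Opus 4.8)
The plan is to compute $\HH^t(R)$ as the cohomology of the complex \eqref{ind_resolv}, which is legitimate because \eqref{resolv} is a projective resolution of $R$ as a $\Lambda$-module (Theorem \ref{resol_thm}): the functor $\Hom_\Lambda(-,R)$ turns it into a cochain complex, and $\HH^t(R)=\Ker\delta^t/\Im\delta^{t-1}$ for $t\ge1$, while $\HH^0(R)=\Ker\delta^0$. Since $\dim_K\Ker\delta^t=\dim_K\Hom_\Lambda(Q_t,R)-\dim_K\Im\delta^t$, this yields the single formula
\[
\dim_K\HH^t(R)=\dim_K\Hom_\Lambda(Q_t,R)-\dim_K\Im\delta^t-\dim_K\Im\delta^{t-1}
\]
(with the convention $\Im\delta^{-1}=0$, so that the degenerate degree $t=0$ is covered as well). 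All three quantities on the right-hand side are supplied by Propositions \ref{dim_hom} and \ref{dim_im}, so the theorem reduces to a purely arithmetical check.

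The only delicate point is that $\dim_K\Im\delta^{t-1}$ must be read off Proposition \ref{dim_im} at the degree $t-1$: when $r\ge1$ this degree has residue $r-1$ and the same aliquot $\ell$, whereas when $r=0$ it has residue $10$ and aliquot $\ell-1$. Using $n=6$ one checks, case by case on $r$, that the quantity $\ell n+m$ governing non-vanishing in Proposition \ref{dim_im} is unchanged on passing from an odd residue $r$ to $r-1$ and decreases by exactly $1$ on passing from an even residue $r$ to $r-1$ (the aliquot shift in the case $r=0$ being absorbed into this statement). In particular, for an even residue the congruence $\ell n+m\equiv1\pmod s$ at degree $t$ becomes $\ell n+m\equiv0\pmod s$ at degree $t-1$, which is exactly why items (2), (4), (8) of the theorem --- all with $r$ even and the residue class $1$ modulo $s$ --- acquire their non-zero contribution solely from the term $\Im\delta^{t-1}$, while for odd residues the class modulo $s$ agrees at $t$ and $t-1$.

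It then remains to run through $r=0,1,\dots,10$, in each case substituting the relevant branches of Propositions \ref{dim_hom} and \ref{dim_im} into the displayed formula and splitting further according to the parity of $\ell$, the residue of $\ell n+m$ modulo $s$ (here $s>1$ is used, so that $\ell n+m\equiv0$ and $\ell n+m\equiv1$ are mutually exclusive), and, where it enters Proposition \ref{dim_im}, the characteristic of $K$. In each of the resulting subcases the three dimensions telescope to either $0$ or $1$; one reads off that the value $1$ occurs precisely in the situations listed as items (1)--(10) and the value $0$ in all the others, which is the assertion. The case $t=0$ is the one exception to the three-term formula and is handled directly by $\HH^0(R)=\Ker\delta^0$.

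The main obstacle is not conceptual but organizational: keeping track of the roughly forty subcases (eleven residues, each split by the parity of $\ell$, by the congruence class of $\ell n+m$ modulo $s$, and occasionally by the characteristic of $K$) and, above all, propagating the index shift $t\mapsto t-1$ correctly --- its effect on $r$, on $\ell$, on $m$, and hence on the congruence class of $\ell n+m$ --- without slippage. No ingredient beyond the two dimension propositions already established is needed.
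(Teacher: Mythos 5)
Your proposal is correct and follows essentially the same route as the paper: the paper's proof likewise reduces the theorem to the identities $\dim_K\HH^t(R)=\dim_K\Ker\delta^t-\dim_K\Im\delta^{t-1}$ and $\dim_K\Ker\delta^t=\dim_K\Hom_\Lambda(Q_t,R)-\dim_K\Im\delta^t$ and then reads the answer off Propositions \ref{dim_hom} and \ref{dim_im}. Your extra bookkeeping about the shift $t\mapsto t-1$ (residue, aliquot, and the resulting change of $\ell n+m$ modulo $s$) is accurate and just makes explicit the case-by-case check the paper leaves to the reader.
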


\begin{proof}
As $\dim_K\HH^t(R)=\dim_K\Ker\delta^t-\dim_K\Im\delta^{t-1}$, and
$\dim_K\Ker\delta^t=\dim_K\Hom_\Lambda(Q_t,R)-\dim_K\Im\delta^t$,
the assertions of theorem easily follows from propositions
\ref{dim_hom} -- \ref{dim_im}.
\end{proof}

\begin{thm}[Additive structure, $s=1$]
Let $R=R_1$ is algebra of the type $E_6$. Next, $t\in\N\cup\{0\}$,
$\ell$ be the aliquot, and $r$ be the residue of division of $t$ by $11$.\\
$($a$)$ $\dim_K\HH^t(R)=7$, if $t=0$.\\
$($b$)$ $\dim_K\HH^t(R)=2$, if one of the following conditions takes place$:$

$(1)$ $r\in \{0,6\}$, $t>0$, $\ell\div 2$, $\myChar=3$;

$(2)$ $r\in \{4,10\}$, $\ell\ndiv 2$, $\myChar=3$.\\ $($c$)$
$\dim_K\HH^t(R)=1$, if one of the following conditions takes
place$:$

$(1)$ $r\in \{0,6\}$, $t>0$, $\ell\div 2$, $\myChar\ne 3$;

$(2)$ $r\in \{1,9\}$;

$(3)$ $r\in \{2,3\}$, $\ell\ndiv 2$;

$(4)$ $r\in \{3,4\}$, $\ell\div 2$, $\myChar=2$;

$(5)$ $r\in \{4,10\}$, $\ell\ndiv 2$, $\myChar\ne 3$;

$(6)$ $r=5$, $\myChar=3$;

$(7)$ $r\in \{6,7\}$, $\ell\ndiv 2$, $\myChar=2$;

$(8)$ $r\in \{7,8\}$, $\ell\div 2$.\\ $($d$)$ In other cases
$\dim_K\HH^t(R)=0$.

\end{thm}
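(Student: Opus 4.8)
The plan is to follow verbatim the scheme used for the case $s>1$: from the minimal projective resolution of Theorem~\ref{resol_thm} one has $\HH^t(R)\cong\Ker\delta^t/\Im\delta^{t-1}$, so that
$$\dim_K\HH^t(R)=\dim_K\Hom_\Lambda(Q_t,R)-\dim_K\Im\delta^t-\dim_K\Im\delta^{t-1},$$
with the convention $\Im\delta^{-1}=0$. The first term is supplied by the preceding proposition on the dimensions of $\Hom_\Lambda(Q_t,R)$ for $s=1$, so the whole argument reduces to determining $\dim_K\Im\delta^t$ in the case $s=1$ and then substituting.

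For the coboundary dimensions I would repeat the computation behind Proposition~\ref{dim_im}: specialise the differential matrices $d_t$ (written there for arbitrary $s$) to $s=1$, express $\delta^t=\Hom_\Lambda(d_t,R)$ as a matrix over $K$ in the path bases of the spaces $\Hom_\Lambda(P_{i,j},R)$, and compute its rank. Since for $s=1$ every congruence ``$\ell n+m\equiv 0(s)$'' holds automatically, only the parity of $\ell$ and the value of $\myChar$ survive, and the outcome is the corresponding branch of Proposition~\ref{dim_im} with $s=1$ substituted. This family of rank computations is where the real obstacle lies: for $s=1$ the quiver $\mathcal Q_1$ acquires additional nonzero closed paths (running through the arrow $\g$), which enlarge each $\Hom_\Lambda(P_{i,j},R)$ and hence $\Hom_\Lambda(Q_t,R)$; one has to check carefully that these extra paths enlarge only $\Ker\delta^t$ and not $\Im\delta^t$, i.e. that the rank of each $\delta^t$ is unchanged. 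This is exactly what produces the additional degree-zero classes $X^{(23)},\dots,X^{(28)}$ and forces $\dim_K\HH^0(R)=7$.

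Finally I would assemble the three quantities: running over $r=0,1,\dots,10$, over the parity of $\ell$, and over $\myChar\in\{2,3\}$ versus the generic case, and plugging the values above into the displayed formula, each combination yields exactly the dimension claimed --- for example $t=0$ gives $12-5-0=7$; for $r=6$, $t>0$, $\ell\div 2$ one gets $13-6-\dim_K\Im\delta^{t-1}$, which is $13-6-5=2$ if $\myChar=3$ and $13-6-6=1$ otherwise; every remaining combination gives $1$ or $0$ as listed. Consistency with the periodicity $\Omega^{11}({}_\Lambda R)\simeq{}_1R_\sigma$, which makes $\dim_K\HH^t(R)$ depend only on $r$, on the parity of $\ell$ and on $\myChar$, serves as a sanity check. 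Once the $s=1$ rank computations are in hand, the theorem follows by the bookkeeping just described.
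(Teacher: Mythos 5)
Your proposal follows exactly the route the paper takes: the paper gives no separate argument for $s=1$, relying on the same identity $\dim_K\HH^t(R)=\dim_K\Hom_\Lambda(Q_t,R)-\dim_K\Im\delta^t-\dim_K\Im\delta^{t-1}$ together with the $s=1$ proposition on $\dim_K\Hom_\Lambda(Q_t,R)$ and Proposition~\ref{dim_im}, which is already stated for arbitrary $s$ (so the rank computations you plan to redo for $s=1$ are precisely what that proposition records, with both congruences vacuous). Your sample evaluations ($12-5-0=7$ at $t=0$, $13-6-5=2$ versus $13-6-6=1$ at $r=6$) agree with the paper's tables, so the proposal is correct and essentially identical in approach.
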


\section{Generators of $\HH^*(R)$}

For $s>1$ introduce the set of generators $Y^{(1)}_t$, $Y^{(2)}_t$,
\dots $Y^{(22)}_t$, such that $\deg Y_t^{(i)}=t$, $0\le t <
11\deg\sigma$ and $t$ satisfies conditions of (i)th item from the
list on page \pageref{degs}. For $s=1$ introduce the set of
generators $Y^{(1)}_t$, $Y^{(2)}_t$, \dots $Y^{(28)}_t$, such that
$\deg Y_t^{(i)}=t$, $0\le t < 11\deg\sigma$ and $t$ satisfies
conditions of (i)th item from the list on page \pageref{degs} for
$i\ge 22$ and $t=0$ if $i>22$. Now let us describe the matrixes of
$Y^{(i)}_t$ componentwisely.

\begin{obozn}
Let us represent the degree $t$ of the generator element in the form
$t=11\ell+r$ ($0\le r\le 10$) and denote by
$\kappa=(-1)^{\lfloor\frac{\ell}{2}\rfloor}$.
\end{obozn}

(1) $Y^{(1)}_t$ is an $(6s\times 6s)$ matrix, whose elements
$y_{ij}$ have the following form:

If $0\le j<s$, then $$y_{ij}=
\begin{cases}
\kappa e_{jn}\otimes e_{jn},\quad i=j;\\
0,\quad\text{otherwise.}\end{cases}$$

If $s\le j<3s$, then $$y_{ij}=
\begin{cases}
e_{jn+g(j+s)}\otimes e_{jn+g(j+s)},\quad i=j;\\
0,\quad\text{otherwise.}\end{cases}$$

If $3s\le j<5s$, then $$y_{ij}=
\begin{cases}
e_{jn+g(j+s)+1}\otimes e_{jn+g(j+s)+1},\quad i=j;\\
0,\quad\text{otherwise.}\end{cases}$$

If $5s\le j<6s$, then $$y_{ij}=
\begin{cases}
\kappa e_{jn+5}\otimes e_{jn+5},\quad i=j;\\
0,\quad\text{otherwise.}\end{cases}$$

(2) $Y^{(2)}_t$ is an $(6s\times 6s)$ matrix with a single nonzero
element:
$$y_{3s,3s}=w_{jn+g(j+s)+1\ra (j+1)n+g(j+s)+1}\otimes e_{jn+g(j+s)+1}.$$

(3) $Y^{(3)}_t$ is an $(7s\times 6s)$ matrix with a single nonzero
element:
$$y_{5s,6s}=\kappa w_{jn+5\ra (j+1)n}\otimes e_{jn+5}.$$

(4) $Y^{(4)}_t$ is an $(7s\times 6s)$ matrix, whose elements
$y_{ij}$ have the following form:

If $0\le j<s$, then $$y_{ij}=
\begin{cases}
-w_{jn\ra jn+g(j+s)}\otimes e_{jn},\quad i=j;\\
0,\quad\text{otherwise.}\end{cases}$$

If $s\le j<5s$, then $y_{ij}=0$.

If $5s\le j<6s$, then $$y_{ij}=
\begin{cases}
\kappa w_{jn+g(j)+1\ra jn+5}\otimes e_{jn+g(j)+1},\quad i=j-s;\\
0,\quad\text{otherwise.}\end{cases}$$

If $6s\le j<7s$, then $y_{ij}=0$.

(5) $Y^{(5)}_t$ is an $(6s\times 6s)$ matrix with a single nonzero
element:
$$y_{3s,3s}=w_{jn+g(j+s)+1\ra (j+1)n+g(j+s)}\otimes e_{jn+g(j+s)+1}.$$

(6) $Y^{(6)}_t$ is an $(8s\times 6s)$ matrix, whose elements
$y_{ij}$ have the following form:

If $0\le j<s$, then $$y_{ij}=
\begin{cases}
w_{jn\ra jn+g(j)+1}\otimes e_{jn},\quad i=j;\\
0,\quad\text{otherwise.}\end{cases}$$

If $s\le j<2s$, then $$y_{ij}=
\begin{cases}
w_{jn\ra jn+g(j)+1}\otimes e_{jn},\quad i=j-s;\\
0,\quad\text{otherwise.}\end{cases}$$

If $2s\le j<4s$, then $y_{ij}=0$.

If $4s\le j<5s$, then $$y_{ij}=
\begin{cases}
w_{jn+g(j)+1\ra (j+1)n}\otimes e_{jn+g(j)+1},\quad i=j-s;\\
0,\quad\text{otherwise.}\end{cases}$$

If $5s\le j<6s$, then $y_{ij}=0$.

If $6s\le j<7s$, then $$y_{ij}=
\begin{cases}
w_{jn+5\ra (j+1)n+g(j)}\otimes e_{jn+5},\quad i=j-s;\\
0,\quad\text{otherwise.}\end{cases}$$

If $7s\le j<8s$, then $y_{ij}=0$.

(7) $Y^{(7)}_t$ is an $(8s\times 6s)$ matrix, whose elements
$y_{ij}$ have the following form:

If $0\le j<2s$, then $$y_{ij}=
\begin{cases}
w_{jn\ra jn+g(j+s)+1}\otimes e_{jn},\quad i=(j)_s;\\
0,\quad\text{otherwise.}\end{cases}$$

If $2s\le j<4s$, then $$y_{ij}=
\begin{cases}
-\kappa w_{jn+g(j)\ra jn+5}\otimes e_{jn+g(j)},\quad i=j-s;\\
0,\quad\text{otherwise.}\end{cases}$$

If $4s\le j<6s$, then $$y_{ij}=
\begin{cases}
-\kappa w_{jn+g(j)+1\ra (j+1)n}\otimes e_{jn+g(j)+1},\quad i=j-s;\\
0,\quad\text{otherwise.}\end{cases}$$

If $6s\le j<8s$, then $y_{ij}=0$.

(8) $Y^{(8)}_t$ is an $(9s\times 6s)$ matrix with a single nonzero
element:
$$y_{3s,6s}=w_{jn+g(j)+1\ra (j+1)n+g(j)+1}\otimes e_{jn+g(j)+1}.$$

(9) $Y^{(9)}_t$ is an $(9s\times 6s)$ matrix, whose elements
$y_{ij}$ have the following form:

If $0\le j<s$, then $y_{ij}=0$.

If $s\le j<2s$, then $$y_{ij}=
\begin{cases}
\kappa e_{jn}\otimes e_{jn},\quad i=j-s;\\
0,\quad\text{otherwise.}\end{cases}$$

If $2s\le j<4s$, then $$y_{ij}=
\begin{cases}
e_{jn+g(j)}\otimes e_{jn+g(j)},\quad i=j-s;\\
0,\quad\text{otherwise.}\end{cases}$$

If $4s\le j<6s$, then $y_{ij}=0$.

If $6s\le j<8s$, then $$y_{ij}=
\begin{cases}
e_{jn+g(j)+1}\otimes e_{jn+g(j)+1},\quad i=j-3s;\\
0,\quad\text{otherwise.}\end{cases}$$

If $8s\le j<9s$, then $$y_{ij}=
\begin{cases}
\kappa e_{jn+5}\otimes e_{jn+5},\quad i=j-3s;\\
0,\quad\text{otherwise.}\end{cases}$$

(10) $Y^{(10)}_t$ is an $(9s\times 6s)$ matrix with a single nonzero
element:
$$y_{5s,8s}=w_{jn+5\ra (j+1)n+5}\otimes e_{jn+5}.$$

(11) $Y^{(11)}_t$ is an $(8s\times 6s)$ matrix, whose elements
$y_{ij}$ have the following form:

If $0\le j<s$, then $$y_{ij}=
\begin{cases}
w_{jn\ra jn+g(j)}\otimes e_{jn},\quad i=j;\\
0,\quad\text{otherwise.}\end{cases}$$

If $s\le j<2s$, then $y_{ij}=0$.

If $2s\le j<3s$, then $$y_{ij}=
\begin{cases}
\kappa w_{jn+g(j)\ra (j+1)n}\otimes e_{jn+g(j)},\quad i=j-s;\\
0,\quad\text{otherwise.}\end{cases}$$

If $3s\le j<4s$, then $y_{ij}=0$.

If $4s\le j<5s$, then $$y_{ij}=
\begin{cases}
-\kappa w_{jn+g(j)+1\ra jn+5}\otimes e_{jn+g(j)+1},\quad i=j-s;\\
0,\quad\text{otherwise.}\end{cases}$$

If $5s\le j<6s$, then $y_{ij}=0$.

If $6s\le j<7s$, then $$y_{ij}=
\begin{cases}
w_{jn+5\ra (j+1)n+g(j)+1}\otimes e_{jn+5},\quad i=j-s;\\
0,\quad\text{otherwise.}\end{cases}$$

If $7s\le j<8s$, then $y_{ij}=0$.

(12) $Y^{(12)}_t$ is an $(8s\times 6s)$ matrix, whose elements
$y_{ij}$ have the following form:

If $0\le j<s$, then $$y_{ij}=
\begin{cases}
-w_{jn\ra jn+g(j+s)}\otimes e_{jn},\quad i=j;\\
0,\quad\text{otherwise.}\end{cases}$$

If $s\le j<4s$, then $y_{ij}=0$.

If $4s\le j<5s$, then $$y_{ij}=
\begin{cases}
\kappa w_{jn+g(j)+1\ra jn+5}\otimes e_{jn+g(j)+1},\quad i=j-s;\\
0,\quad\text{otherwise.}\end{cases}$$

If $5s\le j<8s$, then $y_{ij}=0$.

(13) $Y^{(13)}_t$ is an $(9s\times 6s)$ matrix, whose elements
$y_{ij}$ have the following form:

If $0\le j<s$, then $y_{ij}=0$.

If $s\le j<3s$, then $$y_{ij}=
\begin{cases}
-f_1(j,2s)e_{jn+g(j+s)}\otimes e_{jn+g(j+s)},\quad i=j;\\
0,\quad\text{otherwise.}\end{cases}$$

If $3s\le j<5s$, then $y_{ij}=0$.

If $5s\le j<7s$, then $$y_{ij}=
\begin{cases}
-f_1(j,6s)e_{jn+g(j+s)+1}\otimes e_{jn+g(j+s)+1},\quad i=j-2s;\\
0,\quad\text{otherwise.}\end{cases}$$

If $7s\le j<8s$, then $y_{ij}=0$.

If $8s\le j<9s$, then $$y_{ij}=
\begin{cases}
-\kappa w_{jn+5\ra (j+1)n}\otimes e_{jn+5},\quad i=j-3s;\\
0,\quad\text{otherwise.}\end{cases}$$

(14) $Y^{(14)}_t$ is an $(9s\times 6s)$ matrix with a single nonzero
element:
$$y_{0,0}=\kappa w_{jn\ra (j+1)n}\otimes e_{jn}.$$

(15) $Y^{(15)}_t$ is an $(9s\times 6s)$ matrix, whose elements
$y_{ij}$ have the following form:

If $0\le j<s$, then $$y_{ij}=
\begin{cases}
e_{jn}\otimes e_{jn},\quad i=j;\\
0,\quad\text{otherwise.}\end{cases}$$

If $s\le j<3s$, then $y_{ij}=0$.

If $3s\le j<5s$, then $$y_{ij}=
\begin{cases}
w_{jn+g(j+s)\ra jn+g(j+s)+1}\otimes e_{jn+g(j+s)},\quad i=j-2s;\\
0,\quad\text{otherwise.}\end{cases}$$

If $5s\le j<7s$, then $y_{ij}=0$.

If $7s\le j<8s$, then $$y_{ij}=
\begin{cases}
e_{jn+5}\otimes e_{jn+5},\quad i=j-2s;\\
0,\quad\text{otherwise.}\end{cases}$$

If $8s\le j<9s$, then $y_{ij}=0$.

(16) $Y^{(16)}_t$ is an $(8s\times 6s)$ matrix, whose elements
$y_{ij}$ have the following form:

If $j=0$, then $$y_{ij}=
\begin{cases}
w_{jn\ra jn+g(j)+1}\otimes e_{jn},\quad i=j;\\
0,\quad\text{otherwise.}\end{cases}$$

If $0<j<2s$, then $y_{ij}=0$.

If $j=2s$, then $$y_{ij}=
\begin{cases}
-\kappa w_{jn+g(j)\ra jn+5}\otimes e_{jn+g(j)},\quad i=j-s;\\
0,\quad\text{otherwise.}\end{cases}$$

If $2s<j<8s$, then $y_{ij}=0$.

(17) $Y^{(17)}_t$ is an $(8s\times 6s)$ matrix, whose elements
$y_{ij}$ have the following form:

If $j=0$, then $$y_{ij}=
\begin{cases}
w_{jn\ra jn+g(j+s)+1}\otimes e_{jn},\quad i=j;\\
0,\quad\text{otherwise.}\end{cases}$$

If $0<j<2s$, then $y_{ij}=0$.

If $j=2s$, then $$y_{ij}=
\begin{cases}
w_{jn+g(j)\ra jn+5}\otimes e_{jn+g(j)},\quad i=j-s;\\
0,\quad\text{otherwise.}\end{cases}$$

If $2s<j<8s$, then $y_{ij}=0$.

(18) $Y^{(18)}_t$ is an $(6s\times 6s)$ matrix, whose elements
$y_{ij}$ have the following form:

If $0\le j<s$, then $y_{ij}=0$.

If $s\le j<3s$, then $$y_{ij}=
\begin{cases}
w_{jn+g(j+s)\ra jn+g(j+s)+1}\otimes e_{jn+g(j+s)},\quad i=j;\\
0,\quad\text{otherwise.}\end{cases}$$

If $3s\le j<5s$, then $y_{ij}=0$.

If $5s\le j<6s$, then $$y_{ij}=
\begin{cases}
-\kappa w_{jn+5\ra (j+1)n}\otimes e_{jn+5},\quad i=j;\\
0,\quad\text{otherwise.}\end{cases}$$

(19) $Y^{(19)}_t$ is an $(7s\times 6s)$ matrix, whose elements
$y_{ij}$ have the following form:

If $0\le j<s$, then $y_{ij}=0$.

If $j=s$, then $$y_{ij}=
\begin{cases}
\kappa w_{jn+g(j+s)\ra (j+1)n}\otimes e_{jn+g(j+s)},\quad i=j;\\
0,\quad\text{otherwise.}\end{cases}$$

If $s<j<2s$, then $y_{ij}=0$.

If $j=2s$, then $$y_{ij}=
\begin{cases}
\kappa w_{jn+g(j+s)\ra (j+1)n}\otimes e_{jn+g(j+s)},\quad i=j;\\
0,\quad\text{otherwise.}\end{cases}$$

If $2s<j<7s$, then $y_{ij}=0$.

(20) $Y^{(20)}_t$ is an $(7s\times 6s)$ matrix, whose elements
$y_{ij}$ have the following form:

If $0\le j<s$, then $$y_{ij}=
\begin{cases}
\kappa w_{jn\ra jn+5}\otimes e_{jn},\quad i=j;\\
0,\quad\text{otherwise.}\end{cases}$$

If $s\le j<2s$, then $$y_{ij}=
\begin{cases}
\kappa w_{jn+g(j+s)\ra (j+1)n}\otimes e_{jn+g(j+s)},\quad i=j;\\
0,\quad\text{otherwise.}\end{cases}$$

If $2s\le j<3s$, then $y_{ij}=0$.

If $3s\le j<4s$, then $$y_{ij}=
\begin{cases}
-w_{jn+g(j+s)+1\ra (j+1)n+g(j+s)}\otimes e_{jn+g(j+s)+1},\quad i=j;\\
0,\quad\text{otherwise.}\end{cases}$$

If $4s\le j<6s$, then $y_{ij}=0$.

If $6s\le j<7s$, then $$y_{ij}=
\begin{cases}
-w_{jn+5\ra (j+1)n+g(j)+1}\otimes e_{jn+5},\quad i=j-s;\\
0,\quad\text{otherwise.}\end{cases}$$

(21) $Y^{(21)}_t$ is an $(6s\times 6s)$ matrix with a single nonzero
element:
$$y_{0,0}=-\kappa w_{jn\ra (j+1)n}\otimes e_{jn}.$$

(22) $Y^{(22)}_t$ is an $(6s\times 6s)$ matrix, whose elements
$y_{ij}$ have the following form:

If $0\le j<s$, then $y_{ij}=0$.

If $s\le j<3s$, then $$y_{ij}=
\begin{cases}
f_1(j,2s)e_{jn+g(j+s)}\otimes e_{jn+g(j+s)},\quad i=j;\\
0,\quad\text{otherwise.}\end{cases}$$

If $3s\le j<5s$, then $$y_{ij}=
\begin{cases}
f_1(j,4s)e_{jn+g(j+s)+1}\otimes e_{jn+g(j+s)+1},\quad i=j;\\
0,\quad\text{otherwise.}\end{cases}$$

If $5s\le j<6s$, then $y_{ij}=0$.

(23) $Y^{(23)}_t$ is an $(6s\times 6s)$ matrix with a single nonzero
element:
$$y_{3s,3s}=w_{jn+g(j+s)+1\ra (j+1)n+g(j+s)+1}\otimes e_{jn+g(j+s)+1}.$$

(24) $Y^{(24)}_t$ is an $(6s\times 6s)$ matrix with a single nonzero
element:
$$y_{0,0}=w_{jn\ra (j+1)n}\otimes e_{jn}.$$

(25) $Y^{(25)}_t$ is an $(6s\times 6s)$ matrix with a single nonzero
element:
$$y_{s,s}=w_{jn+g(j+s)\ra (j+1)n+g(j+s)}\otimes e_{jn+g(j+s)}.$$

(26) $Y^{(26)}_t$ is an $(6s\times 6s)$ matrix with a single nonzero
element:
$$y_{2s,2s}=-w_{jn+g(j+s)\ra (j+1)n+g(j+s)}\otimes e_{jn+g(j+s)}.$$

(27) $Y^{(27)}_t$ is an $(6s\times 6s)$  matrix with a single
nonzero element:
$$y_{4s,4s}=-w_{jn+g(j+s)+1\ra (j+1)n+g(j+s)+1}\otimes e_{jn+g(j+s)+1}.$$

(28) $Y^{(28)}_t$ is an $(6s\times 6s)$ matrix with a single nonzero
element:
$$y_{5s,5s}=w_{jn+5\ra (j+1)n+5}\otimes e_{jn+5}.$$

\section{$\Omega$-shifts of generators of the algebra $\HH^*(R)$}

Let $Q_\bullet\rightarrow R$ be the minimal projective bimodule
resolution of the algebra $R$, constructed in paragraph
\ref{sect_res}. Any $t$-cocycle $f\in\Ker\delta^t$ is lifted
(uniquely up to homotopy) to a chain map of complexes $\{\varphi_i:
Q_{t+i}\rightarrow Q_i\}_{i\ge 0}$. The homomorphism $\varphi_i$ is
called the {\it $i$th translate} of the cocycle $f$ and will be
denoted by $\Omega^i(f)$. For cocycles $f_1\in\Ker\delta^{t_1}$ and
$f_2\in\Ker\delta^{t_2}$ we have
\begin{equation}\tag{$*$}\label{mult_formula}
\cl f_2\cdot \cl f_1=\cl(\Omega^0(f_2)\Omega^{t_2}(f_1)).
\end{equation}

We shall now describe $\Omega$-translates for generators of the
algebra $\HH^*(R)$ and then find multiplications of the generators
using the formula \eqref{mult_formula}.

\begin{obozns}$\quad$

(1) For generator degree $t$ represent it in the form $t=11\ell+r$
($0\le r\le 10$) and denote by
$\kappa=(-1)^{\lfloor\frac{\ell}{2}\rfloor}$.

(2) For translate $\Omega^{t_0}$ represent $t_0$ in the form
$t_0=11\ell_0+r_0$ ($0\le r_0\le 10$) and denote by
$\kappa_0=(-1)^{\ell_0}$.
\end{obozns}

Define the helper functions $z_0\text{:
}\Z\times\Z\times\Z\rightarrow\Z$ and $z_1\text{:
}\Z\times\Z\times\Z\rightarrow\Z$, which act in the following way:
$$z_0(x,\ell,k)=(x-\ell k)_s+(\ell s)_{2s},\quad z_1(x,\ell,k)=(x-\ell k)_s+((\ell+1)s)_{2s}$$

\begin{pr}[Translates for the case 1]
$({\rm I})$ Let $r_0\in\N$, $r_0<11$. $r_0$-translates of the
elements $Y^{(1)}_t$ are described by the following way.

$(1)$ If $r_0=0$, then $\Omega^{0}(Y_t^{(1)})$ is described with
$(6s\times 6s)$-matrix with the following elements $b_{ij}${\rm:}

If $0\le j<s$, then $$b_{ij}=
\begin{cases}
\kappa e_{(j+m)n}\otimes e_{jn},\quad i=j;\\
0,\quad\text{otherwise.}\end{cases}$$

If $s\le j<3s$, then $$b_{ij}=
\begin{cases}
e_{(j+m)n+g(j+s)}\otimes e_{jn+g(j+s)},\quad i=j;\\
0,\quad\text{otherwise.}\end{cases}$$

If $3s\le j<5s$, then $$b_{ij}=
\begin{cases}
e_{(j+m)n+g(j+s)+1}\otimes e_{jn+g(j+s)+1},\quad i=j;\\
0,\quad\text{otherwise.}\end{cases}$$

If $5s\le j<6s$, then $$b_{ij}=
\begin{cases}
\kappa e_{(j+m)n+5}\otimes e_{jn+5},\quad i=j;\\
0,\quad\text{otherwise.}\end{cases}$$

$(2)$ If $r_0=1$, then $\Omega^{1}(Y_t^{(1)})$ is described with
$(7s\times 7s)$-matrix with the following elements $b_{ij}${\rm:}

If $0\le j<2s$, then $$b_{ij}=
\begin{cases}
e_{(j+m)n+g(j)}\otimes e_{jn},\quad i=j;\\
0,\quad\text{otherwise.}\end{cases}$$

If $2s\le j<4s$, then $$b_{ij}=
\begin{cases}
e_{(j+m)n+g(j)+1}\otimes e_{jn+g(j)},\quad i=j;\\
0,\quad\text{otherwise.}\end{cases}$$

If $4s\le j<6s$, then $$b_{ij}=
\begin{cases}
\kappa e_{(j+m)n+5}\otimes e_{jn+g(j)+1},\quad i=j;\\
0,\quad\text{otherwise.}\end{cases}$$

If $6s\le j<7s$, then $$b_{ij}=
\begin{cases}
\kappa e_{(j+m+1)n}\otimes e_{jn+5},\quad i=j;\\
0,\quad\text{otherwise.}\end{cases}$$

$(3)$ If $r_0=2$, then $\Omega^{2}(Y_t^{(1)})$ is described with
$(6s\times 6s)$-matrix with the following elements $b_{ij}${\rm:}

If $0\le j<s$, then $$b_{ij}=
\begin{cases}
\kappa e_{(j+m-1)n+5}\otimes e_{jn},\quad i=j;\\
0,\quad\text{otherwise.}\end{cases}$$

If $s\le j<3s$, then $$b_{ij}=
\begin{cases}
e_{(j+m)n+g(j+s)+1}\otimes e_{jn+g(j+s)},\quad i=j;\\
0,\quad\text{otherwise.}\end{cases}$$

If $3s\le j<5s$, then $$b_{ij}=
\begin{cases}
e_{(j+m)n+g(j)}\otimes e_{jn+g(j+s)+1},\quad i=j;\\
0,\quad\text{otherwise.}\end{cases}$$

If $5s\le j<6s$, then $$b_{ij}=
\begin{cases}
\kappa e_{(j+m+1)n}\otimes e_{jn+5},\quad i=j;\\
0,\quad\text{otherwise.}\end{cases}$$

$(4)$ If $r_0=3$, then $\Omega^{3}(Y_t^{(1)})$ is described with
$(8s\times 8s)$-matrix with the following elements $b_{ij}${\rm:}

If $0\le j<2s$, then $$b_{ij}=
\begin{cases}
e_{(j+m)n+g(j)+1}\otimes e_{jn},\quad i=j;\\
0,\quad\text{otherwise.}\end{cases}$$

If $2s\le j<4s$, then $$b_{ij}=
\begin{cases}
\kappa e_{(j+m)n+5}\otimes e_{jn+g(j)},\quad i=j;\\
0,\quad\text{otherwise.}\end{cases}$$

If $4s\le j<6s$, then $$b_{ij}=
\begin{cases}
\kappa e_{(j+m+1)n}\otimes e_{jn+g(j)+1},\quad i=j;\\
0,\quad\text{otherwise.}\end{cases}$$

If $6s\le j<8s$, then $$b_{ij}=
\begin{cases}
e_{(j+m+1)n+g(j)}\otimes e_{jn+5},\quad i=j;\\
0,\quad\text{otherwise.}\end{cases}$$

$(5)$ If $r_0=4$, then $\Omega^{4}(Y_t^{(1)})$ is described with
$(9s\times 9s)$-matrix with the following elements $b_{ij}${\rm:}

If $0\le j<s$, then $$b_{ij}=
\begin{cases}
\kappa e_{(j+m-1)n+5}\otimes e_{jn},\quad i=j;\\
0,\quad\text{otherwise.}\end{cases}$$

If $s\le j<2s$, then $$b_{ij}=
\begin{cases}
\kappa e_{(j+m)n}\otimes e_{jn},\quad i=j;\\
0,\quad\text{otherwise.}\end{cases}$$

If $2s\le j<4s$, then $$b_{ij}=
\begin{cases}
e_{(j+m)n+g(j+s)}\otimes e_{jn+g(j)},\quad i=j;\\
0,\quad\text{otherwise.}\end{cases}$$

If $4s\le j<6s$, then $$b_{ij}=
\begin{cases}
e_{(j+m)n+g(j)}\otimes e_{jn+g(j)+1},\quad i=j;\\
0,\quad\text{otherwise.}\end{cases}$$

If $6s\le j<8s$, then $$b_{ij}=
\begin{cases}
e_{(j+m)n+g(j+s)+1}\otimes e_{jn+g(j)+1},\quad i=j;\\
0,\quad\text{otherwise.}\end{cases}$$

If $8s\le j<9s$, then $$b_{ij}=
\begin{cases}
\kappa e_{(j+m)n+5}\otimes e_{jn+5},\quad i=j;\\
0,\quad\text{otherwise.}\end{cases}$$

$(6)$ If $r_0=5$, then $\Omega^{5}(Y_t^{(1)})$ is described with
$(8s\times 8s)$-matrix with the following elements $b_{ij}${\rm:}

If $0\le j<2s$, then $$b_{ij}=
\begin{cases}
e_{(j+m)n+g(j)}\otimes e_{jn},\quad i=j;\\
0,\quad\text{otherwise.}\end{cases}$$

If $2s\le j<4s$, then $$b_{ij}=
\begin{cases}
\kappa e_{(j+m+1)n}\otimes e_{jn+g(j)},\quad i=j;\\
0,\quad\text{otherwise.}\end{cases}$$

If $4s\le j<6s$, then $$b_{ij}=
\begin{cases}
\kappa e_{(j+m)n+5}\otimes e_{jn+g(j)+1},\quad i=j;\\
0,\quad\text{otherwise.}\end{cases}$$

If $6s\le j<8s$, then $$b_{ij}=
\begin{cases}
e_{(j+m+1)n+g(j)+1}\otimes e_{jn+5},\quad i=j;\\
0,\quad\text{otherwise.}\end{cases}$$

$(7)$ If $r_0=6$, then $\Omega^{6}(Y_t^{(1)})$ is described with
$(9s\times 9s)$-matrix with the following elements $b_{ij}${\rm:}

If $0\le j<s$, then $$b_{ij}=
\begin{cases}
\kappa e_{(j+m)n}\otimes e_{jn},\quad i=j;\\
0,\quad\text{otherwise.}\end{cases}$$

If $s\le j<3s$, then $$b_{ij}=
\begin{cases}
e_{(j+m)n+g(j+s)}\otimes e_{jn+g(j+s)},\quad i=j;\\
0,\quad\text{otherwise.}\end{cases}$$

If $3s\le j<5s$, then $$b_{ij}=
\begin{cases}
e_{(j+m)n+g(j)+1}\otimes e_{jn+g(j+s)},\quad i=j;\\
0,\quad\text{otherwise.}\end{cases}$$

If $5s\le j<7s$, then $$b_{ij}=
\begin{cases}
e_{(j+m)n+g(j+s)+1}\otimes e_{jn+g(j+s)+1},\quad i=j;\\
0,\quad\text{otherwise.}\end{cases}$$

If $7s\le j<8s$, then $$b_{ij}=
\begin{cases}
\kappa e_{(j+m)n+5}\otimes e_{jn+5},\quad i=j;\\
0,\quad\text{otherwise.}\end{cases}$$

If $8s\le j<9s$, then $$b_{ij}=
\begin{cases}
\kappa e_{(j+m+1)n}\otimes e_{jn+5},\quad i=j;\\
0,\quad\text{otherwise.}\end{cases}$$

$(8)$ If $r_0=7$, then $\Omega^{7}(Y_t^{(1)})$ is described with
$(8s\times 8s)$-matrix with the following elements $b_{ij}${\rm:}

If $0\le j<2s$, then $$b_{ij}=
\begin{cases}
e_{(j+m)n+g(j)+1}\otimes e_{jn},\quad i=j;\\
0,\quad\text{otherwise.}\end{cases}$$

If $2s\le j<4s$, then $$b_{ij}=
\begin{cases}
\kappa e_{(j+m)n+5}\otimes e_{jn+g(j)},\quad i=j;\\
0,\quad\text{otherwise.}\end{cases}$$

If $4s\le j<6s$, then $$b_{ij}=
\begin{cases}
\kappa e_{(j+m+1)n}\otimes e_{jn+g(j)+1},\quad i=j;\\
0,\quad\text{otherwise.}\end{cases}$$

If $6s\le j<8s$, then $$b_{ij}=
\begin{cases}
e_{(j+m+1)n+g(j)}\otimes e_{jn+5},\quad i=j;\\
0,\quad\text{otherwise.}\end{cases}$$

$(9)$ If $r_0=8$, then $\Omega^{8}(Y_t^{(1)})$ is described with
$(6s\times 6s)$-matrix with the following elements $b_{ij}${\rm:}

If $0\le j<s$, then $$b_{ij}=
\begin{cases}
\kappa e_{(j+m-1)n+5}\otimes e_{jn},\quad i=j;\\
0,\quad\text{otherwise.}\end{cases}$$

If $s\le j<3s$, then $$b_{ij}=
\begin{cases}
e_{(j+m)n+g(j+s)+1}\otimes e_{jn+g(j+s)},\quad i=j;\\
0,\quad\text{otherwise.}\end{cases}$$

If $3s\le j<5s$, then $$b_{ij}=
\begin{cases}
e_{(j+m)n+g(j)}\otimes e_{jn+g(j+s)+1},\quad i=j;\\
0,\quad\text{otherwise.}\end{cases}$$

If $5s\le j<6s$, then $$b_{ij}=
\begin{cases}
\kappa e_{(j+m+1)n}\otimes e_{jn+5},\quad i=j;\\
0,\quad\text{otherwise.}\end{cases}$$

$(10)$ If $r_0=9$, then $\Omega^{9}(Y_t^{(1)})$ is described with
$(7s\times 7s)$-matrix with the following elements $b_{ij}${\rm:}

If $0\le j<s$, then $$b_{ij}=
\begin{cases}
\kappa e_{(j+m)n+5}\otimes e_{jn},\quad i=j;\\
0,\quad\text{otherwise.}\end{cases}$$

If $s\le j<3s$, then $$b_{ij}=
\begin{cases}
\kappa e_{(j+m+1)n}\otimes e_{jn+g(j+s)},\quad i=j;\\
0,\quad\text{otherwise.}\end{cases}$$

If $3s\le j<5s$, then $$b_{ij}=
\begin{cases}
e_{(j+m+1)n+g(j)}\otimes e_{jn+g(j+s)+1},\quad i=j;\\
0,\quad\text{otherwise.}\end{cases}$$

If $5s\le j<7s$, then $$b_{ij}=
\begin{cases}
e_{(j+m+1)n+g(j+s)+1}\otimes e_{jn+5},\quad i=j;\\
0,\quad\text{otherwise.}\end{cases}$$

$(11)$ If $r_0=10$, then $\Omega^{10}(Y_t^{(1)})$ is described with
$(6s\times 6s)$-matrix with the following elements $b_{ij}${\rm:}

If $0\le j<s$, then $$b_{ij}=
\begin{cases}
\kappa e_{(j+m)n}\otimes e_{jn},\quad i=j;\\
0,\quad\text{otherwise.}\end{cases}$$

If $s\le j<3s$, then $$b_{ij}=
\begin{cases}
e_{(j+m)n+g(j)}\otimes e_{jn+g(j+s)},\quad i=j;\\
0,\quad\text{otherwise.}\end{cases}$$

If $3s\le j<5s$, then $$b_{ij}=
\begin{cases}
e_{(j+m)n+g(j)+1}\otimes e_{jn+g(j+s)+1},\quad i=j;\\
0,\quad\text{otherwise.}\end{cases}$$

If $5s\le j<6s$, then $$b_{ij}=
\begin{cases}
\kappa e_{(j+m)n+5}\otimes e_{jn+5},\quad i=j;\\
0,\quad\text{otherwise.}\end{cases}$$

\medskip
$({\rm II})$ Represent an arbitrary $t_0\in\N$ in the form
$t_0=11\ell_0+r_0$, where $0\le r_0\le 10.$ Then
$\Omega^{t_0}(Y_t^{(1)})$ is a $\Omega^{r_0}(Y_t^{(1)})$, whose left
components twisted by $\sigma^{\ell_0}$.
\end{pr}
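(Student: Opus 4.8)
The plan is to construct explicitly the chain map $\{\varphi_i\colon Q_{t+i}\to Q_i\}_{i\ge 0}$ lifting the cocycle $Y^{(1)}_t$ and to check that $\varphi_{r_0}$ is the matrix displayed in part~(I), which I abbreviate $B^{(r_0)}$. Here $\varphi_0=\Omega^0(Y^{(1)}_t)$ is $Y^{(1)}_t$ itself regarded as a map to $Q_0$ (the matrix of item~$(1)$, i.e.\ the specialisation $m=0$ of the formulas of the preceding section), and, once $\varphi_0,\dots,\varphi_{r_0}$ form a chain map, $\varphi_{r_0+1}$ is any solution of $d_{r_0}\varphi_{r_0+1}=\varphi_{r_0}d_{t+r_0}$; such a solution exists because $d_{r_0-1}(\varphi_{r_0}d_{t+r_0})=\varphi_{r_0-1}d_{t+r_0-1}d_{t+r_0}=0$, so $\varphi_{r_0}d_{t+r_0}$ has image in $\Ker d_{r_0-1}=\Im d_{r_0}$ and $Q_{t+r_0+1}$ is projective. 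Two solutions differ by a homotopy, so the cohomology class and the products computed via \eqref{mult_formula} do not depend on the choice; we simply fix the one given by the displayed matrices. Writing $t=11\ell+r$, the degrees for which $Y^{(1)}_t$ is defined have $r=0$ (condition~$(1)$ of the list on page~\pageref{degs}), so by Theorem~\ref{resol_thm} the twisted differential $d_{t+r_0}=d_{11\ell+r_0}$ is exactly $d_{r_0}$ with $\sigma^{\ell}$ applied to all its left tensor components. Everything then reduces to verifying, for $r_0=0,1,\dots,10$, the matrix identities
\[
 d_{r_0}\,B^{(r_0+1)}\;=\;B^{(r_0)}\,\bigl(\text{$\sigma^{\ell}$-twist of }d_{r_0}\bigr),
\]
where $B^{(11)}$ is read as $B^{(0)}$ with $\sigma$ applied to its left tensor components.

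Before grinding through these I would record the structural fact that makes the answer transparent. Condition~$(1)$ forces $\ell$ even and $\ell n\equiv 0\ (s)$; since $\sigma^{2}(e_i)=e_{i+2n^{2}}$ for every $i$, it follows that $\sigma^{\ell}(e_i)=e_{i+\ell n^{2}}=e_i$, so $\sigma^{\ell}$ fixes every idempotent occurring in any $Q_{r_0}$. Hence $Q_{t+r_0}$ and $Q_{r_0}$ are literally the same graded $\Lambda$-module, and each $B^{(r_0)}$ is the identity map of $Q_{r_0}$ twisted by a diagonal of signs $\pm\kappa$, $\kappa=(-1)^{\lfloor\ell/2\rfloor}$ --- exactly the block pattern displayed in items $(1)$--$(11)$. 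The differential $d_{11\ell+r_0}$, although carried by the same module, differs from $d_{r_0}$ only in the signs that $\sigma^{\ell}$ puts on the arrows: from $\sigma(\g_i)=-\g_{i+n}$ and the sign rules for $\sigma(\a_i),\sigma(\b_i)$ one computes $\sigma^{\ell}(\g_i)=\g_i$, while $\sigma^{\ell}(\a_i)$ and $\sigma^{\ell}(\b_i)$ equal $\a_i$ resp.\ $\b_i$ up to a factor that is $\kappa$ or $1$ according to the residue of $i$ modulo $3$. So the identities above say precisely that the diagonal sign matrices $B^{(r_0)}$ intertwine the original and the $\sigma^{\ell}$-twisted copies of $d_{r_0}$; the ``path part'' of every matrix entry is transported unchanged.

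The substantive work is then the block-by-block verification of these identities for each $r_0\in\{0,\dots,10\}$. For a fixed $r_0$ one runs through the ranges of the column index $j$ appearing in the description of $d_{r_0}$ (namely $0\le j<s$, $s\le j<2s$, \dots, up to $j<8s$ or $j<9s$ depending on the shape of $Q_{r_0}$), locates the nonzero entries of $d_{r_0}$, of $B^{(r_0)}$ and of $B^{(r_0+1)}$ in each block, multiplies the corresponding paths in $K[\mathcal Q_s]$ modulo $I$ (a path of length $\ge 5$ is zero, as are $\a\g\b$ and $\b\g\a$, while $\a^{3}=\b^{3}$), reconciles the index bookkeeping modulo $n=6$ and modulo $ns$ (the functions $f$, $h$, $\lambda$, $g$, $f_0$, $f_1$ controlling where the vertices $(j+m)n$, $\lambda(\cdot)$ and $g(\cdot)$ lie), and checks that both sides carry the same sign. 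I expect this bookkeeping --- in particular matching the summand-to-summand correspondence across the block boundaries and keeping track of the interaction of $\kappa$, the $f_1$-signs and the $\sigma^{\ell}$-signs --- to be the main obstacle; the algebra inside any single cell is only a one- or two-term calculation.

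Finally, part~(II) is immediate once part~(I) is in hand. The resolution $Q_\bullet$ is $11$-periodic up to the automorphism $\sigma$ (Theorem~\ref{resol_thm}), and the lifting recursion producing $\varphi_\bullet$ is compatible with this periodicity: an easy induction on $i$ shows that $\varphi_{i+11}$ equals $\varphi_i$ with $\sigma$ applied to all left tensor components, because both sides solve the same lifting equation relative to the correspondingly $\sigma$-twisted differentials and coincide for $i=0$. Iterating $\ell_0$ times, $\Omega^{t_0}(Y^{(1)}_t)=\Omega^{11\ell_0+r_0}(Y^{(1)}_t)$ is $\Omega^{r_0}(Y^{(1)}_t)$ with its left tensor components twisted by $\sigma^{\ell_0}$, which is exactly the assertion of~(II).
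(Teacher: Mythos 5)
Your proposal follows the paper's own route: the paper proves these translate propositions (explicitly only for case 3, ``the other propositions are proved similarly'') by verifying commutativity of the squares $d_{t_0-1}f_{t_0}=f_{t_0-1}d_{t+t_0-1}$ through direct block-by-block multiplication of the displayed matrices against the differentials, and it treats $t_0\ge 11$ exactly as you do, via the $\sigma$-periodicity of the bimodule resolution. Your extra observation that condition (1) forces $\sigma^{\ell}$ to fix all idempotents, so that the case-1 translates are sign-diagonal and the check reduces to sign bookkeeping, is a correct simplification specific to this case but not a different method.
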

\begin{pr}[Translates for the case 2]
$({\rm I})$ Let $r_0\in\N$, $r_0<11$. $r_0$-translates of the
elements $Y^{(2)}_t$ are described by the following way.

$(1)$ If $r_0=0$, then $\Omega^{0}(Y_t^{(2)})$ is described with
$(6s\times 6s)$-matrix with one nonzero element that is of the
following form{\rm:}
$$b_{3s+z_0(0,\ell_0,n),3s+z_0(0,\ell_0,n)}=w_{(j+m)n+g(j+s)+1\ra (j+m+1)n+g(j+s)+1}\otimes e_{jn+g(j+s)+1}.$$

$(2)$ If $r_0=1$, then $\Omega^{1}(Y_t^{(2)})$ is described with
$(7s\times 7s)$-matrix with one nonzero element that is of the
following form{\rm:}
$$b_{2s+z_0(0,\ell_0,n),2s+z_0(0,\ell_0,n)}=w_{(j+m)n+g(j)+1\ra (j+m+1)n+g(j)+1}\otimes e_{jn+g(j)}.$$

$(3)$ If $r_0=2$, then $\Omega^{2}(Y_t^{(2)})$ is described with
$(6s\times 6s)$-matrix with one nonzero element that is of the
following form{\rm:}
$$b_{s+z_0(-1,\ell_0,n),s+z_0(-1,\ell_0,n)}=w_{(j+m)n+g(j+s)+1\ra (j+m+1)n+g(j+s)+1}\otimes e_{jn+g(j+s)}.$$

$(4)$ If $r_0=3$, then $\Omega^{3}(Y_t^{(2)})$ is described with
$(8s\times 8s)$-matrix with one nonzero element that is of the
following form{\rm:}
$$b_{z_0(-1,\ell_0,n),z_0(-1,\ell_0,n)}=w_{(j+m)n+g(j)+1\ra (j+m+1)n+g(j)+1}\otimes e_{jn}.$$

$(5)$ If $r_0=4$, then $\Omega^{4}(Y_t^{(2)})$ is described with
$(9s\times 9s)$-matrix with one nonzero element that is of the
following form{\rm:}
$$b_{6s+z_1(-2,\ell_0,n),6s+z_1(-2,\ell_0,n)}=w_{(j+m)n+g(j+s)+1\ra (j+m+1)n+g(j+s)+1}\otimes e_{jn+g(j)+1}.$$

$(6)$ If $r_0=5$, then $\Omega^{5}(Y_t^{(2)})$ is described with
$(8s\times 8s)$-matrix with one nonzero element that is of the
following form{\rm:}
$$b_{6s+z_0(-3,\ell_0,n),6s+z_0(-3,\ell_0,n)}=w_{(j+m+1)n+g(j)+1\ra (j+m+2)n+g(j)+1}\otimes e_{jn+5}.$$

$(7)$ If $r_0=6$, then $\Omega^{6}(Y_t^{(2)})$ is described with
$(9s\times 9s)$-matrix with the following elements $b_{ij}${\rm:}

If $0\le j<3s+z_1(-3,\ell_0,n)$, then $b_{ij}=0$.

If $j=3s+z_1(-3,\ell_0,n)$, then $$b_{ij}=
\begin{cases}
w_{(j+m)n+g(j)+1\ra (j+m+1)n+g(j)+1}\otimes e_{jn+g(j+s)},\quad i=j;\\
0,\quad\text{otherwise.}\end{cases}$$

If $3s+z_1(-3,\ell_0,n)<j<5s+z_0(-3,\ell_0,n)$, then $b_{ij}=0$.

If $j=5s+z_0(-3,\ell_0,n)$, then $$b_{ij}=
\begin{cases}
w_{(j+m)n+g(j+s)+1\ra (j+m+1)n+g(j+s)+1}\otimes e_{jn+g(j+s)+1},\quad i=j;\\
0,\quad\text{otherwise.}\end{cases}$$

If $5s+z_0(-3,\ell_0,n)<j<9s$, then $b_{ij}=0$.

$(8)$ If $r_0=7$, then $\Omega^{7}(Y_t^{(2)})$ is described with
$(8s\times 8s)$-matrix with one nonzero element that is of the
following form{\rm:}
$$b_{z_0(-3,\ell_0,n),z_0(-3,\ell_0,n)}=w_{(j+m)n+g(j)+1\ra (j+m+1)n+g(j)+1}\otimes e_{jn}.$$

$(9)$ If $r_0=8$, then $\Omega^{8}(Y_t^{(2)})$ is described with
$(6s\times 6s)$-matrix with one nonzero element that is of the
following form{\rm:}
$$b_{s+z_0(-4,\ell_0,n),s+z_0(-4,\ell_0,n)}=w_{(j+m)n+g(j+s)+1\ra (j+m+1)n+g(j+s)+1}\otimes e_{jn+g(j+s)}.$$

$(10)$ If $r_0=9$, then $\Omega^{9}(Y_t^{(2)})$ is described with
$(7s\times 7s)$-matrix with one nonzero element that is of the
following form{\rm:}
$$b_{5s+z_0(-5,\ell_0,n),5s+z_0(-5,\ell_0,n)}=w_{(j+m+1)n+g(j+s)+1\ra (j+m+2)n+g(j+s)+1}\otimes e_{jn+5}.$$

$(11)$ If $r_0=10$, then $\Omega^{10}(Y_t^{(2)})$ is described with
$(6s\times 6s)$-matrix with one nonzero element that is of the
following form{\rm:}
$$b_{3s+z_1(-5,\ell_0,n),3s+z_1(-5,\ell_0,n)}=w_{(j+m)n+g(j)+1\ra (j+m+1)n+g(j)+1}\otimes e_{jn+g(j+s)+1}.$$

\medskip
$({\rm II})$ Represent an arbitrary $t_0\in\N$ in the form
$t_0=11\ell_0+r_0$, where $0\le r_0\le 10.$ Then
$\Omega^{t_0}(Y_t^{(2)})$ is a $\Omega^{r_0}(Y_t^{(2)})$, whose left
components twisted by $\sigma^{\ell_0}$, and coefficients multiplied
by $(-1)^{\ell_0}$.
\end{pr}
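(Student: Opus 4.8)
The translates $\Omega^{i}(Y^{(2)}_t)$ are by definition the components of a lift of the cocycle $Y^{(2)}_t$ to a chain endomorphism $\{\varphi_i\colon Q_{t+i}\to Q_i\}_{i\ge 0}$ of the bimodule resolution \eqref{resolv}; such a lift exists and is unique up to homotopy, so to prove the stated formulas it suffices to exhibit \emph{one} family of homomorphisms meeting the defining conditions $\varphi_0=Y^{(2)}_t$ (which holds by construction: item $(1)$ is merely $Y^{(2)}_t$ rewritten in periodicity-adapted notation, agreeing with it at $\ell_0=0$ since $r=0$ forces $m=0$) and $d_{r_0-1}\varphi_{r_0}=\varphi_{r_0-1}d_{t+r_0-1}$ for $r_0\ge 1$, and to check that the matrices in items $(1)$--$(11)$, together with their $\sigma$-twists prescribed in part (II), form such a family. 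So the plan is to verify this ladder of commutation relations directly; verifying its $r_0=1$ rung moreover re-proves, after composing with $\varepsilon$ (and using $\varepsilon d_0=0$), that $Y^{(2)}_t$ is a cocycle, so no separate check of that is needed.

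Next I would reduce part (II) to part (I) using the $(11,\sigma)$-periodicity from Theorem~\ref{resol_thm}: $Q_{11\ell_0+r_0}$ arises from $Q_{r_0}$ by replacing each summand $P_{i,j}$ with $P_{\sigma^{\ell_0}(i),j}$, and $d_{11\ell_0+r_0}$ from $d_{r_0}$ by applying $\sigma^{\ell_0}$ to every left tensor factor. Starting from a chain map $\{\varphi_{r_0}\}_{0\le r_0\le 10}$ realising part (I) and applying $\sigma^{\ell_0}$ to all its left tensor factors yields maps between the correct twisted projectives; since $\sigma$ permutes idempotents and sends $\gamma_i\mapsto-\gamma_{i+n}$, $\alpha_i\mapsto\pm\beta_{i+3n}$, $\beta_i\mapsto\pm\alpha_{i+3n}$, applying $\sigma^{\ell_0}$ to a left factor $w'$ replaces it by $\pm\sigma^{\ell_0}(w')$, and I would check that the net sign along each rung of the ladder is the constant $(-1)^{\ell_0}$. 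This constant is precisely the coefficient recorded in part (II); that it is $(-1)^{\ell_0}$ rather than $1$ (as for the translates of $Y^{(1)}_t$, whose entries are sign-free idempotent tensors) reflects that the left tensor factors occurring in $\Omega^{\bullet}(Y^{(2)}_t)$ are genuine paths on which $\sigma$ acts by $-1$. With that sign compatibility in place the twisted family again satisfies the ladder relations one period up, and part (II) follows by induction on $\ell_0$.

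The substance is part (I): for $r_0=1,\dots,11$ one verifies $d_{r_0-1}\varphi_{r_0}=\varphi_{r_0-1}d_{t+r_0-1}$, with $\varphi_{11}$ read off from part (II) at $\ell_0=1$. Both sides are explicit $\Lambda$-matrices whose entries are $K$-combinations of elementary tensors $w'\otimes w$ of paths of $\mathcal Q_s$, and I would compute the two products blockwise, organising the bookkeeping exactly as the differentials in paragraph~\ref{sect_res} are organised: split by the residue of the relevant column index modulo $s$ (respectively $2s$) — the ranges $0\le j<s$, $s\le j<3s$, $3s\le j<5s$, $5s\le j<6s$, and so on — and within each block follow source and target vertices through the helper functions $g$, $\lambda$, $f_0$, $f_1$, $z_0$, $z_1$. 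In every block both products collapse to a single elementary tensor or to $0$; one evaluates the resulting products of paths using the defining relations of $R_s$ (all paths of length $5$ vanish, $\alpha^3=\beta^3$, $\alpha\gamma\beta=\beta\gamma\alpha=0$) and compares coefficients, the signs being controlled by $\kappa=(-1)^{\lfloor\ell/2\rfloor}$, $\kappa_0=(-1)^{\ell_0}$ and the $f_1$-factors coming from the $d_r$. A convenient internal check is that the $r_0=11$ rung must reproduce the $\sigma$-twist of the $r_0=0$ rung up to the factor $-1$, which simultaneously closes the induction.

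The only genuine obstacle is the combinatorial bookkeeping; there is no conceptual difficulty once the differentials of paragraph~\ref{sect_res} and the matching index conventions are fixed. The delicate parts are (a) tracking vertex indices simultaneously modulo $n=6$ and modulo $s$ through the nested helper functions, so that each product of two path-tensors that occurs is correctly seen either to lie in the defining ideal of $R_s$ or to equal a single basis vector of the target projective, and (b) the sign accounting, i.e.\ the interaction of $\kappa$, $\kappa_0$ and the $f_1$-signs with the minus signs built into $\sigma$. One could shorten part of the computation by recognising the cocycle $Y^{(2)}_t$ (whose degree has residue $r=0$) as a Yoneda product of generators of strictly smaller degree and invoking the multiplication formula \eqref{mult_formula}, but the direct verification above is self-contained and is the route I would follow.
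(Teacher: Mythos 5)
Your proposal is correct and coincides with the paper's own proof: the paper establishes these translate propositions (written out explicitly for case 3, the others being ``proved similarly'') precisely by computing the products $d_{t_0-1}f_{t_0}$ blockwise and checking that they coincide with $f_{t_0-1}d_{t+t_0-1}$, i.e.\ the same ladder of commutative squares you describe, with the same blockwise bookkeeping through the helper functions and the relations of $R_s$. Part (II) is likewise handled in the paper exactly as you propose, by twisting the left tensor components with $\sigma^{\ell_0}$ and multiplying the coefficients by $(-1)^{\ell_0}$, so there is no gap and no difference in method.
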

\begin{pr}[Translates for the case 3]\label{shifts_3}
$({\rm I})$ Let $r_0\in\N$, $r_0<11$. $r_0$-translates of the
elements $Y^{(3)}_t$ are described by the following way.

$(1)$ If $r_0=0$, then $\Omega^{0}(Y_t^{(3)})$ is described with
$(7s\times 6s)$-matrix with one nonzero element that is of the
following form{\rm:}
$$b_{5s+(-3\ell_0)_s,6s+(-3\ell_0)_s}=\kappa w_{(j+m)n+5\ra (j+m+1)n}\otimes e_{jn+5}.$$

$(2)$ If $r_0=1$, then $\Omega^{1}(Y_t^{(3)})$ is described with
$(6s\times 7s)$-matrix with the following elements $b_{ij}${\rm:}

If $0\le j<s+(-3\ell_0)_s$, then $b_{ij}=0$.

If $j=s+(-3\ell_0)_s$, then $$b_{ij}=
\begin{cases}
-w_{(j+m)n+g(j+s)+1\ra (j+m+1)n+g(j+s)+1}\otimes e_{jn+g(j+s)},\quad i=j+s;\\
-w_{(j+m)n+5\ra (j+m+1)n+g(j+s)+1}\otimes w_{jn+g(j+s)\ra jn+g(j+s)+1},\quad i=j+3s;\\
0,\quad\text{otherwise.}\end{cases}$$

If $s+(-3\ell_0)_s<j<2s+(-3\ell_0)_s$, then $b_{ij}=0$.

If $j=2s+(-3\ell_0)_s$, then $$b_{ij}=
\begin{cases}
-w_{(j+m)n+g(j+s)+1\ra (j+m+1)n+g(j+s)+1}\otimes e_{jn+g(j+s)},\quad i=j+s;\\
-w_{(j+m)n+5\ra (j+m+1)n+g(j+s)+1}\otimes w_{jn+g(j+s)\ra jn+g(j+s)+1},\quad i=j+3s;\\
0,\quad\text{otherwise.}\end{cases}$$

If $2s+(-3\ell_0)_s<j<3s+(-3\ell_0)_s$, then $b_{ij}=0$.

If $j=3s+(-3\ell_0)_s$, then $$b_{ij}=
\begin{cases}
-w_{(j+m)n+5\ra (j+m+1)n+g(j)}\otimes e_{jn+g(j+s)+1},\quad i=j+s;\\
0,\quad\text{otherwise.}\end{cases}$$

If $3s+(-3\ell_0)_s<j<4s+(-3\ell_0)_s$, then $b_{ij}=0$.

If $j=4s+(-3\ell_0)_s$, then $$b_{ij}=
\begin{cases}
-w_{(j+m)n+5\ra (j+m+1)n+g(j)}\otimes e_{jn+g(j+s)+1},\quad i=j+s;\\
0,\quad\text{otherwise.}\end{cases}$$

If $4s+(-3\ell_0)_s<j<5s+(-1-3\ell_0)_s$, then $b_{ij}=0$.

If $j=5s+(-1-3\ell_0)_s$, then $$b_{ij}=
\begin{cases}
\kappa e_{(j+m+2)n}\otimes w_{jn+5\ra (j+1)n+5},\quad i=(j+1)_s+6s;\\
0,\quad\text{otherwise.}\end{cases}$$

If $5s+(-1-3\ell_0)_s<j<6s$, then $b_{ij}=0$.

$(3)$ If $r_0=2$, then $\Omega^{2}(Y_t^{(3)})$ is described with
$(8s\times 6s)$-matrix with the following elements $b_{ij}${\rm:}

If $0\le j<(-3\ell_0)_s$, then $b_{ij}=0$.

If $j=(-3\ell_0)_s$, then $$b_{ij}=
\begin{cases}
w_{(j+m-1)n+5\ra (j+m)n+g(j)+1}\otimes e_{jn},\quad i=j;\\
0,\quad\text{otherwise.}\end{cases}$$

If $(-3\ell_0)_s<j<s+(-3\ell_0)_s$, then $b_{ij}=0$.

If $j=s+(-3\ell_0)_s$, then $$b_{ij}=
\begin{cases}
w_{(j+m-1)n+5\ra (j+m)n+g(j)+1}\otimes e_{jn},\quad i=j-s;\\
0,\quad\text{otherwise.}\end{cases}$$

If $s+(-3\ell_0)_s<j<6s+(-1-3\ell_0)_s$, then $b_{ij}=0$.

If $j=6s+(-1-3\ell_0)_s$, then $$b_{ij}=
\begin{cases}
e_{(j+m+1)n+g(j)}\otimes w_{jn+5\ra (j+1)n+g(j+s)+1},\quad i=(j+1)_s+4s;\\
0,\quad\text{otherwise.}\end{cases}$$

If $6s+(-1-3\ell_0)_s<j<7s+(-1-3\ell_0)_s$, then $b_{ij}=0$.

If $j=7s+(-1-3\ell_0)_s$, then $$b_{ij}=
\begin{cases}
e_{(j+m+1)n+g(j)}\otimes w_{jn+5\ra (j+1)n+g(j+s)+1},\quad i=(j+1)_s+3s;\\
0,\quad\text{otherwise.}\end{cases}$$

If $7s+(-1-3\ell_0)_s<j<8s$, then $b_{ij}=0$.

$(4)$ If $r_0=3$, then $\Omega^{3}(Y_t^{(3)})$ is described with
$(9s\times 8s)$-matrix with the following elements $b_{ij}${\rm:}

If $0\le j<6s+(-1-3\ell_0)_s$, then $b_{ij}=0$.

If $j=6s+(-1-3\ell_0)_s$, then $$b_{ij}=
\begin{cases}
-e_{(j+m+1)n+g(j+s)+1}\otimes w_{jn+g(j)+1\ra (j+1)n},\quad i=(j+1)_s+s;\\
0,\quad\text{otherwise.}\end{cases}$$

If $6s+(-1-3\ell_0)_s<j<7s+(-1-3\ell_0)_s$, then $b_{ij}=0$.

If $j=7s+(-1-3\ell_0)_s$, then $$b_{ij}=
\begin{cases}
e_{(j+m+1)n+g(j+s)+1}\otimes w_{jn+g(j)+1\ra (j+1)n},\quad i=(j+1)_s;\\
0,\quad\text{otherwise.}\end{cases}$$

If $7s+(-1-3\ell_0)_s<j<8s+(-1-3\ell_0)_s$, then $b_{ij}=0$.

If $j=8s+(-1-3\ell_0)_s$, then $$b_{ij}=
\begin{cases}
-\kappa w_{(j+m+1)n+g(j)\ra (j+m+1)n+5}\otimes e_{jn+5},\quad i=j-2s;\\
\kappa w_{(j+m+1)n+g(j+s)\ra (j+m+1)n+5}\otimes e_{jn+5},\quad i=j-s;\\
0,\quad\text{otherwise.}\end{cases}$$

If $8s+(-1-3\ell_0)_s<j<9s$, then $b_{ij}=0$.

$(5)$ If $r_0=4$, then $\Omega^{4}(Y_t^{(3)})$ is described with
$(8s\times 9s)$-matrix with the following elements $b_{ij}${\rm:}

If $0\le j<2s+(-1-3\ell_0)_s$, then $b_{ij}=0$.

If $j=2s+(-1-3\ell_0)_s$, then $$b_{ij}=
\begin{cases}
-\kappa w_{(j+m)n+5\ra (j+m+1)n}\otimes w_{jn+g(j)\ra (j+1)n},\quad i=(j+1)_s;\\
\kappa e_{(j+m+1)n}\otimes w_{jn+g(j)\ra (j+1)n},\quad i=(j+1)_s+s;\\
0,\quad\text{otherwise.}\end{cases}$$

If $2s+(-1-3\ell_0)_s<j<3s+(-1-3\ell_0)_s$, then $b_{ij}=0$.

If $j=3s+(-1-3\ell_0)_s$, then $$b_{ij}=
\begin{cases}
-\kappa e_{(j+m+1)n}\otimes w_{jn+g(j)\ra (j+1)n},\quad i=(j+1)_s+s;\\
0,\quad\text{otherwise.}\end{cases}$$

If $3s+(-1-3\ell_0)_s<j<4s+(-1-3\ell_0)_s$, then $b_{ij}=0$.

If $j=4s+(-1-3\ell_0)_s$, then $$b_{ij}=
\begin{cases}
-\kappa e_{(j+m)n+5}\otimes w_{jn+g(j)+1\ra (j+1)n},\quad i=(j+1)_s;\\
\kappa e_{(j+m)n+5}\otimes w_{jn+g(j)+1\ra jn+5},\quad i=j+4s;\\
0,\quad\text{otherwise.}\end{cases}$$

If $4s+(-1-3\ell_0)_s<j<5s+(-1-3\ell_0)_s$, then $b_{ij}=0$.

If $j=5s+(-1-3\ell_0)_s$, then $$b_{ij}=
\begin{cases}
-\kappa e_{(j+m)n+5}\otimes w_{jn+g(j)+1\ra jn+5},\quad i=j+3s;\\
0,\quad\text{otherwise.}\end{cases}$$

If $5s+(-1-3\ell_0)_s<j<8s$, then $b_{ij}=0$.

$(6)$ If $r_0=5$, then $\Omega^{5}(Y_t^{(3)})$ is described with
$(9s\times 8s)$-matrix with the following elements $b_{ij}${\rm:}

If $0\le j<s+(-1-3\ell_0)_s$, then $b_{ij}=0$.

If $j=s+(-1-3\ell_0)_s$, then $$b_{ij}=
\begin{cases}
-e_{(j+m+1)n+g(j+s)}\otimes w_{jn+g(j+s)\ra (j+1)n},\quad i=(j+1)_s;\\
0,\quad\text{otherwise.}\end{cases}$$

If $s+(-1-3\ell_0)_s<j<2s+(-1-3\ell_0)_s$, then $b_{ij}=0$.

If $j=2s+(-1-3\ell_0)_s$, then $$b_{ij}=
\begin{cases}
-e_{(j+m+1)n+g(j+s)}\otimes w_{jn+g(j+s)\ra (j+1)n},\quad i=(j+1)_s+s;\\
0,\quad\text{otherwise.}\end{cases}$$

If $2s+(-1-3\ell_0)_s<j<3s+(-1-3\ell_0)_s$, then $b_{ij}=0$.

If $j=3s+(-1-3\ell_0)_s$, then $$b_{ij}=
\begin{cases}
-w_{(j+m+1)n\ra (j+m+1)n+g(j)+1}\otimes e_{jn+g(j+s)},\quad i=j-s;\\
w_{(j+m)n+5\ra (j+m+1)n+g(j)+1}\otimes w_{jn+g(j+s)\ra jn+g(j+s)+1},\quad i=j+s;\\
0,\quad\text{otherwise.}\end{cases}$$

If $3s+(-1-3\ell_0)_s<j<4s+(-1-3\ell_0)_s$, then $b_{ij}=0$.

If $j=4s+(-1-3\ell_0)_s$, then $$b_{ij}=
\begin{cases}
-w_{(j+m+1)n\ra (j+m+1)n+g(j)+1}\otimes e_{jn+g(j+s)},\quad i=j-s;\\
w_{(j+m)n+5\ra (j+m+1)n+g(j)+1}\otimes w_{jn+g(j+s)\ra jn+g(j+s)+1},\quad i=j+s;\\
0,\quad\text{otherwise.}\end{cases}$$

If $4s+(-1-3\ell_0)_s<j<5s+(-1-3\ell_0)_s$, then $b_{ij}=0$.

If $j=5s+(-1-3\ell_0)_s$, then $$b_{ij}=
\begin{cases}
-w_{(j+m)n+5\ra (j+m+1)n+g(j+s)+1}\otimes e_{jn+g(j+s)+1},\quad i=j-s;\\
0,\quad\text{otherwise.}\end{cases}$$

If $5s+(-1-3\ell_0)_s<j<6s+(-1-3\ell_0)_s$, then $b_{ij}=0$.

If $j=6s+(-1-3\ell_0)_s$, then $$b_{ij}=
\begin{cases}
-w_{(j+m)n+5\ra (j+m+1)n+g(j+s)+1}\otimes e_{jn+g(j+s)+1},\quad i=j-s;\\
0,\quad\text{otherwise.}\end{cases}$$

If $6s+(-1-3\ell_0)_s<j<9s$, then $b_{ij}=0$.

$(7)$ If $r_0=6$, then $\Omega^{6}(Y_t^{(3)})$ is described with
$(8s\times 9s)$-matrix with the following elements $b_{ij}${\rm:}

If $0\le j<(-1-3\ell_0)_s$, then $b_{ij}=0$.

If $j=(-1-3\ell_0)_s$, then $$b_{ij}=
\begin{cases}
w_{(j+m)n\ra (j+m)n+g(j)+1}\otimes e_{jn},\quad i=j;\\
-w_{(j+m)n+g(j)\ra (j+m)n+g(j)+1}\otimes w_{jn\ra jn+g(j)},\quad i=j+s;\\
0,\quad\text{otherwise.}\end{cases}$$

If $(-1-3\ell_0)_s<j<s+(-1-3\ell_0)_s$, then $b_{ij}=0$.

If $j=s+(-1-3\ell_0)_s$, then $$b_{ij}=
\begin{cases}
w_{(j+m)n\ra (j+m)n+g(j)+1}\otimes e_{jn},\quad i=j-s;\\
-w_{(j+m)n+g(j)\ra (j+m)n+g(j)+1}\otimes w_{jn\ra jn+g(j)},\quad i=j+s;\\
0,\quad\text{otherwise.}\end{cases}$$

If $s+(-1-3\ell_0)_s<j<2s+(-1-3\ell_0)_s$, then $b_{ij}=0$.

If $j=2s+(-1-3\ell_0)_s$, then $$b_{ij}=
\begin{cases}
\kappa w_{(j+m)n+g(j)\ra (j+m)n+5}\otimes e_{jn+g(j)},\quad i=j-s;\\
0,\quad\text{otherwise.}\end{cases}$$

If $2s+(-1-3\ell_0)_s<j<3s+(-1-3\ell_0)_s$, then $b_{ij}=0$.

If $j=3s+(-1-3\ell_0)_s$, then $$b_{ij}=
\begin{cases}
\kappa w_{(j+m)n+g(j)\ra (j+m)n+5}\otimes e_{jn+g(j)},\quad i=j-s;\\
0,\quad\text{otherwise.}\end{cases}$$

If $3s+(-1-3\ell_0)_s<j<8s$, then $b_{ij}=0$.

$(8)$ If $r_0=7$, then $\Omega^{7}(Y_t^{(3)})$ is described with
$(6s\times 8s)$-matrix with the following elements $b_{ij}${\rm:}

If $0\le j<s+(-2-3\ell_0)_s$, then $b_{ij}=0$.

If $j=s+(-2-3\ell_0)_s$, then $$b_{ij}=
\begin{cases}
-e_{(j+m+1)n+g(j+s)+1}\otimes w_{jn+g(j+s)\ra (j+1)n},\quad i=(j+1)_s;\\
0,\quad\text{otherwise.}\end{cases}$$

If $s+(-2-3\ell_0)_s<j<2s+(-2-3\ell_0)_s$, then $b_{ij}=0$.

If $j=2s+(-2-3\ell_0)_s$, then $$b_{ij}=
\begin{cases}
-e_{(j+m+1)n+g(j+s)+1}\otimes w_{jn+g(j+s)\ra (j+1)n},\quad i=(j+1)_s+s;\\
0,\quad\text{otherwise.}\end{cases}$$

If $2s+(-2-3\ell_0)_s<j<5s+(-2-3\ell_0)_s$, then $b_{ij}=0$.

If $j=5s+(-2-3\ell_0)_s$, then $$b_{ij}=
\begin{cases}
-\kappa w_{(j+m+1)n+g(j+s)\ra (j+m+2)n}\otimes e_{jn+5},\quad i=j+s;\\
-\kappa w_{(j+m+1)n+g(j)\ra (j+m+2)n}\otimes e_{jn+5},\quad i=j+2s;\\
0,\quad\text{otherwise.}\end{cases}$$

If $5s+(-2-3\ell_0)_s<j<6s$, then $b_{ij}=0$.

$(9)$ If $r_0=8$, then $\Omega^{8}(Y_t^{(3)})$ is described with
$(7s\times 6s)$-matrix with the following elements $b_{ij}${\rm:}

If $0\le j<(-2-3\ell_0)_s$, then $b_{ij}=0$.

If $j=(-2-3\ell_0)_s$, then $$b_{ij}=
\begin{cases}
\kappa e_{(j+m)n+5}\otimes w_{jn\ra (j+1)n},\quad i=(j+1)_s;\\
0,\quad\text{otherwise.}\end{cases}$$

If $(-2-3\ell_0)_s<j<s+(-2-3\ell_0)_s$, then $b_{ij}=0$.

If $j=s+(-2-3\ell_0)_s$, then $$b_{ij}=
\begin{cases}
\kappa w_{(j+m)n+g(j+s)+1\ra (j+m+1)n}\otimes e_{jn+g(j+s)},\quad i=j;\\
-\kappa w_{(j+m)n+g(j)\ra (j+m+1)n}\otimes w_{jn+g(j+s)\ra jn+g(j+s)+1},\quad i=j+2s;\\
0,\quad\text{otherwise.}\end{cases}$$

If $s+(-2-3\ell_0)_s<j<2s+(-2-3\ell_0)_s$, then $b_{ij}=0$.

If $j=2s+(-2-3\ell_0)_s$, then $$b_{ij}=
\begin{cases}
-\kappa e_{(j+m+1)n}\otimes w_{jn+g(j+s)\ra jn+5},\quad i=j+3s;\\
0,\quad\text{otherwise.}\end{cases}$$

If $2s+(-2-3\ell_0)_s<j<3s+(-2-3\ell_0)_s$, then $b_{ij}=0$.

If $j=3s+(-2-3\ell_0)_s$, then $$b_{ij}=
\begin{cases}
w_{(j+m)n+g(j)\ra (j+m+1)n+g(j)}\otimes e_{jn+g(j+s)+1},\quad i=j;\\
0,\quad\text{otherwise.}\end{cases}$$

If $3s+(-2-3\ell_0)_s<j<4s+(-2-3\ell_0)_s$, then $b_{ij}=0$.

If $j=4s+(-2-3\ell_0)_s$, then $$b_{ij}=
\begin{cases}
w_{(j+m)n+g(j)\ra (j+m+1)n+g(j)}\otimes e_{jn+g(j+s)+1},\quad i=j;\\
0,\quad\text{otherwise.}\end{cases}$$

If $4s+(-2-3\ell_0)_s<j<7s$, then $b_{ij}=0$.

$(10)$ If $r_0=9$, then $\Omega^{9}(Y_t^{(3)})$ is described with
$(6s\times 7s)$-matrix with the following elements $b_{ij}${\rm:}

If $0\le j<(-2-3\ell_0)_s$, then $b_{ij}=0$.

If $j=(-2-3\ell_0)_s$, then $$b_{ij}=
\begin{cases}
\kappa e_{(j+m+1)n}\otimes w_{jn\ra jn+g(j)},\quad i=j+s;\\
0,\quad\text{otherwise.}\end{cases}$$

If $(-2-3\ell_0)_s<j<2s+(-2-3\ell_0)_s$, then $b_{ij}=0$.

If $j=2s+(-2-3\ell_0)_s$, then $$b_{ij}=
\begin{cases}
-w_{(j+m+1)n\ra (j+m+1)n+g(j)}\otimes e_{jn+g(j+s)},\quad i=j;\\
0,\quad\text{otherwise.}\end{cases}$$

If $2s+(-2-3\ell_0)_s<j<6s$, then $b_{ij}=0$.

$(11)$ If $r_0=10$, then $\Omega^{10}(Y_t^{(3)})$ is described with
$(6s\times 6s)$-matrix with the following elements $b_{ij}${\rm:}

If $0\le j<(-2-3\ell_0)_s$, then $b_{ij}=0$.

If $j=(-2-3\ell_0)_s$, then $$b_{ij}=
\begin{cases}
\kappa w_{(j+m)n\ra (j+m+1)n}\otimes e_{jn},\quad i=j;\\
0,\quad\text{otherwise.}\end{cases}$$

If $(-2-3\ell_0)_s<j<s+(-3-3\ell_0)_s$, then $b_{ij}=0$.

If $j=s+(-3-3\ell_0)_s$, then $$b_{ij}=
\begin{cases}
e_{(j+m+1)n+g(j)}\otimes w_{jn+g(j+s)\ra (j+1)n+g(j+s)},\quad i=(j+1)_s+s;\\
0,\quad\text{otherwise.}\end{cases}$$

If $s+(-3-3\ell_0)_s<j<2s+(-3-3\ell_0)_s$, then $b_{ij}=0$.

If $j=2s+(-3-3\ell_0)_s$, then $$b_{ij}=
\begin{cases}
e_{(j+m+1)n+g(j)}\otimes w_{jn+g(j+s)\ra (j+1)n+g(j+s)},\quad i=(j+1)_s+2s;\\
0,\quad\text{otherwise.}\end{cases}$$

If $2s+(-3-3\ell_0)_s<j<3s+(-3-3\ell_0)_s$, then $b_{ij}=0$.

If $j=3s+(-3-3\ell_0)_s$, then $$b_{ij}=
\begin{cases}
-w_{(j+m+1)n+g(j)\ra (j+m+1)n+g(j)+1}\otimes w_{jn+g(j+s)+1\ra (j+1)n+g(j+s)},\quad i=(j+1)_s+s;\\
e_{(j+m+1)n+g(j)+1}\otimes w_{jn+g(j+s)+1\ra (j+1)n+g(j+s)+1},\quad i=(j+1)_s+3s;\\
0,\quad\text{otherwise.}\end{cases}$$

If $3s+(-3-3\ell_0)_s<j<4s+(-3-3\ell_0)_s$, then $b_{ij}=0$.

If $j=4s+(-3-3\ell_0)_s$, then $$b_{ij}=
\begin{cases}
-w_{(j+m+1)n+g(j)\ra (j+m+1)n+g(j)+1}\otimes w_{jn+g(j+s)+1\ra (j+1)n+g(j+s)},\quad i=(j+1)_s+2s;\\
e_{(j+m+1)n+g(j)+1}\otimes w_{jn+g(j+s)+1\ra (j+1)n+g(j+s)+1},\quad i=(j+1)_s+4s;\\
0,\quad\text{otherwise.}\end{cases}$$

If $4s+(-3-3\ell_0)_s<j<5s+(-3-3\ell_0)_s$, then $b_{ij}=0$.

If $j=5s+(-3-3\ell_0)_s$, then $$b_{ij}=
\begin{cases}
-\kappa w_{(j+m+1)n+g(j)\ra (j+m+1)n+5}\otimes w_{jn+5\ra (j+1)n+g(j+s)},\quad i=(j+1)_s+s;\\
\kappa w_{(j+m+1)n+g(j+s)\ra (j+m+1)n+5}\otimes w_{jn+5\ra (j+1)n+g(j)},\quad i=(j+1)_s+2s;\\
\kappa w_{(j+m+1)n+g(j)+1\ra (j+m+1)n+5}\otimes w_{jn+5\ra (j+1)n+g(j+s)+1},\quad i=(j+1)_s+3s;\\
-\kappa w_{(j+m+1)n+g(j+s)+1\ra (j+m+1)n+5}\otimes w_{jn+5\ra (j+1)n+g(j)+1},\quad i=(j+1)_s+4s;\\
\kappa e_{(j+m+1)n+5}\otimes w_{jn+5\ra (j+1)n+5},\quad i=(j+1)_s+5s;\\
0,\quad\text{otherwise.}\end{cases}$$

If $5s+(-3-3\ell_0)_s<j<6s$, then $b_{ij}=0$.

\medskip
$({\rm II})$ Represent an arbitrary $t_0\in\N$ in the form
$t_0=11\ell_0+r_0$, where $0\le r_0\le 10.$ Then
$\Omega^{t_0}(Y_t^{(3)})$ is a $\Omega^{r_0}(Y_t^{(3)})$, whose left
components twisted by $\sigma^{\ell_0}$, and coefficients multiplied
by $(-1)^{\ell_0}$.
\end{pr}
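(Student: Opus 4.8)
The argument is the standard inductive lifting of a cocycle along the minimal bimodule resolution of Section~\ref{sect_res}. By the construction of the generators, $\Omega^0(Y^{(3)}_t)$ is the homomorphism $Y^{(3)}_t\colon Q_t\ra Q_0$ itself, and $\varepsilon\,Y^{(3)}_t\in\Ker\delta^t$; hence it lifts to a chain map $\{\varphi_i\colon Q_{t+i}\ra Q_i\}_{i\ge0}$ with $\varphi_0=Y^{(3)}_t$, and $\varphi_i=\Omega^i(Y^{(3)}_t)$ by definition. The plan is to produce this chain map one level at a time: once the matrix of $\varphi_{i-1}=\Omega^{i-1}(Y^{(3)}_t)$ is known, the matrix of $\varphi_i$ is determined up to homotopy by the commuting square
\[
d_{i-1}\,\varphi_i=\varphi_{i-1}\,d_{t+i-1},
\]
and it suffices to check that the matrix written in part~$({\rm I})$ of the statement is \emph{a} solution of this identity (any solution within its homotopy class serves, since the product formula \eqref{mult_formula} is homotopy-invariant). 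Here $d_{i-1}$ is read off from Section~\ref{sect_res} for $1\le i\le 10$; and since $\deg Y^{(3)}_t\equiv 1\pmod{11}$ one has $t+i-1=11\ell+i$ with $\ell=\lfloor t/11\rfloor$, so by Theorem~\ref{resol_thm} the map $d_{t+i-1}$ is the differential $d_i$ with all left tensor factors acted on by $\sigma^{\ell}$. Running the induction for $i=1,\dots,10$ establishes part~$({\rm I})$, after which part~$({\rm II})$ is formal.

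Each of these ten verifications reduces to multiplying two explicit sparse matrices over $K[\mathcal{Q}_s]$ and comparing entries. In practice one fixes the block index $j$, uses the defining relations of $R_s$ (paths of length $5$ vanish, $\a^3=\b^3$, $\a\g\b=\b\g\a=0$) to collapse the compositions of the paths $w_{i\ra k}$ that occur, and keeps track both of the residues $(\cdot)_s$, $(\cdot)_{2s}$, $z_0$, $z_1$ that label the summands and of the signs carried by $\kappa$ and by the functions $f_1$, $h$ already present in the differentials; the wrap-around in the $s$ copies $Q'_{\bullet,r}$ is absorbed by $f_0$ and the reductions $(\cdot)_s$. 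This part is purely mechanical but voluminous --- eleven differentials, several of them with internal case splits in the range of $j$, each to be paired with its $\sigma$-twisted shift --- and I expect the bookkeeping here to be the main obstacle; there is no conceptual difficulty in it.

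For part~$({\rm II})$ I would invoke the $\sigma$-periodicity of the resolution. By Theorem~\ref{resol_thm}, for every $\ell_0$ the complex $Q_{11\ell_0+\bullet}$ arises from $Q_\bullet$ by replacing each summand $P_{a,b}$ with $P_{\sigma^{\ell_0}(a),b}$ and letting $\sigma^{\ell_0}$ act on the left tensor factors of the differentials; and since $t\equiv1\pmod{11}$, the source complex $Q_{t+11\ell_0+\bullet}$ is the analogous $\sigma^{\ell_0}$-twist of $Q_{t+\bullet}$. Applying this twist to the chain map $\{\varphi_i\}$ of part~$({\rm I})$ yields a chain map $Q_{t+11\ell_0+\bullet}\ra Q_{11\ell_0+\bullet}$ lifting the (twisted) class of $\varepsilon\,Y^{(3)}_t$, so by uniqueness up to homotopy it represents $\Omega^{11\ell_0+\bullet}(Y^{(3)}_t)$. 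Comparing the resulting matrices with those of part~$({\rm I})$ gives at once the stated relabelling of indices and of left tensor components; the remaining overall scalar $(-1)^{\ell_0}=\kappa_0$ records how $\sigma^{\ell_0}$ interacts with the action $\sigma(\g_i)=-\g_{i+n}$ on the $\g$-arrows appearing in the entries and with the sign normalisation $\kappa=(-1)^{\lfloor\ell/2\rfloor}$, and is confirmed by a short check on the short list of path monomials that actually occur in part~$({\rm I})$.
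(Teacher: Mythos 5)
Your proposal is correct and follows essentially the same route as the paper: the paper also verifies that the squares $d_{t_0-1}\,\Omega^{t_0}(Y^{(3)}_t)=\Omega^{t_0-1}(Y^{(3)}_t)\,d_{t+t_0-1}$ commute by writing out the product matrices $d_{r_0-1}f_{r_0}$ explicitly for $r_0=0,\dots,10$ and comparing, and handles $t_0\ge 11$ by twisting the left tensor components by $\sigma^{\ell_0}$ with the sign $(-1)^{\ell_0}$.
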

\begin{pr}[Translates for the case 4]
$({\rm I})$ Let $r_0\in\N$, $r_0<11$. $r_0$-translates of the
elements $Y^{(4)}_t$ are described by the following way.

$(1)$ If $r_0=0$, then $\Omega^{0}(Y_t^{(4)})$ is described with
$(7s\times 6s)$-matrix with the following elements $b_{ij}${\rm:}

If $0\le j<s$, then $$b_{ij}=
\begin{cases}
-w_{(j+m)n\ra (j+m)n+g(j+s)}\otimes e_{jn},\quad i=j;\\
0,\quad\text{otherwise.}\end{cases}$$

If $s\le j<5s$, then $b_{ij}=0$.

If $5s\le j<6s$, then $$b_{ij}=
\begin{cases}
\kappa w_{(j+m)n+g(j)+1\ra (j+m)n+5}\otimes e_{jn+g(j)+1},\quad i=j-s;\\
0,\quad\text{otherwise.}\end{cases}$$

If $6s\le j<7s$, then $b_{ij}=0$.

$(2)$ If $r_0=1$, then $\Omega^{1}(Y_t^{(4)})$ is described with
$(6s\times 7s)$-matrix with the following elements $b_{ij}${\rm:}

If $0\le j<s$, then $$b_{ij}=
\begin{cases}
\kappa w_{(j+m)n+g(j+s)\ra (j+m)n+5}\otimes e_{jn},\quad i=j+s;\\
\kappa w_{(j+m)n+g(j+s)+1\ra (j+m)n+5}\otimes w_{jn\ra jn+g(j+s)},\quad i=j+3s;\\
0,\quad\text{otherwise.}\end{cases}$$

If $s\le j<2s$, then $$b_{ij}=
\begin{cases}
w_{(j+m+1)n+g(j)\ra (j+m+1)n+g(j)+1}\otimes w_{jn+g(j+s)\ra (j+1)n},\quad i=(j+1)_s+s;\\
0,\quad\text{otherwise.}\end{cases}$$

If $2s\le j<4s$, then $b_{ij}=0$.

If $4s\le j<5s$, then $$b_{ij}=
\begin{cases}
w_{(j+m)n+5\ra (j+m+1)n+g(j+s)}\otimes e_{jn+g(j+s)+1},\quad i=j+s;\\
w_{(j+m+1)n\ra (j+m+1)n+g(j+s)}\otimes w_{jn+g(j+s)+1\ra jn+5},\quad i=j+2s;\\
0,\quad\text{otherwise.}\end{cases}$$

If $5s\le j<6s$, then $$b_{ij}=
\begin{cases}
\kappa w_{(j+m+1)n\ra (j+m+2)n}\otimes e_{jn+5},\quad i=j+s;\\
0,\quad\text{otherwise.}\end{cases}$$

$(3)$ If $r_0=2$, then $\Omega^{2}(Y_t^{(4)})$ is described with
$(8s\times 6s)$-matrix with the following elements $b_{ij}${\rm:}

If $0\le j<s$, then $$b_{ij}=
\begin{cases}
-e_{(j+m)n+g(j+s)+1}\otimes w_{jn\ra jn+g(j+s)},\quad i=j+2s;\\
0,\quad\text{otherwise.}\end{cases}$$

If $s\le j<3s$, then $b_{ij}=0$.

If $3s\le j<4s$, then $$b_{ij}=
\begin{cases}
\kappa w_{(j+m)n+g(j)+1\ra (j+m)n+5}\otimes e_{jn+g(j)},\quad i=j-s;\\
0,\quad\text{otherwise.}\end{cases}$$

If $4s\le j<5s$, then $$b_{ij}=
\begin{cases}
-\kappa w_{(j+m)n+g(j+s)\ra (j+m+1)n}\otimes e_{jn+g(j)+1},\quad i=j-s;\\
0,\quad\text{otherwise.}\end{cases}$$

If $5s\le j<6s$, then $b_{ij}=0$.

If $6s\le j<7s$, then $$b_{ij}=
\begin{cases}
w_{(j+m)n+5\ra (j+m+1)n+g(j+s)}\otimes w_{jn+5\ra (j+1)n},\quad i=(j+1)_s;\\
-e_{(j+m+1)n+g(j+s)}\otimes w_{jn+5\ra (j+1)n+g(j)+1},\quad i=(j+1)_s+3s;\\
0,\quad\text{otherwise.}\end{cases}$$

If $7s\le j<8s$, then $b_{ij}=0$.

$(4)$ If $r_0=3$, then $\Omega^{3}(Y_t^{(4)})$ is described with
$(9s\times 8s)$-matrix with the following elements $b_{ij}${\rm:}

If $0\le j<s$, then $b_{ij}=0$.

If $s\le j<2s$, then $$b_{ij}=
\begin{cases}
\kappa w_{(j+m)n+g(j)+1\ra (j+m+1)n}\otimes e_{jn},\quad i=j;\\
0,\quad\text{otherwise.}\end{cases}$$

If $2s\le j<3s$, then $b_{ij}=0$.

If $3s\le j<4s$, then $$b_{ij}=
\begin{cases}
w_{(j+m)n+5\ra (j+m+1)n+g(j)}\otimes e_{jn+g(j)},\quad i=j;\\
0,\quad\text{otherwise.}\end{cases}$$

If $4s\le j<5s$, then $$b_{ij}=
\begin{cases}
w_{(j+m+1)n\ra (j+m+1)n+g(j+s)}\otimes e_{jn+g(j)+1},\quad i=j;\\
e_{(j+m+1)n+g(j+s)}\otimes w_{jn+g(j)+1\ra jn+5},\quad i=j+3s;\\
0,\quad\text{otherwise.}\end{cases}$$

If $5s\le j<7s$, then $b_{ij}=0$.

If $7s\le j<8s$, then $$b_{ij}=
\begin{cases}
-e_{(j+m+1)n+g(j)+1}\otimes w_{jn+g(j)+1\ra (j+1)n},\quad i=(j+1)_s+s;\\
0,\quad\text{otherwise.}\end{cases}$$

If $8s\le j<9s$, then $$b_{ij}=
\begin{cases}
-\kappa w_{(j+m+1)n+g(j+s)+1\ra (j+m+1)n+5}\otimes w_{jn+5\ra (j+1)n},\quad i=(j+1)_s+s;\\
0,\quad\text{otherwise.}\end{cases}$$

$(5)$ If $r_0=4$, then $\Omega^{4}(Y_t^{(4)})$ is described with
$(8s\times 9s)$-matrix with the following elements $b_{ij}${\rm:}

If $0\le j<s$, then $$b_{ij}=
\begin{cases}
-w_{(j+m-1)n+5\ra (j+m)n+g(j+s)}\otimes e_{jn},\quad i=j;\\
e_{(j+m)n+g(j+s)}\otimes w_{jn\ra jn+g(j)},\quad i=j+2s;\\
0,\quad\text{otherwise.}\end{cases}$$

If $s\le j<2s$, then $b_{ij}=0$.

If $2s\le j<3s$, then $$b_{ij}=
\begin{cases}
-\kappa w_{(j+m)n+g(j+s)\ra (j+m+1)n}\otimes e_{jn+g(j)},\quad i=j;\\
\kappa w_{(j+m)n+g(j+s)+1\ra (j+m+1)n}\otimes w_{jn+g(j)\ra jn+g(j)+1},\quad i=j+4s;\\
0,\quad\text{otherwise.}\end{cases}$$

If $3s\le j<4s$, then $b_{ij}=0$.

If $4s\le j<5s$, then $$b_{ij}=
\begin{cases}
-\kappa w_{(j+m)n+g(j+s)+1\ra (j+m)n+5}\otimes e_{jn+g(j)+1},\quad i=j+2s;\\
0,\quad\text{otherwise.}\end{cases}$$

If $5s\le j<6s$, then $b_{ij}=0$.

If $6s\le j<7s$, then $$b_{ij}=
\begin{cases}
-w_{(j+m)n+5\ra (j+m+1)n+g(j+s)+1}\otimes w_{jn+5\ra (j+1)n},\quad i=(j+1)_s;\\
w_{(j+m)n+5\ra (j+m+1)n+g(j+s)+1}\otimes e_{jn+5},\quad i=j+2s;\\
0,\quad\text{otherwise.}\end{cases}$$

If $7s\le j<8s$, then $b_{ij}=0$.

$(6)$ If $r_0=5$, then $\Omega^{5}(Y_t^{(4)})$ is described with
$(9s\times 8s)$-matrix with the following elements $b_{ij}${\rm:}

If $0\le j<s$, then $b_{ij}=0$.

If $s\le j<2s$, then $$b_{ij}=
\begin{cases}
e_{(j+m+1)n+g(j)}\otimes w_{jn+g(j+s)\ra (j+1)n},\quad i=(j+1)_s+s;\\
w_{(j+m+1)n\ra (j+m+1)n+g(j)}\otimes e_{jn+g(j+s)},\quad i=j+s;\\
0,\quad\text{otherwise.}\end{cases}$$

If $2s\le j<4s$, then $b_{ij}=0$.

If $4s\le j<5s$, then $$b_{ij}=
\begin{cases}
e_{(j+m+1)n+g(j+s)+1}\otimes w_{jn+g(j+s)\ra jn+5},\quad i=j+3s;\\
0,\quad\text{otherwise.}\end{cases}$$

If $5s\le j<6s$, then $$b_{ij}=
\begin{cases}
-w_{(j+m)n+5\ra (j+m+1)n+g(j)+1}\otimes e_{jn+g(j+s)+1},\quad i=j-s;\\
0,\quad\text{otherwise.}\end{cases}$$

If $6s\le j<7s$, then $b_{ij}=0$.

If $7s\le j<8s$, then $$b_{ij}=
\begin{cases}
\kappa w_{(j+m+1)n+g(j)+1\ra (j+m+1)n+5}\otimes e_{jn+5},\quad i=j;\\
0,\quad\text{otherwise.}\end{cases}$$

If $8s\le j<9s$, then $b_{ij}=0$.

$(7)$ If $r_0=6$, then $\Omega^{6}(Y_t^{(4)})$ is described with
$(8s\times 9s)$-matrix with the following elements $b_{ij}${\rm:}

If $0\le j<s$, then $$b_{ij}=
\begin{cases}
e_{(j+m)n+g(j+s)+1}\otimes w_{jn\ra jn+g(j)},\quad i=j+3s;\\
0,\quad\text{otherwise.}\end{cases}$$

If $s\le j<2s$, then $b_{ij}=0$.

If $2s\le j<3s$, then $$b_{ij}=
\begin{cases}
-\kappa w_{(j+m)n+g(j+s)+1\ra (j+m)n+5}\otimes e_{jn+g(j)},\quad i=j+s;\\
0,\quad\text{otherwise.}\end{cases}$$

If $3s\le j<5s$, then $b_{ij}=0$.

If $5s\le j<6s$, then $$b_{ij}=
\begin{cases}
-\kappa w_{(j+m)n+g(j)+1\ra (j+m+1)n}\otimes e_{jn+g(j)+1},\quad i=j+s;\\
0,\quad\text{otherwise.}\end{cases}$$

If $6s\le j<7s$, then $$b_{ij}=
\begin{cases}
w_{(j+m+1)n\ra (j+m+1)n+g(j+s)}\otimes w_{jn+5\ra (j+1)n},\quad i=(j+1)_s;\\
-e_{(j+m+1)n+g(j+s)}\otimes w_{jn+5\ra (j+1)n+g(j+s)},\quad i=(j+1)_s+2s;\\
-w_{(j+m)n+5\ra (j+m+1)n+g(j+s)}\otimes e_{jn+5},\quad i=j+s;\\
0,\quad\text{otherwise.}\end{cases}$$

If $7s\le j<8s$, then $b_{ij}=0$.

$(8)$ If $r_0=7$, then $\Omega^{7}(Y_t^{(4)})$ is described with
$(6s\times 8s)$-matrix with the following elements $b_{ij}${\rm:}

If $0\le j<s$, then $b_{ij}=0$.

If $s\le j<2s$, then $$b_{ij}=
\begin{cases}
e_{(j+m+1)n+g(j)+1}\otimes w_{jn+g(j+s)\ra (j+1)n},\quad i=(j+1)_s+s;\\
-w_{(j+m)n+5\ra (j+m+1)n+g(j)+1}\otimes e_{jn+g(j+s)},\quad i=j+s;\\
0,\quad\text{otherwise.}\end{cases}$$

If $2s\le j<4s$, then $b_{ij}=0$.

If $4s\le j<5s$, then $$b_{ij}=
\begin{cases}
w_{(j+m+1)n\ra (j+m+1)n+g(j+s)}\otimes e_{jn+g(j+s)+1},\quad i=j+s;\\
-e_{(j+m+1)n+g(j+s)}\otimes w_{jn+g(j+s)+1\ra jn+5},\quad i=j+3s;\\
0,\quad\text{otherwise.}\end{cases}$$

If $5s\le j<6s$, then $$b_{ij}=
\begin{cases}
\kappa w_{(j+m+1)n+g(j)+1\ra (j+m+2)n}\otimes w_{jn+5\ra (j+1)n},\quad i=(j+1)_s+s;\\
0,\quad\text{otherwise.}\end{cases}$$

$(9)$ If $r_0=8$, then $\Omega^{8}(Y_t^{(4)})$ is described with
$(7s\times 6s)$-matrix with the following elements $b_{ij}${\rm:}

If $0\le j<s$, then $$b_{ij}=
\begin{cases}
\kappa w_{(j+m-1)n+5\ra (j+m)n+5}\otimes e_{jn},\quad i=j;\\
\kappa w_{(j+m)n+g(j+s)+1\ra (j+m)n+5}\otimes w_{jn\ra jn+g(j+s)},\quad i=j+2s;\\
0,\quad\text{otherwise.}\end{cases}$$

If $s\le j<2s$, then $b_{ij}=0$.

If $2s\le j<3s$, then $$b_{ij}=
\begin{cases}
-\kappa w_{(j+m)n+g(j+s)+1\ra (j+m+1)n}\otimes e_{jn+g(j+s)},\quad i=j;\\
0,\quad\text{otherwise.}\end{cases}$$

If $3s\le j<4s$, then $b_{ij}=0$.

If $4s\le j<5s$, then $$b_{ij}=
\begin{cases}
w_{(j+m+1)n\ra (j+m+1)n+g(j+s)}\otimes w_{jn+g(j+s)+1\ra jn+5},\quad i=j+s;\\
0,\quad\text{otherwise.}\end{cases}$$

If $5s\le j<6s$, then $$b_{ij}=
\begin{cases}
-w_{(j+m)n+5\ra (j+m+1)n+g(j)+1}\otimes w_{jn+5\ra (j+1)n},\quad i=(j+1)_s;\\
-e_{(j+m+1)n+g(j)+1}\otimes w_{jn+5\ra (j+1)n+g(j)},\quad i=(j+1)_s+2s;\\
0,\quad\text{otherwise.}\end{cases}$$

If $6s\le j<7s$, then $b_{ij}=0$.

$(10)$ If $r_0=9$, then $\Omega^{9}(Y_t^{(4)})$ is described with
$(6s\times 7s)$-matrix with the following elements $b_{ij}${\rm:}

If $0\le j<2s$, then $b_{ij}=0$.

If $2s\le j<3s$, then $$b_{ij}=
\begin{cases}
w_{(j+m+1)n\ra (j+m+1)n+g(j+s)}\otimes e_{jn+g(j+s)},\quad i=j;\\
0,\quad\text{otherwise.}\end{cases}$$

If $3s\le j<4s$, then $b_{ij}=0$.

If $4s\le j<5s$, then $$b_{ij}=
\begin{cases}
-e_{(j+m+1)n+g(j+s)+1}\otimes w_{jn+g(j+s)+1\ra jn+5},\quad i=j+2s;\\
0,\quad\text{otherwise.}\end{cases}$$

If $5s\le j<6s$, then $b_{ij}=0$.

$(11)$ If $r_0=10$, then $\Omega^{10}(Y_t^{(4)})$ is described with
$(6s\times 6s)$-matrix with the following elements $b_{ij}${\rm:}

If $0\le j<s$, then $$b_{ij}=
\begin{cases}
-\kappa e_{(j+m+1)n}\otimes w_{jn\ra (j+1)n},\quad i=(j+1)_s;\\
\kappa w_{(j+m)n+g(j)\ra (j+m+1)n}\otimes w_{jn\ra jn+g(j+s)},\quad i=j+2s;\\
-\kappa w_{(j+m)n+g(j)+1\ra (j+m+1)n}\otimes w_{jn\ra jn+g(j+s)+1},\quad i=j+4s;\\
\kappa w_{(j+m)n+5\ra (j+m+1)n}\otimes w_{jn\ra jn+5},\quad i=j+5s;\\
0,\quad\text{otherwise.}\end{cases}$$

If $s\le j<2s$, then $b_{ij}=0$.

If $2s\le j<3s$, then $$b_{ij}=
\begin{cases}
w_{(j+m+1)n\ra (j+m+1)n+g(j+s)}\otimes w_{jn+g(j+s)\ra (j+1)n},\quad i=(j+1)_s;\\
-w_{(j+m)n+5\ra (j+m+1)n+g(j+s)}\otimes w_{jn+g(j+s)\ra jn+5},\quad i=j+3s;\\
0,\quad\text{otherwise.}\end{cases}$$

If $3s\le j<4s$, then $b_{ij}=0$.

If $4s\le j<5s$, then $$b_{ij}=
\begin{cases}
w_{(j+m+1)n\ra (j+m+1)n+g(j+s)+1}\otimes w_{jn+g(j+s)+1\ra (j+1)n},\quad i=(j+1)_s;\\
-w_{(j+m)n+5\ra (j+m+1)n+g(j+s)+1}\otimes w_{jn+g(j+s)+1\ra jn+5},\quad i=j+s;\\
0,\quad\text{otherwise.}\end{cases}$$

If $5s\le j<6s$, then $$b_{ij}=
\begin{cases}
\kappa w_{(j+m+1)n+g(j+s)\ra (j+m+1)n+5}\otimes w_{jn+5\ra (j+1)n+g(j)},\quad i=(j+1)_s+2s;\\
-\kappa w_{(j+m+1)n+g(j+s)+1\ra (j+m+1)n+5}\otimes w_{jn+5\ra (j+1)n+g(j)+1},\quad i=(j+1)_s+4s;\\
\kappa e_{(j+m+1)n+5}\otimes w_{jn+5\ra (j+1)n+5},\quad i=(j+1)_s+5s;\\
0,\quad\text{otherwise.}\end{cases}$$

\medskip
$({\rm II})$ Represent an arbitrary $t_0\in\N$ in the form
$t_0=11\ell_0+r_0$, where $0\le r_0\le 10.$ Then
$\Omega^{t_0}(Y_t^{(4)})$ is a $\Omega^{r_0}(Y_t^{(4)})$, whose left
components twisted by $\sigma^{\ell_0}$.
\end{pr}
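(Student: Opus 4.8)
The plan is to construct the chain‑map liftings $\Omega^{r_0}(Y^{(4)}_t)$ explicitly and to verify the defining commuting‑square relations by matrix computation. Recall that $Y^{(4)}_t$ is a cocycle in $\Ker\delta^t\subseteq\Hom_\Lambda(Q_t,R)$, which I lift to a $\Lambda$‑homomorphism $Q_t\to Q_0$; since each $Q_i$ is projective and the resolution \eqref{resolv} is exact, this cocycle extends to a chain map $\{\Omega^i(Y^{(4)}_t)\colon Q_{t+i}\to Q_i\}_{i\ge 0}$, unique up to homotopy, with $\Omega^0(Y^{(4)}_t)$ the chosen lift and
\[
d_{r_0-1}\cdot\Omega^{r_0}(Y^{(4)}_t)=\Omega^{r_0-1}(Y^{(4)}_t)\cdot d_{t+r_0-1}\qquad(r_0\ge 1),
\]
the products being read in the matrix convention of Section~\ref{sect_res}. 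So it is enough to show that, for $0\le r_0\le 10$, the matrices displayed in items $(1)$--$(11)$ form such a chain map; the assertion for arbitrary $t_0$ in part~$({\rm II})$ will then follow from the $\sigma$‑periodicity of the resolution, and the consistency of the induction at $r_0=11$ is guaranteed by the isomorphism $\Omega^{11}({}_\Lambda R)\simeq{}_1R_\sigma$.

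I would proceed by induction on $r_0$. For the base case $r_0=0$ one only has to observe that the matrix of item~$(1)$ is literally the matrix of $Y^{(4)}_t$ recorded in the section on generators of $\HH^*(R)$. For the inductive step, assuming $\Omega^{r_0-1}(Y^{(4)}_t)$ is as stated and writing $B=(b_{ij})$ for the candidate matrix of item~$(r_0+1)$, I would verify the identity $d_{r_0-1}\cdot B=\Omega^{r_0-1}(Y^{(4)}_t)\cdot d_{t+r_0-1}$ block by block: both sides are matrices whose entries are $K$‑linear combinations of tensors $w\otimes w'$ of paths of $\mathcal Q_s$, and after forming the two matrix products from the explicit formulas for $d_0,\dots,d_{10}$ (and their $\sigma^{\ell}$‑twists on the higher‑degree terms of $Q_{t+\bullet}$, by Theorem~\ref{resol_thm}) one reduces each entry modulo the ideal $I$, using that all paths of length $\ge 5$ vanish, that $\a^3=\b^3$, and that $\a\g\b=\b\g\a=0$. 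In each of the eleven cases the surviving nonzero entries should lie exactly along the positions $i$ prescribed in the statement, the zero blocks being confirmed by vanishing of the corresponding path products; the index bookkeeping is to be organised through the helper functions $g$, $f_0$, $f_1$, $\lambda$, $h$ and the residues $(a)_s$, $(a)_{2s}$ already introduced.

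For part~$({\rm II})$ the plan is to invoke Theorem~\ref{resol_thm} directly: for $t_0=11\ell_0+r_0$ the complex $Q_{t_0+\bullet}$ is obtained from $Q_{r_0+\bullet}$ by replacing every summand $P_{i,j}$ by $P_{\sigma^{\ell_0}(i),j}$ and every differential by the one obtained by applying $\sigma^{\ell_0}$ to its left tensor components, so applying $\sigma^{\ell_0}$ to the left components of $\{\Omega^{r_0+i}(Y^{(4)}_t)\}_i$ again yields a chain map lifting the same cocycle; a check of sign conventions shows that here no extra factor $(-1)^{\ell_0}$ appears, in contrast to the translates of $Y^{(2)}_t$ and $Y^{(3)}_t$. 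The hard part will be the inductive step: each of the eleven commuting‑square identities is a sizeable matrix equation over paths of the quiver, and essentially all the effort goes into carrying the combinatorial index bookkeeping through the products with the differential matrices and the reductions modulo $I$ without error, especially keeping track of signs and of the boundary values of the various residue and floor functions.
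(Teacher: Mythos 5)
Your proposal follows essentially the same route as the paper: the paper proves these translate propositions (explicitly for case 3, the others, including case 4, being "proved similarly") by verifying commutativity of the squares $d_{t_0-1}f_{t_0}=f_{t_0-1}d_{t+t_0-1}$ through direct computation of the matrix products modulo the quiver relations, and treats arbitrary $t_0=11\ell_0+r_0$ via the $\sigma^{\ell_0}$-twist of the periodic resolution, exactly as you outline. So the approach is correct and coincides with the paper's, the only work being the same entrywise bookkeeping the paper carries out.
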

\begin{pr}[Translates for the case 6]
$({\rm I})$ Let $r_0\in\N$, $r_0<11$. $r_0$-translates of the
elements $Y^{(6)}_t$ are described by the following way.

$(1)$ If $r_0=0$, then $\Omega^{0}(Y_t^{(6)})$ is described with
$(8s\times 6s)$-matrix with the following elements $b_{ij}${\rm:}

If $0\le j<s$, then $$b_{ij}=
\begin{cases}
w_{(j+m)n\ra (j+m)n+g(j)+1}\otimes e_{jn},\quad i=j;\\
0,\quad\text{otherwise.}\end{cases}$$

If $s\le j<2s$, then $$b_{ij}=
\begin{cases}
w_{(j+m)n\ra (j+m)n+g(j)+1}\otimes e_{jn},\quad i=j-s;\\
0,\quad\text{otherwise.}\end{cases}$$

If $2s\le j<4s$, then $b_{ij}=0$.

If $4s\le j<5s$, then $$b_{ij}=
\begin{cases}
w_{(j+m)n+g(j)+1\ra (j+m+1)n}\otimes e_{jn+g(j)+1},\quad i=j-s;\\
0,\quad\text{otherwise.}\end{cases}$$

If $5s\le j<6s$, then $b_{ij}=0$.

If $6s\le j<7s$, then $$b_{ij}=
\begin{cases}
w_{(j+m)n+5\ra (j+m+1)n+g(j)}\otimes e_{jn+5},\quad i=j-s;\\
0,\quad\text{otherwise.}\end{cases}$$

If $7s\le j<8s$, then $b_{ij}=0$.

$(2)$ If $r_0=1$, then $\Omega^{1}(Y_t^{(6)})$ is described with
$(9s\times 7s)$-matrix with the following elements $b_{ij}${\rm:}

If $0\le j<s$, then $b_{ij}=0$.

If $s\le j<2s$, then $$b_{ij}=
\begin{cases}
w_{(j+m)n+g(j+s)\ra (j+m+1)n}\otimes e_{jn},\quad i=j-s;\\
w_{(j+m)n+g(j+s)+1\ra (j+m+1)n}\otimes w_{jn\ra jn+g(j+s)},\quad i=j+s;\\
0,\quad\text{otherwise.}\end{cases}$$

If $2s\le j<3s$, then $b_{ij}=0$.

If $3s\le j<4s$, then $$b_{ij}=
\begin{cases}
w_{(j+m)n+5\ra (j+m+1)n+g(j+s)}\otimes w_{jn+g(j)\ra jn+g(j)+1},\quad i=j+2s;\\
0,\quad\text{otherwise.}\end{cases}$$

If $4s\le j<5s$, then $$b_{ij}=
\begin{cases}
w_{(j+m)n+5\ra (j+m+1)n+g(j)}\otimes e_{jn+g(j)+1},\quad i=j;\\
0,\quad\text{otherwise.}\end{cases}$$

If $5s\le j<6s$, then $b_{ij}=0$.

If $6s\le j<7s$, then $$b_{ij}=
\begin{cases}
w_{(j+m+1)n+g(j+s)\ra (j+m+1)n+g(j+s)+1}\otimes w_{jn+g(j)+1\ra (j+1)n},\quad i=(j+1)_s+s;\\
0,\quad\text{otherwise.}\end{cases}$$

If $7s\le j<8s$, then $$b_{ij}=
\begin{cases}
w_{(j+m+1)n\ra (j+m+1)n+g(j+s)+1}\otimes w_{jn+g(j)+1\ra jn+5},\quad i=j-s;\\
0,\quad\text{otherwise.}\end{cases}$$

If $8s\le j<9s$, then $$b_{ij}=
\begin{cases}
w_{(j+m+1)n\ra (j+m+1)n+5}\otimes e_{jn+5},\quad i=j-2s;\\
0,\quad\text{otherwise.}\end{cases}$$

$(3)$ If $r_0=2$, then $\Omega^{2}(Y_t^{(6)})$ is described with
$(8s\times 6s)$-matrix with the following elements $b_{ij}${\rm:}

If $0\le j<s$, then $$b_{ij}=
\begin{cases}
w_{(j+m-1)n+5\ra (j+m)n+g(j)}\otimes e_{jn},\quad i=j;\\
0,\quad\text{otherwise.}\end{cases}$$

If $s\le j<2s$, then $b_{ij}=0$.

If $2s\le j<3s$, then $$b_{ij}=
\begin{cases}
w_{(j+m)n+5\ra (j+m+1)n}\otimes w_{jn+g(j)\ra (j+1)n},\quad i=(j+1)_s;\\
0,\quad\text{otherwise.}\end{cases}$$

If $3s\le j<4s$, then $$b_{ij}=
\begin{cases}
w_{(j+m)n+g(j+s)\ra (j+m+1)n}\otimes w_{jn+g(j)\ra jn+g(j)+1},\quad i=j+s;\\
0,\quad\text{otherwise.}\end{cases}$$

If $4s\le j<5s$, then $$b_{ij}=
\begin{cases}
w_{(j+m)n+g(j+s)\ra (j+m)n+5}\otimes e_{jn+g(j)+1},\quad i=j-s;\\
0,\quad\text{otherwise.}\end{cases}$$

If $5s\le j<6s$, then $b_{ij}=0$.

If $6s\le j<7s$, then $$b_{ij}=
\begin{cases}
w_{(j+m+1)n+g(j)\ra (j+m+1)n+g(j)+1}\otimes w_{jn+5\ra (j+1)n+g(j+s)+1},\quad i=(j+1)_s+4s;\\
0,\quad\text{otherwise.}\end{cases}$$

If $7s\le j<8s$, then $b_{ij}=0$.

$(4)$ If $r_0=3$, then $\Omega^{3}(Y_t^{(6)})$ is described with
$(9s\times 8s)$-matrix with the following elements $b_{ij}${\rm:}

If $0\le j<s$, then $$b_{ij}=
\begin{cases}
w_{(j+m)n+g(j)+1\ra (j+m+1)n}\otimes e_{jn},\quad i=j;\\
w_{(j+m)n+5\ra (j+m+1)n}\otimes w_{jn\ra jn+g(j)},\quad i=j+2s;\\
0,\quad\text{otherwise.}\end{cases}$$

If $s\le j<3s$, then $b_{ij}=0$.

If $3s\le j<4s$, then $$b_{ij}=
\begin{cases}
e_{(j+m+1)n+g(j)+1}\otimes w_{jn+g(j+s)\ra (j+1)n},\quad i=(j+1)_s+s;\\
0,\quad\text{otherwise.}\end{cases}$$

If $4s\le j<5s$, then $$b_{ij}=
\begin{cases}
e_{(j+m+1)n+g(j)+1}\otimes w_{jn+g(j+s)\ra (j+1)n},\quad i=(j+1)_s;\\
0,\quad\text{otherwise.}\end{cases}$$

If $5s\le j<7s$, then $b_{ij}=0$.

If $7s\le j<8s$, then $$b_{ij}=
\begin{cases}
w_{(j+m+1)n+g(j+s)+1\ra (j+m+1)n+5}\otimes w_{jn+5\ra (j+1)n},\quad i=(j+1)_s;\\
e_{(j+m+1)n+5}\otimes w_{jn+5\ra (j+1)n+g(j+s)},\quad i=(j+1)_s+2s;\\
0,\quad\text{otherwise.}\end{cases}$$

If $8s\le j<9s$, then $b_{ij}=0$.

$(5)$ If $r_0=4$, then $\Omega^{4}(Y_t^{(6)})$ is described with
$(8s\times 9s)$-matrix with the following elements $b_{ij}${\rm:}

If $0\le j<s$, then $$b_{ij}=
\begin{cases}
w_{(j+m-1)n+5\ra (j+m)n+g(j)+1}\otimes e_{jn},\quad i=j;\\
w_{(j+m)n+g(j)\ra (j+m)n+g(j)+1}\otimes w_{jn\ra jn+g(j+s)},\quad i=j+3s;\\
e_{(j+m)n+g(j)+1}\otimes w_{jn\ra jn+g(j+s)+1},\quad i=j+7s;\\
0,\quad\text{otherwise.}\end{cases}$$

If $s\le j<2s$, then $$b_{ij}=
\begin{cases}
w_{(j+m)n+g(j)\ra (j+m)n+g(j)+1}\otimes w_{jn\ra jn+g(j+s)},\quad i=j+s;\\
e_{(j+m)n+g(j)+1}\otimes w_{jn\ra jn+g(j+s)+1},\quad i=j+5s;\\
0,\quad\text{otherwise.}\end{cases}$$

If $2s\le j<3s$, then $$b_{ij}=
\begin{cases}
e_{(j+m)n+5}\otimes w_{jn+g(j)\ra (j+1)n},\quad i=(j+1)_s;\\
0,\quad\text{otherwise.}\end{cases}$$

If $3s\le j<8s$, then $b_{ij}=0$.

$(6)$ If $r_0=5$, then $\Omega^{5}(Y_t^{(6)})$ is described with
$(6s\times 8s)$-matrix with the following elements $b_{ij}${\rm:}

If $0\le j<s$, then $$b_{ij}=
\begin{cases}
w_{(j+m)n+g(j)\ra (j+m)n+5}\otimes e_{jn},\quad i=j;\\
w_{(j+m)n+g(j+s)\ra (j+m)n+5}\otimes e_{jn},\quad i=j+s;\\
e_{(j+m)n+5}\otimes w_{jn\ra jn+g(j)+1},\quad i=j+4s;\\
e_{(j+m)n+5}\otimes w_{jn\ra jn+g(j+s)+1},\quad i=j+5s;\\
0,\quad\text{otherwise.}\end{cases}$$

If $s\le j<5s$, then $b_{ij}=0$.

If $5s\le j<6s$, then $$b_{ij}=
\begin{cases}
e_{(j+m+2)n}\otimes w_{jn+5\ra (j+1)n+g(j+s)},\quad i=(j+1)_s+2s;\\
0,\quad\text{otherwise.}\end{cases}$$

$(7)$ If $r_0=6$, then $\Omega^{6}(Y_t^{(6)})$ is described with
$(7s\times 9s)$-matrix with the following elements $b_{ij}${\rm:}

If $0\le j<s$, then $$b_{ij}=
\begin{cases}
w_{(j+m)n\ra (j+m)n+5}\otimes e_{jn},\quad i=j;\\
w_{(j+m)n+g(j)\ra (j+m)n+5}\otimes w_{jn\ra jn+g(j)},\quad i=j+s;\\
w_{(j+m)n+g(j+s)\ra (j+m)n+5}\otimes w_{jn\ra jn+g(j+s)},\quad i=j+2s;\\
0,\quad\text{otherwise.}\end{cases}$$

If $s\le j<2s$, then $$b_{ij}=
\begin{cases}
e_{(j+m+1)n}\otimes w_{jn+g(j+s)\ra (j+1)n},\quad i=(j+1)_s;\\
0,\quad\text{otherwise.}\end{cases}$$

If $2s\le j<3s$, then $b_{ij}=0$.

If $3s\le j<4s$, then $$b_{ij}=
\begin{cases}
w_{(j+m+1)n\ra (j+m+1)n+g(j)}\otimes w_{jn+g(j+s)+1\ra (j+1)n},\quad i=(j+1)_s;\\
0,\quad\text{otherwise.}\end{cases}$$

If $4s\le j<5s$, then $b_{ij}=0$.

If $5s\le j<6s$, then $$b_{ij}=
\begin{cases}
w_{(j+m+1)n+g(j+s)\ra (j+m+1)n+g(j+s)+1}\otimes w_{jn+5\ra (j+1)n+g(j+s)},\quad i=(j+1)_s+s;\\
0,\quad\text{otherwise.}\end{cases}$$

If $6s\le j<7s$, then $$b_{ij}=
\begin{cases}
w_{(j+m+1)n\ra (j+m+1)n+g(j+s)+1}\otimes w_{jn+5\ra (j+1)n},\quad i=(j+1)_s;\\
w_{(j+m+1)n+g(j+s)\ra (j+m+1)n+g(j+s)+1}\otimes w_{jn+5\ra (j+1)n+g(j+s)},\quad i=(j+1)_s+2s;\\
0,\quad\text{otherwise.}\end{cases}$$

$(8)$ If $r_0=7$, then $\Omega^{7}(Y_t^{(6)})$ is described with
$(6s\times 8s)$-matrix with the following elements $b_{ij}${\rm:}

If $0\le j<s$, then $$b_{ij}=
\begin{cases}
w_{(j+m)n+g(j)+1\ra (j+m+1)n}\otimes e_{jn},\quad i=j;\\
w_{(j+m)n+5\ra (j+m+1)n}\otimes w_{jn\ra jn+g(j+s)},\quad i=j+3s;\\
e_{(j+m+1)n}\otimes w_{jn\ra jn+g(j)+1},\quad i=j+4s;\\
e_{(j+m+1)n}\otimes w_{jn\ra jn+g(j+s)+1},\quad i=j+5s;\\
0,\quad\text{otherwise.}\end{cases}$$

If $s\le j<6s$, then $b_{ij}=0$.

$(9)$ If $r_0=8$, then $\Omega^{8}(Y_t^{(6)})$ is described with
$(6s\times 6s)$-matrix with the following elements $b_{ij}${\rm:}

If $0\le j<s$, then $$b_{ij}=
\begin{cases}
w_{(j+m)n+g(j)+1\ra (j+m+1)n}\otimes w_{jn\ra jn+g(j)},\quad i=j+s;\\
w_{(j+m)n+g(j)\ra (j+m+1)n}\otimes w_{jn\ra jn+g(j+s)+1},\quad i=j+4s;\\
0,\quad\text{otherwise.}\end{cases}$$

If $s\le j<2s$, then $b_{ij}=0$.

If $2s\le j<3s$, then $$b_{ij}=
\begin{cases}
w_{(j+m)n+5\ra (j+m+1)n+g(j)}\otimes w_{jn+g(j+s)\ra (j+1)n},\quad i=(j+1)_s;\\
0,\quad\text{otherwise.}\end{cases}$$

If $3s\le j<4s$, then $$b_{ij}=
\begin{cases}
w_{(j+m+1)n+g(j)\ra (j+m+1)n+g(j)+1}\otimes w_{jn+g(j+s)+1\ra (j+1)n+g(j+s)+1},\quad i=(j+1)_s+3s;\\
0,\quad\text{otherwise.}\end{cases}$$

If $4s\le j<5s$, then $$b_{ij}=
\begin{cases}
w_{(j+m)n+5\ra (j+m+1)n+g(j)+1}\otimes w_{jn+g(j+s)+1\ra (j+1)n},\quad i=(j+1)_s;\\
w_{(j+m+1)n+g(j)\ra (j+m+1)n+g(j)+1}\otimes w_{jn+g(j+s)+1\ra (j+1)n+g(j+s)+1},\quad i=(j+1)_s+4s;\\
0,\quad\text{otherwise.}\end{cases}$$

If $5s\le j<6s$, then $$b_{ij}=
\begin{cases}
w_{(j+m+1)n+g(j)+1\ra (j+m+1)n+5}\otimes w_{jn+5\ra (j+1)n+g(j)},\quad i=(j+1)_s+2s;\\
w_{(j+m+1)n+g(j)\ra (j+m+1)n+5}\otimes w_{jn+5\ra (j+1)n+g(j+s)+1},\quad i=(j+1)_s+3s;\\
0,\quad\text{otherwise.}\end{cases}$$

$(10)$ If $r_0=9$, then $\Omega^{9}(Y_t^{(6)})$ is described with
$(7s\times 7s)$-matrix with the following elements $b_{ij}${\rm:}

If $0\le j<s$, then $b_{ij}=0$.

If $s\le j<2s$, then $$b_{ij}=
\begin{cases}
w_{(j+m)n+5\ra (j+m+1)n+g(j+s)}\otimes e_{jn},\quad i=j-s;\\
0,\quad\text{otherwise.}\end{cases}$$

If $2s\le j<5s$, then $b_{ij}=0$.

If $5s\le j<6s$, then $$b_{ij}=
\begin{cases}
e_{(j+m+1)n+5}\otimes w_{jn+g(j)+1\ra (j+1)n},\quad i=(j+1)_s;\\
0,\quad\text{otherwise.}\end{cases}$$

If $6s\le j<7s$, then $$b_{ij}=
\begin{cases}
w_{(j+m+1)n+5\ra (j+m+2)n}\otimes w_{jn+5\ra (j+1)n},\quad i=(j+1)_s;\\
0,\quad\text{otherwise.}\end{cases}$$

$(11)$ If $r_0=10$, then $\Omega^{10}(Y_t^{(6)})$ is described with
$(6s\times 6s)$-matrix with the following elements $b_{ij}${\rm:}

If $0\le j<s$, then $$b_{ij}=
\begin{cases}
w_{(j+m)n\ra (j+m)n+5}\otimes e_{jn},\quad i=j;\\
w_{(j+m)n+g(j+s)\ra (j+m)n+5}\otimes w_{jn\ra jn+g(j)},\quad i=j+s;\\
w_{(j+m)n+g(j)\ra (j+m)n+5}\otimes w_{jn\ra jn+g(j+s)},\quad i=j+2s;\\
w_{(j+m)n+g(j+s)+1\ra (j+m)n+5}\otimes w_{jn\ra jn+g(j)+1},\quad i=j+3s;\\
w_{(j+m)n+g(j)+1\ra (j+m)n+5}\otimes w_{jn\ra jn+g(j+s)+1},\quad i=j+4s;\\
e_{(j+m)n+5}\otimes w_{jn\ra jn+5},\quad i=j+5s;\\
0,\quad\text{otherwise.}\end{cases}$$

If $s\le j<2s$, then $$b_{ij}=
\begin{cases}
w_{(j+m+1)n\ra (j+m+1)n+g(j)+1}\otimes w_{jn+g(j+s)\ra (j+1)n},\quad i=(j+1)_s;\\
w_{(j+m+1)n+g(j)\ra (j+m+1)n+g(j)+1}\otimes w_{jn+g(j+s)\ra (j+1)n+g(j+s)},\quad i=(j+1)_s+s;\\
0,\quad\text{otherwise.}\end{cases}$$

If $2s\le j<3s$, then $$b_{ij}=
\begin{cases}
w_{(j+m+1)n\ra (j+m+1)n+g(j)+1}\otimes w_{jn+g(j+s)\ra (j+1)n},\quad i=(j+1)_s;\\
w_{(j+m+1)n+g(j)\ra (j+m+1)n+g(j)+1}\otimes w_{jn+g(j+s)\ra (j+1)n+g(j+s)},\quad i=(j+1)_s+2s;\\
0,\quad\text{otherwise.}\end{cases}$$

If $3s\le j<6s$, then $b_{ij}=0$.

\medskip
$({\rm II})$ Represent an arbitrary $t_0\in\N$ in the form
$t_0=11\ell_0+r_0$, where $0\le r_0\le 10.$ Then
$\Omega^{t_0}(Y_t^{(6)})$ is a $\Omega^{r_0}(Y_t^{(6)})$, whose left
components twisted by $\sigma^{\ell_0}$.
\end{pr}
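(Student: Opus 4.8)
The goal is to display, for each $r_0\in\{0,1,\dots,10\}$, the homomorphism $\varphi_{r_0}=\Omega^{r_0}(Y_t^{(6)})\colon Q_{t+r_0}\ra Q_{r_0}$ recorded in part~(I)($r_0$) and to verify that the family $\{\varphi_{r_0}\}$ is a chain map lifting the cocycle $\varepsilon\circ Y_t^{(6)}$. Since a chain map lifting a given cocycle is determined only up to homotopy, it suffices to exhibit \emph{one} such lift; the product formula~\eqref{mult_formula} then applies to this choice. Writing $t=11\ell+r$, the generator $Y_t^{(6)}$ belongs to the case $r=3$ (and occurs only in $\myChar=2$), so $Q_t$ is the $\sigma^{\ell}$-twist of $Q_3$; the stated sizes of the matrices $\varphi_{r_0}$ match the numbers of indecomposable summands of $Q_{t+r_0}$ and $Q_{r_0}$, and for $r_0=0$ (where $m=0$) one reads off that the matrix in part~(I)(1) is $Y_t^{(6)}$ itself. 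Thus the real content is items (I)(2)--(I)(11) together with the periodicity claim~(II).

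The plan for~(I) is an induction on $r_0$. By Theorem~\ref{resol_thm} the differential $d_{t+r_0-1}\colon Q_{t+r_0}\ra Q_{t+r_0-1}$ is the appropriate $\sigma$-power applied to the left tensor factors of one of the explicit matrices $d_k$ of Section~\ref{sect_res}, so it is completely known. Given $\varphi_{r_0-1}=B_{r_0-1}$, I would check the single matrix identity
$$d_{r_0-1}\,B_{r_0}=B_{r_0-1}\,d_{t+r_0-1},$$
which is exactly commutativity of the $r_0$-th square; for $r_0=1$ it amounts to the cocycle condition $\varepsilon B_0 d_t=0$, which is in any case a consequence of that identity once it is verified. (Such an identity is consistent because, by the relations $d_kd_{k+1}=0$ from Section~\ref{sect_res} and the square already checked at step $r_0-1$, the right-hand side has image in $\Ker d_{r_0-2}=\Im d_{r_0-1}$.) Each identity is a block computation: one splits according to the interval of the column index $j$ among $[0,s),[s,2s),\dots$ and its residue modulo $s$, multiplies out the tensor-path entries, collects terms, and reduces them using the defining relations of $R$ (all paths of length $5$ vanish, $\a^3=\b^3$, $\a\g\b=\b\g\a=0$); signs are irrelevant since $\myChar=2$. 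The helper functions $g,f_0,f_1,h,\lambda$ and the shift symbols $(j)_s$, $(j)_{2s}$, $z_0$, $z_1$ are precisely the devices that make source and target indices line up across the product, and carrying them correctly through every case is where the work lies.

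For part~(II) I would use the period-$11$ structure of the resolution: by Theorem~\ref{resol_thm}, applying $\sigma^{\ell_0}$ to the left tensor factors of the whole chain map $\{\varphi_i\}$ produces a chain map of the complexes $\{Q_{i+11\ell_0}\}$ with their differentials, so $\Omega^{11\ell_0}(Y_t^{(6)})$ may be taken to be $\sigma^{\ell_0}$ applied to the left components of $\Omega^{0}(Y_t^{(6)})$; composing with the $\Omega^{r_0}$ already found in~(I) yields the general $\Omega^{11\ell_0+r_0}$. This is the same argument used in the earlier translate computations (cf.\ Proposition~\ref{shifts_3}); the one point to watch is whether the $\sigma^{\ell_0}$-twist introduces a factor $(-1)^{\ell_0}$ on coefficients (as it does in cases 2 and 3), which is settled by inspecting the $\sigma$-images of the short paths occurring in the entries, and which is vacuous here because $\myChar=2$.

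The obstacle is organisational rather than conceptual: there are eleven matrix identities, each with many cases in $j$ and nontrivial modular index bookkeeping, so a single slip in an offset such as $(-1-3\ell_0)_s$ versus $(-2-3\ell_0)_s$ propagates through all later squares. The heaviest piece is the square involving $B_{10}$, whose columns carry up to six nonzero entries and which sits at the step where the computation must close up consistently with the $11$-periodicity of~(II).
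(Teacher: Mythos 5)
Your proposal follows essentially the same route as the paper: the paper proves these translate propositions (carrying out the computation explicitly only for case 3 and declaring the remaining cases analogous) precisely by checking commutativity of the lifting squares, i.e.\ that the matrix products $d_{t_0-1}f_{t_0}$ and $f_{t_0-1}d_{t+t_0-1}$ coincide block by block, with $t_0\ge 11$ handled by twisting the left tensor components by $\sigma^{\ell_0}$ — exactly your plan, and you are right that the possible sign $(-1)^{\ell_0}$ is irrelevant here since $Y^{(6)}$ only occurs in characteristic $2$. One small inaccuracy: for case 6 the degree has $r=3$, hence $m=1$ rather than $0$, so the matrix in (I)(1) is not literally the matrix of $Y^{(6)}_t$; the base of your induction is the same kind of routine check (the $m$-shift in the left components is absorbed by the $\sigma^{\ell}$-twist together with the congruence $\ell n+m\equiv 0\,(s)$ defining the admissible degrees), not something read off at a glance.
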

\begin{pr}[Translates for the case 7]
$({\rm I})$ Let $r_0\in\N$, $r_0<11$. $r_0$-translates of the
elements $Y^{(7)}_t$ are described by the following way.

$(1)$ If $r_0=0$, then $\Omega^{0}(Y_t^{(7)})$ is described with
$(8s\times 6s)$-matrix with the following elements $b_{ij}${\rm:}

If $0\le j<2s$, then $$b_{ij}=
\begin{cases}
w_{(j+m)n\ra (j+m)n+g(j+s)+1}\otimes e_{jn},\quad i=(j)_s;\\
0,\quad\text{otherwise.}\end{cases}$$

If $2s\le j<4s$, then $$b_{ij}=
\begin{cases}
-\kappa w_{(j+m)n+g(j)\ra (j+m)n+5}\otimes e_{jn+g(j)},\quad i=j-s;\\
0,\quad\text{otherwise.}\end{cases}$$

If $4s\le j<6s$, then $$b_{ij}=
\begin{cases}
-\kappa w_{(j+m)n+g(j)+1\ra (j+m+1)n}\otimes e_{jn+g(j)+1},\quad i=j-s;\\
0,\quad\text{otherwise.}\end{cases}$$

If $6s\le j<8s$, then $b_{ij}=0$.

$(2)$ If $r_0=1$, then $\Omega^{1}(Y_t^{(7)})$ is described with
$(9s\times 7s)$-matrix with the following elements $b_{ij}${\rm:}

If $0\le j<s$, then $$b_{ij}=
\begin{cases}
\kappa w_{(j+m)n+g(j)\ra (j+m)n+5}\otimes e_{jn},\quad i=j;\\
\kappa w_{(j+m)n+g(j+s)\ra (j+m)n+5}\otimes e_{jn},\quad i=j+s;\\
0,\quad\text{otherwise.}\end{cases}$$

If $s\le j<2s$, then $$b_{ij}=
\begin{cases}
-\kappa w_{(j+m)n+g(j)\ra (j+m+1)n}\otimes e_{jn},\quad i=j;\\
\kappa w_{(j+m)n+g(j+s)+1\ra (j+m+1)n}\otimes w_{jn\ra jn+g(j+s)},\quad i=j+s;\\
-\kappa w_{(j+m)n+g(j)+1\ra (j+m+1)n}\otimes w_{jn\ra jn+g(j)},\quad i=j+2s;\\
0,\quad\text{otherwise.}\end{cases}$$

If $2s\le j<4s$, then $$b_{ij}=
\begin{cases}
f_1(j,3s)w_{(j+m)n+g(j)+1\ra (j+m+1)n+g(j)}\otimes e_{jn+g(j)},\quad i=j;\\
0,\quad\text{otherwise.}\end{cases}$$

If $4s\le j<6s$, then $b_{ij}=0$.

If $6s\le j<8s$, then $$b_{ij}=
\begin{cases}
f_1(j,7s)w_{(j+m)n+5\ra (j+m+1)n+g(j)+1}\otimes e_{jn+g(j)+1},\quad i=j-2s;\\
f_1(j,7s)w_{(j+m+1)n\ra (j+m+1)n+g(j)+1}\otimes w_{jn+g(j)+1\ra jn+5},\quad i=j-s(j_2-6);\\
0,\quad\text{otherwise.}\end{cases}$$

If $8s\le j<9s$, then $$b_{ij}=
\begin{cases}
\kappa w_{(j+m+1)n+g(j)\ra (j+m+1)n+5}\otimes w_{jn+5\ra (j+1)n},\quad i=(j+1)_s;\\
0,\quad\text{otherwise.}\end{cases}$$

$(3)$ If $r_0=2$, then $\Omega^{2}(Y_t^{(7)})$ is described with
$(8s\times 6s)$-matrix with the following elements $b_{ij}${\rm:}

If $0\le j<2s$, then $b_{ij}=0$.

If $2s\le j<4s$, then $$b_{ij}=
\begin{cases}
-\kappa w_{(j+m)n+g(j)+1\ra (j+m+1)n}\otimes e_{jn+g(j)},\quad i=j-s;\\
0,\quad\text{otherwise.}\end{cases}$$

If $4s\le j<6s$, then $$b_{ij}=
\begin{cases}
\kappa w_{(j+m)n+g(j+s)\ra (j+m)n+5}\otimes e_{jn+g(j)+1},\quad i=j-s;\\
0,\quad\text{otherwise.}\end{cases}$$

If $6s\le j<8s$, then $$b_{ij}=
\begin{cases}
e_{(j+m+1)n+g(j+s)+1}\otimes w_{jn+5\ra (j+1)n+g(j+s)},\quad i=(j+1)_s+s(8-j_2);\\
0,\quad\text{otherwise.}\end{cases}$$

$(4)$ If $r_0=3$, then $\Omega^{3}(Y_t^{(7)})$ is described with
$(9s\times 8s)$-matrix with the following elements $b_{ij}${\rm:}

If $0\le j<s$, then $$b_{ij}=
\begin{cases}
\kappa w_{(j+m)n+g(j)+1\ra (j+m+1)n}\otimes e_{jn},\quad i=j;\\
-\kappa w_{(j+m)n+g(j+s)+1\ra (j+m+1)n}\otimes e_{jn},\quad i=j+s;\\
0,\quad\text{otherwise.}\end{cases}$$

If $s\le j<3s$, then $b_{ij}=0$.

If $3s\le j<5s$, then $$b_{ij}=
\begin{cases}
e_{(j+m+1)n+g(j+s)+1}\otimes w_{jn+g(j+s)\ra (j+1)n},\quad i=(j+1)_s+s(j_2-3);\\
f_1(j,4s)w_{(j+m)n+5\ra (j+m+1)n+g(j+s)+1}\otimes e_{jn+g(j+s)},\quad i=j-s;\\
0,\quad\text{otherwise.}\end{cases}$$

If $5s\le j<7s$, then $b_{ij}=0$.

If $7s\le j<8s$, then $$b_{ij}=
\begin{cases}
-\kappa w_{(j+m+1)n+g(j+s)+1\ra (j+m+1)n+5}\otimes w_{jn+5\ra (j+1)n},\quad i=(j+1)_s;\\
\kappa w_{(j+m+1)n+g(j)+1\ra (j+m+1)n+5}\otimes w_{jn+5\ra (j+1)n},\quad i=(j+1)_s+s;\\
0,\quad\text{otherwise.}\end{cases}$$

If $8s\le j<9s$, then $b_{ij}=0$.

$(5)$ If $r_0=4$, then $\Omega^{4}(Y_t^{(7)})$ is described with
$(8s\times 9s)$-matrix with the following elements $b_{ij}${\rm:}

If $0\le j<2s$, then $$b_{ij}=
\begin{cases}
f_1(j,s)w_{(j+m-1)n+5\ra (j+m)n+g(j+s)+1}\otimes e_{jn},\quad i=j-sj_2;\\
-f_1(j,s)w_{(j+m)n+g(j+s)\ra (j+m)n+g(j+s)+1}\otimes w_{jn\ra jn+g(j)},\quad i=j+2s;\\
f_1(j,s)e_{(j+m)n+g(j+s)+1}\otimes w_{jn\ra jn+g(j)+1},\quad i=j+6s;\\
0,\quad\text{otherwise.}\end{cases}$$

If $2s\le j<4s$, then $$b_{ij}=
\begin{cases}
\kappa w_{(j+m)n+g(j+s)\ra (j+m)n+5}\otimes e_{jn+g(j)},\quad i=j;\\
-\kappa w_{(j+m)n+g(j+s)+1\ra (j+m)n+5}\otimes w_{jn+g(j)\ra jn+g(j)+1},\quad i=j+4s;\\
0,\quad\text{otherwise.}\end{cases}$$

If $4s\le j<8s$, then $b_{ij}=0$.

$(6)$ If $r_0=5$, then $\Omega^{5}(Y_t^{(7)})$ is described with
$(6s\times 8s)$-matrix with the following elements $b_{ij}${\rm:}

If $0\le j<s$, then $b_{ij}=0$.

If $s\le j<3s$, then $$b_{ij}=
\begin{cases}
-f_1(j,2s)w_{(j+m+1)n+g(j)\ra (j+m+1)n+g(j)+1}\otimes w_{jn+g(j+s)\ra (j+1)n},\\\quad\quad\quad i=(j+1)_s+s(2-j_2);\\
-f_1(j,2s)w_{(j+m+1)n\ra (j+m+1)n+g(j)+1}\otimes e_{jn+g(j+s)},\quad i=j+s;\\
0,\quad\text{otherwise.}\end{cases}$$

If $3s\le j<5s$, then $b_{ij}=0$.

If $5s\le j<6s$, then $$b_{ij}=
\begin{cases}
\kappa e_{(j+m+2)n}\otimes w_{jn+5\ra (j+1)n+g(j+s)},\quad i=(j+1)_s+2s;\\
\kappa e_{(j+m+2)n}\otimes w_{jn+5\ra (j+1)n+g(j)},\quad i=(j+1)_s+3s;\\
0,\quad\text{otherwise.}\end{cases}$$

$(7)$ If $r_0=6$, then $\Omega^{6}(Y_t^{(7)})$ is described with
$(7s\times 9s)$-matrix with the following elements $b_{ij}${\rm:}

If $0\le j<s$, then $$b_{ij}=
\begin{cases}
\kappa w_{(j+m)n+g(j)\ra (j+m)n+5}\otimes w_{jn\ra jn+g(j)},\quad i=j+s;\\
-\kappa w_{(j+m)n+g(j+s)\ra (j+m)n+5}\otimes w_{jn\ra jn+g(j+s)},\quad i=j+2s;\\
0,\quad\text{otherwise.}\end{cases}$$

If $s\le j<3s$, then $$b_{ij}=
\begin{cases}
\kappa e_{(j+m+1)n}\otimes w_{jn+g(j+s)\ra (j+1)n},\quad i=(j+1)_s;\\
\kappa w_{(j+m)n+g(j+s)\ra (j+m+1)n}\otimes e_{jn+g(j+s)},\quad i=j;\\
0,\quad\text{otherwise.}\end{cases}$$

If $3s\le j<5s$, then $$b_{ij}=
\begin{cases}
-f_1(j,4s)e_{(j+m+1)n+g(j+s)}\otimes w_{jn+g(j+s)+1\ra (j+1)n+g(j+s)},\quad i=(j+1)_s+s(j_2-2);\\
0,\quad\text{otherwise.}\end{cases}$$

If $5s\le j<7s$, then $$b_{ij}=
\begin{cases}
f_1(j,6s)w_{(j+m+1)n+g(j)\ra (j+m+1)n+g(j)+1}\otimes w_{jn+5\ra (j+1)n+g(j)},\quad i=(j+1)_s+s(7-j_2);\\
0,\quad\text{otherwise.}\end{cases}$$

$(8)$ If $r_0=7$, then $\Omega^{7}(Y_t^{(7)})$ is described with
$(6s\times 8s)$-matrix with the following elements $b_{ij}${\rm:}

If $0\le j<s$, then $b_{ij}=0$.

If $s\le j<3s$, then $$b_{ij}=
\begin{cases}
f_1(j,2s)w_{(j+m)n+5\ra (j+m+1)n+g(j+s)}\otimes e_{jn+g(j+s)},\quad i=j+s;\\
f_1(j,2s)w_{(j+m+1)n\ra (j+m+1)n+g(j+s)}\otimes w_{jn+g(j+s)\ra jn+g(j+s)+1},\quad i=j+3s;\\
-f_1(j,2s)e_{(j+m+1)n+g(j+s)}\otimes w_{jn+g(j+s)\ra jn+5},\quad i=j+5s;\\
0,\quad\text{otherwise.}\end{cases}$$

If $3s\le j<6s$, then $b_{ij}=0$.

$(9)$ If $r_0=8$, then $\Omega^{8}(Y_t^{(7)})$ is described with
$(6s\times 6s)$-matrix with the following elements $b_{ij}${\rm:}

If $0\le j<s$, then $$b_{ij}=
\begin{cases}
\kappa w_{(j+m)n+g(j+s)\ra (j+m+1)n}\otimes w_{jn\ra jn+g(j)+1},\quad i=j+3s;\\
-\kappa w_{(j+m)n+g(j)\ra (j+m+1)n}\otimes w_{jn\ra jn+g(j+s)+1},\quad i=j+4s;\\
0,\quad\text{otherwise.}\end{cases}$$

If $s\le j<2s$, then $$b_{ij}=
\begin{cases}
w_{(j+m+1)n\ra (j+m+1)n+g(j+s)}\otimes w_{jn+g(j+s)\ra jn+5},\quad i=j+4s;\\
0,\quad\text{otherwise.}\end{cases}$$

If $2s\le j<3s$, then $$b_{ij}=
\begin{cases}
-w_{(j+m)n+5\ra (j+m+1)n+g(j+s)}\otimes w_{jn+g(j+s)\ra (j+1)n},\quad i=(j+1)_s;\\
w_{(j+m)n+g(j+s)+1\ra (j+m+1)n+g(j+s)}\otimes e_{jn+g(j+s)},\quad i=j;\\
0,\quad\text{otherwise.}\end{cases}$$

If $3s\le j<5s$, then $$b_{ij}=
\begin{cases}
-f_1(j,4s)e_{(j+m+1)n+g(j+s)+1}\otimes w_{jn+g(j+s)+1\ra (j+1)n+g(j+s)},\quad i=(j+1)_s+s(j_2-2);\\
0,\quad\text{otherwise.}\end{cases}$$

If $5s\le j<6s$, then $$b_{ij}=
\begin{cases}
\kappa w_{(j+m+1)n+g(j+s)+1\ra (j+m+1)n+5}\otimes w_{jn+5\ra (j+1)n+g(j+s)},\quad i=(j+1)_s+s;\\
-\kappa w_{(j+m+1)n+g(j)+1\ra (j+m+1)n+5}\otimes w_{jn+5\ra (j+1)n+g(j)},\quad i=(j+1)_s+2s;\\
0,\quad\text{otherwise.}\end{cases}$$

$(10)$ If $r_0=9$, then $\Omega^{9}(Y_t^{(7)})$ is described with
$(7s\times 7s)$-matrix with the following elements $b_{ij}${\rm:}

If $0\le j<s$, then $$b_{ij}=
\begin{cases}
-w_{(j+m+1)n\ra (j+m+1)n+g(j)}\otimes w_{jn\ra jn+g(j+s)},\quad i=j+2s;\\
0,\quad\text{otherwise.}\end{cases}$$

If $s\le j<2s$, then $$b_{ij}=
\begin{cases}
-w_{(j+m)n+5\ra (j+m+1)n+g(j)}\otimes e_{jn},\quad i=j-s;\\
0,\quad\text{otherwise.}\end{cases}$$

If $2s\le j<3s$, then $$b_{ij}=
\begin{cases}
e_{(j+m+1)n+g(j)+1}\otimes w_{jn+g(j)\ra jn+5},\quad i=j+3s;\\
0,\quad\text{otherwise.}\end{cases}$$

If $3s\le j<4s$, then $$b_{ij}=
\begin{cases}
w_{(j+m+1)n\ra (j+m+1)n+g(j)+1}\otimes e_{jn+g(j)},\quad i=j-s;\\
-e_{(j+m+1)n+g(j)+1}\otimes w_{jn+g(j)\ra jn+5},\quad i=j+3s;\\
0,\quad\text{otherwise.}\end{cases}$$

If $4s\le j<6s$, then $b_{ij}=0$.

If $6s\le j<7s$, then $$b_{ij}=
\begin{cases}
-\kappa w_{(j+m+1)n+5\ra (j+m+2)n}\otimes w_{jn+5\ra (j+1)n},\quad i=(j+1)_s;\\
0,\quad\text{otherwise.}\end{cases}$$

$(11)$ If $r_0=10$, then $\Omega^{10}(Y_t^{(7)})$ is described with
$(6s\times 6s)$-matrix with the following elements $b_{ij}${\rm:}

If $0\le j<s$, then $$b_{ij}=
\begin{cases}
-\kappa w_{(j+m)n\ra (j+m)n+5}\otimes e_{jn},\quad i=j;\\
\kappa w_{(j+m)n+g(j+s)\ra (j+m)n+5}\otimes w_{jn\ra jn+g(j)},\quad i=j+s;\\
\kappa w_{(j+m)n+g(j)\ra (j+m)n+5}\otimes w_{jn\ra jn+g(j+s)},\quad i=j+2s;\\
-\kappa w_{(j+m)n+g(j+s)+1\ra (j+m)n+5}\otimes w_{jn\ra jn+g(j)+1},\quad i=j+3s;\\
-\kappa w_{(j+m)n+g(j)+1\ra (j+m)n+5}\otimes w_{jn\ra jn+g(j+s)+1},\quad i=j+4s;\\
0,\quad\text{otherwise.}\end{cases}$$

If $s\le j<2s$, then $$b_{ij}=
\begin{cases}
w_{(j+m)n+5\ra (j+m+1)n+g(j+s)+1}\otimes w_{jn+g(j+s)\ra jn+5},\quad i=j+4s;\\
0,\quad\text{otherwise.}\end{cases}$$

If $2s\le j<3s$, then $$b_{ij}=
\begin{cases}
-w_{(j+m+1)n\ra (j+m+1)n+g(j+s)+1}\otimes w_{jn+g(j+s)\ra (j+1)n},\quad i=(j+1)_s;\\
w_{(j+m)n+5\ra (j+m+1)n+g(j+s)+1}\otimes w_{jn+g(j+s)\ra jn+5},\quad i=j+3s;\\
0,\quad\text{otherwise.}\end{cases}$$

If $3s\le j<4s$, then $b_{ij}=0$.

If $4s\le j<5s$, then $$b_{ij}=
\begin{cases}
w_{(j+m+1)n\ra (j+m+1)n+g(j)}\otimes w_{jn+g(j+s)+1\ra (j+1)n},\quad i=(j+1)_s;\\
0,\quad\text{otherwise.}\end{cases}$$

If $5s\le j<6s$, then $$b_{ij}=
\begin{cases}
\kappa w_{(j+m+1)n+g(j+s)\ra (j+m+2)n}\otimes w_{jn+5\ra (j+1)n+g(j)},\quad i=(j+1)_s+2s;\\
-\kappa w_{(j+m+1)n+g(j+s)+1\ra (j+m+2)n}\otimes w_{jn+5\ra (j+1)n+g(j)+1},\quad i=(j+1)_s+4s;\\
0,\quad\text{otherwise.}\end{cases}$$

\medskip
$({\rm II})$ Represent an arbitrary $t_0\in\N$ in the form
$t_0=11\ell_0+r_0$, where $0\le r_0\le 10.$ Then
$\Omega^{t_0}(Y_t^{(7)})$ is a $\Omega^{r_0}(Y_t^{(7)})$, whose left
components twisted by $\sigma^{\ell_0}$.
\end{pr}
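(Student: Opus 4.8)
The plan is to recognize the matrices $\Omega^{r_0}(Y^{(7)}_t)$ of part (I) as the components of a lift of the cocycle $Y^{(7)}_t$ to a chain map of the bimodule resolution~\eqref{resolv}, and then to obtain part (II) from the $\sigma$-periodicity of that resolution.

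First I would use that $Y^{(7)}_t$ is a cocycle, i.e. $\delta^t(Y^{(7)}_t)=0$, so that $Y^{(7)}_t\circ d_t$ maps into $\Ker\varepsilon=\Im d_0$; this is itself a direct check from the matrices of $Y^{(7)}_t$ and $d_t$, of the same type as the computations below. By the comparison theorem for the projective resolution $Q_\bullet\to R$ there is a lift $\{\varphi_i\colon Q_{t+i}\to Q_i\}_{i\ge 0}$ with $\varepsilon\varphi_0=Y^{(7)}_t$, unique up to homotopy, and the proposition asserts that one admissible choice is $\varphi_{r_0}=\Omega^{r_0}(Y^{(7)}_t)$ for $0\le r_0\le 10$ as displayed, extended by $\varphi_{11\ell_0+r_0}=\sigma^{\ell_0}(\varphi_{r_0})$ (the action of $\sigma^{\ell_0}$ on all left tensor components). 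Since any two lifts are homotopic, this explicit representative is then legitimate for computing cup products via~\eqref{mult_formula} in the sequel.

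The substance of the proof is the verification of the identities $\varepsilon\circ\Omega^0(Y^{(7)}_t)=Y^{(7)}_t$ and $d_{r_0-1}\circ\Omega^{r_0}(Y^{(7)}_t)=\Omega^{r_0-1}(Y^{(7)}_t)\circ d_{t+r_0-1}$ for $r_0\ge 1$. Writing $t=11\ell+3$ and invoking Theorem~\ref{resol_thm} — that $d_{11\ell+r}$ and $Q_{11\ell+r}$ arise from $d_r$ and $Q_r$ by acting with $\sigma^\ell$ on left components — the identities for $r_0>11$ follow from those for $1\le r_0\le 11$ by applying $\sigma$, so it suffices to check these eleven equalities, in which $d_{r_0-1}$ runs through the untwisted $d_0,\dots,d_{10}$ while $d_{t+r_0-1}$ is $\sigma^\ell d_3,\dots,\sigma^\ell d_{10}$ for $r_0\le 8$ and $\sigma^{\ell+1}d_0,\sigma^{\ell+1}d_1,\sigma^{\ell+1}d_2$ for $r_0=9,10,11$, the requisite $\sigma$-twists being already built into the left indices $(j+m)n+\cdots$ of the matrices in part (I). Each equality I would check entrywise: fix the relevant ranges of the column index (the residue of $j$ modulo $s$, the value of $g(j)$, the parities controlling $f_1$, $h$ and $\kappa$, and $\myChar$ where it occurs), multiply the two sides as matrices over the path algebra, simplify products of paths using only the defining relations of $R$ — all paths of length $5$ vanish, $\a^3=\b^3$, $\a\g\b=\b\g\a=0$ — together with $\sigma(\g_i)=-\g_{i+n}$, $\sigma(\a_i)=\pm\b_{i+3n}$, $\sigma(\b_i)=\pm\a_{i+3n}$, and compare the surviving entries. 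For the one-term matrices the whole content is that the position, e.g. $5s+(-3\ell_0)_s$, transforms as prescribed; this is a direct unwinding of the action of $\sigma$ on the idempotents $e_i$, which is precisely what the auxiliary functions $z_0,z_1$ encode. In contrast with cases $2$ and $3$, the entries of $Y^{(7)}_t$ are paths of even length on which $\sigma$ acts with total sign $+1$, which is why in part (II) only the twist by $\sigma^{\ell_0}$ appears, with no accompanying factor $(-1)^{\ell_0}$.

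The difficulty is one of bookkeeping rather than of ideas: each of the eleven matrix identities fragments into many sub-cases according to $j\bmod s$, to several parities and to the characteristic, and the real work is to keep the signs generated by the $\sigma$-twists and by $f_1$ consistent so that all boundary contributions cancel and exactly the displayed entries remain. The argument is entirely parallel to the proofs of the translate formulas for cases $1$--$6$.
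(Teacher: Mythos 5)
Your proposal is correct and follows essentially the same route as the paper: the paper proves these translate propositions by verifying, entry by entry, that the displayed matrices make the lifting squares $d_{t_0-1}f_{t_0}=f_{t_0-1}d_{t+t_0-1}$ commute (carried out explicitly for case 3 and declared "similar" for the others, including case 7), with part (II) obtained from the $\sigma$-periodicity of the resolution exactly as you describe. Your heuristic aside about why no factor $(-1)^{\ell_0}$ appears is not itself a proof, but it is immaterial since the sign is fixed by the same square-checking for $r_0=9,10,11$ that you already include.
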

\begin{pr}[Translates for the case 8]
$({\rm I})$ Let $r_0\in\N$, $r_0<11$. $r_0$-translates of the
elements $Y^{(8)}_t$ are described by the following way.

$(1)$ If $r_0=0$, then $\Omega^{0}(Y_t^{(8)})$ is described with
$(9s\times 6s)$-matrix with one nonzero element that is of the
following form{\rm:}
$$b_{3s+z_0(0, \ell_0, 2),6s+z_0(0, \ell_0, 2)}=w_{(j+m)n+g(j)+1\ra (j+m+1)n+g(j)+1}\otimes e_{jn+g(j)+1}.$$

$(2)$ If $r_0=1$, then $\Omega^{1}(Y_t^{(8)})$ is described with
$(8s\times 7s)$-matrix with one nonzero element that is of the
following form{\rm:}
\begin{multline*}
b_{(6s+z_1(-1, \ell_0,
2)+1)_s+2s+(\ell_0s)_{2s},6s+z_1(-1, \ell_0, 2)}=\\
-\kappa_0 w_{(j+m+1)n+g(j+s)+1\ra (j+m+2)n+g(j+s)+1}\otimes w_{jn+5\ra (j+1)n+g(j+s)}.
\end{multline*}

$(3)$ If $r_0=2$, then $\Omega^{2}(Y_t^{(8)})$ is described with
$(9s\times 6s)$-matrix with one nonzero element that is of the
following form{\rm:}
$$b_{(3s+z_0(-1, \ell_0, 2)+1)_s+s+(\ell_0s)_{2s},3s+z_0(-1, \ell_0, 2)}=e_{(j+m+1)n+g(j+s)+1}\otimes w_{jn+g(j+s)\ra (j+1)n+g(j+s)}.$$

$(4)$ If $r_0=3$, then $\Omega^{3}(Y_t^{(8)})$ is described with
$(8s\times 8s)$-matrix with one nonzero element that is of the
following form{\rm:}
$$b_{(2s+z_0(-1, \ell_0, 2)+1)_s+2s+(\ell_0s)_{2s},2s+z_0(-1, \ell_0, 2)}=\kappa\kappa_0 e_{(j+m+1)n+5}\otimes w_{jn+g(j)\ra (j+1)n+g(j)}.$$

$(5)$ If $r_0=4$, then $\Omega^{4}(Y_t^{(8)})$ is described with
$(6s\times 9s)$-matrix with one nonzero element that is of the
following form{\rm:}
\begin{multline*}
b_{(s+z_0(-1, \ell_0, 2)+1)_s+2s+(\ell_0s)_{2s},s+z_0(-1, \ell_0, 2)}=\\
-w_{(j+m+1)n+g(j)\ra (j+m+1)n+g(j)+1}\otimes w_{jn+g(j+s)\ra (j+1)n+g(j+s)}.
\end{multline*}

$(6)$ If $r_0=5$, then $\Omega^{5}(Y_t^{(8)})$ is described with
$(7s\times 8s)$-matrix with one nonzero element that is of the
following form{\rm:}
$$b_{(s+z_0(-1, \ell_0, 2)+1)_s+2s+(\ell_0s)_{2s},s+z_0(-1, \ell_0, 2)}=\kappa\kappa_0 e_{(j+m+2)n}\otimes w_{jn+g(j+s)\ra (j+1)n+g(j+s)}.$$

$(7)$ If $r_0=6$, then $\Omega^{6}(Y_t^{(8)})$ is described with
$(6s\times 9s)$-matrix with one nonzero element that is of the
following form{\rm:}
$$b_{(s+z_0(-1, \ell_0, 2)+1)_s+s+(\ell_0s)_{2s},s+z_0(-1, \ell_0, 2)}=\\
e_{(j+m+1)n+g(j+s)}\otimes w_{jn+g(j+s)\ra (j+1)n+g(j+s)}.$$

$(8)$ If $r_0=7$, then $\Omega^{7}(Y_t^{(8)})$ is described with
$(6s\times 8s)$-matrix with one nonzero element that is of the
following form{\rm:}
\begin{multline*}b_{(s+z_0(-1, \ell_0, 2)+1)_s+2s+(\ell_0s)_{2s},s+z_0(-1, \ell_0, 2)}=\\
-w_{(j+m+1)n+5\ra (j+m+2)n+g(j+s)}\otimes w_{jn+g(j+s)\ra (j+1)n+g(j+s)}.
\end{multline*}

$(9)$ If $r_0=8$, then $\Omega^{8}(Y_t^{(8)})$ is described with
$(7s\times 6s)$-matrix with one nonzero element that is of the
following form{\rm:}
$$b_{(2s+z_0(-1, \ell_0, 2)+1)_s+s+(\ell_0s)_{2s},2s+z_0(-1, \ell_0, 2)}=-e_{(j+m+1)n+g(j)+1}\otimes w_{jn+g(j)\ra (j+1)n+g(j)}.$$

$(10)$ If $r_0=9$, then $\Omega^{9}(Y_t^{(8)})$ is described with
$(6s\times 7s)$-matrix with one nonzero element that is of the
following form{\rm:}
$$b_{5s+z_0(-1, \ell_0, 2),s+z_0(-1, \ell_0, 2)}=-w_{(j+m+1)n+g(j+s)+1\ra (j+m+2)n+g(j+s)+1}\otimes w_{jn+g(j+s)\ra jn+5}.$$

$(11)$ If $r_0=10$, then $\Omega^{10}(Y_t^{(8)})$ is described with
$(8s\times 6s)$-matrix with one nonzero element that is of the
following form{\rm:}
$$b_{(s+z_0(-1, \ell_0, 2))_{2s}+3s,z_0(-1, \ell_0, 2)}=\kappa_0 w_{(j+m)n+g(j)+1\ra (j+m+1)n+g(j)+1}\otimes w_{jn\ra jn+g(j+s)+1}.$$

\medskip
$({\rm II})$ Represent an arbitrary $t_0\in\N$ in the form
$t_0=11\ell_0+r_0$, where $0\le r_0\le 10.$ Then
$\Omega^{t_0}(Y_t^{(8)})$ is a $\Omega^{r_0}(Y_t^{(8)})$, whose left
components twisted by $\sigma^{\ell_0}$.
\end{pr}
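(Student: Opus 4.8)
The plan is to realise $Y^{(8)}_t$ as the degree-zero translate of a cocycle and to obtain each higher translate $\Omega^{r_0}(Y^{(8)}_t)\colon Q_{t+r_0}\to Q_{r_0}$ by lifting step by step through the minimal projective bimodule resolution \eqref{resolv} of $R$. As written in the preceding section, $Y^{(8)}_t=\Omega^{0}(Y^{(8)}_t)$ is a $\Lambda$-module morphism $Q_t\to Q_0$ for which $\varepsilon\circ Y^{(8)}_t$ is a cocycle of the complex \eqref{ind_resolv}, and it has a single nonzero entry. The higher translates are then determined recursively, uniquely up to homotopy, by the lifting relations
\[
d_{r_0-1}\circ\Omega^{r_0}(Y^{(8)}_t)=\Omega^{r_0-1}(Y^{(8)}_t)\circ d_{t+r_0-1},\qquad r_0\ge 1,
\]
which are solvable because each $Q_{t+r_0}$ is projective and \eqref{resolv} is exact. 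So the concrete task is: take the candidate matrices displayed in $(1)$--$(11)$ of the statement, and verify, for every $r_0\in\{1,\dots,10\}$, that it defines a $\Lambda$-module morphism $Q_{t+r_0}\to Q_{r_0}$ of the asserted shape and that it satisfies the lifting relation with the explicit differentials $d_r$ $(0\le r\le 10)$ of Section~\ref{sect_res}.

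Each such verification splits into two parts. First, every nonzero entry $b_{ij}$ must be a legitimate morphism between the indecomposable summand of $Q_{t+r_0}$ and the indecomposable summand of $Q_{r_0}$ that it links, i.e.\ the displayed element $w_{\bullet\ra\bullet}\otimes e_{\bullet}$ or $e_{\bullet}\otimes w_{\bullet\ra\bullet}$ must be a nonzero path of $\mathcal Q_s$ whose source and target vertices are exactly those prescribed by the two summands. Second, the two matrix products in the lifting relation must agree; this is a finite computation inside $K[\mathcal Q_s]/I$, carried out by multiplying the relevant rows and columns and reducing modulo the defining relations of $I$ (all paths of length $5$ vanish, $\a^3=\b^3$, $\a\g\b=\b\g\a=0$). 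Since $Y^{(8)}_t$ has a single nonzero entry, an immediate induction shows that each $\Omega^{r_0}(Y^{(8)}_t)$ is concentrated on one ``diagonal'' strip of summands, whose row and column labels are bookkept by the residues $(\,\cdot\,)_s$, $(\,\cdot\,)_{2s}$ and the auxiliary functions $z_0,z_1$; hence only a bounded number of components of either product are nonzero, and the check terminates.

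For part~$({\rm II})$ one invokes the periodicity recorded in Theorem~\ref{resol_thm}: for $\ell_0\in\N$ the module $Q_{11\ell_0+r}$ is obtained from $Q_r$ by replacing every summand $P_{i,j}$ with $P_{\sigma^{\ell_0}(i),j}$, and $d_{11\ell_0+r}$ from $d_r$ by applying $\sigma^{\ell_0}$ to all left tensor components. Therefore the whole system of lifting relations that pins down $\Omega^{11\ell_0+r_0}(Y^{(8)}_t)$ is precisely the $\sigma^{\ell_0}$-image of the one pinning down $\Omega^{r_0}(Y^{(8)}_t)$, and by uniqueness of the lift its solution is $\Omega^{r_0}(Y^{(8)}_t)$ with all left tensor components twisted by $\sigma^{\ell_0}$. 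It remains to check, from the explicit action of $\sigma$ on the idempotents and arrows occurring in the support of this particular cocycle, that no additional scalar is introduced --- in contrast with the cases $Y^{(2)}$ and $Y^{(3)}$ (cf.\ Proposition~\ref{shifts_3}), where the coefficients acquire a factor $(-1)^{\ell_0}$.

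The main obstacle is the bookkeeping in the second paragraph: for each of the eleven values of $r_0$ one must pin down exactly which summand of $Q_{r_0}$ (and of $Q_{t+r_0}$) carries the support, get right the index arithmetic modulo $n=6$, $s$, $2s$ and $3$ that runs through the helper functions $g,f_0,f_1,\lambda,z_0,z_1$, and keep the signs $\kappa_0=(-1)^{\ell_0}$ consistent throughout the lifting. No conceptual issue arises once \eqref{resolv} and its differentials are available; it is a lengthy but entirely routine matrix verification, which is why we merely record the result and leave the computation implicit.
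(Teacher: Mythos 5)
Your proposal takes essentially the same route as the paper: the paper proves these translate propositions (explicitly only for case 3, Proposition \ref{shifts_3}, the others being declared ``similar'') by verifying commutativity of the lifting squares $d_{t_0-1}\circ\Omega^{t_0}=\Omega^{t_0-1}\circ d_{t+t_0-1}$ through explicit products of the differential matrices of Section \ref{sect_res}, and handles $t_0\ge 11$ by the $\sigma^{\ell_0}$-periodicity of the resolution from Theorem \ref{resol_thm} --- exactly your plan, including the point that for this generator no extra factor $(-1)^{\ell_0}$ appears. The only difference is explicitness: the paper writes out the product matrices in the model case, while you leave that routine matrix verification implicit, which matches the level of detail the paper itself gives for case 8.
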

\begin{pr}[Translates for the case 9]
$({\rm I})$ Let $r_0\in\N$, $r_0<11$. $r_0$-translates of the
elements $Y^{(9)}_t$ are described by the following way.

$(1)$ If $r_0=0$, then $\Omega^{0}(Y_t^{(9)})$ is described with
$(9s\times 6s)$-matrix with the following elements $b_{ij}${\rm:}

If $0\le j<s$, then $b_{ij}=0$.

If $s\le j<2s$, then $$b_{ij}=
\begin{cases}
\kappa e_{(j+m)n}\otimes e_{jn},\quad i=j-s;\\
0,\quad\text{otherwise.}\end{cases}$$

If $2s\le j<4s$, then $$b_{ij}=
\begin{cases}
e_{(j+m)n+g(j)}\otimes e_{jn+g(j)},\quad i=j-s;\\
0,\quad\text{otherwise.}\end{cases}$$

If $4s\le j<6s$, then $b_{ij}=0$.

If $6s\le j<8s$, then $$b_{ij}=
\begin{cases}
e_{(j+m)n+g(j)+1}\otimes e_{jn+g(j)+1},\quad i=j-3s;\\
0,\quad\text{otherwise.}\end{cases}$$

If $8s\le j<9s$, then $$b_{ij}=
\begin{cases}
\kappa e_{(j+m)n+5}\otimes e_{jn+5},\quad i=j-3s;\\
0,\quad\text{otherwise.}\end{cases}$$

$(2)$ If $r_0=1$, then $\Omega^{1}(Y_t^{(9)})$ is described with
$(8s\times 7s)$-matrix with the following elements $b_{ij}${\rm:}

If $0\le j<2s$, then $$b_{ij}=
\begin{cases}
e_{(j+m)n+g(j+s)}\otimes e_{jn},\quad i=(j)_s+f_0(j,s)s;\\
0,\quad\text{otherwise.}\end{cases}$$

If $2s\le j<4s$, then $$b_{ij}=
\begin{cases}
-\kappa f_1(j,3s)w_{(j+m)n+g(j)+1\ra (j+m+1)n}\otimes e_{jn+g(j)},\quad i=j;\\
\kappa f_1(j,3s)w_{(j+m)n+5\ra (j+m+1)n}\otimes w_{jn+g(j)\ra jn+g(j)+1},\quad i=j+2s;\\
\kappa f_1(j,3s)e_{(j+m+1)n}\otimes w_{jn+g(j)\ra jn+5},\quad i=(j)_s+6s;\\
0,\quad\text{otherwise.}\end{cases}$$

If $4s\le j<6s$, then $$b_{ij}=
\begin{cases}
\kappa f_1(j,5s)e_{(j+m)n+5}\otimes e_{jn+g(j)+1},\quad i=j;\\
0,\quad\text{otherwise.}\end{cases}$$

If $6s\le j<8s$, then $$b_{ij}=
\begin{cases}
-f_1(j,7s)w_{(j+m+1)n+g(j+s)\ra (j+m+1)n+g(j+s)+1}\otimes w_{jn+5\ra (j+1)n},\\\quad\quad\quad i=(j+1)_s+f_0(j,7s)s;\\
f_1(j,7s)e_{(j+m+1)n+g(j+s)+1}\otimes w_{jn+5\ra (j+1)n+g(j+s)},\quad i=(j+1)_s+(f_0(j,7s)+2)s;\\
-f_1(j,7s)w_{(j+m+1)n\ra (j+m+1)n+g(j+s)+1}\otimes e_{jn+5},\quad i=(j)_s+6s;\\
0,\quad\text{otherwise.}\end{cases}$$

$(3)$ If $r_0=2$, then $\Omega^{2}(Y_t^{(9)})$ is described with
$(9s\times 6s)$-matrix with the following elements $b_{ij}${\rm:}

If $0\le j<s$, then $$b_{ij}=
\begin{cases}
-\kappa w_{(j+m-1)n+5\ra (j+m)n}\otimes e_{jn},\quad i=j;\\
0,\quad\text{otherwise.}\end{cases}$$

If $s\le j<3s$, then $$b_{ij}=
\begin{cases}
-e_{(j+m)n+g(j)}\otimes w_{jn+g(j+s)\ra jn+g(j+s)+1},\quad i=j+2s;\\
0,\quad\text{otherwise.}\end{cases}$$

If $3s\le j<5s$, then $$b_{ij}=
\begin{cases}
e_{(j+m)n+g(j+s)+1}\otimes e_{jn+g(j+s)},\quad i=j-2s;\\
0,\quad\text{otherwise.}\end{cases}$$

If $5s\le j<7s$, then $$b_{ij}=
\begin{cases}
-w_{(j+m)n+g(j)\ra (j+m)n+g(j)+1}\otimes e_{jn+g(j+s)+1},\quad i=j-2s;\\
0,\quad\text{otherwise.}\end{cases}$$

If $7s\le j<8s$, then $$b_{ij}=
\begin{cases}
-\kappa e_{(j+m)n+5}\otimes w_{jn+5\ra (j+1)n},\quad i=(j+1)_s;\\
0,\quad\text{otherwise.}\end{cases}$$

If $8s\le j<9s$, then $$b_{ij}=
\begin{cases}
-\kappa w_{(j+m)n+5\ra (j+m+1)n}\otimes w_{jn+5\ra (j+1)n},\quad i=(j+1)_s;\\
\kappa e_{(j+m+1)n}\otimes e_{jn+5},\quad i=j-3s;\\
0,\quad\text{otherwise.}\end{cases}$$

$(4)$ If $r_0=3$, then $\Omega^{3}(Y_t^{(9)})$ is described with
$(8s\times 8s)$-matrix with the following elements $b_{ij}${\rm:}

If $0\le j<2s$, then $$b_{ij}=
\begin{cases}
-f_1(j,s)e_{(j+m)n+g(j+s)+1}\otimes e_{jn},\quad i=(j)_s+f_0(j,s)s;\\
0,\quad\text{otherwise.}\end{cases}$$

If $2s\le j<4s$, then $$b_{ij}=
\begin{cases}
\kappa f_1(j,3s)e_{(j+m)n+5}\otimes e_{jn+g(j)},\quad i=j;\\
0,\quad\text{otherwise.}\end{cases}$$

If $4s\le j<6s$, then $$b_{ij}=
\begin{cases}
\kappa f_1(j,5s)e_{(j+m+1)n}\otimes e_{jn+g(j)+1},\quad i=j;\\
0,\quad\text{otherwise.}\end{cases}$$

If $6s\le j<8s$, then $$b_{ij}=
\begin{cases}
e_{(j+m+1)n+g(j+s)}\otimes e_{jn+5},\quad i=(j)_s+(f_0(j,7s)+6)s;\\
0,\quad\text{otherwise.}\end{cases}$$

$(5)$ If $r_0=4$, then $\Omega^{4}(Y_t^{(9)})$ is described with
$(6s\times 9s)$-matrix with the following elements $b_{ij}${\rm:}

If $0\le j<s$, then $$b_{ij}=
\begin{cases}
\kappa e_{(j+m-1)n+5}\otimes e_{jn},\quad i=j;\\
0,\quad\text{otherwise.}\end{cases}$$

If $s\le j<3s$, then $$b_{ij}=
\begin{cases}
-w_{(j+m)n+g(j)\ra (j+m)n+g(j)+1}\otimes e_{jn+g(j+s)},\quad i=j+s;\\
-e_{(j+m)n+g(j)+1}\otimes w_{jn+g(j+s)\ra jn+g(j+s)+1},\quad i=j+5s;\\
0,\quad\text{otherwise.}\end{cases}$$

If $3s\le j<5s$, then $$b_{ij}=
\begin{cases}
e_{(j+m)n+g(j+s)}\otimes e_{jn+g(j+s)+1},\quad i=j+s;\\
0,\quad\text{otherwise.}\end{cases}$$

If $5s\le j<6s$, then $$b_{ij}=
\begin{cases}
-\kappa e_{(j+m+1)n}\otimes w_{jn+5\ra (j+1)n},\quad i=(j+1)_s+s;\\
-\kappa w_{(j+m)n+5\ra (j+m+1)n}\otimes e_{jn+5},\quad i=j+3s;\\
0,\quad\text{otherwise.}\end{cases}$$

$(6)$ If $r_0=5$, then $\Omega^{5}(Y_t^{(9)})$ is described with
$(7s\times 8s)$-matrix with the following elements $b_{ij}${\rm:}

If $0\le j<s$, then $$b_{ij}=
\begin{cases}
-\kappa w_{(j+m)n+g(j)\ra (j+m)n+5}\otimes e_{jn},\quad i=j;\\
-\kappa w_{(j+m)n+g(j+s)\ra (j+m)n+5}\otimes e_{jn},\quad i=j+s;\\
-\kappa e_{(j+m)n+5}\otimes w_{jn\ra jn+g(j)+1},\quad i=j+4s;\\
-\kappa e_{(j+m)n+5}\otimes w_{jn\ra jn+g(j+s)+1},\quad i=j+5s;\\
0,\quad\text{otherwise.}\end{cases}$$

If $s\le j<3s$, then $$b_{ij}=
\begin{cases}
\kappa f_1(j,2s)e_{(j+m+1)n}\otimes e_{jn+g(j+s)},\quad i=j+s;\\
\kappa f_1(j,2s)w_{(j+m)n+5\ra (j+m+1)n}\otimes w_{jn+g(j+s)\ra jn+g(j+s)+1},\quad i=j+3s;\\
0,\quad\text{otherwise.}\end{cases}$$

If $3s\le j<5s$, then $$b_{ij}=
\begin{cases}
-e_{(j+m+1)n+g(j+s)}\otimes w_{jn+g(j+s)+1\ra (j+1)n},\quad i=(j+1)_s+(1-f_0(j,4s))s;\\
-w_{(j+m)n+5\ra (j+m+1)n+g(j+s)}\otimes e_{jn+g(j+s)+1},\quad i=j+s;\\
0,\quad\text{otherwise.}\end{cases}$$

If $5s\le j<7s$, then $$b_{ij}=
\begin{cases}
-w_{(j+m+1)n+g(j)\ra (j+m+1)n+g(j)+1}\otimes w_{jn+5\ra (j+1)n},\quad i=(j+1)_s+f_0(j,6s)s;\\
-f_1(j,6s)e_{(j+m+1)n+g(j)+1}\otimes e_{jn+5},\quad i=(j)_s+(6+f_0(j,6s))s;\\
0,\quad\text{otherwise.}\end{cases}$$

$(7)$ If $r_0=6$, then $\Omega^{6}(Y_t^{(9)})$ is described with
$(6s\times 9s)$-matrix with the following elements $b_{ij}${\rm:}

If $0\le j<s$, then $$b_{ij}=
\begin{cases}
\kappa e_{(j+m)n}\otimes e_{jn},\quad i=j;\\
0,\quad\text{otherwise.}\end{cases}$$

If $s\le j<3s$, then $$b_{ij}=
\begin{cases}
e_{(j+m)n+g(j+s)}\otimes e_{jn+g(j+s)},\quad i=j;\\
0,\quad\text{otherwise.}\end{cases}$$

If $3s\le j<5s$, then $$b_{ij}=
\begin{cases}
e_{(j+m)n+g(j+s)+1}\otimes e_{jn+g(j+s)+1},\quad i=j+2s;\\
0,\quad\text{otherwise.}\end{cases}$$

If $5s\le j<6s$, then $$b_{ij}=
\begin{cases}
\kappa e_{(j+m)n+5}\otimes e_{jn+5},\quad i=j+2s;\\
0,\quad\text{otherwise.}\end{cases}$$

$(8)$ If $r_0=7$, then $\Omega^{7}(Y_t^{(9)})$ is described with
$(6s\times 8s)$-matrix with the following elements $b_{ij}${\rm:}

If $0\le j<s$, then $$b_{ij}=
\begin{cases}
-\kappa w_{(j+m)n+g(j+s)+1\ra (j+m+1)n}\otimes e_{jn},\quad i=j+s;\\
\kappa w_{(j+m)n+5\ra (j+m+1)n}\otimes w_{jn\ra jn+g(j)},\quad i=j+2s;\\
-\kappa e_{(j+m+1)n}\otimes w_{jn\ra jn+g(j)+1},\quad i=j+4s;\\
-\kappa e_{(j+m+1)n}\otimes w_{jn\ra jn+g(j+s)+1},\quad i=j+5s;\\
0,\quad\text{otherwise.}\end{cases}$$

If $s\le j<3s$, then $$b_{ij}=
\begin{cases}
-w_{(j+m)n+5\ra (j+m+1)n+g(j+s)}\otimes e_{jn+g(j+s)},\quad i=j+s;\\
w_{(j+m+1)n\ra (j+m+1)n+g(j+s)}\otimes w_{jn+g(j+s)\ra jn+g(j+s)+1},\quad i=j+3s;\\
-e_{(j+m+1)n+g(j+s)}\otimes w_{jn+g(j+s)\ra jn+5},\quad i=j+5s;\\
0,\quad\text{otherwise.}\end{cases}$$

If $3s\le j<5s$, then $$b_{ij}=
\begin{cases}
-e_{(j+m+1)n+g(j+s)+1}\otimes w_{jn+g(j+s)+1\ra (j+1)n},\quad i=(j+1)_s+(1-f_0(j,4s))s;\\
-w_{(j+m+1)n\ra (j+m+1)n+g(j+s)+1}\otimes e_{jn+g(j+s)+1},\quad i=j+s;\\
w_{(j+m+1)n+g(j+s)\ra (j+m+1)n+g(j+s)+1}\otimes w_{jn+g(j+s)+1\ra jn+5},\quad i=j+3s;\\
0,\quad\text{otherwise.}\end{cases}$$

If $5s\le j<6s$, then $$b_{ij}=
\begin{cases}
\kappa w_{(j+m+1)n+g(j)+1\ra (j+m+1)n+5}\otimes w_{jn+5\ra (j+1)n},\quad i=(j+1)_s+s;\\
-\kappa e_{(j+m+1)n+5}\otimes w_{jn+5\ra (j+1)n+g(j+s)},\quad i=(j+1)_s+2s;\\
-\kappa w_{(j+m+1)n+g(j+s)\ra (j+m+1)n+5}\otimes e_{jn+5},\quad i=j+s;\\
-\kappa w_{(j+m+1)n+g(j)\ra (j+m+1)n+5}\otimes e_{jn+5},\quad i=j+2s;\\
0,\quad\text{otherwise.}\end{cases}$$

$(9)$ If $r_0=8$, then $\Omega^{8}(Y_t^{(9)})$ is described with
$(7s\times 6s)$-matrix with the following elements $b_{ij}${\rm:}

If $0\le j<s$, then $$b_{ij}=
\begin{cases}
e_{(j+m)n+g(j)}\otimes w_{jn\ra jn+g(j+s)+1},\quad i=j+4s;\\
0,\quad\text{otherwise.}\end{cases}$$

If $s\le j<2s$, then $$b_{ij}=
\begin{cases}
-w_{(j+m-1)n+5\ra (j+m)n+g(j)}\otimes e_{jn},\quad i=j-s;\\
e_{(j+m)n+g(j)}\otimes w_{jn\ra jn+g(j+s)+1},\quad i=j+2s;\\
0,\quad\text{otherwise.}\end{cases}$$

If $2s\le j<4s$, then $$b_{ij}=
\begin{cases}
-e_{(j+m)n+g(j)+1}\otimes e_{jn+g(j)},\quad i=j-s;\\
0,\quad\text{otherwise.}\end{cases}$$

If $4s\le j<5s$, then $$b_{ij}=
\begin{cases}
\kappa e_{(j+m)n+5}\otimes w_{jn+g(j)+1\ra (j+1)n},\quad i=(j+1)_s;\\
\kappa w_{(j+m)n+g(j+s)\ra (j+m)n+5}\otimes e_{jn+g(j)+1},\quad i=j-s;\\
0,\quad\text{otherwise.}\end{cases}$$

If $5s\le j<6s$, then $$b_{ij}=
\begin{cases}
\kappa w_{(j+m)n+g(j+s)\ra (j+m)n+5}\otimes e_{jn+g(j)+1},\quad i=j-s;\\
0,\quad\text{otherwise.}\end{cases}$$

If $6s\le j<7s$, then $$b_{ij}=
\begin{cases}
-\kappa e_{(j+m+1)n}\otimes e_{jn+5},\quad i=j-s;\\
0,\quad\text{otherwise.}\end{cases}$$

$(10)$ If $r_0=9$, then $\Omega^{9}(Y_t^{(9)})$ is described with
$(6s\times 7s)$-matrix with the following elements $b_{ij}${\rm:}

If $0\le j<s$, then $$b_{ij}=
\begin{cases}
-\kappa e_{(j+m)n+5}\otimes e_{jn},\quad i=j;\\
0,\quad\text{otherwise.}\end{cases}$$

If $s\le j<3s$, then $$b_{ij}=
\begin{cases}
-w_{(j+m+1)n\ra (j+m+1)n+g(j+s)+1}\otimes e_{jn+g(j+s)},\quad i=j;\\
-e_{(j+m+1)n+g(j+s)+1}\otimes w_{jn+g(j+s)\ra jn+5},\quad i=j+4s;\\
0,\quad\text{otherwise.}\end{cases}$$

If $3s\le j<5s$, then $$b_{ij}=
\begin{cases}
-e_{(j+m+1)n+g(j)}\otimes e_{jn+g(j+s)+1},\quad i=j;\\
0,\quad\text{otherwise.}\end{cases}$$

If $5s\le j<6s$, then $$b_{ij}=
\begin{cases}
-\kappa e_{(j+m+2)n}\otimes w_{jn+5\ra (j+1)n+g(j+s)},\quad i=(j+1)_s+s;\\
-\kappa w_{(j+m+1)n+g(j+s)+1\ra (j+m+2)n}\otimes e_{jn+5},\quad i=j;\\
0,\quad\text{otherwise.}\end{cases}$$

$(11)$ If $r_0=10$, then $\Omega^{10}(Y_t^{(9)})$ is described with
$(8s\times 6s)$-matrix with the following elements $b_{ij}${\rm:}

If $0\le j<2s$, then $$b_{ij}=
\begin{cases}
w_{(j+m)n\ra (j+m)n+g(j)+1}\otimes e_{jn},\quad i=(j)_s;\\
f_1(j,s)w_{(j+m)n+g(j)\ra (j+m)n+g(j)+1}\otimes w_{jn\ra jn+g(j+s)},\quad i=(j)_s+(1+f_0(j,s))s;\\
f_1(j,s)e_{(j+m)n+g(j)+1}\otimes w_{jn\ra jn+g(j+s)+1},\quad i=(j)_s+(3+f_0(j,s))s;\\
0,\quad\text{otherwise.}\end{cases}$$

If $2s\le j<4s$, then $$b_{ij}=
\begin{cases}
\kappa w_{(j+m)n+g(j+s)\ra (j+m)n+5}\otimes e_{jn+g(j)},\quad i=j-s;\\
\kappa w_{(j+m)n+g(j+s)+1\ra (j+m)n+5}\otimes w_{jn+g(j)\ra jn+g(j)+1},\quad i=j+s;\\
\kappa f_1(j,3s)e_{(j+m)n+5}\otimes w_{jn+g(j)\ra jn+5},\quad i=(j)_s+5s;\\
0,\quad\text{otherwise.}\end{cases}$$

If $4s\le j<5s$, then $$b_{ij}=
\begin{cases}
\kappa e_{(j+m+1)n}\otimes w_{jn+g(j)+1\ra (j+1)n},\quad i=(j+1)_s;\\
\kappa w_{(j+m)n+g(j+s)+1\ra (j+m+1)n}\otimes e_{jn+g(j)+1},\quad i=j-s;\\
\kappa w_{(j+m)n+5\ra (j+m+1)n}\otimes w_{jn+g(j)+1\ra jn+5},\quad i=j+s;\\
0,\quad\text{otherwise.}\end{cases}$$

If $5s\le j<6s$, then $$b_{ij}=
\begin{cases}
\kappa w_{(j+m)n+g(j+s)+1\ra (j+m+1)n}\otimes e_{jn+g(j)+1},\quad i=j-s;\\
0,\quad\text{otherwise.}\end{cases}$$

If $6s\le j<7s$, then $$b_{ij}=
\begin{cases}
w_{(j+m+1)n\ra (j+m+1)n+g(j)}\otimes w_{jn+5\ra (j+1)n},\quad i=(j+1)_s;\\
e_{(j+m+1)n+g(j)}\otimes w_{jn+5\ra (j+1)n+g(j+s)},\quad i=(j+1)_s+2s;\\
w_{(j+m)n+5\ra (j+m+1)n+g(j)}\otimes e_{jn+5},\quad i=j-s;\\
0,\quad\text{otherwise.}\end{cases}$$

If $7s\le j<8s$, then $$b_{ij}=
\begin{cases}
e_{(j+m+1)n+g(j)}\otimes w_{jn+5\ra (j+1)n+g(j+s)},\quad i=(j+1)_s+s;\\
0,\quad\text{otherwise.}\end{cases}$$

\medskip
$({\rm II})$ Represent an arbitrary $t_0\in\N$ in the form
$t_0=11\ell_0+r_0$, where $0\le r_0\le 10.$ Then
$\Omega^{t_0}(Y_t^{(9)})$ is a $\Omega^{r_0}(Y_t^{(9)})$, whose left
components twisted by $\sigma^{\ell_0}$, and coefficients multiplied
by $(-1)^{\ell_0}$.
\end{pr}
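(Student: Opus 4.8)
The plan is to build the chain lift $\{\Omega^i(Y_t^{(9)})\}_{i\ge 0}$ of the cocycle $Y^{(9)}_t$ inductively and to read the matrices of $({\rm I})$ off this construction. By definition the $i$th translate is the component $\varphi_i\colon Q_{t+i}\to Q_i$ of a chain map over the bimodule resolution of Theorem~\ref{resol_thm} lifting $Y^{(9)}_t$; thus the family is pinned down (up to homotopy, which is all \eqref{mult_formula} requires) by $\Omega^0(Y_t^{(9)})=Y^{(9)}_t$ --- realised as a $\Lambda$-homomorphism $Q_t\to Q_0$ through the block description of $Q_t=Q_{11\ell+4}$, i.e. the $\sigma^{\ell}$-twist of $Q_4$ --- together with the commuting squares
$$
d_{i-1}\cdot\Omega^{i}(Y_t^{(9)})=\Omega^{i-1}(Y_t^{(9)})\cdot d_{t+i-1},\qquad i\ge 1 .
$$
The relation $\delta^t(Y_t^{(9)})=0$ is what makes the first square solvable, and every further square is solvable by exactness of the resolution; so it suffices to check that the matrices displayed in $({\rm I})$ satisfy the above identities for $i=1,\dots,10$.

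First I would treat the inductive step for $r_0=1,\dots,10$. The differential $d_{r_0-1}$ is the explicit componentwise block matrix of Section~\ref{sect_res}, while $d_{t+r_0-1}$ is obtained from it by applying $\sigma^{\ell}$ to all left tensor components, as prescribed in Theorem~\ref{resol_thm}. Both sides of the displayed identity are then products of known matrices whose entries are tensors $w\otimes w'$ of paths, and the verification reduces to an entrywise comparison. I would organise it by the block decomposition of the $Q$'s and by the residue classes of the column index modulo $s$ and modulo $2s$; the helper functions $g$, $f_0$, $f_1$, $z_0$, $z_1$ are exactly the bookkeeping tools needed to make the idempotent and path indices --- the various $(j+m)n+g(j)+1$, the $\lambda(\cdot)$, and so on --- agree on the two sides of each square. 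The standing hypothesis $\myChar=3$ of item $(9)$ enters precisely here: several cancellations (of terms carrying the coefficient $2$, arising from pairs such as $\a^2\g$ and $\b^2\g$) hold only in characteristic $3$, and this is what makes $Y^{(9)}_t$ a cocycle and keeps the claimed lifts mutually consistent.

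Next I would deduce part $({\rm II})$ from the $\sigma$-periodicity of the resolution. Carrying the same verification one further step, to $i=11$, shows that $\Omega^{11}(Y_t^{(9)})$ agrees with $\Omega^0(Y_t^{(9)})$ after twisting every left tensor component by $\sigma$ and negating every coefficient. This is forced: by Theorem~\ref{resol_thm} the terms $Q_{t+11}$, $Q_{11}$ and the differentials $d_{t+10}$, $d_{10}$ are each obtained from their lower analogues by one and the same operation of applying $\sigma$ to left components, so the chain map must respect it; the extra global sign is the scalar by which $\sigma$ acts on the paths occurring in the translates, each of which contains an odd number of sign-carrying arrows (one $\g$ together with the relevant $\a$'s and $\b$'s), whence $\sigma$ multiplies each by $-1$. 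Part $({\rm II})$ then follows by induction on $\ell_0$: given the statement for $\ell_0-1$, composition with the level-$11$ relation gives that $\Omega^{11\ell_0+r_0}(Y_t^{(9)})$ is $\Omega^{r_0}(Y_t^{(9)})$ with left components twisted by $\sigma^{\ell_0}$ and coefficients multiplied by $(-1)^{\ell_0}$.

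The main obstacle is the bulk of the case analysis in the inductive step, not any conceptual point: there are eleven differential matrices, each split into four or five ranges of the column index, and within each range the translate matrices themselves break into several sub-cases with their own index shifts and signs ($\kappa$, $\kappa_0$, $f_1$, and the $\sigma$-twist). Every square closes by a finite mechanical computation, but matching all the indices and tracking all the signs across these cases is long and error-prone, which is why only the statement is recorded. The argument runs entirely parallel to those for the translates in cases $1$--$8$ (compare Proposition~\ref{shifts_3}).
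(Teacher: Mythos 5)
Your proposal is correct and follows essentially the same route as the paper: the paper proves these translate propositions (explicitly for the representative case 3, with the rest declared analogous) by exhibiting the product matrices $d_{t_0-1}f_{t_0}$ for $r_0=0,\dots,10$ and checking entrywise that they coincide with $f_{t_0-1}d_{t+t_0-1}$, then disposing of $t_0\ge 11$ by observing that the products are the $\sigma^{\ell_0}$-twists with the sign $(-1)^{\ell_0}$, exactly the commuting-square verification plus periodicity induction you describe. The only soft spot is your heuristic for the extra sign in part (II) (it is an additional factor beyond the signs already contained in $\sigma^{\ell_0}$ applied to the left components), but since you propose to establish it by carrying the verification through step $11$, this does not affect the argument.
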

\begin{pr}[Translates for the case 11]
$({\rm I})$ Let $r_0\in\N$, $r_0<11$. $r_0$-translates of the
elements $Y^{(11)}_t$ are described by the following way.

$(1)$ If $r_0=0$, then $\Omega^{0}(Y_t^{(11)})$ is described with
$(8s\times 6s)$-matrix with the following elements $b_{ij}${\rm:}

If $0\le j<s$, then $$b_{ij}=
\begin{cases}
w_{(j+m)n\ra (j+m)n+g(j)}\otimes e_{jn},\quad i=j;\\
0,\quad\text{otherwise.}\end{cases}$$

If $s\le j<2s$, then $b_{ij}=0$.

If $2s\le j<3s$, then $$b_{ij}=
\begin{cases}
\kappa w_{(j+m)n+g(j)\ra (j+m+1)n}\otimes e_{jn+g(j)},\quad i=j-s;\\
0,\quad\text{otherwise.}\end{cases}$$

If $3s\le j<4s$, then $b_{ij}=0$.

If $4s\le j<5s$, then $$b_{ij}=
\begin{cases}
-\kappa w_{(j+m)n+g(j)+1\ra (j+m)n+5}\otimes e_{jn+g(j)+1},\quad i=j-s;\\
0,\quad\text{otherwise.}\end{cases}$$

If $5s\le j<6s$, then $b_{ij}=0$.

If $6s\le j<7s$, then $$b_{ij}=
\begin{cases}
w_{(j+m)n+5\ra (j+m+1)n+g(j)+1}\otimes e_{jn+5},\quad i=j-s;\\
0,\quad\text{otherwise.}\end{cases}$$

If $7s\le j<8s$, then $b_{ij}=0$.

$(2)$ If $r_0=1$, then $\Omega^{1}(Y_t^{(11)})$ is described with
$(9s\times 7s)$-matrix with the following elements $b_{ij}${\rm:}

If $0\le j<s$, then $$b_{ij}=
\begin{cases}
\kappa w_{(j+m)n+g(j)\ra (j+m+1)n}\otimes e_{jn},\quad i=j;\\
-\kappa w_{(j+m)n+g(j)+1\ra (j+m+1)n}\otimes w_{jn\ra jn+g(j)},\quad i=j+2s;\\
0,\quad\text{otherwise.}\end{cases}$$

If $s\le j<2s$, then $$b_{ij}=
\begin{cases}
w_{(j+m)n+g(j+s)+1\ra (j+m+1)n+g(j+s)}\otimes e_{jn+g(j+s)},\quad i=j+s;\\
w_{(j+m)n+5\ra (j+m+1)n+g(j+s)}\otimes w_{jn+g(j+s)\ra jn+g(j+s)+1},\quad i=j+3s;\\
w_{(j+m+1)n\ra (j+m+1)n+g(j+s)}\otimes w_{jn+g(j+s)\ra jn+5},\quad i=j+5s;\\
0,\quad\text{otherwise.}\end{cases}$$

If $2s\le j<4s$, then $b_{ij}=0$.

If $4s\le j<5s$, then $$b_{ij}=
\begin{cases}
w_{(j+m+1)n+g(j)\ra (j+m+1)n+g(j)+1}\otimes w_{jn+g(j+s)\ra (j+1)n},\quad i=(j+1)_s;\\
-w_{(j+m)n+5\ra (j+m+1)n+g(j)+1}\otimes w_{jn+g(j+s)\ra jn+g(j+s)+1},\quad i=j+s;\\
0,\quad\text{otherwise.}\end{cases}$$

If $5s\le j<6s$, then $$b_{ij}=
\begin{cases}
-w_{(j+m)n+5\ra (j+m+1)n+g(j+s)+1}\otimes e_{jn+g(j+s)+1},\quad i=j-s;\\
0,\quad\text{otherwise.}\end{cases}$$

If $6s\le j<7s$, then $b_{ij}=0$.

If $7s\le j<8s$, then $$b_{ij}=
\begin{cases}
-\kappa w_{(j+m+1)n+g(j+s)\ra (j+m+1)n+5}\otimes w_{jn+5\ra (j+1)n},\quad i=(j+1)_s;\\
-\kappa w_{(j+m+1)n+g(j+s)+1\ra (j+m+1)n+5}\otimes w_{jn+5\ra (j+1)n+g(j+s)},\quad i=(j+1)_s+2s;\\
\kappa w_{(j+m+1)n\ra (j+m+1)n+5}\otimes e_{jn+5},\quad i=j-s;\\
0,\quad\text{otherwise.}\end{cases}$$

If $8s\le j<9s$, then $b_{ij}=0$.

$(3)$ If $r_0=2$, then $\Omega^{2}(Y_t^{(11)})$ is described with
$(8s\times 6s)$-matrix with the following elements $b_{ij}${\rm:}

If $0\le j<s$, then $$b_{ij}=
\begin{cases}
-w_{(j+m-1)n+5\ra (j+m)n+g(j)+1}\otimes e_{jn},\quad i=j;\\
-e_{(j+m)n+g(j)+1}\otimes w_{jn\ra jn+g(j)},\quad i=j+s;\\
0,\quad\text{otherwise.}\end{cases}$$

If $s\le j<2s$, then $b_{ij}=0$.

If $2s\le j<3s$, then $$b_{ij}=
\begin{cases}
-\kappa w_{(j+m)n+g(j)+1\ra (j+m)n+5}\otimes e_{jn+g(j)},\quad i=j-s;\\
0,\quad\text{otherwise.}\end{cases}$$

If $3s\le j<4s$, then $$b_{ij}=
\begin{cases}
\kappa w_{(j+m)n+g(j+s)\ra (j+m)n+5}\otimes w_{jn+g(j)\ra jn+g(j)+1},\quad i=j+s;\\
0,\quad\text{otherwise.}\end{cases}$$

If $4s\le j<5s$, then $b_{ij}=0$.

If $5s\le j<6s$, then $$b_{ij}=
\begin{cases}
-\kappa w_{(j+m)n+g(j+s)\ra (j+m+1)n}\otimes e_{jn+g(j)+1},\quad i=j-s;\\
0,\quad\text{otherwise.}\end{cases}$$

If $6s\le j<7s$, then $$b_{ij}=
\begin{cases}
w_{(j+m+1)n\ra (j+m+1)n+g(j)}\otimes e_{jn+5},\quad i=j-s;\\
0,\quad\text{otherwise.}\end{cases}$$

If $7s\le j<8s$, then $b_{ij}=0$.

$(4)$ If $r_0=3$, then $\Omega^{3}(Y_t^{(11)})$ is described with
$(6s\times 8s)$-matrix with the following elements $b_{ij}${\rm:}

If $0\le j<s$, then $$b_{ij}=
\begin{cases}
-\kappa w_{(j+m)n+g(j)+1\ra (j+m)n+5}\otimes e_{jn},\quad i=j;\\
0,\quad\text{otherwise.}\end{cases}$$

If $s\le j<2s$, then $$b_{ij}=
\begin{cases}
-e_{(j+m+1)n+g(j+s)+1}\otimes w_{jn+g(j+s)\ra (j+1)n},\quad i=(j+1)_s;\\
-w_{(j+m)n+5\ra (j+m+1)n+g(j+s)+1}\otimes e_{jn+g(j+s)},\quad i=j+s;\\
w_{(j+m+1)n\ra (j+m+1)n+g(j+s)+1}\otimes w_{jn+g(j+s)\ra jn+g(j+s)+1},\quad i=j+3s;\\
0,\quad\text{otherwise.}\end{cases}$$

If $2s\le j<4s$, then $b_{ij}=0$.

If $4s\le j<5s$, then $$b_{ij}=
\begin{cases}
-w_{(j+m+1)n\ra (j+m+1)n+g(j)}\otimes e_{jn+g(j+s)+1},\quad i=j+s;\\
0,\quad\text{otherwise.}\end{cases}$$

If $5s\le j<6s$, then $$b_{ij}=
\begin{cases}
-\kappa w_{(j+m+1)n+g(j+s)+1\ra (j+m+2)n}\otimes w_{jn+5\ra (j+1)n},\quad i=(j+1)_s;\\
\kappa w_{(j+m+1)n+g(j+s)\ra (j+m+2)n}\otimes e_{jn+5},\quad i=j+s;\\
0,\quad\text{otherwise.}\end{cases}$$

$(5)$ If $r_0=4$, then $\Omega^{4}(Y_t^{(11)})$ is described with
$(7s\times 9s)$-matrix with the following elements $b_{ij}${\rm:}

If $0\le j<s$, then $$b_{ij}=
\begin{cases}
\kappa w_{(j+m-1)n+5\ra (j+m)n+5}\otimes e_{jn},\quad i=j;\\
-\kappa w_{(j+m)n+g(j)\ra (j+m)n+5}\otimes w_{jn\ra jn+g(j+s)},\quad i=j+3s;\\
-\kappa w_{(j+m)n+g(j)\ra (j+m)n+5}\otimes w_{jn\ra jn+g(j)+1},\quad i=j+4s;\\
-\kappa w_{(j+m)n+g(j)+1\ra (j+m)n+5}\otimes w_{jn\ra jn+g(j+s)+1},\quad i=j+7s;\\
0,\quad\text{otherwise.}\end{cases}$$

If $s\le j<2s$, then $$b_{ij}=
\begin{cases}
\kappa w_{(j+m)n+g(j+s)\ra (j+m+1)n}\otimes w_{jn+g(j+s)\ra jn+g(j+s)+1},\quad i=j+3s;\\
0,\quad\text{otherwise.}\end{cases}$$

If $2s\le j<3s$, then $$b_{ij}=
\begin{cases}
-\kappa w_{(j+m)n+g(j)\ra (j+m+1)n}\otimes e_{jn+g(j+s)},\quad i=j+s;\\
-\kappa w_{(j+m)n+g(j)+1\ra (j+m+1)n}\otimes w_{jn+g(j+s)\ra jn+g(j+s)+1},\quad i=j+5s;\\
0,\quad\text{otherwise.}\end{cases}$$

If $3s\le j<4s$, then $b_{ij}=0$.

If $4s\le j<5s$, then $$b_{ij}=
\begin{cases}
-w_{(j+m+1)n\ra (j+m+1)n+g(j)}\otimes w_{jn+g(j+s)+1\ra (j+1)n},\quad i=(j+1)_s+s;\\
-w_{(j+m)n+g(j)+1\ra (j+m+1)n+g(j)}\otimes e_{jn+g(j+s)+1},\quad i=j+3s;\\
0,\quad\text{otherwise.}\end{cases}$$

If $5s\le j<6s$, then $$b_{ij}=
\begin{cases}
w_{(j+m+1)n+g(j+s)\ra (j+m+1)n+g(j+s)+1}\otimes w_{jn+5\ra (j+1)n+g(j+s)+1},\quad i=(j+1)_s+4s;\\
-e_{(j+m+1)n+g(j+s)+1}\otimes w_{jn+5\ra (j+1)n+g(j)+1},\quad i=(j+1)_s+7s;\\
-w_{(j+m)n+5\ra (j+m+1)n+g(j+s)+1}\otimes e_{jn+5},\quad i=j+3s;\\
0,\quad\text{otherwise.}\end{cases}$$

If $6s\le j<7s$, then $b_{ij}=0$.

$(6)$ If $r_0=5$, then $\Omega^{5}(Y_t^{(11)})$ is described with
$(6s\times 8s)$-matrix with the following elements $b_{ij}${\rm:}

If $0\le j<2s$, then $b_{ij}=0$.

If $2s\le j<3s$, then $$b_{ij}=
\begin{cases}
-w_{(j+m+1)n\ra (j+m+1)n+g(j)}\otimes e_{jn+g(j+s)},\quad i=j+s;\\
0,\quad\text{otherwise.}\end{cases}$$

If $3s\le j<4s$, then $b_{ij}=0$.

If $4s\le j<5s$, then $$b_{ij}=
\begin{cases}
w_{(j+m+1)n+g(j)\ra (j+m+1)n+g(j)+1}\otimes w_{jn+g(j+s)+1\ra (j+1)n},\quad i=(j+1)_s;\\
w_{(j+m)n+5\ra (j+m+1)n+g(j)+1}\otimes e_{jn+g(j+s)+1},\quad i=j+s;\\
e_{(j+m+1)n+g(j)+1}\otimes w_{jn+g(j+s)+1\ra jn+5},\quad i=j+2s;\\
0,\quad\text{otherwise.}\end{cases}$$

If $5s\le j<6s$, then $$b_{ij}=
\begin{cases}
-\kappa w_{(j+m+1)n+g(j+s)+1\ra (j+m+1)n+5}\otimes e_{jn+5},\quad i=j+s;\\
0,\quad\text{otherwise.}\end{cases}$$

$(7)$ If $r_0=6$, then $\Omega^{6}(Y_t^{(11)})$ is described with
$(6s\times 9s)$-matrix with the following elements $b_{ij}${\rm:}

If $0\le j<s$, then $$b_{ij}=
\begin{cases}
-\kappa w_{(j+m)n+g(j)+1\ra (j+m+1)n}\otimes w_{jn\ra jn+g(j+s)},\quad i=j+4s;\\
-\kappa w_{(j+m)n+g(j)+1\ra (j+m+1)n}\otimes w_{jn\ra jn+g(j)+1},\quad i=j+5s;\\
0,\quad\text{otherwise.}\end{cases}$$

If $s\le j<2s$, then $b_{ij}=0$.

If $2s\le j<3s$, then $$b_{ij}=
\begin{cases}
w_{(j+m+1)n\ra (j+m+1)n+g(j)}\otimes w_{jn+g(j+s)\ra (j+1)n},\quad i=(j+1)_s;\\
w_{(j+m)n+g(j)+1\ra (j+m+1)n+g(j)}\otimes e_{jn+g(j+s)},\quad i=j+2s;\\
0,\quad\text{otherwise.}\end{cases}$$

If $3s\le j<4s$, then $b_{ij}=0$.

If $4s\le j<5s$, then $$b_{ij}=
\begin{cases}
-w_{(j+m+1)n\ra (j+m+1)n+g(j)+1}\otimes w_{jn+g(j+s)+1\ra (j+1)n},\quad i=(j+1)_s;\\
-w_{(j+m)n+5\ra (j+m+1)n+g(j)+1}\otimes w_{jn+g(j+s)+1\ra jn+5},\quad i=j+3s;\\
0,\quad\text{otherwise.}\end{cases}$$

If $5s\le j<6s$, then $$b_{ij}=
\begin{cases}
\kappa w_{(j+m+1)n+g(j)\ra (j+m+1)n+5}\otimes w_{jn+5\ra (j+1)n+g(j)},\quad i=(j+1)_s+2s;\\
-\kappa w_{(j+m+1)n+g(j+s)+1\ra (j+m+1)n+5}\otimes w_{jn+5\ra (j+1)n+g(j)},\quad i=(j+1)_s+4s;\\
-\kappa w_{(j+m+1)n\ra (j+m+1)n+5}\otimes e_{jn+5},\quad i=j+3s;\\
0,\quad\text{otherwise.}\end{cases}$$

$(8)$ If $r_0=7$, then $\Omega^{7}(Y_t^{(11)})$ is described with
$(7s\times 8s)$-matrix with the following elements $b_{ij}${\rm:}

If $0\le j<s$, then $b_{ij}=0$.

If $s\le j<2s$, then $$b_{ij}=
\begin{cases}
-w_{(j+m)n+5\ra (j+m+1)n+g(j+s)}\otimes w_{jn\ra jn+g(j)},\quad i=j+2s;\\
-w_{(j+m+1)n\ra (j+m+1)n+g(j+s)}\otimes w_{jn\ra jn+g(j+s)+1},\quad i=j+3s;\\
-w_{(j+m+1)n\ra (j+m+1)n+g(j+s)}\otimes w_{jn\ra jn+g(j)+1},\quad i=j+4s;\\
0,\quad\text{otherwise.}\end{cases}$$

If $2s\le j<3s$, then $b_{ij}=0$.

If $3s\le j<4s$, then $$b_{ij}=
\begin{cases}
w_{(j+m)n+5\ra (j+m+1)n+g(j+s)+1}\otimes e_{jn+g(j)},\quad i=j;\\
0,\quad\text{otherwise.}\end{cases}$$

If $4s\le j<5s$, then $$b_{ij}=
\begin{cases}
-\kappa w_{(j+m+1)n\ra (j+m+1)n+5}\otimes e_{jn+g(j)+1},\quad i=j;\\
0,\quad\text{otherwise.}\end{cases}$$

If $5s\le j<6s$, then $$b_{ij}=
\begin{cases}
-\kappa w_{(j+m+1)n+g(j+s)+1\ra (j+m+1)n+5}\otimes w_{jn+g(j)+1\ra (j+1)n},\quad i=(j+1)_s;\\
-\kappa w_{(j+m+1)n\ra (j+m+1)n+5}\otimes e_{jn+g(j)+1},\quad i=j;\\
-\kappa w_{(j+m+1)n+g(j)\ra (j+m+1)n+5}\otimes w_{jn+g(j)+1\ra jn+5},\quad i=j+2s;\\
0,\quad\text{otherwise.}\end{cases}$$

If $6s\le j<7s$, then $$b_{ij}=
\begin{cases}
\kappa w_{(j+m+1)n+g(j)+1\ra (j+m+2)n}\otimes w_{jn+5\ra (j+1)n},\quad i=(j+1)_s;\\
\kappa w_{(j+m+1)n+g(j)\ra (j+m+2)n}\otimes e_{jn+5},\quad i=j;\\
-\kappa w_{(j+m+1)n+g(j+s)\ra (j+m+2)n}\otimes e_{jn+5},\quad i=j+s;\\
0,\quad\text{otherwise.}\end{cases}$$

$(9)$ If $r_0=8$, then $\Omega^{8}(Y_t^{(11)})$ is described with
$(6s\times 6s)$-matrix with the following elements $b_{ij}${\rm:}

If $0\le j<s$, then $$b_{ij}=
\begin{cases}
\kappa e_{(j+m)n+5}\otimes w_{jn\ra (j+1)n},\quad i=(j+1)_s;\\
-\kappa w_{(j+m)n+g(j+s)+1\ra (j+m)n+5}\otimes w_{jn\ra jn+g(j+s)},\quad i=j+2s;\\
0,\quad\text{otherwise.}\end{cases}$$

If $s\le j<2s$, then $b_{ij}=0$.

If $2s\le j<3s$, then $$b_{ij}=
\begin{cases}
w_{(j+m)n+5\ra (j+m+1)n+g(j)+1}\otimes w_{jn+g(j+s)\ra (j+1)n},\quad i=(j+1)_s;\\
0,\quad\text{otherwise.}\end{cases}$$

If $3s\le j<4s$, then $$b_{ij}=
\begin{cases}
w_{(j+m)n+5\ra (j+m+1)n+g(j+s)}\otimes w_{jn+g(j+s)+1\ra (j+1)n},\quad i=(j+1)_s;\\
w_{(j+m+1)n\ra (j+m+1)n+g(j+s)}\otimes w_{jn+g(j+s)+1\ra jn+5},\quad i=j+2s;\\
0,\quad\text{otherwise.}\end{cases}$$

If $4s\le j<5s$, then $b_{ij}=0$.

If $5s\le j<6s$, then $$b_{ij}=
\begin{cases}
-\kappa w_{(j+m+1)n+g(j+s)+1\ra (j+m+2)n}\otimes w_{jn+5\ra (j+1)n+g(j+s)},\quad i=(j+1)_s+s;\\
-\kappa w_{(j+m+1)n+g(j)\ra (j+m+2)n}\otimes w_{jn+5\ra (j+1)n+g(j+s)+1},\quad i=(j+1)_s+3s;\\
0,\quad\text{otherwise.}\end{cases}$$

$(10)$ If $r_0=9$, then $\Omega^{9}(Y_t^{(11)})$ is described with
$(8s\times 7s)$-matrix with the following elements $b_{ij}${\rm:}

If $0\le j<s$, then $$b_{ij}=
\begin{cases}
w_{(j+m)n+5\ra (j+m+1)n+g(j+s)+1}\otimes e_{jn},\quad i=j;\\
0,\quad\text{otherwise.}\end{cases}$$

If $s\le j<2s$, then $$b_{ij}=
\begin{cases}
w_{(j+m+1)n\ra (j+m+1)n+g(j+s)+1}\otimes w_{jn\ra jn+g(j)},\quad i=j+s;\\
0,\quad\text{otherwise.}\end{cases}$$

If $2s\le j<3s$, then $$b_{ij}=
\begin{cases}
-\kappa w_{(j+m+1)n\ra (j+m+1)n+5}\otimes e_{jn+g(j)},\quad i=j-s;\\
0,\quad\text{otherwise.}\end{cases}$$

If $3s\le j<4s$, then $$b_{ij}=
\begin{cases}
-\kappa e_{(j+m+1)n+5}\otimes w_{jn+g(j)\ra (j+1)n},\quad i=(j+1)_s;\\
0,\quad\text{otherwise.}\end{cases}$$

If $4s\le j<5s$, then $$b_{ij}=
\begin{cases}
-\kappa w_{(j+m+1)n+5\ra (j+m+2)n}\otimes w_{jn+g(j)+1\ra (j+1)n},\quad i=(j+1)_s;\\
\kappa w_{(j+m+1)n+g(j+s)\ra (j+m+2)n}\otimes e_{jn+g(j)+1},\quad i=j-s;\\
0,\quad\text{otherwise.}\end{cases}$$

If $5s\le j<6s$, then $b_{ij}=0$.

If $6s\le j<7s$, then $$b_{ij}=
\begin{cases}
w_{(j+m+1)n+5\ra (j+m+2)n+g(j+s)}\otimes w_{jn+5\ra (j+1)n},\quad i=(j+1)_s;\\
0,\quad\text{otherwise.}\end{cases}$$

If $7s\le j<8s$, then $$b_{ij}=
\begin{cases}
-w_{(j+m+2)n\ra (j+m+2)n+g(j+s)}\otimes w_{jn+5\ra (j+1)n+g(j+s)},\quad i=(j+1)_s+s;\\
0,\quad\text{otherwise.}\end{cases}$$

$(11)$ If $r_0=10$, then $\Omega^{10}(Y_t^{(11)})$ is described with
$(9s\times 6s)$-matrix with the following elements $b_{ij}${\rm:}

If $0\le j<s$, then $$b_{ij}=
\begin{cases}
\kappa w_{(j+m)n\ra (j+m)n+5}\otimes e_{jn},\quad i=j;\\
\kappa w_{(j+m)n+g(j+s)\ra (j+m)n+5}\otimes w_{jn\ra jn+g(j)},\quad i=j+s;\\
-\kappa w_{(j+m)n+g(j)\ra (j+m)n+5}\otimes w_{jn\ra jn+g(j+s)},\quad i=j+2s;\\
-\kappa w_{(j+m)n+g(j+s)+1\ra (j+m)n+5}\otimes w_{jn\ra jn+g(j)+1},\quad i=j+3s;\\
\kappa w_{(j+m)n+g(j)+1\ra (j+m)n+5}\otimes w_{jn\ra jn+g(j+s)+1},\quad i=j+4s;\\
-\kappa e_{(j+m)n+5}\otimes w_{jn\ra jn+5},\quad i=j+5s;\\
0,\quad\text{otherwise.}\end{cases}$$

If $s\le j<2s$, then $$b_{ij}=
\begin{cases}
-\kappa e_{(j+m+1)n}\otimes w_{jn\ra (j+1)n},\quad i=(j+1)_s;\\
\kappa w_{(j+m)n+g(j)\ra (j+m+1)n}\otimes w_{jn\ra jn+g(j+s)},\quad i=j;\\
0,\quad\text{otherwise.}\end{cases}$$

If $2s\le j<3s$, then $$b_{ij}=
\begin{cases}
-w_{(j+m+1)n\ra (j+m+1)n+g(j)}\otimes w_{jn+g(j)\ra (j+1)n},\quad i=(j+1)_s;\\
0,\quad\text{otherwise.}\end{cases}$$

If $3s\le j<4s$, then $b_{ij}=0$.

If $4s\le j<5s$, then $$b_{ij}=
\begin{cases}
-w_{(j+m)n+g(j+s)+1\ra (j+m+1)n+g(j+s)}\otimes e_{jn+g(j)+1},\quad i=j-s;\\
-w_{(j+m)n+5\ra (j+m+1)n+g(j+s)}\otimes w_{jn+g(j)+1\ra jn+5},\quad i=j+s;\\
0,\quad\text{otherwise.}\end{cases}$$

If $5s\le j<6s$, then $b_{ij}=0$.

If $6s\le j<7s$, then $$b_{ij}=
\begin{cases}
w_{(j+m+1)n\ra (j+m+1)n+g(j)+1}\otimes w_{jn+g(j)+1\ra (j+1)n},\quad i=(j+1)_s;\\
0,\quad\text{otherwise.}\end{cases}$$

If $7s\le j<9s$, then $b_{ij}=0$.

\medskip
$({\rm II})$ Represent an arbitrary $t_0\in\N$ in the form
$t_0=11\ell_0+r_0$, where $0\le r_0\le 10.$ Then
$\Omega^{t_0}(Y_t^{(11)})$ is a $\Omega^{r_0}(Y_t^{(11)})$, whose
left components twisted by $\sigma^{\ell_0}$, and coefficients
multiplied by $(-1)^{\ell_0}$.
\end{pr}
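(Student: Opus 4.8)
The plan is to identify $\Omega^{r_0}(Y^{(11)}_t)$, for $0\le r_0\le 10$, with the $r_0$th term $\varphi_{r_0}\colon Q_{t+r_0}\to Q_{r_0}$ of the chain map lifting the cocycle $Y^{(11)}_t=\varphi_0$ (viewed as a map $Q_t\to Q_0$). Such a lift exists and is unique up to chain homotopy because the $Q_i$ are projective and the complex \eqref{resolv} is exact, and it is characterised by $\varphi_0=Y^{(11)}_t$ together with the identities
\[
d_{r_0-1}\,\varphi_{r_0}=\varphi_{r_0-1}\,d_{t+r_0-1},\qquad 1\le r_0\le 10,
\]
where on the right $d_{t+r_0-1}=d_{11\ell+(r+r_0-1)}$ is obtained from the differentials of Section~\ref{sect_res} by the $\sigma^{\ell}$-twist of Theorem~\ref{resol_thm} (here $t=11\ell+r$). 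Since the statement records only one representative of each translate, it suffices to check that the matrices displayed in items $(1)$--$(11)$ satisfy these ten identities, item $(1)$ being the definition of $\varphi_0$.

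I would carry out each of the ten verifications block by block. Both $Q_{t+r_0}$ and $Q_{r_0}$ split as direct sums over $r\in\{0,\dots,s-1\}$ of the pieces $Q'_{\bullet,r}$ of Section~\ref{sect_res} and, within a block, as direct sums of the $P_{i,j}$ indexed by the vertices of $\mathcal Q_s$; one checks that the claimed $\varphi_{r_0}$ respects this decomposition, so that $d_{r_0-1}\varphi_{r_0}=\varphi_{r_0-1}d_{t+r_0-1}$ reduces to a finite list of scalar equalities between paths in $K[\mathcal Q_s]/I$, each of which is immediate from the defining relations of $R$ (paths of length $5$ are zero, $\a^3=\b^3$, $\a\g\b=\b\g\a=0$). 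The helper functions $g,f_0,f_1,h,\lambda,z_0,z_1$ and the residues $(a)_s$, $(a)_{2s}$ serve only to name the correct block and vertex in each case; the one delicate ingredient is the sign, governed by $\sigma(\g_i)=-\g_{i+n}$ and by the factor $\kappa=(-1)^{\lfloor\ell/2\rfloor}$ carried by $Y^{(11)}_t$. Concretely I would treat $r_0=0$ first (nothing to prove), then $r_0=1,\dots,10$ in turn, substituting at each step the already-verified $\varphi_{r_0-1}$ and matching the product $\varphi_{r_0-1}d_{t+r_0-1}$ against $d_{r_0-1}\varphi_{r_0}$.

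For part $({\rm II})$ I would argue by periodicity. By Theorem~\ref{resol_thm} the resolution is $11$-periodic up to the automorphism $\sigma$: passing from homological degree $i$ to degree $i+11$ amounts to applying $\sigma$ to all left tensor factors. Since $\sigma$ is an algebra automorphism it commutes with composition of $\Lambda$-homomorphisms, so twisting the left factors of every $\varphi_i$ in a chain lift of $Y^{(11)}_t$ by $\sigma^{\ell_0}$ again produces a chain lift — of the degree-$(t+11\ell_0)$ cocycle obtained from $Y^{(11)}_t$ in the same way. By uniqueness up to homotopy this twisted lift coincides with the canonical one in the terms we record, so $\Omega^{t_0}(Y^{(11)}_t)$ is the $\sigma^{\ell_0}$-twist of $\Omega^{r_0}(Y^{(11)}_t)$ up to a global scalar. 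That scalar equals $(-1)^{\ell_0}=\kappa_0$ because of the way $\sigma(\g_i)=-\g_{i+n}$ interacts with the $\kappa$-coefficients appearing in the matrix of $Y^{(11)}_t$ (and hence, by the identities just checked, in each $\varphi_i$); comparing the $\sigma^{\ell_0}$-twisted lift with the base formulas $(1)$--$(11)$ pins the factor down. Induction on $\ell_0$ then yields the stated form.

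The main obstacle will not be conceptual but combinatorial: each of the ten chain-map identities unfolds, through the multi-branch definitions of the $d_i$ and of $\varphi_{r_0-1}$, into a large number of case distinctions on the residues of the column index modulo $s$ and $2s$ (and, where relevant, on $\myChar$), and the signs must be tracked through $\sigma$ and $\kappa$ without slips. I would keep this under control by verifying everything one cyclic block and one vertex class $\{0,1,2,3,4,5\}\bmod n$ at a time — each such check being a one-line computation in the Nakayama-type algebra $R$ — and by exploiting the symmetry among the parallel sub-ranges of each index interval to avoid repeating nearly identical arguments.
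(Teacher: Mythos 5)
Your proposal follows essentially the same route as the paper: the paper proves these translate propositions (explicitly for case 3, the others "similarly") by checking commutativity of the squares $d_{t_0-1}f_{t_0}=f_{t_0-1}d_{t+t_0-1}$ through direct block-by-block computation of the product matrices, which is exactly the verification scheme you describe, and it handles $t_0\ge 11$ by the same $\sigma^{\ell_0}$-twist (with the $(-1)^{\ell_0}$ coefficient) that your periodicity argument yields. So your plan is correct and matches the paper's method; the remaining work is only the case-by-case matrix bookkeeping the paper itself carries out.
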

\begin{pr}[Translates for the case 12]
$({\rm I})$ Let $r_0\in\N$, $r_0<11$. $r_0$-translates of the
elements $Y^{(12)}_t$ are described by the following way.

$(1)$ If $r_0=0$, then $\Omega^{0}(Y_t^{(12)})$ is described with
$(8s\times 6s)$-matrix with the following elements $b_{ij}${\rm:}

If $0\le j<s$, then $$b_{ij}=
\begin{cases}
-w_{(j+m)n\ra (j+m)n+g(j+s)}\otimes e_{jn},\quad i=j;\\
0,\quad\text{otherwise.}\end{cases}$$

If $s\le j<4s$, then $b_{ij}=0$.

If $4s\le j<5s$, then $$b_{ij}=
\begin{cases}
\kappa w_{(j+m)n+g(j)+1\ra (j+m)n+5}\otimes e_{jn+g(j)+1},\quad i=j-s;\\
0,\quad\text{otherwise.}\end{cases}$$

If $5s\le j<8s$, then $b_{ij}=0$.

$(2)$ If $r_0=1$, then $\Omega^{1}(Y_t^{(12)})$ is described with
$(9s\times 7s)$-matrix with the following elements $b_{ij}${\rm:}

If $0\le j<s$, then $$b_{ij}=
\begin{cases}
-\kappa w_{(j+m)n+g(j+s)\ra (j+m+1)n}\otimes e_{jn},\quad i=j+s;\\
-\kappa w_{(j+m)n+g(j+s)+1\ra (j+m+1)n}\otimes w_{jn\ra jn+g(j+s)},\quad i=j+3s;\\
\kappa w_{(j+m)n+5\ra (j+m+1)n}\otimes w_{jn\ra jn+g(j)+1},\quad i=j+4s;\\
-\kappa w_{(j+m)n+5\ra (j+m+1)n}\otimes w_{jn\ra jn+g(j+s)+1},\quad i=j+5s;\\
0,\quad\text{otherwise.}\end{cases}$$

If $s\le j<2s$, then $$b_{ij}=
\begin{cases}
e_{(j+m+1)n+g(j)}\otimes w_{jn+g(j+s)\ra (j+1)n},\quad i=(j+1)_s+s;\\
0,\quad\text{otherwise.}\end{cases}$$

If $2s\le j<3s$, then $b_{ij}=0$.

If $3s\le j<4s$, then $$b_{ij}=
\begin{cases}
w_{(j+m)n+g(j+s)+1\ra (j+m+1)n+g(j+s)+1}\otimes e_{jn+g(j+s)},\quad i=j-s;\\
0,\quad\text{otherwise.}\end{cases}$$

If $4s\le j<5s$, then $$b_{ij}=
\begin{cases}
-w_{(j+m)n+g(j+s)+1\ra (j+m+1)n+g(j+s)+1}\otimes e_{jn+g(j+s)},\quad i=j-s;\\
-w_{(j+m)n+5\ra (j+m+1)n+g(j+s)+1}\otimes w_{jn+g(j+s)\ra jn+g(j+s)+1},\quad i=j+s;\\
-w_{(j+m+1)n\ra (j+m+1)n+g(j+s)+1}\otimes w_{jn+g(j+s)\ra jn+5},\quad i=j+2s;\\
0,\quad\text{otherwise.}\end{cases}$$

If $5s\le j<7s$, then $b_{ij}=0$.

If $7s\le j<8s$, then $$b_{ij}=
\begin{cases}
\kappa w_{(j+m+1)n+g(j+s)\ra (j+m+1)n+5}\otimes w_{jn+5\ra (j+1)n},\quad i=(j+1)_s;\\
\kappa w_{(j+m+1)n+g(j+s)+1\ra (j+m+1)n+5}\otimes w_{jn+5\ra (j+1)n+g(j+s)},\quad i=(j+1)_s+2s;\\
0,\quad\text{otherwise.}\end{cases}$$

If $8s\le j<9s$, then $b_{ij}=0$.

$(3)$ If $r_0=2$, then $\Omega^{2}(Y_t^{(12)})$ is described with
$(8s\times 6s)$-matrix with the following elements $b_{ij}${\rm:}

If $0\le j<s$, then $$b_{ij}=
\begin{cases}
w_{(j+m)n+g(j+s)\ra (j+m)n+g(j+s)+1}\otimes w_{jn\ra jn+g(j)+1},\quad i=j+3s;\\
0,\quad\text{otherwise.}\end{cases}$$

If $s\le j<2s$, then $$b_{ij}=
\begin{cases}
-w_{(j+m-1)n+5\ra (j+m)n+g(j+s)+1}\otimes e_{jn},\quad i=j-s;\\
0,\quad\text{otherwise.}\end{cases}$$

If $2s\le j<3s$, then $$b_{ij}=
\begin{cases}
\kappa e_{(j+m)n+5}\otimes w_{jn+g(j)\ra (j+1)n},\quad i=(j+1)_s;\\
0,\quad\text{otherwise.}\end{cases}$$

If $3s\le j<4s$, then $$b_{ij}=
\begin{cases}
\kappa w_{(j+m)n+g(j+s)\ra (j+m)n+5}\otimes w_{jn+g(j)\ra jn+g(j)+1},\quad i=j+s;\\
0,\quad\text{otherwise.}\end{cases}$$

If $4s\le j<5s$, then $b_{ij}=0$.

If $5s\le j<6s$, then $$b_{ij}=
\begin{cases}
\kappa w_{(j+m)n+5\ra (j+m+1)n}\otimes w_{jn+g(j)+1\ra (j+1)n},\quad i=(j+1)_s;\\
0,\quad\text{otherwise.}\end{cases}$$

If $6s\le j<7s$, then $$b_{ij}=
\begin{cases}
w_{(j+m)n+5\ra (j+m+1)n+g(j+s)}\otimes w_{jn+5\ra (j+1)n},\quad i=(j+1)_s;\\
0,\quad\text{otherwise.}\end{cases}$$

If $7s\le j<8s$, then $b_{ij}=0$.

$(4)$ If $r_0=3$, then $\Omega^{3}(Y_t^{(12)})$ is described with
$(6s\times 8s)$-matrix with the following elements $b_{ij}${\rm:}

If $0\le j<s$, then $$b_{ij}=
\begin{cases}
-\kappa w_{(j+m)n+g(j+s)+1\ra (j+m)n+5}\otimes e_{jn},\quad i=j+s;\\
\kappa e_{(j+m)n+5}\otimes w_{jn\ra jn+g(j+s)},\quad i=j+3s;\\
0,\quad\text{otherwise.}\end{cases}$$

If $s\le j<6s$, then $b_{ij}=0$.

$(5)$ If $r_0=4$, then $\Omega^{4}(Y_t^{(12)})$ is described with
$(7s\times 9s)$-matrix with the following elements $b_{ij}${\rm:}

If $0\le j<s$, then $$b_{ij}=
\begin{cases}
-\kappa e_{(j+m)n+5}\otimes w_{jn\ra (j+1)n},\quad i=(j+1)_s;\\
-\kappa w_{(j+m)n+g(j)\ra (j+m)n+5}\otimes w_{jn\ra jn+g(j+s)},\quad i=j+3s;\\
\kappa w_{(j+m)n+g(j)+1\ra (j+m)n+5}\otimes w_{jn\ra jn+g(j+s)+1},\quad i=j+7s;\\
0,\quad\text{otherwise.}\end{cases}$$

If $s\le j<2s$, then $$b_{ij}=
\begin{cases}
\kappa w_{(j+m)n+g(j)\ra (j+m+1)n}\otimes e_{jn+g(j+s)},\quad i=j+s;\\
-\kappa w_{(j+m)n+g(j)+1\ra (j+m+1)n}\otimes w_{jn+g(j+s)\ra jn+g(j+s)+1},\quad i=j+5s;\\
0,\quad\text{otherwise.}\end{cases}$$

If $2s\le j<6s$, then $b_{ij}=0$.

If $6s\le j<7s$, then $$b_{ij}=
\begin{cases}
w_{(j+m+1)n+g(j)\ra (j+m+1)n+g(j)+1}\otimes w_{jn+5\ra (j+1)n+g(j+s)},\quad i=(j+1)_s+3s;\\
-e_{(j+m+1)n+g(j)+1}\otimes w_{jn+5\ra (j+1)n+g(j+s)+1},\quad i=(j+1)_s+7s;\\
0,\quad\text{otherwise.}\end{cases}$$

$(6)$ If $r_0=5$, then $\Omega^{5}(Y_t^{(12)})$ is described with
$(6s\times 8s)$-matrix with the following elements $b_{ij}${\rm:}

If $0\le j<s$, then $$b_{ij}=
\begin{cases}
\kappa e_{(j+m+1)n}\otimes w_{jn\ra jn+g(j)},\quad i=j+2s;\\
\kappa e_{(j+m+1)n}\otimes w_{jn\ra jn+g(j+s)},\quad i=j+3s;\\
0,\quad\text{otherwise.}\end{cases}$$

If $s\le j<6s$, then $b_{ij}=0$.

$(7)$ If $r_0=6$, then $\Omega^{6}(Y_t^{(12)})$ is described with
$(6s\times 9s)$-matrix with the following elements $b_{ij}${\rm:}

If $0\le j<s$, then $$b_{ij}=
\begin{cases}
-\kappa w_{(j+m)n\ra (j+m+1)n}\otimes e_{jn},\quad i=j;\\
0,\quad\text{otherwise.}\end{cases}$$

If $s\le j<3s$, then $$b_{ij}=
\begin{cases}
e_{(j+m+1)n+g(j+s)}\otimes w_{jn+g(j+s)\ra (j+1)n+g(j+s)},\quad i=(j+1)_s+j_2s;\\
0,\quad\text{otherwise.}\end{cases}$$

If $3s\le j<5s$, then $$b_{ij}=
\begin{cases}
w_{(j+m+1)n+g(j+s)\ra (j+m+1)n+g(j+s)+1}\otimes w_{jn+g(j+s)+1\ra (j+1)n+g(j+s)},\\\quad\quad\quad i=(j+1)_s+(j_2-2)s;\\
0,\quad\text{otherwise.}\end{cases}$$

If $5s\le j<6s$, then $$b_{ij}=
\begin{cases}
-\kappa w_{(j+m+1)n\ra (j+m+1)n+5}\otimes w_{jn+5\ra (j+1)n},\quad i=(j+1)_s;\\
\kappa w_{(j+m)n+5\ra (j+m+1)n+5}\otimes e_{jn+5},\quad i=j+2s;\\
0,\quad\text{otherwise.}\end{cases}$$

$(8)$ If $r_0=7$, then $\Omega^{7}(Y_t^{(12)})$ is described with
$(7s\times 8s)$-matrix with the following elements $b_{ij}${\rm:}

If $0\le j<4s$, then $b_{ij}=0$.

If $4s\le j<6s$, then $$b_{ij}=
\begin{cases}
\kappa w_{(j+m+1)n\ra (j+m+1)n+5}\otimes e_{jn+g(j)+1},\quad i=j;\\
-\kappa w_{(j+m+1)n+g(j)\ra (j+m+1)n+5}\otimes w_{jn+g(j)+1\ra jn+5},\quad i=j+2s;\\
0,\quad\text{otherwise.}\end{cases}$$

If $6s\le j<7s$, then $b_{ij}=0$.

$(9)$ If $r_0=8$, then $\Omega^{8}(Y_t^{(12)})$ is described with
$(6s\times 6s)$-matrix with the following elements $b_{ij}${\rm:}

If $0\le j<s$, then $$b_{ij}=
\begin{cases}
\kappa w_{(j+m)n+g(j+s)\ra (j+m)n+5}\otimes w_{jn\ra jn+g(j)+1},\quad i=j+3s;\\
-\kappa w_{(j+m)n+g(j)\ra (j+m)n+5}\otimes w_{jn\ra jn+g(j+s)+1},\quad i=j+4s;\\
0,\quad\text{otherwise.}\end{cases}$$

If $s\le j<5s$, then $b_{ij}=0$.

If $5s\le j<6s$, then $$b_{ij}=
\begin{cases}
\kappa w_{(j+m+1)n+g(j+s)+1\ra (j+m+2)n}\otimes w_{jn+5\ra (j+1)n+g(j+s)},\quad i=(j+1)_s+s;\\
0,\quad\text{otherwise.}\end{cases}$$

$(10)$ If $r_0=9$, then $\Omega^{9}(Y_t^{(12)})$ is described with
$(8s\times 7s)$-matrix with the following elements $b_{ij}${\rm:}

If $0\le j<4s$, then $b_{ij}=0$.

If $4s\le j<5s$, then $$b_{ij}=
\begin{cases}
-\kappa e_{(j+m+2)n}\otimes w_{jn+g(j)+1\ra (j+1)n+g(j)},\quad i=(j+1)_s+s;\\
0,\quad\text{otherwise.}\end{cases}$$

If $5s\le j<6s$, then $b_{ij}=0$.

If $6s\le j<7s$, then $$b_{ij}=
\begin{cases}
-w_{(j+m+1)n+g(j)+1\ra (j+m+2)n+g(j)}\otimes e_{jn+5},\quad i=j-s;\\
0,\quad\text{otherwise.}\end{cases}$$

If $7s\le j<8s$, then $b_{ij}=0$.

$(11)$ If $r_0=10$, then $\Omega^{10}(Y_t^{(12)})$ is described with
$(9s\times 6s)$-matrix with the following elements $b_{ij}${\rm:}

If $0\le j<2s$, then $b_{ij}=0$.

If $2s\le j<3s$, then $$b_{ij}=
\begin{cases}
e_{(j+m+1)n+g(j+s)}\otimes w_{jn+g(j)\ra (j+1)n+g(j)},\quad i=(j+1)_s+s;\\
0,\quad\text{otherwise.}\end{cases}$$

If $3s\le j<4s$, then $$b_{ij}=
\begin{cases}
w_{(j+m)n+g(j+s)\ra (j+m+1)n+g(j+s)}\otimes e_{jn+g(j)},\quad i=j-s;\\
-w_{(j+m)n+g(j+s)+1\ra (j+m+1)n+g(j+s)}\otimes w_{jn+g(j)\ra jn+g(j)+1},\quad i=j+s;\\
0,\quad\text{otherwise.}\end{cases}$$

If $4s\le j<5s$, then $$b_{ij}=
\begin{cases}
w_{(j+m+1)n\ra (j+m+1)n+g(j)}\otimes w_{jn+g(j)+1\ra (j+1)n},\quad i=(j+1)_s;\\
-w_{(j+m)n+5\ra (j+m+1)n+g(j)}\otimes w_{jn+g(j)+1\ra jn+5},\quad i=j+s;\\
0,\quad\text{otherwise.}\end{cases}$$

If $5s\le j<6s$, then $b_{ij}=0$.

If $6s\le j<7s$, then $$b_{ij}=
\begin{cases}
w_{(j+m+1)n+g(j+s)\ra (j+m+1)n+g(j+s)+1}\otimes w_{jn+g(j)+1\ra (j+1)n+g(j)},\quad i=(j+1)_s+s;\\
-e_{(j+m+1)n+g(j+s)+1}\otimes w_{jn+g(j)+1\ra (j+1)n+g(j)+1},\quad i=(j+1)_s+3s;\\
0,\quad\text{otherwise.}\end{cases}$$

If $7s\le j<8s$, then $$b_{ij}=
\begin{cases}
-w_{(j+m)n+g(j+s)+1\ra (j+m+1)n+g(j+s)+1}\otimes e_{jn+g(j)+1},\quad i=j-3s;\\
0,\quad\text{otherwise.}\end{cases}$$

If $8s\le j<9s$, then $b_{ij}=0$.

\medskip
$({\rm II})$ Represent an arbitrary $t_0\in\N$ in the form
$t_0=11\ell_0+r_0$, where $0\le r_0\le 10.$ Then
$\Omega^{t_0}(Y_t^{(12)})$ is a $\Omega^{r_0}(Y_t^{(12)})$, whose
left components twisted by $\sigma^{\ell_0}$.
\end{pr}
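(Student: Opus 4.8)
The plan is to realize $Y^{(12)}_t$ as a genuine $t$-cocycle and then lift it one homological degree at a time, the formulas in the statement being one explicit admissible choice of such a lift. Recall from the discussion around \eqref{mult_formula} that a class in $\HH^t(R)$ is represented by $f\in\Hom_\Lambda(Q_t,R)$ with $f d_t=0$, and that such an $f$ lifts, uniquely up to homotopy, to a chain map $\{\varphi_i\colon Q_{t+i}\to Q_i\}_{i\ge 0}$ determined by $\varepsilon\varphi_0=f$ and $d_{i-1}\varphi_i=\varphi_{i-1}d_{t+i-1}$ for $i\ge 1$, with $\varphi_i=\Omega^i(f)$. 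Existence of each $\varphi_i$ is automatic from projectivity of $Q_{t+i}$ together with exactness of \eqref{resolv}, so the whole content of the proposition is the explicit description. First I would record that the matrix $Y^{(12)}_t$ from the section on generators represents a cocycle (this is exactly what qualifies it as a generator of $\HH^*(R)$), fixing $f$; the case $r_0=0$ of the proposition then just amounts to writing down a $\varphi_0$ with $\varepsilon\varphi_0=f$.

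The core of the argument is an induction on $r_0$ from $1$ to $10$: given the matrix $b^{(r_0-1)}$ of $\varphi_{r_0-1}$ (the inductive hypothesis, or for $r_0=1$ the matrix of $\varphi_0$), one must exhibit $b^{(r_0)}$ with $d_{r_0-1}\,b^{(r_0)}=b^{(r_0-1)}\,d_{t+r_0-1}$. Here $d_{r_0-1}$ is read off directly from \S\ref{sect_res}, while $d_{t+r_0-1}$, by Theorem~\ref{resol_thm}, is one of $d_0,\dots,d_{10}$ with every left tensor factor twisted by a fixed power of $\sigma$ (since $t=11\ell+5$, this power is $\ell$ or $\ell+1$ depending on $r_0$). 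Writing both sides blockwise with respect to the decompositions of the $Q$'s and expanding the entries with the index functions of \S\ref{sect_res} and the auxiliary functions $z_0,z_1$, the matrix identity becomes a finite list of componentwise equalities between concatenations of paths of $\mathcal Q_s$ tensored on matching vertices; the $b^{(r_0)}$ in the statement is precisely the solution obtained block by block. Two features recur throughout: the sign $\kappa=(-1)^{\lfloor\ell/2\rfloor}$ decorates certain entries because $\sigma$ carries a sign on $\g$ and on two of the three residue classes of $\a$ and $\b$, so $\sigma^{\ell}$ contributes a sign governed by $\lfloor\ell/2\rfloor$; and various potential extra summands, coming from products of length $\ge 3$ along an $\a$- or $\b$-branch, either vanish or get identified via $\a^3=\b^3$ exactly when $\myChar=3$, which is why case $12$ is stated in characteristic $3$ and why the formulas carry no residual terms.

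Part~(II) is then formal. Repeating the construction with $t$ replaced by $t_0=11\ell_0+r_0$, each differential used in lifting $Y^{(12)}_{t_0}$ differs from the one used in the degree-$r_0$ computation by an extra $\sigma^{\ell_0}$ on its left components (Theorem~\ref{resol_thm}), and this twist is inherited by every $\varphi_i$; for case $12$ the auxiliary signs cancel in pairs, so---unlike cases $2,3,9,11$---no additional factor $(-1)^{\ell_0}$ appears, and $\Omega^{t_0}(Y^{(12)}_t)$ is obtained from $\Omega^{r_0}(Y^{(12)}_t)$ simply by applying $\sigma^{\ell_0}$ to left tensor components. I expect the only real difficulty to be the bookkeeping: one has to check that the case splits in each $b_{ij}$ formula match exactly the supports produced by $d_{r_0-1}$ and by the $\sigma$-twisted $d_{t+r_0-1}$, track positions modulo $s$ and $2s$ through the functions $(a)_s$, $g$, $z_0$, $z_1$, and keep the characteristic-$3$ cancellations and the $\kappa$-signs consistent across all eleven steps.
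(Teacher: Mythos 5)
Your plan --- verify componentwise that the displayed matrices make the lifting squares $d_{r_0-1}\,\Omega^{r_0}(Y^{(12)}_t)=\Omega^{r_0-1}(Y^{(12)}_t)\,d_{t+r_0-1}$ commute, starting from $\varepsilon\varphi_0=f$, and deduce part (II) from the $\sigma^{\ell_0}$-twist of the higher differentials --- is exactly the paper's proof, which performs this check explicitly for the case-3 translates and treats the remaining cases, including this one, as "proved similarly." One small caveat: your remark that residual terms cancel "via $\a^3=\b^3$ exactly when $\myChar=3$" is misleading, since $\a^3-\b^3$ lies in the defining ideal in every characteristic; $\myChar=3$ is only what makes $Y^{(12)}$ a nonzero cohomology class, and plays no role in the lifting computation itself.
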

\begin{pr}[Translates for the case 13]
$({\rm I})$ Let $r_0\in\N$, $r_0<11$. $r_0$-translates of the
elements $Y^{(13)}_t$ are described by the following way.

$(1)$ If $r_0=0$, then $\Omega^{0}(Y_t^{(13)})$ is described with
$(9s\times 6s)$-matrix with the following elements $b_{ij}${\rm:}

If $0\le j<s$, then $b_{ij}=0$.

If $s\le j<3s$, then $$b_{ij}=
\begin{cases}
-f_1(j,2s)e_{(j+m)n+g(j+s)}\otimes e_{jn+g(j+s)},\quad i=j;\\
0,\quad\text{otherwise.}\end{cases}$$

If $3s\le j<5s$, then $b_{ij}=0$.

If $5s\le j<7s$, then $$b_{ij}=
\begin{cases}
-f_1(j,6s)e_{(j+m)n+g(j+s)+1}\otimes e_{jn+g(j+s)+1},\quad i=j-2s;\\
0,\quad\text{otherwise.}\end{cases}$$

If $7s\le j<8s$, then $b_{ij}=0$.

If $8s\le j<9s$, then $$b_{ij}=
\begin{cases}
-\kappa w_{(j+m)n+5\ra (j+m+1)n}\otimes e_{jn+5},\quad i=j-3s;\\
0,\quad\text{otherwise.}\end{cases}$$

$(2)$ If $r_0=1$, then $\Omega^{1}(Y_t^{(13)})$ is described with
$(8s\times 7s)$-matrix with the following elements $b_{ij}${\rm:}

If $0\le j<2s$, then $$b_{ij}=
\begin{cases}
f_1(j,s)e_{(j+m)n+g(j)+1}\otimes w_{jn\ra jn+g(j)},\quad i=j+2s;\\
0,\quad\text{otherwise.}\end{cases}$$

If $2s\le j<4s$, then $$b_{ij}=
\begin{cases}
-\kappa f_1(j,3s)w_{(j+m)n+g(j)+1\ra (j+m)n+5}\otimes e_{jn+g(j)},\quad i=j;\\
0,\quad\text{otherwise.}\end{cases}$$

If $4s\le j<6s$, then $$b_{ij}=
\begin{cases}
-\kappa f_1(j,5s)w_{(j+m)n+5\ra (j+m+1)n}\otimes e_{jn+g(j)+1},\quad i=j;\\
0,\quad\text{otherwise.}\end{cases}$$

If $6s\le j<8s$, then $$b_{ij}=
\begin{cases}
f_1(j,7s)e_{(j+m+1)n+g(j)}\otimes w_{jn+5\ra (j+1)n},\quad i=(j+1)_s+(j_2-6)s;\\
f_1(j,7s)w_{(j+m+1)n\ra (j+m+1)n+g(j)}\otimes e_{jn+5},\quad i=(j)_s+6s;\\
0,\quad\text{otherwise.}\end{cases}$$

$(3)$ If $r_0=2$, then $\Omega^{2}(Y_t^{(13)})$ is described with
$(6s\times 6s)$-matrix with the following elements $b_{ij}${\rm:}

If $0\le j<s$, then $b_{ij}=0$.

If $s\le j<3s$, then $$b_{ij}=
\begin{cases}
-f_1(j,2s)e_{(j+m)n+g(j+s)+1}\otimes e_{jn+g(j+s)},\quad i=j;\\
0,\quad\text{otherwise.}\end{cases}$$

If $3s\le j<5s$, then $$b_{ij}=
\begin{cases}
-f_1(j,4s)e_{(j+m)n+g(j)}\otimes e_{jn+g(j+s)+1},\quad i=j;\\
0,\quad\text{otherwise.}\end{cases}$$

If $5s\le j<6s$, then $$b_{ij}=
\begin{cases}
\kappa w_{(j+m)n+5\ra (j+m+1)n}\otimes w_{jn+5\ra (j+1)n},\quad i=(j+1)_s;\\
0,\quad\text{otherwise.}\end{cases}$$

$(4)$ If $r_0=3$, then $\Omega^{3}(Y_t^{(13)})$ is described with
$(7s\times 8s)$-matrix with the following elements $b_{ij}${\rm:}

If $0\le j<s$, then $$b_{ij}=
\begin{cases}
-\kappa w_{(j+m)n+g(j)+1\ra (j+m)n+5}\otimes e_{jn},\quad i=j;\\
\kappa w_{(j+m)n+g(j+s)+1\ra (j+m)n+5}\otimes e_{jn},\quad i=j+s;\\
0,\quad\text{otherwise.}\end{cases}$$

If $s\le j<3s$, then $$b_{ij}=
\begin{cases}
-\kappa f_1(j,2s)w_{(j+m)n+5\ra (j+m+1)n}\otimes e_{jn+g(j+s)},\quad i=j+s;\\
0,\quad\text{otherwise.}\end{cases}$$

If $3s\le j<5s$, then $$b_{ij}=
\begin{cases}
-f_1(j,4s)w_{(j+m+1)n\ra (j+m+1)n+g(j)}\otimes e_{jn+g(j+s)+1},\quad i=j+s;\\
-f_1(j,4s)e_{(j+m+1)n+g(j)}\otimes w_{jn+g(j+s)+1\ra jn+5},\quad i=j+2(1+f_0(j,4s))s;\\
0,\quad\text{otherwise.}\end{cases}$$

If $5s\le j<7s$, then $$b_{ij}=
\begin{cases}
e_{(j+m+1)n+g(j+s)+1}\otimes w_{jn+5\ra (j+1)n},\quad i=(j+1)_s+(j_2-5)s;\\
0,\quad\text{otherwise.}\end{cases}$$

$(5)$ If $r_0=4$, then $\Omega^{4}(Y_t^{(13)})$ is described with
$(6s\times 9s)$-matrix with the following elements $b_{ij}${\rm:}

If $0\le j<s$, then $$b_{ij}=
\begin{cases}
-\kappa w_{(j+m-1)n+5\ra (j+m)n}\otimes e_{jn},\quad i=j;\\
0,\quad\text{otherwise.}\end{cases}$$

If $s\le j<3s$, then $$b_{ij}=
\begin{cases}
-f_1(j,2s)e_{(j+m)n+g(j)}\otimes e_{jn+g(j+s)},\quad i=j+s;\\
0,\quad\text{otherwise.}\end{cases}$$

If $3s\le j<5s$, then $$b_{ij}=
\begin{cases}
-f_1(j,4s)e_{(j+m)n+g(j)+1}\otimes e_{jn+g(j+s)+1},\quad i=j+3s;\\
0,\quad\text{otherwise.}\end{cases}$$

If $5s\le j<6s$, then $b_{ij}=0$.

$(6)$ If $r_0=5$, then $\Omega^{5}(Y_t^{(13)})$ is described with
$(6s\times 8s)$-matrix with the following elements $b_{ij}${\rm:}

If $0\le j<s$, then $$b_{ij}=
\begin{cases}
\kappa e_{(j+m+1)n}\otimes w_{jn\ra jn+g(j)},\quad i=j+2s;\\
\kappa e_{(j+m+1)n}\otimes w_{jn\ra jn+g(j+s)},\quad i=j+3s;\\
0,\quad\text{otherwise.}\end{cases}$$

If $s\le j<3s$, then $$b_{ij}=
\begin{cases}
-f_1(j,2s)e_{(j+m+1)n+g(j)}\otimes w_{jn+g(j+s)\ra (j+1)n},\quad i=(j+1)_s+f_0(j,2s)s;\\
-f_1(j,2s)w_{(j+m+1)n\ra (j+m+1)n+g(j)}\otimes e_{jn+g(j+s)},\quad i=j+s;\\
0,\quad\text{otherwise.}\end{cases}$$

If $3s\le j<5s$, then $$b_{ij}=
\begin{cases}
f_1(j,4s)w_{(j+m)n+5\ra (j+m+1)n+g(j)+1}\otimes e_{jn+g(j+s)+1},\quad i=j+s;\\
e_{(j+m+1)n+g(j)+1}\otimes w_{jn+g(j+s)+1\ra jn+5},\quad i=j+2(1+f_0(j,4s))s;\\
0,\quad\text{otherwise.}\end{cases}$$

If $5s\le j<6s$, then $$b_{ij}=
\begin{cases}
-\kappa w_{(j+m+1)n+g(j+s)+1\ra (j+m+1)n+5}\otimes e_{jn+5},\quad i=j+s;\\
\kappa w_{(j+m+1)n+g(j)+1\ra (j+m+1)n+5}\otimes e_{jn+5},\quad i=j+2s;\\
0,\quad\text{otherwise.}\end{cases}$$

$(7)$ If $r_0=6$, then $\Omega^{6}(Y_t^{(13)})$ is described with
$(7s\times 9s)$-matrix with the following elements $b_{ij}${\rm:}

If $0\le j<2s$, then $$b_{ij}=
\begin{cases}
-f_1(j,s)e_{(j+m)n+g(j+s)}\otimes w_{jn\ra jn+g(j+s)},\quad i=j+2f_0(j,s)s;\\
0,\quad\text{otherwise.}\end{cases}$$

If $2s\le j<4s$, then $$b_{ij}=
\begin{cases}
f_1(j,3s)e_{(j+m)n+g(j+s)+1}\otimes e_{jn+g(j)},\quad i=j+s;\\
0,\quad\text{otherwise.}\end{cases}$$

If $4s\le j<6s$, then $$b_{ij}=
\begin{cases}
\kappa w_{(j+m)n+g(j)+1\ra (j+m)n+5}\otimes e_{jn+g(j)+1},\quad i=j+s;\\
0,\quad\text{otherwise.}\end{cases}$$

If $6s\le j<7s$, then $$b_{ij}=
\begin{cases}
-\kappa e_{(j+m+1)n}\otimes w_{jn+5\ra (j+1)n},\quad i=(j+1)_s;\\
\kappa w_{(j+m)n+5\ra (j+m+1)n}\otimes e_{jn+5},\quad i=j+s;\\
0,\quad\text{otherwise.}\end{cases}$$

$(8)$ If $r_0=7$, then $\Omega^{7}(Y_t^{(13)})$ is described with
$(6s\times 8s)$-matrix with the following elements $b_{ij}${\rm:}

If $0\le j<s$, then $$b_{ij}=
\begin{cases}
\kappa w_{(j+m)n+g(j)+1\ra (j+m)n+5}\otimes e_{jn},\quad i=j;\\
-\kappa w_{(j+m)n+g(j+s)+1\ra (j+m)n+5}\otimes e_{jn},\quad i=j+s;\\
0,\quad\text{otherwise.}\end{cases}$$

If $s\le j<3s$, then $$b_{ij}=
\begin{cases}
-f_1(j,2s)e_{(j+m+1)n+g(j)+1}\otimes w_{jn+g(j+s)\ra (j+1)n},\quad i=(j+1)_s+f_0(j,2s)s;\\
f_1(j,2s)w_{(j+m)n+5\ra (j+m+1)n+g(j)+1}\otimes e_{jn+g(j+s)},\quad i=j+s;\\
0,\quad\text{otherwise.}\end{cases}$$

If $3s\le j<5s$, then $$b_{ij}=
\begin{cases}
-f_1(j,4s)w_{(j+m+1)n\ra (j+m+1)n+g(j+s)}\otimes e_{jn+g(j+s)+1},\quad i=j+s;\\
f_1(j,4s)e_{(j+m+1)n+g(j+s)}\otimes w_{jn+g(j+s)+1\ra jn+5},\quad i=j+3s;\\
0,\quad\text{otherwise.}\end{cases}$$

If $5s\le j<6s$, then $$b_{ij}=
\begin{cases}
\kappa w_{(j+m+1)n+g(j)+1\ra (j+m+2)n}\otimes w_{jn+5\ra (j+1)n},\quad i=(j+1)_s+s;\\
-\kappa e_{(j+m+2)n}\otimes w_{jn+5\ra (j+1)n+g(j+s)+1},\quad i=(j+1)_s+4s;\\
-\kappa e_{(j+m+2)n}\otimes w_{jn+5\ra (j+1)n+g(j)+1},\quad i=(j+1)_s+5s;\\
0,\quad\text{otherwise.}\end{cases}$$

$(9)$ If $r_0=8$, then $\Omega^{8}(Y_t^{(13)})$ is described with
$(8s\times 6s)$-matrix with the following elements $b_{ij}${\rm:}

If $0\le j<2s$, then $$b_{ij}=
\begin{cases}
-f_1(j,s)w_{(j+m-1)n+5\ra (j+m)n+g(j+s)+1}\otimes e_{jn},\quad i=(j)_s;\\
-e_{(j+m)n+g(j+s)+1}\otimes w_{jn\ra jn+g(j+s)},\quad i=(j)_s+(1+f_0(j,s))s;\\
0,\quad\text{otherwise.}\end{cases}$$

If $2s\le j<4s$, then $$b_{ij}=
\begin{cases}
\kappa w_{(j+m)n+g(j)+1\ra (j+m)n+5}\otimes e_{jn+g(j)},\quad i=j-s;\\
0,\quad\text{otherwise.}\end{cases}$$

If $4s\le j<5s$, then $$b_{ij}=
\begin{cases}
-\kappa w_{(j+m)n+5\ra (j+m+1)n}\otimes w_{jn+g(j)+1\ra (j+1)n},\quad i=(j+1)_s;\\
\kappa w_{(j+m)n+g(j+s)\ra (j+m+1)n}\otimes e_{jn+g(j)+1},\quad i=j-s;\\
-\kappa e_{(j+m+1)n}\otimes w_{jn+g(j)+1\ra jn+5},\quad i=j+s;\\
0,\quad\text{otherwise.}\end{cases}$$

If $5s\le j<6s$, then $$b_{ij}=
\begin{cases}
\kappa w_{(j+m)n+g(j+s)\ra (j+m+1)n}\otimes e_{jn+g(j)+1},\quad i=j-s;\\
-\kappa e_{(j+m+1)n}\otimes w_{jn+g(j)+1\ra jn+5},\quad i=j;\\
0,\quad\text{otherwise.}\end{cases}$$

If $6s\le j<8s$, then $$b_{ij}=
\begin{cases}
f_1(j,7s)e_{(j+m+1)n+g(j+s)}\otimes w_{jn+5\ra (j+1)n+g(j)+1},\quad i=(j+1)_s+(j_2-3)s;\\
0,\quad\text{otherwise.}\end{cases}$$

$(10)$ If $r_0=9$, then $\Omega^{9}(Y_t^{(13)})$ is described with
$(9s\times 7s)$-matrix with the following elements $b_{ij}${\rm:}

If $0\le j<s$, then $b_{ij}=0$.

If $s\le j<2s$, then $$b_{ij}=
\begin{cases}
\kappa w_{(j+m)n+5\ra (j+m+1)n}\otimes e_{jn},\quad i=j-s;\\
0,\quad\text{otherwise.}\end{cases}$$

If $2s\le j<4s$, then $$b_{ij}=
\begin{cases}
-f_1(j,3s)w_{(j+m+1)n\ra (j+m+1)n+g(j)}\otimes e_{jn+g(j)},\quad i=j-s;\\
0,\quad\text{otherwise.}\end{cases}$$

If $4s\le j<6s$, then $$b_{ij}=
\begin{cases}
-f_1(j,5s)e_{(j+m+1)n+g(j+s)}\otimes e_{jn+g(j)+1},\quad i=j-s;\\
0,\quad\text{otherwise.}\end{cases}$$

If $6s\le j<8s$, then $$b_{ij}=
\begin{cases}
f_1(j,7s)e_{(j+m+1)n+g(j)+1}\otimes w_{jn+g(j)+1\ra jn+5},\quad i=j-s;\\
0,\quad\text{otherwise.}\end{cases}$$

If $8s\le j<9s$, then $$b_{ij}=
\begin{cases}
-\kappa e_{(j+m+1)n+5}\otimes w_{jn+5\ra (j+1)n},\quad i=(j+1)_s;\\
0,\quad\text{otherwise.}\end{cases}$$

$(11)$ If $r_0=10$, then $\Omega^{10}(Y_t^{(13)})$ is described with
$(8s\times 6s)$-matrix with the following elements $b_{ij}${\rm:}

If $0\le j<2s$, then $$b_{ij}=
\begin{cases}
w_{(j+m)n\ra (j+m)n+g(j+s)}\otimes e_{jn},\quad i=(j)_s;\\
-f_1(j,s)e_{(j+m)n+g(j+s)}\otimes w_{jn\ra jn+g(j)},\quad i=j+s;\\
0,\quad\text{otherwise.}\end{cases}$$

If $2s\le j<4s$, then $$b_{ij}=
\begin{cases}
\kappa w_{(j+m)n+g(j+s)\ra (j+m+1)n}\otimes e_{jn+g(j)},\quad i=j-s;\\
-\kappa w_{(j+m)n+g(j+s)+1\ra (j+m+1)n}\otimes w_{jn+g(j)\ra jn+g(j)+1},\quad i=j+s;\\
-\kappa f_1(j,3s)w_{(j+m)n+5\ra (j+m+1)n}\otimes w_{jn+g(j)\ra jn+5},\quad i=(j)_s+5s;\\
0,\quad\text{otherwise.}\end{cases}$$

If $4s\le j<6s$, then $$b_{ij}=
\begin{cases}
\kappa w_{(j+m)n+g(j+s)+1\ra (j+m)n+5}\otimes e_{jn+g(j)+1},\quad i=j-s;\\
\kappa f_1(j,5s)e_{(j+m)n+5}\otimes w_{jn+g(j)+1\ra jn+5},\quad i=(j)_s+5s;\\
0,\quad\text{otherwise.}\end{cases}$$

If $6s\le j<8s$, then $$b_{ij}=
\begin{cases}
f_1(j,7s)w_{(j+m+1)n\ra (j+m+1)n+g(j+s)+1}\otimes w_{jn+5\ra (j+1)n},\quad i=(j+1)_s;\\
-w_{(j+m+1)n+g(j+s)\ra (j+m+1)n+g(j+s)+1}\otimes w_{jn+5\ra (j+1)n+g(j)},\quad i=(j+1)_s+(j_2-5)s;\\
e_{(j+m+1)n+g(j+s)+1}\otimes w_{jn+5\ra (j+1)n+g(j)+1},\quad i=(j+1)_s+(j_2-3)s;\\
0,\quad\text{otherwise.}\end{cases}$$

\medskip
$({\rm II})$ Represent an arbitrary $t_0\in\N$ in the form
$t_0=11\ell_0+r_0$, where $0\le r_0\le 10.$ Then
$\Omega^{t_0}(Y_t^{(13)})$ is a $\Omega^{r_0}(Y_t^{(13)})$, whose
left components twisted by $\sigma^{\ell_0}$.
\end{pr}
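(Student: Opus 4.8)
The plan is to prove the proposition by the standard inductive lifting of a cocycle to a chain map, exhibiting an explicit representative at each step, and then to reduce the general degree $t_0$ to its residue $r_0$ modulo $11$ using the periodicity of the resolution. First I would record that $\Omega^{0}(Y_t^{(13)})$ is nothing but the matrix $Y_t^{(13)}$ written out in the section on generators (this is precisely item $(1)$ of part $({\rm I})$), and that it is a cocycle: multiplying $Y_t^{(13)}$ by the matrix of $d_t=d_{11\ell+6}$, which by Theorem~\ref{resol_thm} is obtained from $d_6$ by the $\sigma^{\ell}$-twist, gives zero, using only the defining relations of $R_s$ — vanishing of paths of length $5$, $\a^3=\b^3$, $\a\g\b=\b\g\a=0$ — together with the composition rules for the paths $w_{i\ra j}$.

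Next, for $1\le r_0\le 10$ I would build $\Omega^{r_0}(Y_t^{(13)})\colon Q_{t+r_0}\to Q_{r_0}$ inductively. Assuming $\Omega^{r_0-1}(Y_t^{(13)})$ is already available, the composite $\Omega^{r_0-1}(Y_t^{(13)})\circ d_{t+r_0-1}$ is annihilated by $d_{r_0-2}$, using $d_{r_0-2}d_{r_0-1}=0$ (established in Theorem~\ref{resol_thm}) and the induction hypothesis, hence it maps into $\Ker d_{r_0-2}=\Im d_{r_0-1}$; since $Q_{t+r_0}$ is a projective $\Lambda$-module a lift through $d_{r_0-1}$ exists. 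The content of part $({\rm I})$ is then the assertion that the matrix displayed in item $(r_0)$ \emph{is} such a lift, and I would verify this by a componentwise computation of the two matrix products $d_{r_0-1}\cdot\Omega^{r_0}(Y_t^{(13)})$ and $\Omega^{r_0-1}(Y_t^{(13)})\cdot d_{t+r_0-1}$, tracking the row and column indices through the auxiliary functions $g$, $f_0$, $f_1$, $\lambda$ and the quantities $z_0,z_1$, and the signs through the factor $\kappa$. This step is routine but long; it is the part where the explicit shape of every entry matters.

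Finally, for an arbitrary $t_0=11\ell_0+r_0$ I would invoke the periodicity from Theorem~\ref{resol_thm}: in degrees $\ge 11$ the complex $(Q_\bullet,d_\bullet)$ is obtained from its first eleven terms by sending each summand $P_{i,j}$ to $P_{\sigma^{\ell_0}(i),j}$ and applying $\sigma^{\ell_0}$ to the left tensor factors of the differentials, equivalently $\Omega^{11}({}_\Lambda R)\simeq{}_1R_\sigma$. Applying $\sigma^{\ell_0}$ to the left components of the whole chain map $\{\Omega^i(Y_t^{(13)})\}_{0\le i\le 10}$ produces a chain map over this shifted portion of the resolution, and so represents $\Omega^{t_0}(Y_t^{(13)})$; since products in $\HH^*(R)$ computed via \eqref{mult_formula} are independent of the chain-map representative, this is all that is needed for subsequent use.

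The main obstacle I anticipate is the last bookkeeping point in part $({\rm II})$: one must check that $\sigma^{\ell_0}$ does not introduce an overall sign on the entries of these matrices, so that no factor $(-1)^{\ell_0}$ appears — unlike the translates for cases $2$, $3$, $9$ and $11$, where such a factor does arise. Because $\sigma$ negates the arrows $\g_i$ and changes the sign of some of the $\a_i,\b_i$, one has to count, entry by entry, how many sign-changing arrows occur in each path $w$ appearing in $\Omega^{r_0}(Y_t^{(13)})$ and confirm that the total parity is even, and one must check that the $\kappa$-factors, which already encode the parity of $\lfloor\ell/2\rfloor$, are left unaffected. Carrying this out uniformly across all eleven residues $r_0$, in tandem with the matrix-product identities of the inductive step, is where essentially all the work lies.
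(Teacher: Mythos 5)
Your plan coincides with the paper's proof: the paper proves the analogous proposition for case 3 by writing out the products $d_{t_0-1}f_{t_0}$ componentwise and checking they equal $f_{t_0-1}d_{t+t_0-1}$ (commutativity of the lifting squares), states that the remaining cases — including case 13 — are verified in the same way, and handles $t_0\ge 11$ by twisting the left tensor components by $\sigma^{\ell_0}$ with the appropriate sign bookkeeping. Your inductive-lifting framing and the attention to the absence of a $(-1)^{\ell_0}$ factor for this case are exactly the same verification, so the proposal is correct and essentially identical in approach.
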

\begin{pr}[Translates for the case 14]
$({\rm I})$ Let $r_0\in\N$, $r_0<11$. $r_0$-translates of the
elements $Y^{(14)}_t$ are described by the following way.

$(1)$ If $r_0=0$, then $\Omega^{0}(Y_t^{(14)})$ is described with
$(9s\times 6s)$-matrix with one nonzero element that is of the
following form{\rm:}
$$b_{(-3\ell_0)_s,(-3\ell_0)_s}=\kappa w_{(j+m)n\ra (j+m+1)n}\otimes e_{jn}.$$

$(2)$ If $r_0=1$, then $\Omega^{1}(Y_t^{(14)})$ is described with
$(8s\times 7s)$-matrix with one nonzero element that is of the
following form{\rm:}
$$b_{(-3\ell_0)_s,5s+(-1-3\ell_0)_s}=\kappa w_{(j+m+1)n+g(j+s)\ra (j+m+2)n}\otimes w_{jn+g(j)+1\ra (j+1)n}.$$

$(3)$ If $r_0=2$, then $\Omega^{2}(Y_t^{(14)})$ is described with
$(6s\times 6s)$-matrix with one nonzero element that is of the
following form{\rm:}
$$b_{4s+(-1-3\ell_0)_s,4s+(-1-3\ell_0)_s}=w_{(j+m)n+g(j)\ra (j+m+1)n+g(j)}\otimes e_{jn+g(j+s)+1}.$$

$(4)$ If $r_0=3$, then $\Omega^{3}(Y_t^{(14)})$ is described with
$(7s\times 8s)$-matrix with one nonzero element that is of the
following form{\rm:}
$$b_{5s+(-1-3\ell_0)_s,4s+(-2-3\ell_0)_s}=-w_{(j+m+2)n\ra (j+m+2)n+g(j)}\otimes w_{jn+g(j+s)+1\ra (j+1)n+g(j+s)+1}.$$

$(5)$ If $r_0=4$, then $\Omega^{4}(Y_t^{(14)})$ is described with
$(6s\times 9s)$-matrix with one nonzero element that is of the
following form{\rm:}
$$b_{7s+(-1-3\ell_0)_s,4s+(-2-3\ell_0)_s}=-e_{(j+m+1)n+g(j)+1}\otimes w_{jn+g(j+s)+1\ra (j+1)n+g(j+s)+1}.$$

$(6)$ If $r_0=5$, then $\Omega^{5}(Y_t^{(14)})$ is described with
$(6s\times 8s)$-matrix with one nonzero element that is of the
following form{\rm:}
$$b_{5s+(-1-3\ell_0)_s,4s+(-2-3\ell_0)_s}=w_{(j+m+1)n+5\ra (j+m+2)n+g(j)+1}\otimes w_{jn+g(j+s)+1\ra (j+1)n+g(j+s)+1}.$$

$(7)$ If $r_0=6$, then $\Omega^{6}(Y_t^{(14)})$ is described with
$(7s\times 9s)$-matrix with one nonzero element that is of the
following form{\rm:}
$$b_{6s+(-1-3\ell_0)_s,5s+(-2-3\ell_0)_s}=-\kappa w_{(j+m+1)n+g(j)+1\ra (j+m+1)n+5}\otimes w_{jn+g(j)+1\ra (j+1)n+g(j)+1}.$$

$(8)$ If $r_0=7$, then $\Omega^{7}(Y_t^{(14)})$ is described with
$(6s\times 8s)$-matrix with one nonzero element that is of the
following form{\rm:}
$$b_{5s+(-1-3\ell_0)_s,4s+(-2-3\ell_0)_s}=-w_{(j+m+2)n\ra (j+m+2)n+g(j+s)}\otimes w_{jn+g(j+s)+1\ra (j+1)n+g(j+s)+1}.$$

$(9)$ If $r_0=8$, then $\Omega^{8}(Y_t^{(14)})$ is described with
$(8s\times 6s)$-matrix with one nonzero element that is of the
following form{\rm:}
$$b_{5s+(-2-3\ell_0)_s,5s+(-2-3\ell_0)_s}=\kappa w_{(j+m+1)n\ra (j+m+2)n}\otimes w_{jn+g(j)+1\ra jn+5}.$$

$(10)$ If $r_0=9$, then $\Omega^{9}(Y_t^{(14)})$ is described with
$(9s\times 7s)$-matrix with one nonzero element that is of the
following form{\rm:}
$$b_{s+(-2-3\ell_0)_s,s+(-2-3\ell_0)_s}=-\kappa w_{(j+m+1)n\ra (j+m+2)n}\otimes w_{jn\ra jn+g(j+s)}.$$

$(11)$ If $r_0=10$, then $\Omega^{10}(Y_t^{(14)})$ is described with
$(8s\times 6s)$-matrix with one nonzero element that is of the
following form{\rm:}
$$b_{(2s+(-3-3\ell_0)_s+1)_s,2s+(-3-3\ell_0)_s}=\kappa w_{(j+m+1)n\ra (j+m+2)n}\otimes w_{jn+g(j)\ra (j+1)n}.$$

\medskip
$({\rm II})$ Represent an arbitrary $t_0\in\N$ in the form
$t_0=11\ell_0+r_0$, where $0\le r_0\le 10.$ Then
$\Omega^{t_0}(Y_t^{(14)})$ is a $\Omega^{r_0}(Y_t^{(14)})$, whose
left components twisted by $\sigma^{\ell_0}$.
\end{pr}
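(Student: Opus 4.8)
The plan is to exhibit the full chain map $\{\Omega^{i}(Y^{(14)}_t)\}_{i\ge 0}$ lifting the cocycle $Y^{(14)}_t$ along the minimal bimodule resolution \eqref{resolv}, and then to check that the eleven matrices written in part~$({\rm I})$, together with the $\sigma$-twisting prescription of part~$({\rm II})$, actually form such a lift. Throughout we are in the situation of condition~$(14)$, so $\myChar=3$ and $t=11\ell+6$ with $\ell$ even; write $m=3$ for the aliquot of $6$ by $2$. First I would record the base case: the displayed $(9s\times 6s)$-matrix $\Omega^{0}(Y^{(14)}_t)$, postcomposed with the augmentation $\varepsilon$, returns the generator $Y^{(14)}_t$ from the previous section, which is immediate because $\varepsilon$ merely multiplies the two tensor factors of the single entry $\kappa\,w_{(j+m)n\ra(j+m+1)n}\otimes e_{jn}$.

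Next, for $1\le r_0\le 10$ I would argue by induction, checking the commuting-square identity
$$d_{r_0-1}\cdot\Omega^{r_0}(Y^{(14)}_t)=\Omega^{r_0-1}(Y^{(14)}_t)\cdot d_{t+r_0-1},$$
where the products are matrix products in which the two tensor slots of the entries multiply in the path algebra of $\mathcal Q_s$ according to the enveloping-algebra structure, and the differentials $d_{k}$ with $k\ge 11$ are the $\sigma$-twisted ones furnished by Theorem~\ref{resol_thm}. Existence of \emph{some} lift is automatic from projectivity of $Q_{t+r_0}$ and exactness of \eqref{resolv}; the content of the proposition is that the specific, minimally supported matrices displayed do the job. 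Since each $\Omega^{r_0}(Y^{(14)}_t)$ is \emph{monomial} — one nonzero entry, on the summand of the current $Q_{t+r_0}$ indexed by the offset $(-3\ell_0)_s$, $(-1-3\ell_0)_s$, $(-2-3\ell_0)_s$ or $(-3-3\ell_0)_s$ — and the pertinent block of $d_{r_0-1}$ is supported on only a few summands, each of the eleven verifications collapses to comparing two paths in $\mathcal Q_s$ (or a path tensored with an idempotent against an idempotent tensored with a path). These comparisons reduce to the defining relations of $R_s$ (paths of length $5$ vanish, $\a^{3}=\b^{3}$, $\a\g\b=\b\g\a=0$) together with index bookkeeping modulo $ns$ and $s$; the offsets $(-3\ell_0)_s,\dots$ are exactly what is forced by transporting the support vertex through the successive differentials $d_{0},\dots,d_{10}$. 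Finally I would close the induction with the ``wrap-around'' square $d_{10}\cdot\Omega^{11}(Y^{(14)}_t)=\Omega^{10}(Y^{(14)}_t)\cdot d_{t+10}$, which realizes the family as periodic with period dividing $11\deg\sigma$.

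For part~$({\rm II})$ I would invoke the structure of the resolution in Theorem~\ref{resol_thm}: for $t_0=11\ell_0+r_0$ the term $Q_{t+t_0}$ and the differentials around it are obtained from $Q_{t+r_0}$ and its differentials by replacing every summand $P_{i,j}$ with $P_{\sigma^{\ell_0}(i),j}$ and by letting $\sigma^{\ell_0}$ act on the left tensor components of all matrix entries. Because $Y^{(14)}_t$ and the already-built $\Omega^{r_0}(Y^{(14)}_t)$ have left tensor components that are honest paths and right components that are idempotents or paths, applying $\sigma^{\ell_0}$ to the left components transports each commuting-square identity of part~$({\rm I})$ verbatim; and since $\sigma$ acts on the arrows occurring in the monomial entries of $Y^{(14)}$ with signs that cancel between the two tensor slots of a composite (unlike the cases where an overall factor $(-1)^{\ell_0}$ survives), one obtains precisely $\Omega^{t_0}(Y^{(14)}_t)=\sigma^{\ell_0}\!\bigl(\Omega^{r_0}(Y^{(14)}_t)\bigr)$ on left components, with no extra sign.

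The main obstacle is organisational rather than conceptual: one must carry the interlocking index arithmetic — the interplay of $j$, $jn$, the $n^{2}$-type shifts induced by $\sigma$, and the residues $(\,\cdot\,)_s$ and $(\,\cdot\,)_{2s}$ — consistently through all eleven cases and choose the lifts compatibly, so that the single remaining square $d_{10}\Omega^{11}=\Omega^{10}d_{t+10}$ genuinely closes. Choosing the lifts as sparsely as possible (which is why every $\Omega^{r_0}(Y^{(14)}_t)$ here is monomial) is what keeps the computation finite; this minimality, combined with the fully explicit matrices of $d_0,\dots,d_{10}$ recorded in Section~\ref{sect_res}, is what turns each of the eleven checks into a short path comparison instead of a rank computation.
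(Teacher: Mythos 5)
Your plan is essentially the paper's own argument: the paper proves one representative translate proposition (case 3) by writing out the products $d_{t_0-1}f_{t_0}$ and checking they coincide with $f_{t_0-1}d_{t+t_0-1}$, handling $t_0\ge 11$ via the $\sigma^{\ell_0}$-twist, and declares the remaining cases (including case 14) analogous — which is exactly the commuting-square verification you describe. Your remark in part (II) that no factor $(-1)^{\ell_0}$ survives for $Y^{(14)}$ is stated rather than computed, but it is the same sign bookkeeping the paper performs implicitly when twisting the product matrices, so the approach matches.
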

\begin{pr}[Translates for the case 15]
$({\rm I})$ Let $r_0\in\N$, $r_0<11$. $r_0$-translates of the
elements $Y^{(15)}_t$ are described by the following way.

$(1)$ If $r_0=0$, then $\Omega^{0}(Y_t^{(15)})$ is described with
$(9s\times 6s)$-matrix with the following elements $b_{ij}${\rm:}

If $0\le j<s$, then $$b_{ij}=
\begin{cases}
e_{(j+m)n}\otimes e_{jn},\quad i=j;\\
0,\quad\text{otherwise.}\end{cases}$$

If $s\le j<3s$, then $b_{ij}=0$.

If $3s\le j<5s$, then $$b_{ij}=
\begin{cases}
w_{(j+m)n+g(j+s)\ra (j+m)n+g(j+s)+1}\otimes e_{jn+g(j+s)},\quad i=j-2s;\\
0,\quad\text{otherwise.}\end{cases}$$

If $5s\le j<7s$, then $b_{ij}=0$.

If $7s\le j<8s$, then $$b_{ij}=
\begin{cases}
e_{(j+m)n+5}\otimes e_{jn+5},\quad i=j-2s;\\
0,\quad\text{otherwise.}\end{cases}$$

If $8s\le j<9s$, then $b_{ij}=0$.

$(2)$ If $r_0=1$, then $\Omega^{1}(Y_t^{(15)})$ is described with
$(8s\times 7s)$-matrix with the following elements $b_{ij}${\rm:}

If $0\le j<2s$, then $$b_{ij}=
\begin{cases}
w_{(j+m)n+g(j+s)\ra (j+m)n+g(j+s)+1}\otimes e_{jn},\quad i=(j)_s+f_0(j,s)s;\\
0,\quad\text{otherwise.}\end{cases}$$

If $2s\le j<4s$, then $$b_{ij}=
\begin{cases}
w_{(j+m)n+g(j)+1\ra (j+m)n+5}\otimes e_{jn+g(j)},\quad i=j;\\
e_{(j+m)n+5}\otimes w_{jn+g(j)\ra jn+g(j)+1},\quad i=j+2s;\\
0,\quad\text{otherwise.}\end{cases}$$

If $4s\le j<5s$, then $b_{ij}=0$.

If $5s\le j<6s$, then $$b_{ij}=
\begin{cases}
e_{(j+m+1)n}\otimes w_{jn+g(j)+1\ra jn+5},\quad i=j+s;\\
0,\quad\text{otherwise.}\end{cases}$$

If $6s\le j<7s$, then $$b_{ij}=
\begin{cases}
w_{(j+m+1)n\ra (j+m+1)n+g(j+s)}\otimes e_{jn+5},\quad i=j;\\
0,\quad\text{otherwise.}\end{cases}$$

If $7s\le j<8s$, then $b_{ij}=0$.

$(3)$ If $r_0=2$, then $\Omega^{2}(Y_t^{(15)})$ is described with
$(6s\times 6s)$-matrix with the following elements $b_{ij}${\rm:}

If $0\le j<s$, then $$b_{ij}=
\begin{cases}
e_{(j+m-1)n+5}\otimes e_{jn},\quad i=j;\\
0,\quad\text{otherwise.}\end{cases}$$

If $s\le j<3s$, then $$b_{ij}=
\begin{cases}
w_{(j+m)n+g(j)\ra (j+m)n+g(j)+1}\otimes w_{jn+g(j+s)\ra jn+g(j+s)+1},\quad i=j+2s;\\
0,\quad\text{otherwise.}\end{cases}$$

If $3s\le j<5s$, then $b_{ij}=0$.

If $5s\le j<6s$, then $$b_{ij}=
\begin{cases}
e_{(j+m+1)n}\otimes e_{jn+5},\quad i=j;\\
0,\quad\text{otherwise.}\end{cases}$$

$(4)$ If $r_0=3$, then $\Omega^{3}(Y_t^{(15)})$ is described with
$(7s\times 8s)$-matrix with the following elements $b_{ij}${\rm:}

If $0\le j<s$, then $$b_{ij}=
\begin{cases}
w_{(j+m)n+g(j)+1\ra (j+m)n+5}\otimes e_{jn},\quad i=j;\\
e_{(j+m)n+5}\otimes w_{jn\ra jn+g(j)},\quad i=j+2s;\\
0,\quad\text{otherwise.}\end{cases}$$

If $s\le j<3s$, then $$b_{ij}=
\begin{cases}
e_{(j+m+1)n}\otimes w_{jn+g(j+s)\ra jn+g(j+s)+1},\quad i=j+3s;\\
0,\quad\text{otherwise.}\end{cases}$$

If $3s\le j<5s$, then $$b_{ij}=
\begin{cases}
w_{(j+m+1)n\ra (j+m+1)n+g(j+s)}\otimes e_{jn+g(j+s)+1},\quad i=j+s;\\
0,\quad\text{otherwise.}\end{cases}$$

If $5s\le j<7s$, then $$b_{ij}=
\begin{cases}
w_{(j+m+1)n+g(j)\ra (j+m+1)n+g(j)+1}\otimes e_{jn+5},\quad i=j+2f_0(j,6s)s;\\
0,\quad\text{otherwise.}\end{cases}$$

$(5)$ If $r_0=4$, then $\Omega^{4}(Y_t^{(15)})$ is described with
$(6s\times 9s)$-matrix with the following elements $b_{ij}${\rm:}

If $0\le j<s$, then $$b_{ij}=
\begin{cases}
e_{(j+m)n}\otimes e_{jn},\quad i=j+s;\\
0,\quad\text{otherwise.}\end{cases}$$

If $s\le j<3s$, then $b_{ij}=0$.

If $3s\le j<5s$, then $$b_{ij}=
\begin{cases}
w_{(j+m)n+g(j+s)\ra (j+m)n+g(j+s)+1}\otimes e_{jn+g(j+s)+1},\quad i=j+s;\\
0,\quad\text{otherwise.}\end{cases}$$

If $5s\le j<6s$, then $$b_{ij}=
\begin{cases}
e_{(j+m)n+5}\otimes e_{jn+5},\quad i=j+3s;\\
0,\quad\text{otherwise.}\end{cases}$$

$(6)$ If $r_0=5$, then $\Omega^{5}(Y_t^{(15)})$ is described with
$(6s\times 8s)$-matrix with the following elements $b_{ij}${\rm:}

If $0\le j<s$, then $$b_{ij}=
\begin{cases}
w_{(j+m)n+g(j+s)\ra (j+m+1)n}\otimes e_{jn},\quad i=j+s;\\
e_{(j+m+1)n}\otimes w_{jn\ra jn+g(j)},\quad i=j+2s;\\
w_{(j+m)n+5\ra (j+m+1)n}\otimes w_{jn\ra jn+g(j)+1},\quad i=j+4s;\\
0,\quad\text{otherwise.}\end{cases}$$

If $s\le j<3s$, then $$b_{ij}=
\begin{cases}
e_{(j+m+1)n+g(j+s)}\otimes w_{jn+g(j+s)\ra (j+1)n},\quad i=(j+1)_s+(j_2-1)s;\\
w_{(j+m)n+5\ra (j+m+1)n+g(j+s)}\otimes w_{jn+g(j+s)\ra jn+g(j+s)+1},\quad i=j+3s;\\
0,\quad\text{otherwise.}\end{cases}$$

If $3s\le j<5s$, then $$b_{ij}=
\begin{cases}
w_{(j+m+1)n+g(j+s)\ra (j+m+1)n+g(j+s)+1}\otimes w_{jn+g(j+s)+1\ra (j+1)n},\quad i=(j+1)_s+(j_2-3)s;\\
w_{(j+m)n+5\ra (j+m+1)n+g(j+s)+1}\otimes e_{jn+g(j+s)+1},\quad i=j+s;\\
0,\quad\text{otherwise.}\end{cases}$$

If $5s\le j<6s$, then $$b_{ij}=
\begin{cases}
w_{(j+m+1)n+g(j+s)\ra (j+m+1)n+5}\otimes w_{jn+5\ra (j+1)n},\quad i=(j+1)_s;\\
e_{(j+m+1)n+5}\otimes w_{jn+5\ra (j+1)n+g(j)+1},\quad i=(j+1)_s+5s;\\
w_{(j+m+1)n+g(j+s)+1\ra (j+m+1)n+5}\otimes e_{jn+5},\quad i=j+s;\\
0,\quad\text{otherwise.}\end{cases}$$

$(7)$ If $r_0=6$, then $\Omega^{6}(Y_t^{(15)})$ is described with
$(7s\times 9s)$-matrix with the following elements $b_{ij}${\rm:}

If $0\le j<s$, then $$b_{ij}=
\begin{cases}
e_{(j+m)n+g(j)}\otimes w_{jn\ra jn+g(j)},\quad i=j+s;\\
0,\quad\text{otherwise.}\end{cases}$$

If $s\le j<2s$, then $$b_{ij}=
\begin{cases}
w_{(j+m)n\ra (j+m)n+g(j)}\otimes e_{jn},\quad i=j-s;\\
e_{(j+m)n+g(j)}\otimes w_{jn\ra jn+g(j)},\quad i=j+s;\\
0,\quad\text{otherwise.}\end{cases}$$

If $2s\le j<4s$, then $b_{ij}=0$.

If $4s\le j<5s$, then $$b_{ij}=
\begin{cases}
w_{(j+m)n+g(j)+1\ra (j+m)n+5}\otimes e_{jn+g(j)+1},\quad i=j+s;\\
0,\quad\text{otherwise.}\end{cases}$$

If $5s\le j<6s$, then $$b_{ij}=
\begin{cases}
w_{(j+m)n+g(j)+1\ra (j+m)n+5}\otimes e_{jn+g(j)+1},\quad i=j+s;\\
e_{(j+m)n+5}\otimes w_{jn+g(j)+1\ra jn+5},\quad i=j+2s;\\
0,\quad\text{otherwise.}\end{cases}$$

If $6s\le j<7s$, then $$b_{ij}=
\begin{cases}
e_{(j+m+1)n}\otimes w_{jn+5\ra (j+1)n},\quad i=(j+1)_s;\\
e_{(j+m+1)n}\otimes e_{jn+5},\quad i=j+2s;\\
0,\quad\text{otherwise.}\end{cases}$$

$(8)$ If $r_0=7$, then $\Omega^{7}(Y_t^{(15)})$ is described with
$(6s\times 8s)$-matrix with the following elements $b_{ij}${\rm:}

If $0\le j<s$, then $$b_{ij}=
\begin{cases}
w_{(j+m)n+g(j+s)+1\ra (j+m)n+5}\otimes e_{jn},\quad i=j+s;\\
e_{(j+m)n+5}\otimes w_{jn\ra jn+g(j)},\quad i=j+2s;\\
0,\quad\text{otherwise.}\end{cases}$$

If $s\le j<3s$, then $$b_{ij}=
\begin{cases}
e_{(j+m+1)n+g(j+s)+1}\otimes w_{jn+g(j+s)\ra (j+1)n},\quad i=(j+1)_s+(j_2-1)s;\\
w_{(j+m+1)n\ra (j+m+1)n+g(j+s)+1}\otimes w_{jn+g(j+s)\ra jn+g(j+s)+1},\quad i=j+3s;\\
0,\quad\text{otherwise.}\end{cases}$$

If $3s\le j<5s$, then $$b_{ij}=
\begin{cases}
w_{(j+m+1)n\ra (j+m+1)n+g(j)}\otimes e_{jn+g(j+s)+1},\quad i=j+s;\\
0,\quad\text{otherwise.}\end{cases}$$

If $5s\le j<6s$, then $$b_{ij}=
\begin{cases}
e_{(j+m+2)n}\otimes w_{jn+5\ra (j+1)n+g(j+s)+1},\quad i=(j+1)_s+4s;\\
w_{(j+m+1)n+g(j+s)\ra (j+m+2)n}\otimes e_{jn+5},\quad i=j+s;\\
0,\quad\text{otherwise.}\end{cases}$$

$(9)$ If $r_0=8$, then $\Omega^{8}(Y_t^{(15)})$ is described with
$(8s\times 6s)$-matrix with the following elements $b_{ij}${\rm:}

If $0\le j<s$, then $$b_{ij}=
\begin{cases}
e_{(j+m)n+g(j)+1}\otimes w_{jn\ra jn+g(j)},\quad i=j+s;\\
w_{(j+m)n+g(j)\ra (j+m)n+g(j)+1}\otimes w_{jn\ra jn+g(j+s)+1},\quad i=j+4s;\\
0,\quad\text{otherwise.}\end{cases}$$

If $s\le j<2s$, then $$b_{ij}=
\begin{cases}
w_{(j+m-1)n+5\ra (j+m)n+g(j)+1}\otimes e_{jn},\quad i=j-s;\\
e_{(j+m)n+g(j)+1}\otimes w_{jn\ra jn+g(j)},\quad i=j+s;\\
w_{(j+m)n+g(j)\ra (j+m)n+g(j)+1}\otimes w_{jn\ra jn+g(j+s)+1},\quad i=j+2s;\\
0,\quad\text{otherwise.}\end{cases}$$

If $2s\le j<3s$, then $$b_{ij}=
\begin{cases}
e_{(j+m)n+5}\otimes w_{jn+g(j)\ra (j+1)n},\quad i=(j+1)_s;\\
0,\quad\text{otherwise.}\end{cases}$$

If $3s\le j<4s$, then $b_{ij}=0$.

If $4s\le j<5s$, then $$b_{ij}=
\begin{cases}
w_{(j+m)n+g(j+s)\ra (j+m+1)n}\otimes e_{jn+g(j)+1},\quad i=j-s;\\
e_{(j+m+1)n}\otimes w_{jn+g(j)+1\ra jn+5},\quad i=j+s;\\
0,\quad\text{otherwise.}\end{cases}$$

If $5s\le j<6s$, then $$b_{ij}=
\begin{cases}
w_{(j+m)n+g(j+s)\ra (j+m+1)n}\otimes e_{jn+g(j)+1},\quad i=j-s;\\
0,\quad\text{otherwise.}\end{cases}$$

If $6s\le j<7s$, then $$b_{ij}=
\begin{cases}
w_{(j+m+1)n\ra (j+m+1)n+g(j)}\otimes e_{jn+5},\quad i=j-s;\\
0,\quad\text{otherwise.}\end{cases}$$

If $7s\le j<8s$, then $b_{ij}=0$.

$(10)$ If $r_0=9$, then $\Omega^{9}(Y_t^{(15)})$ is described with
$(9s\times 7s)$-matrix with the following elements $b_{ij}${\rm:}

If $0\le j<s$, then $$b_{ij}=
\begin{cases}
e_{(j+m)n+5}\otimes e_{jn},\quad i=j;\\
0,\quad\text{otherwise.}\end{cases}$$

If $s\le j<2s$, then $$b_{ij}=
\begin{cases}
e_{(j+m+1)n}\otimes w_{jn\ra jn+g(j+s)},\quad i=j;\\
0,\quad\text{otherwise.}\end{cases}$$

If $2s\le j<4s$, then $$b_{ij}=
\begin{cases}
w_{(j+m+1)n\ra (j+m+1)n+g(j+s)}\otimes e_{jn+g(j)},\quad i=j-s;\\
0,\quad\text{otherwise.}\end{cases}$$

If $4s\le j<6s$, then $b_{ij}=0$.

If $6s\le j<8s$, then $$b_{ij}=
\begin{cases}
w_{(j+m+1)n+g(j+s)\ra (j+m+1)n+g(j+s)+1}\otimes e_{jn+g(j)+1},\quad i=j-3s;\\
0,\quad\text{otherwise.}\end{cases}$$

If $8s\le j<9s$, then $$b_{ij}=
\begin{cases}
w_{(j+m+1)n+g(j)+1\ra (j+m+1)n+5}\otimes e_{jn+5},\quad i=j-3s;\\
0,\quad\text{otherwise.}\end{cases}$$

$(11)$ If $r_0=10$, then $\Omega^{10}(Y_t^{(15)})$ is described with
$(8s\times 6s)$-matrix with the following elements $b_{ij}${\rm:}

If $0\le j<s$, then $$b_{ij}=
\begin{cases}
w_{(j+m)n\ra (j+m)n+g(j)}\otimes e_{jn},\quad i=j;\\
0,\quad\text{otherwise.}\end{cases}$$

If $s\le j<2s$, then $b_{ij}=0$.

If $2s\le j<3s$, then $$b_{ij}=
\begin{cases}
e_{(j+m+1)n}\otimes w_{jn+g(j)\ra (j+1)n},\quad i=(j+1)_s;\\
w_{(j+m)n+g(j+s)\ra (j+m+1)n}\otimes e_{jn+g(j)},\quad i=j-s;\\
0,\quad\text{otherwise.}\end{cases}$$

If $3s\le j<4s$, then $$b_{ij}=
\begin{cases}
w_{(j+m)n+g(j+s)\ra (j+m+1)n}\otimes e_{jn+g(j)},\quad i=j-s;\\
0,\quad\text{otherwise.}\end{cases}$$

If $4s\le j<5s$, then $$b_{ij}=
\begin{cases}
w_{(j+m)n+g(j+s)+1\ra (j+m)n+5}\otimes e_{jn+g(j)+1},\quad i=j-s;\\
e_{(j+m)n+5}\otimes w_{jn+g(j)+1\ra jn+5},\quad i=j+s;\\
0,\quad\text{otherwise.}\end{cases}$$

If $5s\le j<6s$, then $$b_{ij}=
\begin{cases}
w_{(j+m)n+g(j+s)+1\ra (j+m)n+5}\otimes e_{jn+g(j)+1},\quad i=j-s;\\
0,\quad\text{otherwise.}\end{cases}$$

If $6s\le j<7s$, then $$b_{ij}=
\begin{cases}
w_{(j+m+1)n+g(j)\ra (j+m+1)n+g(j)+1}\otimes w_{jn+5\ra (j+1)n+g(j+s)},\quad i=(j+1)_s+2s;\\
w_{(j+m)n+5\ra (j+m+1)n+g(j)+1}\otimes e_{jn+5},\quad i=j-s;\\
0,\quad\text{otherwise.}\end{cases}$$

If $7s\le j<8s$, then $$b_{ij}=
\begin{cases}
w_{(j+m+1)n+g(j)\ra (j+m+1)n+g(j)+1}\otimes w_{jn+5\ra (j+1)n+g(j+s)},\quad i=(j+1)_s+s;\\
0,\quad\text{otherwise.}\end{cases}$$

\medskip
$({\rm II})$ Represent an arbitrary $t_0\in\N$ in the form
$t_0=11\ell_0+r_0$, where $0\le r_0\le 10.$ Then
$\Omega^{t_0}(Y_t^{(15)})$ is a $\Omega^{r_0}(Y_t^{(15)})$, whose
left components twisted by $\sigma^{\ell_0}$.
\end{pr}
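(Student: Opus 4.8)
The elements $Y^{(15)}_t$ are the cocycles in $\Ker\delta^t$ fixed in Section 5, and the aim of the proposition is to write down explicitly the chain map $\{\varphi_i=\Omega^i(Y^{(15)}_t):Q_{t+i}\to Q_i\}_{i\ge 0}$ lifting $Y^{(15)}_t$, which is what the multiplication formula \eqref{mult_formula} needs. The plan is an induction on $r_0$. For the base case one notes that when $r_0=0$ the aliquot $m$ of $r_0$ by $2$ vanishes, so the matrix listed in part (I)(1) is literally the matrix of $Y^{(15)}_t$ as recorded in Section 5; hence $\Omega^0(Y^{(15)}_t)=Y^{(15)}_t$ — the chosen representative of the class — which anchors the induction.

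For the inductive step I would check, for each $r_0$ with $0\le r_0\le 10$, the identity expressing commutativity of the lifting squares,
\[
d_{r_0}\cdot\Omega^{r_0+1}(Y^{(15)}_t)=\Omega^{r_0}(Y^{(15)}_t)\cdot d_{t+r_0},
\]
read as a product of matrices over $K\left[\mathcal Q_s\right]$, where $d_{r_0}$ is taken from the componentwise description in Section 3 and $d_{t+r_0}$ is reduced, via Theorem \ref{resol_thm}, to $d_{(t+r_0)_{11}}$ with its left tensor factors twisted by the appropriate power of $\sigma$. On each of the index blocks $0\le j<s,\ s\le j<2s,\dots$ singled out in the statement one simply composes the path entries, tracking the shifts $(j+m)n$, $(j+m)n+g(\cdot)$, $(j+m)n+g(\cdot)+1$, $(j+m)n+5$ and the auxiliary functions $g,f,f_0,f_1,h,\lambda$, and verifies that the two sides agree. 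Together with $\Omega^0(Y^{(15)}_t)=Y^{(15)}_t$ this determines the $\varphi_i$ up to chain homotopy, and since \eqref{mult_formula} requires only a representative, the sparse choice recorded in (I) is a legitimate one.

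Part (II) then follows by periodicity: applying $\sigma^{\ell_0}$ to every left tensor component of the whole chain map $\{\Omega^{r_0}(Y^{(15)}_t)\}_{0\le r_0\le 10}$ yields, by Theorem \ref{resol_thm} (which builds $Q_{11\ell_0+i}$ and $d_{11\ell_0+i}$ from $Q_i$ and $d_i$ in exactly this way), a new chain map lifting the same cocycle, hence a model for $\Omega^{11\ell_0+r_0}(Y^{(15)}_t)$; and here the path entries occurring in $\Omega^{r_0}(Y^{(15)}_t)$ are acted on by $\sigma$ without an overall sign change — in contrast with cases $2,3,9,11$ — which is why no factor $(-1)^{\ell_0}$ appears. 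The main obstacle throughout is not conceptual but bookkeeping: the differentials and the matrices $\Omega^{r_0}(Y^{(15)}_t)$ are defined piecewise over many residue ranges with several interacting arithmetic functions, so the verification amounts to a careful matching of these ranges and of a large number of path products and sign cancellations, with the delicate points occurring at the boundaries of the ranges and in the degenerate case $s=1$.
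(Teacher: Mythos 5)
Your proposal is correct and follows essentially the same route as the paper, which proves all of these translate propositions by checking blockwise that the squares $d_{t_0-1}f_{t_0}=f_{t_0-1}d_{t+t_0-1}$ commute for the displayed matrices (the computation is written out only for case 3, the remaining cases, including case 15, being declared analogous), and which handles $t_0\ge 11$ exactly by the $\sigma^{\ell_0}$-twist of the resolution, with no extra sign $(-1)^{\ell_0}$ for this case. One minor notational caveat: the $m$ appearing in the matrices of part (I) is the generator's $m=\lfloor r/2\rfloor=3$ (it records the left vertices of the summands of $Q_{t+r_0}$), not the aliquot of $r_0$ by $2$, so $\Omega^{0}(Y^{(15)}_t)$ coincides with the Section 5 matrix of $Y^{(15)}_t$ only after re-reading the left indices on $Q_t$ with the shift by $mn$; this slip does not affect the rest of your argument.
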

\begin{pr}[Translates for the case 16]
$({\rm I})$ Let $r_0\in\N$, $r_0<11$. $r_0$-translates of the
elements $Y^{(16)}_t$ are described by the following way.

$(1)$ If $r_0=0$, then $\Omega^{0}(Y_t^{(16)})$ is described with
$(8s\times 6s)$-matrix with the following elements $b_{ij}${\rm:}

If $0\le j<(-4\ell_0)_s$, then $b_{ij}=0$.

If $j=(-4\ell_0)_s$, then $$b_{ij}=
\begin{cases}
w_{(j+m)n\ra (j+m)n+g(j)+1}\otimes e_{jn},\quad i=j;\\
0,\quad\text{otherwise.}\end{cases}$$

If $(-4\ell_0)_s<j<2s+(-4\ell_0)_s$, then $b_{ij}=0$.

If $j=2s+(-4\ell_0)_s$, then $$b_{ij}=
\begin{cases}
-\kappa w_{(j+m)n+g(j)\ra (j+m)n+5}\otimes e_{jn+g(j)},\quad i=j-s;\\
0,\quad\text{otherwise.}\end{cases}$$

If $2s+(-4\ell_0)_s<j<8s$, then $b_{ij}=0$.

$(2)$ If $r_0=1$, then $\Omega^{1}(Y_t^{(16)})$ is described with
$(6s\times 7s)$-matrix with the following elements $b_{ij}${\rm:}

If $0\le j<(-4\ell_0)_s$, then $b_{ij}=0$.

If $j=(-4\ell_0)_s$, then $$b_{ij}=
\begin{cases}
\kappa w_{(j+m)n+g(j)\ra (j+m)n+5}\otimes e_{jn},\quad i=j;\\
0,\quad\text{otherwise.}\end{cases}$$

If $(-4\ell_0)_s<j<s+(-1-4\ell_0)_s$, then $b_{ij}=0$.

If $j=s+(-1-4\ell_0)_s$, then $$b_{ij}=
\begin{cases}
-w_{(j+m+1)n+g(j+s)\ra (j+m+1)n+g(j+s)+1}\otimes w_{jn+g(j+s)\ra (j+1)n},\quad i=(j+1)_s;\\
-e_{(j+m+1)n+g(j+s)+1}\otimes w_{jn+g(j+s)\ra (j+1)n+g(j+s)},\quad i=(j+1)_s+2s;\\
0,\quad\text{otherwise.}\end{cases}$$

If $s+(-1-4\ell_0)_s<j<5s+(-1-4\ell_0)_s$, then $b_{ij}=0$.

If $j=5s+(-1-4\ell_0)_s$, then $$b_{ij}=
\begin{cases}
-\kappa w_{(j+m+1)n+g(j+s)\ra (j+m+2)n}\otimes w_{jn+5\ra (j+1)n},\quad i=(j+1)_s;\\
-\kappa w_{(j+m+1)n\ra (j+m+2)n}\otimes e_{jn+5},\quad i=j+s;\\
0,\quad\text{otherwise.}\end{cases}$$

If $5s+(-1-4\ell_0)_s<j<6s$, then $b_{ij}=0$.

$(3)$ If $r_0=2$, then $\Omega^{2}(Y_t^{(16)})$ is described with
$(7s\times 6s)$-matrix with the following elements $b_{ij}${\rm:}

If $0\le j<s+(-1-4\ell_0)_s$, then $b_{ij}=0$.

If $j=s+(-1-4\ell_0)_s$, then $$b_{ij}=
\begin{cases}
-\kappa w_{(j+m)n+g(j+s)+1\ra (j+m+1)n}\otimes e_{jn+g(j+s)},\quad i=j;\\
0,\quad\text{otherwise.}\end{cases}$$

If $s+(-1-4\ell_0)_s<j<2s+(-1-4\ell_0)_s$, then $b_{ij}=0$.

If $j=2s+(-1-4\ell_0)_s$, then $$b_{ij}=
\begin{cases}
-\kappa w_{(j+m)n+g(j)\ra (j+m+1)n}\otimes w_{jn+g(j+s)\ra jn+g(j+s)+1},\quad i=j+2s;\\
0,\quad\text{otherwise.}\end{cases}$$

If $2s+(-1-4\ell_0)_s<j<4s+(-1-4\ell_0)_s$, then $b_{ij}=0$.

If $j=4s+(-1-4\ell_0)_s$, then $$b_{ij}=
\begin{cases}
w_{(j+m)n+g(j)\ra (j+m+1)n+g(j)}\otimes e_{jn+g(j+s)+1},\quad i=j;\\
0,\quad\text{otherwise.}\end{cases}$$

If $4s+(-1-4\ell_0)_s<j<7s$, then $b_{ij}=0$.

$(4)$ If $r_0=3$, then $\Omega^{3}(Y_t^{(16)})$ is described with
$(6s\times 8s)$-matrix with one nonzero element that is of the
following form{\rm:}
$$b_{(-1-4\ell_0)_s,(-1-4\ell_0)_s}=\kappa w_{(j+m)n+g(j)+1\ra (j+m+1)n}\otimes e_{jn}.$$

$(5)$ If $r_0=4$, then $\Omega^{4}(Y_t^{(16)})$ is described with
$(6s\times 9s)$-matrix with the following elements $b_{ij}${\rm:}

If $0\le j<(-2-4\ell_0)_s$, then $b_{ij}=0$.

If $j=(-2-4\ell_0)_s$, then $$b_{ij}=
\begin{cases}
-\kappa e_{(j+m+1)n}\otimes w_{jn\ra (j+1)n},\quad i=(j+1)_s+s;\\
0,\quad\text{otherwise.}\end{cases}$$

If $(-2-4\ell_0)_s<j<2s+(-2-4\ell_0)_s$, then $b_{ij}=0$.

If $j=2s+(-2-4\ell_0)_s$, then $$b_{ij}=
\begin{cases}
-w_{(j+m+1)n\ra (j+m+1)n+g(j)}\otimes w_{jn+g(j+s)\ra (j+1)n},\quad i=(j+1)_s+s;\\
0,\quad\text{otherwise.}\end{cases}$$

If $2s+(-2-4\ell_0)_s<j<4s+(-2-4\ell_0)_s$, then $b_{ij}=0$.

If $j=4s+(-2-4\ell_0)_s$, then $$b_{ij}=
\begin{cases}
w_{(j+m)n+g(j)+1\ra (j+m+1)n+g(j)+1}\otimes e_{jn+g(j+s)+1},\quad i=j+3s;\\
0,\quad\text{otherwise.}\end{cases}$$

If $4s+(-2-4\ell_0)_s<j<6s$, then $b_{ij}=0$.

$(6)$ If $r_0=5$, then $\Omega^{5}(Y_t^{(16)})$ is described with
$(7s\times 8s)$-matrix with the following elements $b_{ij}${\rm:}

If $0\le j<(-2-4\ell_0)_s$, then $b_{ij}=0$.

If $j=(-2-4\ell_0)_s$, then $$b_{ij}=
\begin{cases}
e_{(j+m+1)n+g(j+s)}\otimes w_{jn\ra (j+1)n},\quad i=(j+1)_s+s;\\
0,\quad\text{otherwise.}\end{cases}$$

If $(-2-4\ell_0)_s<j<3s+(-2-4\ell_0)_s$, then $b_{ij}=0$.

If $j=3s+(-2-4\ell_0)_s$, then $$b_{ij}=
\begin{cases}
w_{(j+m+1)n\ra (j+m+1)n+g(j+s)+1}\otimes e_{jn+g(j)},\quad i=j;\\
0,\quad\text{otherwise.}\end{cases}$$

If $3s+(-2-4\ell_0)_s<j<7s$, then $b_{ij}=0$.

$(7)$ If $r_0=6$, then $\Omega^{6}(Y_t^{(16)})$ is described with
$(6s\times 9s)$-matrix with the following elements $b_{ij}${\rm:}

If $0\le j<(-2-4\ell_0)_s$, then $b_{ij}=0$.

If $j=(-2-4\ell_0)_s$, then $$b_{ij}=
\begin{cases}
\kappa w_{(j+m)n+g(j+s)\ra (j+m)n+5}\otimes w_{jn\ra jn+g(j+s)},\quad i=j+2s;\\
0,\quad\text{otherwise.}\end{cases}$$

If $(-2-4\ell_0)_s<j<2s+(-3-4\ell_0)_s$, then $b_{ij}=0$.

If $j=2s+(-3-4\ell_0)_s$, then $$b_{ij}=
\begin{cases}
w_{(j+m+1)n\ra (j+m+1)n+g(j)+1}\otimes w_{jn+g(j+s)\ra (j+1)n},\quad i=(j+1)_s;\\
0,\quad\text{otherwise.}\end{cases}$$

If $2s+(-3-4\ell_0)_s<j<6s$, then $b_{ij}=0$.

$(8)$ If $r_0=7$, then $\Omega^{7}(Y_t^{(16)})$ is described with
$(8s\times 8s)$-matrix with the following elements $b_{ij}${\rm:}

If $0\le j<s+(-3-4\ell_0)_s$, then $b_{ij}=0$.

If $j=s+(-3-4\ell_0)_s$, then $$b_{ij}=
\begin{cases}
e_{(j+m+1)n+g(j+s)+1}\otimes w_{jn\ra (j+1)n},\quad i=(j+1)_s;\\
0,\quad\text{otherwise.}\end{cases}$$

If $s+(-3-4\ell_0)_s<j<3s+(-3-4\ell_0)_s$, then $b_{ij}=0$.

If $j=3s+(-3-4\ell_0)_s$, then $$b_{ij}=
\begin{cases}
-\kappa w_{(j+m+1)n+g(j)+1\ra (j+m+1)n+5}\otimes w_{jn+g(j)\ra (j+1)n},\quad i=(j+1)_s+s;\\
0,\quad\text{otherwise.}\end{cases}$$

If $3s+(-3-4\ell_0)_s<j<6s+(-3-4\ell_0)_s$, then $b_{ij}=0$.

If $j=6s+(-3-4\ell_0)_s$, then $$b_{ij}=
\begin{cases}
-w_{(j+m+1)n+g(j+s)\ra (j+m+2)n+g(j+s)}\otimes e_{jn+5},\quad i=j+s;\\
0,\quad\text{otherwise.}\end{cases}$$

If $6s+(-3-4\ell_0)_s<j<8s$, then $b_{ij}=0$.

$(9)$ If $r_0=8$, then $\Omega^{8}(Y_t^{(16)})$ is described with
$(9s\times 6s)$-matrix with the following elements $b_{ij}${\rm:}

If $0\le j<(-3-4\ell_0)_s$, then $b_{ij}=0$.

If $j=(-3-4\ell_0)_s$, then $$b_{ij}=
\begin{cases}
\kappa e_{(j+m)n+5}\otimes w_{jn\ra (j+1)n},\quad i=(j+1)_s;\\
0,\quad\text{otherwise.}\end{cases}$$

If $(-3-4\ell_0)_s<j<3s+(-3-4\ell_0)_s$, then $b_{ij}=0$.

If $j=3s+(-3-4\ell_0)_s$, then $$b_{ij}=
\begin{cases}
w_{(j+m)n+g(j)+1\ra (j+m+1)n+g(j)}\otimes e_{jn+g(j)},\quad i=j-s;\\
0,\quad\text{otherwise.}\end{cases}$$

If $3s+(-3-4\ell_0)_s<j<4s+(-3-4\ell_0)_s$, then $b_{ij}=0$.

If $j=4s+(-3-4\ell_0)_s$, then $$b_{ij}=
\begin{cases}
-w_{(j+m)n+g(j+s)\ra (j+m+1)n+g(j+s)}\otimes e_{jn+g(j)+1},\quad i=j-s;\\
0,\quad\text{otherwise.}\end{cases}$$

If $4s+(-3-4\ell_0)_s<j<9s$, then $b_{ij}=0$.

$(10)$ If $r_0=9$, then $\Omega^{9}(Y_t^{(16)})$ is described with
$(8s\times 7s)$-matrix with the following elements $b_{ij}${\rm:}

If $0\le j<(-3-4\ell_0)_s$, then $b_{ij}=0$.

If $j=(-3-4\ell_0)_s$, then $$b_{ij}=
\begin{cases}
-w_{(j+m)n+5\ra (j+m+1)n+g(j+s)}\otimes e_{jn},\quad i=j;\\
0,\quad\text{otherwise.}\end{cases}$$

If $(-3-4\ell_0)_s<j<6s+(-4-4\ell_0)_s$, then $b_{ij}=0$.

If $j=6s+(-4-4\ell_0)_s$, then $$b_{ij}=
\begin{cases}
-w_{(j+m+1)n+5\ra (j+m+2)n+g(j+s)+1}\otimes w_{jn+5\ra (j+1)n},\quad i=(j+1)_s;\\
0,\quad\text{otherwise.}\end{cases}$$

If $6s+(-4-4\ell_0)_s<j<8s$, then $b_{ij}=0$.

$(11)$ If $r_0=10$, then $\Omega^{10}(Y_t^{(16)})$ is described with
$(9s\times 6s)$-matrix with the following elements $b_{ij}${\rm:}

If $0\le j<s+(-4-4\ell_0)_s$, then $b_{ij}=0$.

If $j=s+(-4-4\ell_0)_s$, then $$b_{ij}=
\begin{cases}
w_{(j+m+1)n\ra (j+m+1)n+g(j)}\otimes w_{jn+g(j+s)\ra (j+1)n},\quad i=(j+1)_s;\\
-e_{(j+m+1)n+g(j)}\otimes w_{jn+g(j+s)\ra (j+1)n+g(j+s)},\quad i=(j+1)_s+s;\\
0,\quad\text{otherwise.}\end{cases}$$

If $s+(-4-4\ell_0)_s<j<5s+(-4-4\ell_0)_s$, then $b_{ij}=0$.

If $j=5s+(-4-4\ell_0)_s$, then $$b_{ij}=
\begin{cases}
-w_{(j+m)n+g(j)+1\ra (j+m+1)n+g(j)+1}\otimes e_{jn+g(j+s)+1},\quad i=j-2s;\\
-w_{(j+m)n+5\ra (j+m+1)n+g(j)+1}\otimes w_{jn+g(j+s)+1\ra jn+5},\quad i=j;\\
0,\quad\text{otherwise.}\end{cases}$$

If $5s+(-4-4\ell_0)_s<j<9s$, then $b_{ij}=0$.

\medskip
$({\rm II})$ Represent an arbitrary $t_0\in\N$ in the form
$t_0=11\ell_0+r_0$, where $0\le r_0\le 10.$ Then
$\Omega^{t_0}(Y_t^{(16)})$ is a $\Omega^{r_0}(Y_t^{(16)})$, whose
left components twisted by $\sigma^{\ell_0}$, and coefficients
multiplied by $(-1)^{\ell_0}$.
\end{pr}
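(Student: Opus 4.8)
The argument is entirely parallel to the proofs of the preceding propositions on $\Omega$-translates, so I indicate only the scheme. By definition $\Omega^{r_0}(Y^{(16)}_t)$ is the $r_0$-th component of a chain map $\{\varphi_i\colon Q_{t+i}\to Q_i\}_{i\ge 0}$ lifting the cocycle $Y^{(16)}_t$, and such a lift is unique up to homotopy; hence it suffices to check that the matrices displayed in the statement \emph{do} assemble into a chain map. Because the resolution \eqref{resolv} is $11$-periodic up to the twist by $\sigma$ (Theorem \ref{resol_thm}: $Q_{11\ell+r}$ comes from $Q_r$ and $d_{11\ell+r}$ from $d_r$ by applying $\sigma^\ell$ to left tensor factors), it is enough to verify the chain-map relations for the eleven matrices $\Omega^{0},\dots,\Omega^{10}$ of part (I) together with one period-crossing relation, and then part (II) follows by induction on $\ell_0$.

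Concretely, the steps are: (a) check the augmentation identity $\varepsilon\circ\Omega^{0}(Y^{(16)}_t)=Y^{(16)}_t$, which amounts to comparing the two nonzero entries of $\Omega^{0}(Y^{(16)}_t)$, sitting in columns $(-4\ell_0)_s$ and $2s+(-4\ell_0)_s$, with those of $Y^{(16)}_t$ after applying $\varepsilon(a\otimes b)=ab$ — the shift by $(-4\ell_0)_s$ being exactly the offset forced by $\deg Y^{(16)}_t=t$ and by the permutation $\sigma^{\ell_0}$ on the idempotents $e_i$; (b) for $r_0=1,\dots,10$, verify the square
\[
d_{r_0-1}\cdot\Omega^{r_0}(Y^{(16)}_t)=\Omega^{r_0-1}(Y^{(16)}_t)\cdot d_{t+r_0-1}
\]
as an identity of matrices over the path algebra $K[\mathcal Q_s]$; (c) verify the period-crossing square $d_{10}\cdot\Omega^{11}(Y^{(16)}_t)=\Omega^{10}(Y^{(16)}_t)\cdot d_{t+10}$, where $\Omega^{11}(Y^{(16)}_t)$ is the $\sigma$-twist of $\Omega^{0}(Y^{(16)}_t)$ with every coefficient negated; (d) iterate (c) to obtain the $\sigma^{\ell_0}$-twist and the sign $(-1)^{\ell_0}$ asserted in part (II). Each identity in (b), (c) is checked by multiplying the two (very sparse) matrices entrywise, concatenating the resulting paths of $\mathcal Q_s$, and reducing modulo $I$; only three reduction rules are ever needed — a path of length $5$ is zero, $\alpha^3=\beta^3$, and $\alpha\gamma\beta=\beta\gamma\alpha=0$. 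Carrying this out at the same time re-confirms that $Y^{(16)}_t$ lies in $\Ker\delta^t$.

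The computations in (b) split into cases according to the residue of the column index $j$ modulo $s$ and modulo $2s$, the values of the auxiliary functions $g$, $f_0$, $f_1$, $\lambda$, and — the genuinely fiddly point — the position of the shift parameter $(-c-4\ell_0)_s$ (for the relevant $c\in\{0,1,2,3,4\}$) inside a block of length $s$; this is precisely why part (I) is organised into the ``$j<\cdots$ / $j=\cdots$ / $j>\cdots$'' trichotomies. I expect the main obstacle to be purely this bookkeeping: each $d_i$ is itself a four- or five-branch piecewise formula, so every square in (b) expands into a sizeable number of subcases, and one must check in each that the composed path either equals the prescribed entry of $\Omega^{r_0-1}(Y^{(16)}_t)\cdot d_{t+r_0-1}$ or is annihilated by one of the three relations. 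Once all squares commute, uniqueness of the homotopy lift yields the proposition, the sign $(-1)^{\ell_0}$ in part (II) reflecting the action of $\sigma$ on the $\gamma$-arrows occurring in the support of $Y^{(16)}_t$.
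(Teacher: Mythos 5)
Your scheme is essentially the paper's own proof: the paper establishes these translate propositions (writing out the details only for case 3 and stating that the others are handled similarly) by verifying commutativity of the squares $d_{t_0-1}f_{t_0}=f_{t_0-1}d_{t+t_0-1}$ through explicit multiplication of the sparse matrices over $K[\mathcal Q_s]/I$, with the range $t_0\ge 11$ dealt with by the $\sigma^{\ell_0}$-twist and sign $(-1)^{\ell_0}$, exactly as in your steps (a)--(d). So the proposal is correct and takes the same approach, differing only in presentation (your explicit appeal to uniqueness of the lift up to homotopy and the separate ``period-crossing'' square, both of which are implicit in the paper's case (12)).
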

\begin{pr}[Translates for the case 18]
$({\rm I})$ Let $r_0\in\N$, $r_0<11$. $r_0$-translates of the
elements $Y^{(18)}_t$ are described by the following way.

$(1)$ If $r_0=0$, then $\Omega^{0}(Y_t^{(18)})$ is described with
$(6s\times 6s)$-matrix with the following elements $b_{ij}${\rm:}

If $0\le j<s$, then $b_{ij}=0$.

If $s\le j<3s$, then $$b_{ij}=
\begin{cases}
w_{(j+m)n+g(j+s)\ra (j+m)n+g(j+s)+1}\otimes e_{jn+g(j+s)},\quad i=j;\\
0,\quad\text{otherwise.}\end{cases}$$

If $3s\le j<5s$, then $b_{ij}=0$.

If $5s\le j<6s$, then $$b_{ij}=
\begin{cases}
-\kappa w_{(j+m)n+5\ra (j+m+1)n}\otimes e_{jn+5},\quad i=j;\\
0,\quad\text{otherwise.}\end{cases}$$

$(2)$ If $r_0=1$, then $\Omega^{1}(Y_t^{(18)})$ is described with
$(7s\times 7s)$-matrix with the following elements $b_{ij}${\rm:}

If $0\le j<s$, then $$b_{ij}=
\begin{cases}
\kappa w_{(j+m)n+g(j)\ra (j+m)n+5}\otimes e_{jn},\quad i=j;\\
-\kappa w_{(j+m)n+g(j+s)\ra (j+m)n+5}\otimes e_{jn},\quad i=j+s;\\
0,\quad\text{otherwise.}\end{cases}$$

If $s\le j<3s$, then $$b_{ij}=
\begin{cases}
\kappa w_{(j+m)n+g(j+s)+1\ra (j+m+1)n}\otimes e_{jn+g(j+s)},\quad i=j+s;\\
\kappa w_{(j+m)n+5\ra (j+m+1)n}\otimes w_{jn+g(j+s)\ra jn+g(j+s)+1},\quad i=j+3s;\\
0,\quad\text{otherwise.}\end{cases}$$

If $3s\le j<5s$, then $$b_{ij}=
\begin{cases}
-w_{(j+m)n+5\ra (j+m+1)n+g(j)}\otimes e_{jn+g(j+s)+1},\quad i=j+s;\\
0,\quad\text{otherwise.}\end{cases}$$

If $5s\le j<7s$, then $$b_{ij}=
\begin{cases}
-w_{(j+m+1)n+g(j+s)\ra (j+m+1)n+g(j+s)+1}\otimes w_{jn+5\ra (j+1)n},\quad i=(j+1)_s+(j_2-5)s;\\
-w_{(j+m+1)n\ra (j+m+1)n+g(j+s)+1}\otimes e_{jn+5},\quad i=j+(6-j_2)s;\\
0,\quad\text{otherwise.}\end{cases}$$

$(3)$ If $r_0=2$, then $\Omega^{2}(Y_t^{(18)})$ is described with
$(6s\times 6s)$-matrix with the following elements $b_{ij}${\rm:}

If $0\le j<s$, then $$b_{ij}=
\begin{cases}
\kappa w_{(j+m-1)n+5\ra (j+m)n}\otimes e_{jn},\quad i=j;\\
0,\quad\text{otherwise.}\end{cases}$$

If $s\le j<3s$, then $b_{ij}=0$.

If $3s\le j<5s$, then $$b_{ij}=
\begin{cases}
-w_{(j+m)n+g(j)\ra (j+m)n+g(j)+1}\otimes e_{jn+g(j+s)+1},\quad i=j;\\
0,\quad\text{otherwise.}\end{cases}$$

If $5s\le j<6s$, then $b_{ij}=0$.

$(4)$ If $r_0=3$, then $\Omega^{3}(Y_t^{(18)})$ is described with
$(6s\times 8s)$-matrix with the following elements $b_{ij}${\rm:}

If $0\le j<s$, then $$b_{ij}=
\begin{cases}
\kappa w_{(j+m)n+g(j)+1\ra (j+m+1)n}\otimes e_{jn},\quad i=j;\\
\kappa w_{(j+m)n+5\ra (j+m+1)n}\otimes w_{jn\ra jn+g(j)},\quad i=j+2s;\\
0,\quad\text{otherwise.}\end{cases}$$

If $s\le j<3s$, then $$b_{ij}=
\begin{cases}
-w_{(j+m)n+5\ra (j+m+1)n+g(j)}\otimes e_{jn+g(j+s)},\quad i=j+s;\\
0,\quad\text{otherwise.}\end{cases}$$

If $3s\le j<5s$, then $$b_{ij}=
\begin{cases}
-f_1(j,4s)e_{(j+m+1)n+g(j)+1}\otimes w_{jn+g(j+s)+1\ra (j+1)n},\quad i=(j+1)_s+(4-j_2)s;\\
0,\quad\text{otherwise.}\end{cases}$$

If $5s\le j<6s$, then $$b_{ij}=
\begin{cases}
-\kappa w_{(j+m+1)n+g(j+s)+1\ra (j+m+1)n+5}\otimes w_{jn+5\ra (j+1)n},\quad i=(j+1)_s;\\
-\kappa e_{(j+m+1)n+5}\otimes w_{jn+5\ra (j+1)n+g(j+s)},\quad i=(j+1)_s+2s;\\
0,\quad\text{otherwise.}\end{cases}$$

$(5)$ If $r_0=4$, then $\Omega^{4}(Y_t^{(18)})$ is described with
$(7s\times 9s)$-matrix with the following elements $b_{ij}${\rm:}

If $0\le j<s$, then $b_{ij}=0$.

If $s\le j<2s$, then $$b_{ij}=
\begin{cases}
w_{(j+m-1)n+5\ra (j+m)n+g(j+s)}\otimes e_{jn},\quad i=j-s;\\
0,\quad\text{otherwise.}\end{cases}$$

If $2s\le j<4s$, then $$b_{ij}=
\begin{cases}
w_{(j+m)n+g(j+s)\ra (j+m)n+g(j+s)+1}\otimes e_{jn+g(j)},\quad i=j;\\
-e_{(j+m)n+g(j+s)+1}\otimes w_{jn+g(j)\ra jn+g(j)+1},\quad i=j+4s;\\
0,\quad\text{otherwise.}\end{cases}$$

If $4s\le j<5s$, then $$b_{ij}=
\begin{cases}
\kappa e_{(j+m)n+5}\otimes w_{jn+g(j)+1\ra (j+1)n},\quad i=(j+1)_s;\\
0,\quad\text{otherwise.}\end{cases}$$

If $5s\le j<7s$, then $b_{ij}=0$.

$(6)$ If $r_0=5$, then $\Omega^{5}(Y_t^{(18)})$ is described with
$(6s\times 8s)$-matrix with the following elements $b_{ij}${\rm:}

If $0\le j<s$, then $$b_{ij}=
\begin{cases}
-\kappa w_{(j+m)n+g(j)\ra (j+m)n+5}\otimes e_{jn},\quad i=j;\\
-\kappa w_{(j+m)n+g(j+s)\ra (j+m)n+5}\otimes e_{jn},\quad i=j+s;\\
\kappa e_{(j+m)n+5}\otimes w_{jn\ra jn+g(j)+1},\quad i=j+4s;\\
\kappa e_{(j+m)n+5}\otimes w_{jn\ra jn+g(j+s)+1},\quad i=j+5s;\\
0,\quad\text{otherwise.}\end{cases}$$

If $s\le j<3s$, then $$b_{ij}=
\begin{cases}
-w_{(j+m+1)n+g(j)\ra (j+m+1)n+g(j)+1}\otimes w_{jn+g(j+s)\ra (j+1)n},\quad i=(j+1)_s+(2-j_2)s;\\
-w_{(j+m+1)n\ra (j+m+1)n+g(j)+1}\otimes e_{jn+g(j+s)},\quad i=j+s;\\
0,\quad\text{otherwise.}\end{cases}$$

If $3s\le j<5s$, then $b_{ij}=0$.

If $5s\le j<6s$, then $$b_{ij}=
\begin{cases}
-\kappa e_{(j+m+2)n}\otimes w_{jn+5\ra (j+1)n+g(j+s)},\quad i=(j+1)_s+2s;\\
0,\quad\text{otherwise.}\end{cases}$$

$(7)$ If $r_0=6$, then $\Omega^{6}(Y_t^{(18)})$ is described with
$(8s\times 9s)$-matrix with the following elements $b_{ij}${\rm:}

If $0\le j<2s$, then $$b_{ij}=
\begin{cases}
f_1(j,s)w_{(j+m)n\ra (j+m)n+g(j+s)+1}\otimes e_{jn},\quad i=(j)_s;\\
-f_1(j,s)w_{(j+m)n+g(j+s)\ra (j+m)n+g(j+s)+1}\otimes w_{jn\ra jn+g(j+s)},\quad i=(j)_s+(2-j_2)s;\\
0,\quad\text{otherwise.}\end{cases}$$

If $2s\le j<4s$, then $$b_{ij}=
\begin{cases}
-\kappa f_1(j,3s)w_{(j+m)n+g(j)\ra (j+m)n+5}\otimes e_{jn+g(j)},\quad i=j-s;\\
0,\quad\text{otherwise.}\end{cases}$$

If $4s\le j<5s$, then $$b_{ij}=
\begin{cases}
\kappa e_{(j+m+1)n}\otimes w_{jn+g(j)+1\ra (j+1)n},\quad i=(j+1)_s;\\
0,\quad\text{otherwise.}\end{cases}$$

If $5s\le j<6s$, then $b_{ij}=0$.

If $6s\le j<7s$, then $$b_{ij}=
\begin{cases}
w_{(j+m+1)n\ra (j+m+1)n+g(j+s)}\otimes w_{jn+5\ra (j+1)n},\quad i=(j+1)_s;\\
-e_{(j+m+1)n+g(j+s)}\otimes w_{jn+5\ra (j+1)n+g(j+s)},\quad i=(j+1)_s+2s;\\
0,\quad\text{otherwise.}\end{cases}$$

If $7s\le j<8s$, then $$b_{ij}=
\begin{cases}
-e_{(j+m+1)n+g(j+s)}\otimes w_{jn+5\ra (j+1)n+g(j+s)},\quad i=(j+1)_s+s;\\
0,\quad\text{otherwise.}\end{cases}$$

$(8)$ If $r_0=7$, then $\Omega^{7}(Y_t^{(18)})$ is described with
$(9s\times 8s)$-matrix with the following elements $b_{ij}${\rm:}

If $0\le j<s$, then $b_{ij}=0$.

If $s\le j<2s$, then $$b_{ij}=
\begin{cases}
-\kappa w_{(j+m)n+g(j)+1\ra (j+m+1)n}\otimes e_{jn},\quad i=j;\\
\kappa w_{(j+m)n+5\ra (j+m+1)n}\otimes w_{jn\ra jn+g(j+s)},\quad i=j+s;\\
\kappa e_{(j+m+1)n}\otimes w_{jn\ra jn+g(j+s)+1},\quad i=j+3s;\\
\kappa e_{(j+m+1)n}\otimes w_{jn\ra jn+g(j)+1},\quad i=j+4s;\\
0,\quad\text{otherwise.}\end{cases}$$

If $2s\le j<4s$, then $$b_{ij}=
\begin{cases}
w_{(j+m)n+5\ra (j+m+1)n+g(j)}\otimes e_{jn+g(j)},\quad i=j;\\
w_{(j+m+1)n\ra (j+m+1)n+g(j)}\otimes w_{jn+g(j)\ra jn+g(j)+1},\quad i=j+2s;\\
-e_{(j+m+1)n+g(j)}\otimes w_{jn+g(j)\ra jn+5},\quad i=j+4s;\\
0,\quad\text{otherwise.}\end{cases}$$

If $4s\le j<9s$, then $b_{ij}=0$.

$(9)$ If $r_0=8$, then $\Omega^{8}(Y_t^{(18)})$ is described with
$(8s\times 6s)$-matrix with the following elements $b_{ij}${\rm:}

If $0\le j<s$, then $$b_{ij}=
\begin{cases}
w_{(j+m-1)n+5\ra (j+m)n+g(j+s)}\otimes e_{jn},\quad i=j;\\
e_{(j+m)n+g(j+s)}\otimes w_{jn\ra jn+g(j)+1},\quad i=j+3s;\\
0,\quad\text{otherwise.}\end{cases}$$

If $s\le j<2s$, then $$b_{ij}=
\begin{cases}
e_{(j+m)n+g(j+s)}\otimes w_{jn\ra jn+g(j)+1},\quad i=j+3s;\\
0,\quad\text{otherwise.}\end{cases}$$

If $2s\le j<3s$, then $$b_{ij}=
\begin{cases}
-\kappa w_{(j+m)n+5\ra (j+m+1)n}\otimes w_{jn+g(j)\ra (j+1)n},\quad i=(j+1)_s;\\
-\kappa w_{(j+m)n+g(j)+1\ra (j+m+1)n}\otimes e_{jn+g(j)},\quad i=j-s;\\
0,\quad\text{otherwise.}\end{cases}$$

If $3s\le j<4s$, then $$b_{ij}=
\begin{cases}
\kappa w_{(j+m)n+g(j)+1\ra (j+m+1)n}\otimes e_{jn+g(j)},\quad i=j-s;\\
0,\quad\text{otherwise.}\end{cases}$$

If $4s\le j<6s$, then $b_{ij}=0$.

If $6s\le j<8s$, then $$b_{ij}=
\begin{cases}
-f_1(j,7s)e_{(j+m+1)n+g(j+s)+1}\otimes w_{jn+5\ra (j+1)n+g(j+s)},\quad i=(j+1)_s+(8-j_2)s;\\
0,\quad\text{otherwise.}\end{cases}$$

$(10)$ If $r_0=9$, then $\Omega^{9}(Y_t^{(18)})$ is described with
$(9s\times 7s)$-matrix with the following elements $b_{ij}${\rm:}

If $0\le j<s$, then $$b_{ij}=
\begin{cases}
\kappa w_{(j+m)n+5\ra (j+m+1)n}\otimes e_{jn},\quad i=j;\\
0,\quad\text{otherwise.}\end{cases}$$

If $s\le j<3s$, then $b_{ij}=0$.

If $3s\le j<5s$, then $$b_{ij}=
\begin{cases}
w_{(j+m+1)n\ra (j+m+1)n+g(j+s)+1}\otimes e_{jn+g(j+s)},\quad i=j-2s;\\
-e_{(j+m+1)n+g(j+s)+1}\otimes w_{jn+g(j+s)\ra jn+5},\quad i=j+2s;\\
0,\quad\text{otherwise.}\end{cases}$$

If $5s\le j<7s$, then $b_{ij}=0$.

If $7s\le j<8s$, then $$b_{ij}=
\begin{cases}
-\kappa e_{(j+m+1)n+5}\otimes w_{jn+5\ra (j+1)n},\quad i=(j+1)_s;\\
0,\quad\text{otherwise.}\end{cases}$$

If $8s\le j<9s$, then $b_{ij}=0$.

$(11)$ If $r_0=10$, then $\Omega^{10}(Y_t^{(18)})$ is described with
$(8s\times 6s)$-matrix with the following elements $b_{ij}${\rm:}

If $0\le j<2s$, then $$b_{ij}=
\begin{cases}
f_1(j,s)w_{(j+m)n\ra (j+m)n+g(j+s)+1}\otimes e_{jn},\quad i=(j)_s;\\
-w_{(j+m)n+g(j+s)\ra (j+m)n+g(j+s)+1}\otimes w_{jn\ra jn+g(j)},\quad i=j+s;\\
e_{(j+m)n+g(j+s)+1}\otimes w_{jn\ra jn+g(j)+1},\quad i=j+3s;\\
0,\quad\text{otherwise.}\end{cases}$$

If $2s\le j<4s$, then $$b_{ij}=
\begin{cases}
\kappa f_1(j,3s)w_{(j+m)n+g(j+s)\ra (j+m)n+5}\otimes e_{jn+g(j)},\quad i=j-s;\\
-\kappa f_1(j,3s)w_{(j+m)n+g(j+s)+1\ra (j+m)n+5}\otimes w_{jn+g(j)\ra jn+g(j)+1},\quad i=j+s;\\
-\kappa e_{(j+m)n+5}\otimes w_{jn+g(j)\ra jn+5},\quad i=(j)_s+5s;\\
0,\quad\text{otherwise.}\end{cases}$$

If $4s\le j<8s$, then $b_{ij}=0$.

\medskip
$({\rm II})$ Represent an arbitrary $t_0\in\N$ in the form
$t_0=11\ell_0+r_0$, where $0\le r_0\le 10.$ Then
$\Omega^{t_0}(Y_t^{(18)})$ is a $\Omega^{r_0}(Y_t^{(18)})$, whose
left components twisted by $\sigma^{\ell_0}$.
\end{pr}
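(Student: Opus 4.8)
The plan is to realize the cocycle $Y^{(18)}_t$ as the degree-zero term of a chain map of the bimodule resolution \eqref{resolv} and to identify all of its homological translates with the matrices listed in the statement. Recall that $Y^{(18)}_t\in\Ker\delta^t$ was described earlier as the $(6s\times 6s)$-matrix $\varphi_0:=Y^{(18)}_t$; since the $Q_i$ are projective and \eqref{resolv} is exact, $\varphi_0$ extends to a chain map $\{\varphi_i\colon Q_{t+i}\to Q_i\}_{i\ge 0}$, unique up to homotopy, and $\Omega^i(Y^{(18)}_t)=\varphi_i$ by definition. Hence part~(I) amounts to exhibiting, for each $r_0$ with $1\le r_0\le 10$, a morphism $\varphi_{r_0}\colon Q_{t+r_0}\to Q_{r_0}$ with the displayed matrix and satisfying
\begin{equation}\tag{$\dagger$}
d_{r_0-1}\varphi_{r_0}=\varphi_{r_0-1}d_{t+r_0-1};
\end{equation}
uniqueness up to homotopy then guarantees that the displayed matrix indeed represents $\Omega^{r_0}(Y^{(18)}_t)$.

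I would verify $(\dagger)$ by induction on $r_0$, starting from $\varphi_0=Y^{(18)}_t$. At step $r_0$ one first identifies the differential on the right: since $t\equiv 8\pmod{11}$, we have $t+r_0-1\equiv r_0+7\pmod{11}$, so by Theorem~\ref{resol_thm} the matrix $d_{t+r_0-1}$ equals $d_{(r_0+7)_{11}}$ with its left tensor factors acted on by $\sigma$ to the appropriate power (the power increasing by one once $r_0\ge 4$). All three matrices entering $(\dagger)$ have entries of the form (idempotent, short path, or single arrow) tensored with data of the same type on the right, and each column is supported in only a few rows; consequently the products $d_{r_0-1}\varphi_{r_0}$ and $\varphi_{r_0-1}d_{t+r_0-1}$ are finite sums of terms $(\text{path})\otimes(\text{path})$, and $(\dagger)$ is checked entry by entry. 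For a given residue range of the column index $j$ one matches source and target vertices using the descriptions of the $Q_i$ together with the helper functions $g,f_0,f_1,\lambda,h$, and then simplifies each path composition modulo the relations of $R$ — discarding composites of length $\ge 5$ and using $\a^3=\b^3$, $\a\g\b=\b\g\a=0$ — until the two sides agree. Running this through $r_0=1,2,\ldots,10$ produces all eleven matrices of part~(I).

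For part~(II) the point is to propagate along the period of the resolution. By the corollary to Theorem~\ref{resol_thm} one has $\Omega^{11}({}_\Lambda R)\simeq{}_1R_\sigma$, and $Q_{11\ell+r}$, $d_{11\ell+r}$ arise from $Q_r$, $d_r$ by acting with $\sigma^\ell$ on left tensor components; hence the $(11\ell_0+r_0)$-th term $\varphi_{11\ell_0+r_0}$ of the chain map lifting $Y^{(18)}_t$ equals the $r_0$-th term $\varphi_{r_0}$ with all left tensor factors twisted by $\sigma^{\ell_0}$. Writing an arbitrary $t_0$ as $11\ell_0+r_0$ with $0\le r_0\le 10$ and inserting the part~(I) matrix for that $r_0$ gives the statement. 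No global sign $(-1)^{\ell_0}$ is produced here because the signs that $\sigma^{\ell_0}$ creates on the arrows occurring in the entries of $\varphi_0$ are already accounted for by the vertex-dependent shifts in the part~(I) formulas; this is precisely the point at which cases such as $Y^{(2)}$, $Y^{(3)}$ and $Y^{(16)}$ behave differently and acquire a factor $(-1)^{\ell_0}$.

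The only genuine obstacle is the bookkeeping inside the inductive step: keeping the vertex indices consistent across the many residue ranges of $j$ (the simultaneous reductions modulo $s$, $2s$ and $n$, and the branching of $g$, $f_0$, $f_1$), and correctly recognizing which path products vanish by the length-$5$ relation or by $\a\g\b=\b\g\a=0$. There is no conceptual difficulty beyond this: existence of the lift is automatic from projectivity, uniqueness up to homotopy identifies the written matrices with $\Omega^{r_0}(Y^{(18)}_t)$, and the periodicity argument for part~(II) is formal once the degree-$0$ through degree-$10$ matrices are in hand.
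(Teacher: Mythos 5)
Your proposal is correct and is essentially the paper's own argument: the paper proves these translate propositions (explicitly only for case 3, the others "similarly") by exhibiting the listed matrices and checking the commutativity $d_{t_0-1}f_{t_0}=f_{t_0-1}d_{t+t_0-1}$ entrywise using the quiver relations, then handling $t_0\ge 11$ via the $\sigma^{\ell_0}$-periodicity of the resolution, exactly as you describe. The only cosmetic difference is that you phrase the degree-by-degree check as an induction and add the standard remarks on existence and homotopy-uniqueness of the lift, which the paper leaves implicit.
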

\begin{pr}[Translates for the case 20]
$({\rm I})$ Let $r_0\in\N$, $r_0<11$. $r_0$-translates of the
elements $Y^{(20)}_t$ are described by the following way.

$(1)$ If $r_0=0$, then $\Omega^{0}(Y_t^{(20)})$ is described with
$(7s\times 6s)$-matrix with the following elements $b_{ij}${\rm:}

If $0\le j<s$, then $$b_{ij}=
\begin{cases}
\kappa w_{(j+m)n\ra (j+m)n+5}\otimes e_{jn},\quad i=j;\\
0,\quad\text{otherwise.}\end{cases}$$

If $s\le j<2s$, then $$b_{ij}=
\begin{cases}
\kappa w_{(j+m)n+g(j+s)\ra (j+m+1)n}\otimes e_{jn+g(j+s)},\quad i=j;\\
0,\quad\text{otherwise.}\end{cases}$$

If $2s\le j<3s$, then $b_{ij}=0$.

If $3s\le j<4s$, then $$b_{ij}=
\begin{cases}
-w_{(j+m)n+g(j+s)+1\ra (j+m+1)n+g(j+s)}\otimes e_{jn+g(j+s)+1},\quad i=j;\\
0,\quad\text{otherwise.}\end{cases}$$

If $4s\le j<6s$, then $b_{ij}=0$.

If $6s\le j<7s$, then $$b_{ij}=
\begin{cases}
-w_{(j+m)n+5\ra (j+m+1)n+g(j)+1}\otimes e_{jn+5},\quad i=j-s;\\
0,\quad\text{otherwise.}\end{cases}$$

$(2)$ If $r_0=1$, then $\Omega^{1}(Y_t^{(20)})$ is described with
$(6s\times 7s)$-matrix with the following elements $b_{ij}${\rm:}

If $0\le j<s$, then $$b_{ij}=
\begin{cases}
-\kappa w_{(j+m)n+g(j)\ra (j+m+1)n}\otimes e_{jn},\quad i=j;\\
0,\quad\text{otherwise.}\end{cases}$$

If $s\le j<2s$, then $$b_{ij}=
\begin{cases}
w_{(j+m)n+g(j+s)+1\ra (j+m+1)n+g(j+s)}\otimes e_{jn+g(j+s)},\quad i=j+s;\\
0,\quad\text{otherwise.}\end{cases}$$

If $2s\le j<3s$, then $b_{ij}=0$.

If $3s\le j<4s$, then $$b_{ij}=
\begin{cases}
w_{(j+m)n+5\ra (j+m+1)n+g(j+s)+1}\otimes e_{jn+g(j+s)+1},\quad i=j+s;\\
0,\quad\text{otherwise.}\end{cases}$$

If $4s\le j<5s$, then $b_{ij}=0$.

If $5s\le j<6s$, then $$b_{ij}=
\begin{cases}
-\kappa w_{(j+m+1)n\ra (j+m+1)n+5}\otimes e_{jn+5},\quad i=j+s;\\
0,\quad\text{otherwise.}\end{cases}$$

$(3)$ If $r_0=2$, then $\Omega^{2}(Y_t^{(20)})$ is described with
$(6s\times 6s)$-matrix with the following elements $b_{ij}${\rm:}

If $0\le j<s$, then $$b_{ij}=
\begin{cases}
\kappa w_{(j+m)n+g(j)+1\ra (j+m+1)n}\otimes w_{jn\ra jn+g(j)},\quad i=j+s;\\
0,\quad\text{otherwise.}\end{cases}$$

If $s\le j<2s$, then $$b_{ij}=
\begin{cases}
-w_{(j+m)n+5\ra (j+m+1)n+g(j+s)}\otimes w_{jn+g(j+s)\ra (j+1)n},\quad i=(j+1)_s;\\
0,\quad\text{otherwise.}\end{cases}$$

If $2s\le j<3s$, then $b_{ij}=0$.

If $3s\le j<4s$, then $$b_{ij}=
\begin{cases}
-w_{(j+m)n+5\ra (j+m+1)n+g(j+s)+1}\otimes w_{jn+g(j+s)+1\ra (j+1)n},\quad i=(j+1)_s;\\
0,\quad\text{otherwise.}\end{cases}$$

If $4s\le j<5s$, then $b_{ij}=0$.

If $5s\le j<6s$, then $$b_{ij}=
\begin{cases}
\kappa w_{(j+m+1)n+g(j+s)+1\ra (j+m+1)n+5}\otimes w_{jn+5\ra (j+1)n+g(j+s)},\quad i=(j+1)_s+s;\\
0,\quad\text{otherwise.}\end{cases}$$

$(4)$ If $r_0=3$, then $\Omega^{3}(Y_t^{(20)})$ is described with
$(7s\times 8s)$-matrix with the following elements $b_{ij}${\rm:}

If $0\le j<s$, then $$b_{ij}=
\begin{cases}
-w_{(j+m)n+5\ra (j+m+1)n+g(j)}\otimes w_{jn\ra jn+g(j)},\quad i=j+2s;\\
0,\quad\text{otherwise.}\end{cases}$$

If $s\le j<4s$, then $b_{ij}=0$.

If $4s\le j<5s$, then $$b_{ij}=
\begin{cases}
\kappa w_{(j+m+1)n+g(j)+1\ra (j+m+1)n+5}\otimes w_{jn+g(j)+1\ra (j+1)n},\quad i=(j+1)_s;\\
0,\quad\text{otherwise.}\end{cases}$$

If $5s\le j<7s$, then $b_{ij}=0$.

$(5)$ If $r_0=4$, then $\Omega^{4}(Y_t^{(20)})$ is described with
$(6s\times 9s)$-matrix with the following elements $b_{ij}${\rm:}

If $0\le j<s$, then $$b_{ij}=
\begin{cases}
\kappa w_{(j+m-1)n+5\ra (j+m)n+5}\otimes e_{jn},\quad i=j;\\
-\kappa w_{(j+m)n+g(j)\ra (j+m)n+5}\otimes w_{jn\ra jn+g(j+s)},\quad i=j+3s;\\
\kappa w_{(j+m)n+g(j)+1\ra (j+m)n+5}\otimes w_{jn\ra jn+g(j+s)+1},\quad i=j+7s;\\
0,\quad\text{otherwise.}\end{cases}$$

If $s\le j<2s$, then $$b_{ij}=
\begin{cases}
-w_{(j+m)n+5\ra (j+m+1)n+g(j+s)+1}\otimes w_{jn+g(j+s)\ra (j+1)n},\quad i=(j+1)_s;\\
0,\quad\text{otherwise.}\end{cases}$$

If $2s\le j<6s$, then $b_{ij}=0$.

$(6)$ If $r_0=5$, then $\Omega^{5}(Y_t^{(20)})$ is described with
$(8s\times 8s)$-matrix with the following elements $b_{ij}${\rm:}

If $0\le j<s$, then $$b_{ij}=
\begin{cases}
w_{(j+m+1)n+g(j)\ra (j+m+1)n+g(j)+1}\otimes w_{jn\ra (j+1)n},\quad i=(j+1)_s;\\
w_{(j+m+1)n\ra (j+m+1)n+g(j)+1}\otimes w_{jn\ra jn+g(j+s)},\quad i=j+3s;\\
0,\quad\text{otherwise.}\end{cases}$$

If $s\le j<3s$, then $b_{ij}=0$.

If $3s\le j<4s$, then $$b_{ij}=
\begin{cases}
\kappa w_{(j+m+1)n+g(j+s)\ra (j+m+1)n+5}\otimes w_{jn+g(j)\ra (j+1)n},\quad i=(j+1)_s;\\
\kappa w_{(j+m+1)n\ra (j+m+1)n+5}\otimes e_{jn+g(j)},\quad i=j;\\
0,\quad\text{otherwise.}\end{cases}$$

If $4s\le j<6s$, then $b_{ij}=0$.

If $6s\le j<7s$, then $$b_{ij}=
\begin{cases}
-w_{(j+m+2)n\ra (j+m+2)n+g(j)}\otimes w_{jn+5\ra (j+1)n+g(j+s)},\quad i=(j+1)_s+3s;\\
0,\quad\text{otherwise.}\end{cases}$$

If $7s\le j<8s$, then $b_{ij}=0$.

$(7)$ If $r_0=6$, then $\Omega^{6}(Y_t^{(20)})$ is described with
$(9s\times 9s)$-matrix with the following elements $b_{ij}${\rm:}

If $0\le j<s$, then $b_{ij}=0$.

If $s\le j<2s$, then $$b_{ij}=
\begin{cases}
-\kappa w_{(j+m)n\ra (j+m+1)n}\otimes e_{jn},\quad i=j-s;\\
\kappa w_{(j+m)n+g(j+s)\ra (j+m+1)n}\otimes w_{jn\ra jn+g(j+s)},\quad i=j;\\
0,\quad\text{otherwise.}\end{cases}$$

If $2s\le j<3s$, then $b_{ij}=0$.

If $3s\le j<4s$, then $$b_{ij}=
\begin{cases}
-w_{(j+m+1)n\ra (j+m+1)n+g(j+s)}\otimes w_{jn+g(j)\ra (j+1)n},\quad i=(j+1)_s;\\
0,\quad\text{otherwise.}\end{cases}$$

If $4s\le j<9s$, then $b_{ij}=0$.

$(8)$ If $r_0=7$, then $\Omega^{7}(Y_t^{(20)})$ is described with
$(8s\times 8s)$-matrix with the following elements $b_{ij}${\rm:}

If $0\le j<s$, then $$b_{ij}=
\begin{cases}
w_{(j+m)n+5\ra (j+m+1)n+g(j)}\otimes w_{jn\ra jn+g(j)},\quad i=j+2s;\\
w_{(j+m+1)n\ra (j+m+1)n+g(j)}\otimes w_{jn\ra jn+g(j)+1},\quad i=j+4s;\\
w_{(j+m+1)n\ra (j+m+1)n+g(j)}\otimes w_{jn\ra jn+g(j+s)+1},\quad i=j+5s;\\
0,\quad\text{otherwise.}\end{cases}$$

If $s\le j<2s$, then $b_{ij}=0$.

If $2s\le j<3s$, then $$b_{ij}=
\begin{cases}
\kappa w_{(j+m+1)n+g(j)+1\ra (j+m+2)n}\otimes w_{jn+g(j)\ra (j+1)n},\quad i=(j+1)_s;\\
0,\quad\text{otherwise.}\end{cases}$$

If $3s\le j<8s$, then $b_{ij}=0$.

$(9)$ If $r_0=8$, then $\Omega^{8}(Y_t^{(20)})$ is described with
$(9s\times 6s)$-matrix with the following elements $b_{ij}${\rm:}

If $0\le j<s$, then $$b_{ij}=
\begin{cases}
-\kappa w_{(j+m)n+g(j)+1\ra (j+m+1)n}\otimes w_{jn\ra jn+g(j)},\quad i=j+s;\\
-\kappa w_{(j+m)n+g(j)\ra (j+m+1)n}\otimes w_{jn\ra jn+g(j+s)+1},\quad i=j+4s;\\
0,\quad\text{otherwise.}\end{cases}$$

If $s\le j<2s$, then $$b_{ij}=
\begin{cases}
-w_{(j+m)n+5\ra (j+m+1)n+g(j+s)}\otimes w_{jn+g(j+s)\ra (j+1)n},\quad i=(j+1)_s;\\
0,\quad\text{otherwise.}\end{cases}$$

If $2s\le j<7s$, then $b_{ij}=0$.

If $7s\le j<8s$, then $$b_{ij}=
\begin{cases}
\kappa w_{(j+m+1)n\ra (j+m+1)n+5}\otimes e_{jn+5},\quad i=j-2s;\\
0,\quad\text{otherwise.}\end{cases}$$

If $8s\le j<9s$, then $b_{ij}=0$.

$(10)$ If $r_0=9$, then $\Omega^{9}(Y_t^{(20)})$ is described with
$(8s\times 7s)$-matrix with the following elements $b_{ij}${\rm:}

If $0\le j<s$, then $$b_{ij}=
\begin{cases}
-w_{(j+m)n+5\ra (j+m+1)n+g(j)+1}\otimes e_{jn},\quad i=j;\\
0,\quad\text{otherwise.}\end{cases}$$

If $s\le j<2s$, then $b_{ij}=0$.

If $2s\le j<3s$, then $$b_{ij}=
\begin{cases}
\kappa w_{(j+m+1)n\ra (j+m+1)n+5}\otimes e_{jn+g(j)},\quad i=j-s;\\
0,\quad\text{otherwise.}\end{cases}$$

If $3s\le j<4s$, then $$b_{ij}=
\begin{cases}
\kappa w_{(j+m+1)n\ra (j+m+1)n+5}\otimes e_{jn+g(j)},\quad i=j-s;\\
0,\quad\text{otherwise.}\end{cases}$$

If $4s\le j<5s$, then $b_{ij}=0$.

If $5s\le j<6s$, then $$b_{ij}=
\begin{cases}
\kappa w_{(j+m+1)n+g(j+s)\ra (j+m+2)n}\otimes e_{jn+g(j)+1},\quad i=j-s;\\
0,\quad\text{otherwise.}\end{cases}$$

If $6s\le j<7s$, then $$b_{ij}=
\begin{cases}
w_{(j+m+1)n+5\ra (j+m+2)n+g(j)}\otimes w_{jn+5\ra (j+1)n},\quad i=(j+1)_s;\\
0,\quad\text{otherwise.}\end{cases}$$

If $7s\le j<8s$, then $b_{ij}=0$.

$(11)$ If $r_0=10$, then $\Omega^{10}(Y_t^{(20)})$ is described with
$(6s\times 6s)$-matrix with the following elements $b_{ij}${\rm:}

If $0\le j<s$, then $$b_{ij}=
\begin{cases}
\kappa w_{(j+m)n\ra (j+m)n+5}\otimes e_{jn},\quad i=j;\\
0,\quad\text{otherwise.}\end{cases}$$

If $s\le j<4s$, then $b_{ij}=0$.

If $4s\le j<5s$, then $$b_{ij}=
\begin{cases}
-w_{(j+m)n+g(j)+1\ra (j+m+1)n+g(j)}\otimes e_{jn+g(j+s)+1},\quad i=j;\\
w_{(j+m)n+5\ra (j+m+1)n+g(j)}\otimes w_{jn+g(j+s)+1\ra jn+5},\quad i=j+s;\\
0,\quad\text{otherwise.}\end{cases}$$

If $5s\le j<6s$, then $$b_{ij}=
\begin{cases}
-\kappa w_{(j+m+1)n+g(j)\ra (j+m+2)n}\otimes w_{jn+5\ra (j+1)n+g(j+s)},\quad i=(j+1)_s+s;\\
\kappa w_{(j+m+1)n+g(j)+1\ra (j+m+2)n}\otimes w_{jn+5\ra (j+1)n+g(j+s)+1},\quad i=(j+1)_s+3s;\\
-\kappa w_{(j+m+1)n+g(j+s)+1\ra (j+m+2)n}\otimes w_{jn+5\ra (j+1)n+g(j)+1},\quad i=(j+1)_s+4s;\\
\kappa w_{(j+m+1)n+5\ra (j+m+2)n}\otimes w_{jn+5\ra (j+1)n+5},\quad i=(j+1)_s+5s;\\
0,\quad\text{otherwise.}\end{cases}$$

\medskip
$({\rm II})$ Represent an arbitrary $t_0\in\N$ in the form
$t_0=11\ell_0+r_0$, where $0\le r_0\le 10.$ Then
$\Omega^{t_0}(Y_t^{(20)})$ is a $\Omega^{r_0}(Y_t^{(20)})$, whose
left components twisted by $\sigma^{\ell_0}$.
\end{pr}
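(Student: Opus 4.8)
The plan is to proceed exactly as in the proofs of the preceding Propositions (``Translates for the cases $1$--$18$''), adapting the index bookkeeping to the generator $Y^{(20)}_t$. Recall that $Y^{(20)}_t\in\Ker\delta^t$ is a cocycle of a degree $t$ with $r=9$, and that by the discussion preceding formula \eqref{mult_formula} it lifts, uniquely up to homotopy, to a chain map $\{\varphi_i\colon Q_{t+i}\to Q_i\}_{i\ge 0}$ with $\varepsilon\varphi_0=Y^{(20)}_t$ and $d_i\varphi_{i+1}=\varphi_i d_{t+i}$ for all $i$; by definition $\Omega^i(Y^{(20)}_t)=\varphi_i$. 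Since, by Theorem~\ref{resol_thm}, $Q_{11\ell+r}$ is obtained from $Q_r$ and $d_{11\ell+r}$ from $d_r$ by applying $\sigma^\ell$ to the left tensor components, it suffices to $(a)$ exhibit $\varphi_0=\Omega^0(Y^{(20)}_t)$ lifting the cocycle, $(b)$ produce $\varphi_1,\dots,\varphi_{11}$ step by step from the lifting property, and $(c)$ check that $\varphi_{11}$ coincides with $\Omega^0$ of the same family of generators in degree $t-11$, twisted on the left by $\sigma$; assertion $({\rm II})$ then follows by iterating $(c)$.

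First I would verify the base case: the $(7s\times 6s)$ matrix written in $({\rm I})(1)$ equals the matrix of the generator $Y^{(20)}_t$ from the previous section (entry $(20)$) after identifying $Q_0$ with its explicit decomposition, so that $\varepsilon\,\Omega^0(Y^{(20)}_t)=Y^{(20)}_t$. Then, for $r_0=0,1,\dots,10$ in turn, I would check the commuting square $d_{r_0}\,\Omega^{r_0+1}(Y^{(20)}_t)=\Omega^{r_0}(Y^{(20)}_t)\,d_{t+r_0}$. Because every $Q_i$ is an explicit direct sum of indecomposable $\Lambda$-projectives $P_{a,b}$, each of these maps is a matrix whose entries are $K$-linear combinations of tensors $w\otimes w'$ of paths of $\mathcal Q_s$, and each square becomes a finite system of equations in the path algebra. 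One verifies componentwise that the sparse matrix displayed in $({\rm I})(r_0{+}1)$ solves it; every entry comparison reduces to a product of two paths, taken modulo the ideal $I$ (paths of length $5$ together with $\a^3-\b^3$, $\a\g\b$, $\b\g\a$), combined with the elementary identities satisfied by the auxiliary functions $g$, $f_0$, $f_1$, $\lambda$, $h$ and by $z_0,z_1$ and the residues $(\,\cdot\,)_s$. The differentials $d_{r_0}$ needed here are precisely those written out in paragraph~\ref{sect_res}, and the $d_{t+r_0}$ are their $\sigma^{\ell}$-twists.

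Finally I would treat the wrap-around square $r_0=10$: the target of $\varphi_{11}$ is $Q_{11}$, i.e.\ the $\sigma$-twist of $Q_0$, and solving $d_{10}\varphi_{11}=\varphi_{10}d_{t+10}$ produces exactly the matrix $\Omega^0(Y^{(20)}_t)$ with all left tensor components replaced by their images under $\sigma$. Here one uses $\sigma(e_i)$, $\sigma(\g_i)=-\g_{i+n}$, $\sigma(\a_i)$, $\sigma(\b_i)$ to absorb the signs accumulated over the eleven steps, and checks that for this particular family of generators no residual scalar appears over a full period — in contrast with cases $2,3,9,11,13,16$, where one picks up a factor $(-1)^{\ell_0}$; together with $\kappa=(-1)^{\lfloor\ell/2\rfloor}$ this accounts for the form of $({\rm II})$. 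The only genuine difficulty is organizational rather than conceptual: keeping the index arithmetic consistent at the junctions of the piecewise definitions of $Q_i$, $d_i$ and $\Omega^{r_0}$ (the ranges $j<s$, $s\le j<3s$, $3s\le j<5s$, $5s\le j<6s$, and the offsets $(-k-3\ell_0)_s$), and tracking the sign factors $\kappa$, $\kappa_0$, $f_1$ through the matrix products. No idea beyond those already used for the earlier cases is required.
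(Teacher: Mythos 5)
Your proposal follows essentially the same route as the paper: the paper proves these translate propositions (explicitly for case 3, the others ``similarly'') by writing out the product matrices $d_{t_0-1}f_{t_0}$ componentwise and checking that they coincide with $f_{t_0-1}d_{t+t_0-1}$, with the squares for $t_0\ge 11$ handled via the $\sigma^{\ell_0}$-twist of the period-$11$ data --- exactly the verification you describe. One immaterial slip in your parenthetical list of cases acquiring the factor $(-1)^{\ell_0}$: case 13 does not, while case 22 (and the cases 23--28) do; this does not affect your argument for case 20.
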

\begin{pr}[Translates for the case 22]
$({\rm I})$ Let $r_0\in\N$, $r_0<11$. $r_0$-translates of the
elements $Y^{(22)}_t$ are described by the following way.

$(1)$ If $r_0=0$, then $\Omega^{0}(Y_t^{(22)})$ is described with
$(6s\times 6s)$-matrix with the following elements $b_{ij}${\rm:}

If $0\le j<s$, then $b_{ij}=0$.

If $s\le j<3s$, then $$b_{ij}=
\begin{cases}
f_1(j,2s)e_{(j+m)n+g(j+s)}\otimes e_{jn+g(j+s)},\quad i=j;\\
0,\quad\text{otherwise.}\end{cases}$$

If $3s\le j<5s$, then $$b_{ij}=
\begin{cases}
f_1(j,4s)e_{(j+m)n+g(j+s)+1}\otimes e_{jn+g(j+s)+1},\quad i=j;\\
0,\quad\text{otherwise.}\end{cases}$$

If $5s\le j<6s$, then $b_{ij}=0$.

$(2)$ If $r_0=1$, then $\Omega^{1}(Y_t^{(22)})$ is described with
$(6s\times 7s)$-matrix with the following elements $b_{ij}${\rm:}

If $0\le j<s$, then $$b_{ij}=
\begin{cases}
\kappa w_{(j+m)n+g(j)\ra (j+m+1)n}\otimes e_{jn},\quad i=j;\\
-\kappa w_{(j+m)n+g(j+s)\ra (j+m+1)n}\otimes e_{jn},\quad i=j+s;\\
-\kappa w_{(j+m)n+g(j)+1\ra (j+m+1)n}\otimes w_{jn\ra jn+g(j)},\quad i=j+2s;\\
\kappa w_{(j+m)n+g(j+s)+1\ra (j+m+1)n}\otimes w_{jn\ra jn+g(j+s)},\quad i=j+3s;\\
0,\quad\text{otherwise.}\end{cases}$$

If $s\le j<3s$, then $$b_{ij}=
\begin{cases}
f_1(j,2s)e_{(j+m+1)n+g(j+s)}\otimes w_{jn+g(j+s)\ra (j+1)n},\quad i=(j+1)_s+(j_2-1)s;\\
-f_1(j,2s)w_{(j+m)n+g(j+s)+1\ra (j+m+1)n+g(j+s)}\otimes e_{jn+g(j+s)},\quad i=j+s;\\
f_1(j,2s)w_{(j+m)n+5\ra (j+m+1)n+g(j+s)}\otimes w_{jn+g(j+s)\ra jn+g(j+s)+1},\quad i=j+3s;\\
f_1(j,2s)w_{(j+m+1)n\ra (j+m+1)n+g(j+s)}\otimes w_{jn+g(j+s)\ra jn+5},\quad i=(j)_s+6s;\\
0,\quad\text{otherwise.}\end{cases}$$

If $3s\le j<5s$, then $$b_{ij}=
\begin{cases}
-f_1(j,4s)w_{(j+m+1)n+g(j+s)\ra (j+m+1)n+g(j+s)+1}\otimes w_{jn+g(j+s)+1\ra (j+1)n},\\\quad\quad\quad i=(j+1)_s+(j_2-3)s;\\
f_1(j,4s)e_{(j+m+1)n+g(j+s)+1}\otimes w_{jn+g(j+s)+1\ra (j+1)n+g(j+s)},\quad i=(j+1)_s+(j_2-1)s;\\
-f_1(j,4s)w_{(j+m)n+5\ra (j+m+1)n+g(j+s)+1}\otimes e_{jn+g(j+s)+1},\quad i=j+s;\\
-f_1(j,4s)w_{(j+m+1)n\ra (j+m+1)n+g(j+s)+1}\otimes w_{jn+g(j+s)+1\ra jn+5},\quad i=(j)_s+6s;\\
0,\quad\text{otherwise.}\end{cases}$$

If $5s\le j<6s$, then $$b_{ij}=
\begin{cases}
\kappa w_{(j+m+1)n+g(j+s)\ra (j+m+1)n+5}\otimes w_{jn+5\ra (j+1)n},\quad i=(j+1)_s;\\
-\kappa w_{(j+m+1)n+g(j)\ra (j+m+1)n+5}\otimes w_{jn+5\ra (j+1)n},\quad i=(j+1)_s+s;\\
-\kappa w_{(j+m+1)n+g(j+s)+1\ra (j+m+1)n+5}\otimes w_{jn+5\ra (j+1)n+g(j+s)},\quad i=(j+1)_s+2s;\\
\kappa w_{(j+m+1)n+g(j)+1\ra (j+m+1)n+5}\otimes w_{jn+5\ra (j+1)n+g(j)},\quad i=(j+1)_s+3s;\\
0,\quad\text{otherwise.}\end{cases}$$

$(3)$ If $r_0=2$, then $\Omega^{2}(Y_t^{(22)})$ is described with
$(7s\times 6s)$-matrix with the following elements $b_{ij}${\rm:}

If $0\le j<2s$, then $$b_{ij}=
\begin{cases}
-w_{(j+m-1)n+5\ra (j+m)n+g(j)}\otimes e_{jn},\quad i=(j)_s;\\
-f_1(j,s)e_{(j+m)n+g(j)}\otimes w_{jn\ra jn+g(j+s)+1},\quad i=(j)_s+(4-j_2)s;\\
0,\quad\text{otherwise.}\end{cases}$$

If $2s\le j<4s$, then $$b_{ij}=
\begin{cases}
-f_1(j,3s)e_{(j+m)n+g(j)+1}\otimes e_{jn+g(j)},\quad i=j-s;\\
0,\quad\text{otherwise.}\end{cases}$$

If $4s\le j<6s$, then $$b_{ij}=
\begin{cases}
\kappa f_1(j,5s)w_{(j+m)n+g(j+s)\ra (j+m)n+5}\otimes e_{jn+g(j)+1},\quad i=j-s;\\
0,\quad\text{otherwise.}\end{cases}$$

If $6s\le j<7s$, then $b_{ij}=0$.

$(4)$ If $r_0=3$, then $\Omega^{3}(Y_t^{(22)})$ is described with
$(6s\times 8s)$-matrix with the following elements $b_{ij}${\rm:}

If $0\le j<s$, then $$b_{ij}=
\begin{cases}
-\kappa w_{(j+m)n+g(j)+1\ra (j+m)n+5}\otimes e_{jn},\quad i=j;\\
\kappa w_{(j+m)n+g(j+s)+1\ra (j+m)n+5}\otimes e_{jn},\quad i=j+s;\\
0,\quad\text{otherwise.}\end{cases}$$

If $s\le j<3s$, then $$b_{ij}=
\begin{cases}
e_{(j+m+1)n+g(j+s)+1}\otimes w_{jn+g(j+s)\ra (j+1)n},\quad i=(j+1)_s+(j_2-1)s;\\
-f_1(j,2s)w_{(j+m)n+5\ra (j+m+1)n+g(j+s)+1}\otimes e_{jn+g(j+s)},\quad i=j+s;\\
0,\quad\text{otherwise.}\end{cases}$$

If $3s\le j<5s$, then $$b_{ij}=
\begin{cases}
-f_1(j,4s)w_{(j+m+1)n\ra (j+m+1)n+g(j)}\otimes e_{jn+g(j+s)+1},\quad i=j+s;\\
-f_1(j,4s)e_{(j+m+1)n+g(j)}\otimes w_{jn+g(j+s)+1\ra jn+5},\quad i=(j)_s+(10-j_2)s;\\
0,\quad\text{otherwise.}\end{cases}$$

If $5s\le j<6s$, then $$b_{ij}=
\begin{cases}
-\kappa w_{(j+m+1)n+g(j+s)+1\ra (j+m+2)n}\otimes w_{jn+5\ra (j+1)n},\quad i=(j+1)_s;\\
\kappa w_{(j+m+1)n+g(j+s)\ra (j+m+2)n}\otimes e_{jn+5},\quad i=j+s;\\
-\kappa w_{(j+m+1)n+g(j)\ra (j+m+2)n}\otimes e_{jn+5},\quad i=j+2s;\\
0,\quad\text{otherwise.}\end{cases}$$

$(5)$ If $r_0=4$, then $\Omega^{4}(Y_t^{(22)})$ is described with
$(8s\times 9s)$-matrix with the following elements $b_{ij}${\rm:}

If $0\le j<2s$, then $$b_{ij}=
\begin{cases}
w_{(j+m-1)n+5\ra (j+m)n+g(j)+1}\otimes e_{jn},\quad i=(j)_s;\\
-w_{(j+m)n+g(j)\ra (j+m)n+g(j)+1}\otimes w_{jn\ra jn+g(j+s)},\quad i=(j)_s+(3-j_2)s;\\
-e_{(j+m)n+g(j)+1}\otimes w_{jn\ra jn+g(j+s)+1},\quad i=(j)_s+(7-j_2)s;\\
0,\quad\text{otherwise.}\end{cases}$$

If $2s\le j<4s$, then $$b_{ij}=
\begin{cases}
\kappa f_1(j,3s)w_{(j+m)n+g(j+s)\ra (j+m)n+5}\otimes e_{jn+g(j)},\quad i=j;\\
\kappa f_1(j,3s)w_{(j+m)n+g(j+s)+1\ra (j+m)n+5}\otimes w_{jn+g(j)\ra jn+g(j)+1},\quad i=j+4s;\\
0,\quad\text{otherwise.}\end{cases}$$

If $4s\le j<6s$, then $$b_{ij}=
\begin{cases}
-\kappa f_1(j,5s)w_{(j+m)n+g(j)\ra (j+m+1)n}\otimes e_{jn+g(j)+1},\quad i=j;\\
-\kappa f_1(j,5s)w_{(j+m)n+g(j+s)+1\ra (j+m+1)n}\otimes e_{jn+g(j)+1},\quad i=j+2s;\\
0,\quad\text{otherwise.}\end{cases}$$

If $6s\le j<7s$, then $$b_{ij}=
\begin{cases}
w_{(j+m)n+5\ra (j+m+1)n+g(j)}\otimes w_{jn+5\ra (j+1)n},\quad i=(j+1)_s;\\
-e_{(j+m+1)n+g(j)}\otimes w_{jn+5\ra (j+1)n+g(j+s)},\quad i=(j+1)_s+3s;\\
-w_{(j+m)n+5\ra (j+m+1)n+g(j)}\otimes e_{jn+5},\quad i=j+2s;\\
0,\quad\text{otherwise.}\end{cases}$$

If $7s\le j<8s$, then $$b_{ij}=
\begin{cases}
e_{(j+m+1)n+g(j)}\otimes w_{jn+5\ra (j+1)n+g(j+s)},\quad i=(j+1)_s+2s;\\
-w_{(j+m)n+5\ra (j+m+1)n+g(j)}\otimes e_{jn+5},\quad i=j+s;\\
0,\quad\text{otherwise.}\end{cases}$$

$(6)$ If $r_0=5$, then $\Omega^{5}(Y_t^{(22)})$ is described with
$(9s\times 8s)$-matrix with the following elements $b_{ij}${\rm:}

If $0\le j<s$, then $b_{ij}=0$.

If $s\le j<2s$, then $$b_{ij}=
\begin{cases}
\kappa e_{(j+m+1)n}\otimes w_{jn\ra jn+g(j+s)},\quad i=j+s;\\
\kappa e_{(j+m+1)n}\otimes w_{jn\ra jn+g(j)},\quad i=j+2s;\\
-\kappa w_{(j+m)n+5\ra (j+m+1)n}\otimes w_{jn\ra jn+g(j+s)+1},\quad i=j+3s;\\
-\kappa w_{(j+m)n+5\ra (j+m+1)n}\otimes w_{jn\ra jn+g(j)+1},\quad i=j+4s;\\
0,\quad\text{otherwise.}\end{cases}$$

If $2s\le j<4s$, then $$b_{ij}=
\begin{cases}
-f_1(j,3s)e_{(j+m+1)n+g(j+s)}\otimes w_{jn+g(j)\ra (j+1)n},\quad i=(j+1)_s+(3-j_2)s;\\
-f_1(j,3s)w_{(j+m+1)n\ra (j+m+1)n+g(j+s)}\otimes e_{jn+g(j)},\quad i=j;\\
f_1(j,3s)w_{(j+m)n+5\ra (j+m+1)n+g(j+s)}\otimes w_{jn+g(j)\ra jn+g(j)+1},\quad i=j+2s;\\
0,\quad\text{otherwise.}\end{cases}$$

If $4s\le j<6s$, then $$b_{ij}=
\begin{cases}
f_1(j,5s)w_{(j+m)n+5\ra (j+m+1)n+g(j)}\otimes e_{jn+g(j)+1},\quad i=j;\\
0,\quad\text{otherwise.}\end{cases}$$

If $6s\le j<8s$, then $$b_{ij}=
\begin{cases}
-f_1(j,7s)w_{(j+m)n+5\ra (j+m+1)n+g(j+s)+1}\otimes e_{jn+g(j)+1},\quad i=j-2s;\\
e_{(j+m+1)n+g(j+s)+1}\otimes w_{jn+g(j)+1\ra jn+5},\quad i=(j)_s+(13-j_2)s;\\
0,\quad\text{otherwise.}\end{cases}$$

If $8s\le j<9s$, then $$b_{ij}=
\begin{cases}
-\kappa w_{(j+m+1)n+g(j)+1\ra (j+m+1)n+5}\otimes e_{jn+5},\quad i=j-2s;\\
\kappa w_{(j+m+1)n+g(j+s)+1\ra (j+m+1)n+5}\otimes e_{jn+5},\quad i=j-s;\\
0,\quad\text{otherwise.}\end{cases}$$

$(7)$ If $r_0=6$, then $\Omega^{6}(Y_t^{(22)})$ is described with
$(8s\times 9s)$-matrix with the following elements $b_{ij}${\rm:}

If $0\le j<2s$, then $$b_{ij}=
\begin{cases}
f_1(j,s)e_{(j+m)n+g(j)}\otimes w_{jn\ra jn+g(j)},\quad i=j+s;\\
0,\quad\text{otherwise.}\end{cases}$$

If $2s\le j<3s$, then $$b_{ij}=
\begin{cases}
-\kappa e_{(j+m+1)n}\otimes w_{jn+g(j)\ra (j+1)n},\quad i=(j+1)_s;\\
\kappa w_{(j+m)n+g(j)\ra (j+m+1)n}\otimes e_{jn+g(j)},\quad i=j-s;\\
-\kappa w_{(j+m)n+5\ra (j+m+1)n}\otimes w_{jn+g(j)\ra jn+5},\quad i=j+5s;\\
\kappa e_{(j+m+1)n}\otimes w_{jn+g(j)\ra jn+5},\quad i=j+6s;\\
0,\quad\text{otherwise.}\end{cases}$$

If $3s\le j<4s$, then $$b_{ij}=
\begin{cases}
-\kappa w_{(j+m)n+g(j)\ra (j+m+1)n}\otimes e_{jn+g(j)},\quad i=j-s;\\
\kappa w_{(j+m)n+g(j)+1\ra (j+m+1)n}\otimes w_{jn+g(j)\ra jn+g(j)+1},\quad i=j+3s;\\
-\kappa w_{(j+m)n+5\ra (j+m+1)n}\otimes w_{jn+g(j)\ra jn+5},\quad i=j+4s;\\
-\kappa e_{(j+m+1)n}\otimes w_{jn+g(j)\ra jn+5},\quad i=j+5s;\\
0,\quad\text{otherwise.}\end{cases}$$

If $4s\le j<6s$, then $$b_{ij}=
\begin{cases}
\kappa f_1(j,5s)w_{(j+m)n+g(j)+1\ra (j+m)n+5}\otimes e_{jn+g(j)+1},\quad i=j+s;\\
0,\quad\text{otherwise.}\end{cases}$$

If $6s\le j<7s$, then $$b_{ij}=
\begin{cases}
-e_{(j+m+1)n+g(j)+1}\otimes w_{jn+5\ra (j+1)n+g(j)+1},\quad i=(j+1)_s+5s;\\
-w_{(j+m)n+5\ra (j+m+1)n+g(j)+1}\otimes e_{jn+5},\quad i=j+s;\\
-w_{(j+m+1)n\ra (j+m+1)n+g(j)+1}\otimes e_{jn+5},\quad i=j+2s;\\
0,\quad\text{otherwise.}\end{cases}$$

If $7s\le j<8s$, then $$b_{ij}=
\begin{cases}
e_{(j+m+1)n+g(j)+1}\otimes w_{jn+5\ra (j+1)n+g(j+s)},\quad i=(j+1)_s+3s;\\
e_{(j+m+1)n+g(j)+1}\otimes w_{jn+5\ra (j+1)n+g(j)+1},\quad i=(j+1)_s+6s;\\
w_{(j+m)n+5\ra (j+m+1)n+g(j)+1}\otimes e_{jn+5},\quad i=j;\\
0,\quad\text{otherwise.}\end{cases}$$

$(8)$ If $r_0=7$, then $\Omega^{7}(Y_t^{(22)})$ is described with
$(9s\times 8s)$-matrix with the following elements $b_{ij}${\rm:}

If $0\le j<s$, then $$b_{ij}=
\begin{cases}
-\kappa e_{(j+m+1)n}\otimes w_{jn\ra jn+g(j)+1},\quad i=j+4s;\\
-\kappa e_{(j+m+1)n}\otimes w_{jn\ra jn+g(j+s)+1},\quad i=j+5s;\\
0,\quad\text{otherwise.}\end{cases}$$

If $s\le j<2s$, then $$b_{ij}=
\begin{cases}
-w_{(j+m)n+5\ra (j+m+1)n+g(j+s)}\otimes e_{jn+g(j+s)},\quad i=j+s;\\
-w_{(j+m+1)n\ra (j+m+1)n+g(j+s)}\otimes w_{jn+g(j+s)\ra jn+g(j+s)+1},\quad i=j+3s;\\
-e_{(j+m+1)n+g(j+s)}\otimes w_{jn+g(j+s)\ra jn+5},\quad i=j+5s;\\
0,\quad\text{otherwise.}\end{cases}$$

If $2s\le j<3s$, then $$b_{ij}=
\begin{cases}
w_{(j+m)n+5\ra (j+m+1)n+g(j+s)}\otimes e_{jn+g(j+s)},\quad i=j+s;\\
e_{(j+m+1)n+g(j+s)}\otimes w_{jn+g(j+s)\ra jn+5},\quad i=j+5s;\\
0,\quad\text{otherwise.}\end{cases}$$

If $3s\le j<4s$, then $$b_{ij}=
\begin{cases}
e_{(j+m+1)n+g(j)+1}\otimes w_{jn+g(j+s)\ra (j+1)n},\quad i=(j+1)_s+s;\\
w_{(j+m+1)n\ra (j+m+1)n+g(j)+1}\otimes w_{jn+g(j+s)\ra jn+g(j+s)+1},\quad i=j+s;\\
0,\quad\text{otherwise.}\end{cases}$$

If $4s\le j<5s$, then $b_{ij}=0$.

If $5s\le j<7s$, then $$b_{ij}=
\begin{cases}
-e_{(j+m+1)n+g(j+s)+1}\otimes w_{jn+g(j+s)+1\ra (j+1)n},\quad i=(j+1)_s+(j_2-5)s;\\
-f_1(j,6s)w_{(j+m+1)n\ra (j+m+1)n+g(j+s)+1}\otimes e_{jn+g(j+s)+1},\quad i=j-s;\\
-w_{(j+m+1)n+g(j+s)\ra (j+m+1)n+g(j+s)+1}\otimes w_{jn+g(j+s)+1\ra jn+5},\quad i=j+s;\\
0,\quad\text{otherwise.}\end{cases}$$

If $7s\le j<8s$, then $$b_{ij}=
\begin{cases}
-\kappa w_{(j+m+1)n+g(j)+1\ra (j+m+1)n+5}\otimes w_{jn+5\ra (j+1)n},\quad i=(j+1)_s+s;\\
-\kappa w_{(j+m+1)n+g(j+s)\ra (j+m+1)n+5}\otimes e_{jn+5},\quad i=j-s;\\
0,\quad\text{otherwise.}\end{cases}$$

If $8s\le j<9s$, then $$b_{ij}=
\begin{cases}
\kappa e_{(j+m+2)n}\otimes w_{jn+5\ra (j+1)n+g(j)+1},\quad i=(j+1)_s+4s;\\
-\kappa e_{(j+m+2)n}\otimes w_{jn+5\ra (j+1)n+g(j+s)+1},\quad i=(j+1)_s+5s;\\
\kappa w_{(j+m+1)n+g(j+s)\ra (j+m+2)n}\otimes e_{jn+5},\quad i=j-s;\\
0,\quad\text{otherwise.}\end{cases}$$

$(9)$ If $r_0=8$, then $\Omega^{8}(Y_t^{(22)})$ is described with
$(8s\times 6s)$-matrix with the following elements $b_{ij}${\rm:}

If $0\le j<s$, then $$b_{ij}=
\begin{cases}
e_{(j+m)n+g(j)+1}\otimes w_{jn\ra jn+g(j)},\quad i=j+s;\\
w_{(j+m)n+g(j)\ra (j+m)n+g(j)+1}\otimes w_{jn\ra jn+g(j+s)+1},\quad i=j+4s;\\
0,\quad\text{otherwise.}\end{cases}$$

If $s\le j<2s$, then $$b_{ij}=
\begin{cases}
-e_{(j+m)n+g(j)+1}\otimes w_{jn\ra jn+g(j)},\quad i=j+s;\\
0,\quad\text{otherwise.}\end{cases}$$

If $2s\le j<3s$, then $$b_{ij}=
\begin{cases}
\kappa w_{(j+m)n+g(j)+1\ra (j+m)n+5}\otimes e_{jn+g(j)},\quad i=j-s;\\
0,\quad\text{otherwise.}\end{cases}$$

If $3s\le j<4s$, then $$b_{ij}=
\begin{cases}
-\kappa w_{(j+m)n+g(j)+1\ra (j+m)n+5}\otimes e_{jn+g(j)},\quad i=j-s;\\
-\kappa w_{(j+m)n+g(j+s)\ra (j+m)n+5}\otimes w_{jn+g(j)\ra jn+g(j)+1},\quad i=j+s;\\
0,\quad\text{otherwise.}\end{cases}$$

If $4s\le j<5s$, then $$b_{ij}=
\begin{cases}
\kappa w_{(j+m)n+5\ra (j+m+1)n}\otimes w_{jn+g(j)+1\ra (j+1)n},\quad i=(j+1)_s;\\
\kappa w_{(j+m)n+g(j+s)\ra (j+m+1)n}\otimes e_{jn+g(j)+1},\quad i=j-s;\\
\kappa e_{(j+m+1)n}\otimes w_{jn+g(j)+1\ra jn+5},\quad i=j+s;\\
0,\quad\text{otherwise.}\end{cases}$$

If $5s\le j<6s$, then $$b_{ij}=
\begin{cases}
-\kappa w_{(j+m)n+g(j+s)\ra (j+m+1)n}\otimes e_{jn+g(j)+1},\quad i=j-s;\\
-\kappa e_{(j+m+1)n}\otimes w_{jn+g(j)+1\ra jn+5},\quad i=j;\\
0,\quad\text{otherwise.}\end{cases}$$

If $6s\le j<7s$, then $$b_{ij}=
\begin{cases}
-e_{(j+m+1)n+g(j)}\otimes w_{jn+5\ra (j+1)n+g(j+s)+1},\quad i=(j+1)_s+4s;\\
w_{(j+m+1)n\ra (j+m+1)n+g(j)}\otimes e_{jn+5},\quad i=j-s;\\
0,\quad\text{otherwise.}\end{cases}$$

If $7s\le j<8s$, then $$b_{ij}=
\begin{cases}
-w_{(j+m)n+5\ra (j+m+1)n+g(j)}\otimes w_{jn+5\ra (j+1)n},\quad i=(j+1)_s;\\
e_{(j+m+1)n+g(j)}\otimes w_{jn+5\ra (j+1)n+g(j+s)+1},\quad i=(j+1)_s+3s;\\
-w_{(j+m+1)n\ra (j+m+1)n+g(j)}\otimes e_{jn+5},\quad i=j-2s;\\
0,\quad\text{otherwise.}\end{cases}$$

$(10)$ If $r_0=9$, then $\Omega^{9}(Y_t^{(22)})$ is described with
$(6s\times 7s)$-matrix with the following elements $b_{ij}${\rm:}

If $0\le j<s$, then $b_{ij}=0$.

If $s\le j<3s$, then $$b_{ij}=
\begin{cases}
-f_1(j,2s)w_{(j+m+1)n\ra (j+m+1)n+g(j+s)+1}\otimes e_{jn+g(j+s)},\quad i=j;\\
-f_1(j,2s)e_{(j+m+1)n+g(j+s)+1}\otimes w_{jn+g(j+s)\ra jn+5},\quad i=j+4s;\\
0,\quad\text{otherwise.}\end{cases}$$

If $3s\le j<5s$, then $$b_{ij}=
\begin{cases}
-f_1(j,4s)e_{(j+m+1)n+g(j)}\otimes e_{jn+g(j+s)+1},\quad i=j;\\
0,\quad\text{otherwise.}\end{cases}$$

If $5s\le j<6s$, then $$b_{ij}=
\begin{cases}
\kappa w_{(j+m+1)n+5\ra (j+m+2)n}\otimes w_{jn+5\ra (j+1)n},\quad i=(j+1)_s;\\
0,\quad\text{otherwise.}\end{cases}$$

$(11)$ If $r_0=10$, then $\Omega^{10}(Y_t^{(22)})$ is described with
$(7s\times 6s)$-matrix with the following elements $b_{ij}${\rm:}

If $0\le j<s$, then $$b_{ij}=
\begin{cases}
-\kappa w_{(j+m)n+g(j+s)\ra (j+m)n+5}\otimes w_{jn\ra jn+g(j)},\quad i=j+s;\\
-\kappa w_{(j+m)n+g(j)\ra (j+m)n+5}\otimes w_{jn\ra jn+g(j+s)},\quad i=j+2s;\\
-\kappa w_{(j+m)n+g(j+s)+1\ra (j+m)n+5}\otimes w_{jn\ra jn+g(j)+1},\quad i=j+3s;\\
-\kappa w_{(j+m)n+g(j)+1\ra (j+m)n+5}\otimes w_{jn\ra jn+g(j+s)+1},\quad i=j+4s;\\
0,\quad\text{otherwise.}\end{cases}$$

If $s\le j<3s$, then $$b_{ij}=
\begin{cases}
\kappa f_1(j,2s)w_{(j+m)n+g(j)\ra (j+m+1)n}\otimes e_{jn+g(j+s)},\quad i=j;\\
\kappa f_1(j,2s)w_{(j+m)n+g(j)+1\ra (j+m+1)n}\otimes w_{jn+g(j+s)\ra jn+g(j+s)+1},\quad i=j+2s;\\
\kappa w_{(j+m)n+5\ra (j+m+1)n}\otimes w_{jn+g(j+s)\ra jn+5},\quad i=(j)_s+5s;\\
0,\quad\text{otherwise.}\end{cases}$$

If $3s\le j<5s$, then $$b_{ij}=
\begin{cases}
-w_{(j+m+1)n\ra (j+m+1)n+g(j)}\otimes w_{jn+g(j+s)+1\ra (j+1)n},\quad i=(j+1)_s;\\
f_1(j,4s)e_{(j+m+1)n+g(j)}\otimes w_{jn+g(j+s)+1\ra (j+1)n+g(j+s)},\quad i=(j+1)_s+(j_2-2)s;\\
-f_1(j,4s)w_{(j+m)n+g(j)+1\ra (j+m+1)n+g(j)}\otimes e_{jn+g(j+s)+1},\quad i=j;\\
-w_{(j+m)n+5\ra (j+m+1)n+g(j)}\otimes w_{jn+g(j+s)+1\ra jn+5},\quad i=(j)_s+5s;\\
0,\quad\text{otherwise.}\end{cases}$$

If $5s\le j<7s$, then $$b_{ij}=
\begin{cases}
-w_{(j+m+1)n\ra (j+m+1)n+g(j+s)+1}\otimes w_{jn+5\ra (j+1)n},\quad i=(j+1)_s;\\
-f_1(j,6s)w_{(j+m+1)n+g(j+s)\ra (j+m+1)n+g(j+s)+1}\otimes w_{jn+5\ra (j+1)n+g(j)},\\\quad\quad\quad i=(j+1)_s+(7-j_2)s;\\
-f_1(j,6s)e_{(j+m+1)n+g(j+s)+1}\otimes w_{jn+5\ra (j+1)n+g(j)+1},\quad i=(j+1)_s+(9-j_2)s;\\
0,\quad\text{otherwise.}\end{cases}$$

\medskip
$({\rm II})$ Represent an arbitrary $t_0\in\N$ in the form
$t_0=11\ell_0+r_0$, where $0\le r_0\le 10.$ Then
$\Omega^{t_0}(Y_t^{(22)})$ is a $\Omega^{r_0}(Y_t^{(22)})$, whose
left components twisted by $\sigma^{\ell_0}$, and coefficients
multiplied by $(-1)^{\ell_0}$.
\end{pr}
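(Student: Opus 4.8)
I would follow the standard recipe for computing $\Omega$-translates. Recall from the opening of this section that any cocycle lifts, uniquely up to homotopy, to a chain endomorphism $\{\varphi_i\colon Q_{t+i}\to Q_i\}_{i\ge 0}$ of the minimal projective bimodule resolution $Q_\bullet\to R$, where each $\varphi_{i+1}$ is an arbitrary $\Lambda$-homomorphism solving the lifting equation $d_i\varphi_{i+1}=\varphi_i d_{t+i}$. Here $\varphi_0=\Omega^0(Y^{(22)}_t)$ is $Y^{(22)}_t$ itself: the matrix displayed in item $(1)$ coincides with the one given for $Y^{(22)}_t$ in the previous section, and $\varphi_0 d_t$ factors through $\Im d_0=\Ker\varepsilon$ because $Y^{(22)}_t$ is a cocycle, so the lifting equations are solvable step by step. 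It therefore suffices to exhibit, for $r_0=1,\dots,10$, the matrix $B^{(r_0)}$ listed in item $(r_0+1)$ and to check that it is a $\Lambda$-homomorphism $Q_{t+r_0}\to Q_{r_0}$ with $d_{r_0-1}B^{(r_0)}=B^{(r_0-1)}d_{t+r_0-1}$; since two lifts of a cocycle differ by a chain homotopy, this single family then represents all of $\Omega^{r_0}(Y^{(22)}_t)$, which is part $(\mathrm{I})$.

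The first bookkeeping step is to confirm that every nonzero entry $b_{ij}$ of $B^{(r_0)}$ is a scalar times a tensor $w\otimes w'$ of paths whose endpoints are the source idempotents of the $i$-th summand of $Q_{r_0}$ and the $j$-th summand of $Q_{t+r_0}$; this is read off from the descriptions of $Q_\bullet$ and the auxiliary functions $g,\lambda,f,f_0,f_1,h$ in Section \ref{sect_res}, keeping in mind that, by Theorem \ref{resol_thm}, $Q_{t+r_0}$ and $d_{t+r_0-1}$ are $\sigma$-power twists of the explicitly described terms and differentials, with a wraparound modulo $11$ (already present in the source for $r_0\ge1$ and in the differential for $r_0\ge2$, since $t=11\ell+10$). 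Then, for each $r_0$ in turn, I would evaluate the block product $d_{r_0-1}B^{(r_0)}-B^{(r_0-1)}d_{t+r_0-1}$ columnwise, that is, summand by summand of $Q_{t+r_0}$, grouping the columns by the index ranges $0\le j<s$, $s\le j<3s$, etc.\ that organize the matrix descriptions. In each range the difference is a short sum of products of one entry of $B$ with one entry of $d$, and these cancel in pairs or vanish because of the defining relations of $R$ — all paths of length $5$, together with $\a^3-\b^3$, $\a\g\b$, $\b\g\a$ and their $\sigma^\ell$-images — leaving $0$. Minimality of $Q_\bullet\to R$ enters only to note that the lifted maps automatically have radical entries, so the displayed normalization needs no further adjustment.

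For part $(\mathrm{II})$ I would quote Theorem \ref{resol_thm} directly: $Q_{11\ell_0+r_0}$ is $Q_{r_0}$ with each summand $P_{i,j}$ replaced by $P_{\sigma^{\ell_0}(i),j}$, and $d_{11\ell_0+r_0}$ is $d_{r_0}$ with $\sigma^{\ell_0}$ applied to all left tensor components. Hence applying $\sigma^{\ell_0}$ to the left components of the whole chain map $\{\Omega^i(Y^{(22)}_t)\}$ constructed above yields again a chain map, now sitting in degrees $11\ell_0+i$, lifting the same cocycle; by uniqueness up to homotopy it represents $\Omega^{11\ell_0+i}(Y^{(22)}_t)$. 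The additional scalar $(-1)^{\ell_0}$ comes from the signs in $\sigma(\g_i)=-\g_{i+n}$, $\sigma(\a_i)=\pm\b_{i+3n}$, $\sigma(\b_i)=\pm\a_{i+3n}$: tracing these through the explicit entries $b_{ij}$ shows that $\sigma^{\ell_0}$ multiplies each of them by exactly $(-1)^{\ell_0}$ after the index shift, which is the asserted factor (the same phenomenon explains why some earlier cases carry this sign and others do not).

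The only genuine obstacle is the bulk of part $(\mathrm{I})$: the ten identities $d_{r_0-1}B^{(r_0)}=B^{(r_0-1)}d_{t+r_0-1}$ are each individually routine, but they involve sparse, heavily case-split matrices over the path algebra, so the real work — and the only place errors can creep in — is keeping the index ranges, the modular wraparound $t+r_0-1\equiv r_0-2\pmod{11}$ for $r_0\ge2$, and the $\sigma$-signs straight. Performing the check column-block by column-block and reducing first to the case $\kappa=\kappa_0=1$ keeps it manageable, with every cancellation ultimately forced by the relations of $R$.
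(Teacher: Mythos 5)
Your proposal matches the paper's own method: the paper proves these propositions (explicitly for the case 3 translates, with the remark that the others are handled identically) by directly computing the products $d_{r_0-1}f_{r_0}$ and checking they coincide with $f_{r_0-1}d_{t+r_0-1}$, i.e.\ exactly the column-block verification of the lifting squares that you describe, and part $(\mathrm{II})$ likewise follows from the $\sigma^{\ell_0}$-twisted form of $Q_{11\ell_0+r_0}$ and $d_{11\ell_0+r_0}$ in Theorem \ref{resol_thm} together with the signs of $\sigma$. So the proposal is correct and takes essentially the same route as the paper.
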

\begin{pr}[Translates for the case 23]
$({\rm I})$ Let $r_0\in\N$, $r_0<11$. $r_0$-translates of the
elements $Y^{(23)}_t$ are described by the following way.

$(1)$ If $r_0=0$, then $\Omega^{0}(Y_t^{(23)})$ is described with
$(6s\times 6s)$-matrix with one nonzero element that is of the
following form{\rm:}
$$b_{3s+z_0(0,\ell_0,n),3s+z_0(0,\ell_0,n)}=w_{(j+m)n+g(j+s)+1\ra (j+m+1)n+g(j+s)+1}\otimes e_{jn+g(j+s)+1}.$$

$(2)$ If $r_0=1$, then $\Omega^{1}(Y_t^{(23)})$ is described with
$(7s\times 7s)$-matrix with one nonzero element that is of the
following form{\rm:}
$$b_{2s+z_0(0,\ell_0,n),2s+z_0(0,\ell_0,n)}=w_{(j+m)n+g(j)+1\ra (j+m+1)n+g(j)+1}\otimes e_{jn+g(j)}.$$

$(3)$ If $r_0=2$, then $\Omega^{2}(Y_t^{(23)})$ is described with
$(6s\times 6s)$-matrix with one nonzero element that is of the
following form{\rm:}
$$b_{s+z_0(-1,\ell_0,n),s+z_0(-1,\ell_0,n)}=w_{(j+m)n+g(j+s)+1\ra (j+m+1)n+g(j+s)+1}\otimes e_{jn+g(j+s)}.$$

$(4)$ If $r_0=3$, then $\Omega^{3}(Y_t^{(23)})$ is described with
$(8s\times 8s)$-matrix with one nonzero element that is of the
following form{\rm:}
$$b_{z_0(-1,\ell_0,n),z_0(-1,\ell_0,n)}=w_{(j+m)n+g(j)+1\ra (j+m+1)n+g(j)+1}\otimes e_{jn}.$$

$(5)$ If $r_0=4$, then $\Omega^{4}(Y_t^{(23)})$ is described with
$(9s\times 9s)$-matrix with one nonzero element that is of the
following form{\rm:}
$$b_{6s+z_1(-2,\ell_0,n),6s+z_1(-2,\ell_0,n)}=w_{(j+m)n+g(j+s)+1\ra (j+m+1)n+g(j+s)+1}\otimes e_{jn+g(j)+1}.$$

$(6)$ If $r_0=5$, then $\Omega^{5}(Y_t^{(23)})$ is described with
$(8s\times 8s)$-matrix with one nonzero element that is of the
following form{\rm:}
$$b_{6s+z_0(-3,\ell_0,n),6s+z_0(-3,\ell_0,n)}=w_{(j+m+1)n+g(j)+1\ra (j+m+2)n+g(j)+1}\otimes e_{jn+5}.$$

$(7)$ If $r_0=6$, then $\Omega^{6}(Y_t^{(23)})$ is described with
$(9s\times 9s)$-matrix with the following elements $b_{ij}${\rm:}

If $0\le j<3s+z_1(-3,\ell_0,n)$, then $b_{ij}=0$.

If $j=3s+z_1(-3,\ell_0,n)$, then $$b_{ij}=
\begin{cases}
w_{(j+m)n+g(j)+1\ra (j+m+1)n+g(j)+1}\otimes e_{jn+g(j+s)},\quad i=j;\\
0,\quad\text{otherwise.}\end{cases}$$

If $3s+z_1(-3,\ell_0,n)<j<5s+z_0(-3,\ell_0,n)$, then $b_{ij}=0$.

If $j=5s+z_0(-3,\ell_0,n)$, then $$b_{ij}=
\begin{cases}
w_{(j+m)n+g(j+s)+1\ra (j+m+1)n+g(j+s)+1}\otimes e_{jn+g(j+s)+1},\quad i=j;\\
0,\quad\text{otherwise.}\end{cases}$$

If $5s+z_0(-3,\ell_0,n)<j<9s$, then $b_{ij}=0$.

$(8)$ If $r_0=7$, then $\Omega^{7}(Y_t^{(23)})$ is described with
$(8s\times 8s)$-matrix with one nonzero element that is of the
following form{\rm:}
$$b_{z_0(-3,\ell_0,n),z_0(-3,\ell_0,n)}=w_{(j+m)n+g(j)+1\ra (j+m+1)n+g(j)+1}\otimes e_{jn}.$$

$(9)$ If $r_0=8$, then $\Omega^{8}(Y_t^{(23)})$ is described with
$(6s\times 6s)$-matrix with one nonzero element that is of the
following form{\rm:}
$$b_{s+z_0(-4,\ell_0,n),s+z_0(-4,\ell_0,n)}=w_{(j+m)n+g(j+s)+1\ra (j+m+1)n+g(j+s)+1}\otimes e_{jn+g(j+s)}.$$

$(10)$ If $r_0=9$, then $\Omega^{9}(Y_t^{(23)})$ is described with
$(7s\times 7s)$-matrix with one nonzero element that is of the
following form{\rm:}
$$b_{5s+z_0(-5,\ell_0,n),5s+z_0(-5,\ell_0,n)}=w_{(j+m+1)n+g(j+s)+1\ra (j+m+2)n+g(j+s)+1}\otimes e_{jn+5}.$$

$(11)$ If $r_0=10$, then $\Omega^{10}(Y_t^{(23)})$ is described with
$(6s\times 6s)$-matrix with one nonzero element that is of the
following form{\rm:}
$$b_{3s+z_1(-5,\ell_0,n),3s+z_1(-5,\ell_0,n)}=w_{(j+m)n+g(j)+1\ra (j+m+1)n+g(j)+1}\otimes e_{jn+g(j+s)+1}.$$

\medskip
$({\rm II})$ Represent an arbitrary $t_0\in\N$ in the form
$t_0=11\ell_0+r_0$, where $0\le r_0\le 10.$ Then
$\Omega^{t_0}(Y_t^{(23)})$ is a $\Omega^{r_0}(Y_t^{(23)})$, whose
left components twisted by $\sigma^{\ell_0}$, and coefficients
multiplied by $(-1)^{\ell_0}$.
\end{pr}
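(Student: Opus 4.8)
The plan is to verify that the matrices displayed in part~(I), extended to all degrees by the twisting rule of part~(II), constitute a chain endomorphism of the resolution $(Q_\bullet,d_\bullet)$ lifting the cocycle $Y^{(23)}_{0}$; uniqueness of such a lift up to homotopy then identifies these matrices with $\Omega^{\bullet}(Y^{(23)}_{0})$. Recall that case~23 lives in the setting $s=1$, $\myChar\ne3$, $t=0$, so a lift is a family $\{\varphi_{i}\colon Q_{i}\to Q_{i}\}_{i\ge0}$ with $\varepsilon\circ\varphi_{0}$ equal to the homomorphism $Q_{0}\to R$ determined by $Y^{(23)}_{0}$, and with $d_{i}\circ\varphi_{i+1}=\varphi_{i}\circ d_{i}$ for all $i$. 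First I would check the normalization: for $r_0=0$ and $\ell_0=0$ one has $z_0(0,0,n)=0$ and $m=0$, so the stated matrix collapses to the single entry $w_{jn+g(j+s)+1\ra(j+1)n+g(j+s)+1}\otimes e_{jn+g(j+s)+1}$ at position $(3s,3s)$, which is exactly the description of $Y^{(23)}_{0}$ recorded in Section~5; hence $\Omega^{0}(Y^{(23)}_{0})$ represents the correct cocycle.

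Next I would reduce the chain-map condition to finitely many squares. By Theorem~\ref{resol_thm}, $Q_{11\ell+r}$ and $d_{11\ell+r}$ arise from $Q_r$, $d_r$ by applying $\sigma^{\ell}$ to every left tensor component, and part~(II) prescribes $\varphi_{11\ell_0+r_0}$ as the same $\sigma^{\ell_0}$-twist of $\varphi_{r_0}$ up to the global sign $\kappa_0=(-1)^{\ell_0}$. Since $\sigma$ is an algebra automorphism it is compatible with the matrix multiplications that compute $d\circ\varphi$ and $\varphi\circ d$, so commutativity of the square indexed by $r_0$ entails that of the square indexed by $11\ell_0+r_0$. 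Thus it suffices to check the eleven identities $d_{r_0}\,\Omega^{r_0+1}(Y^{(23)}_{0})=\Omega^{r_0}(Y^{(23)}_{0})\,d_{r_0}$ for $r_0=0,1,\dots,10$, the case $r_0=10$ being the one in which $\varphi_{11}$ (the $\sigma$-twist of $\varphi_{0}$, carrying the sign $\kappa_0=-1$) first meets $d_{10}$, so that the sign conventions must be tracked carefully there.

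For the eleven squares I would form both matrix products from the explicit component formulas for the $d_{r_0}$ in Section~\ref{sect_res} and for $\Omega^{r_0}(Y^{(23)})$, $\Omega^{r_0+1}(Y^{(23)})$ above, and compare them entrywise. Each $\Omega^{r_0}(Y^{(23)})$ has at most two nonzero entries (a single one except when $r_0=6$) and the differentials are sparse, so each square reduces to a handful of identities between paths $w_{i\ra j}$ of $\mathcal{Q}_s$; these are settled by the index arithmetic packaged in $g$, $f_0$, $f_1$, $z_0$, $z_1$ and by the defining relations of $R$ (paths of length $5$ vanish, $\alpha^{3}=\beta^{3}$, $\alpha\gamma\beta=\beta\gamma\alpha=0$), which is precisely what forces the surplus terms on the two sides to cancel or to agree. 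A shortcut I would exploit: the matrices recorded here for $Y^{(23)}$ are \emph{verbatim} those recorded for $Y^{(2)}$ in the proposition ``Translates for the case 2'' --- the generator $Y^{(23)}_{0}$ has the same one-entry shape as $Y^{(2)}_{t}$, namely $w_{jn+g(j+s)+1\ra(j+1)n+g(j+s)+1}\otimes e_{jn+g(j+s)+1}$ at $(3s,3s)$, and the twisting rule in part~(II) is the same --- so these eleven verifications coincide with the ones already performed for case~2 and need not be repeated.

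The main obstacle is not conceptual but organizational: keeping the case split over the residues of the column index $j$ modulo $s$, $2s$ and modulo $n=6$ under control while simultaneously tracking the three independent sign sources ($\kappa$ attached to the generator degree, $\kappa_0$ attached to the translate degree, and the internal signs of the $d_r$), and in particular making sure these remain globally consistent across the period $M=11\,\frac{2s}{\myNod(2n,s)}$. Once the identification with case~2 is invoked, however, there is nothing further to compute.
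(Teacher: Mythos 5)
Your proposal is correct and follows essentially the same route as the paper: the paper proves these translate propositions by checking commutativity of the lifting squares $d_{t_0-1}f_{t_0}=f_{t_0-1}d_{t+t_0-1}$ componentwise for $r_0=0,\dots,10$ and handling $t_0\ge 11$ by the $\sigma^{\ell_0}$-twist with sign $(-1)^{\ell_0}$ (it does this explicitly for case 3 and declares the remaining cases, including this one, analogous). Your additional observation that the case-23 data coincides verbatim with the case-2 data (same one-entry cocycle at position $(3s,3s)$, same twisting rule, and the verification identities are characteristic-independent) is a legitimate shortcut consistent with the paper's treatment.
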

\begin{pr}[Translates for the case 24]
$({\rm I})$ Let $r_0\in\N$, $r_0<11$. $r_0$-translates of the
elements $Y^{(24)}_t$ are described by the following way.

$(1)$ If $r_0=0$, then $\Omega^{0}(Y_t^{(24)})$ is described with
$(6s\times 6s)$-matrix with one nonzero element that is of the
following form{\rm:}
$$b_{0,0}=w_{(j+m)n\ra (j+m+1)n}\otimes e_{jn}.$$

$(2)$ If $r_0=1$, then $\Omega^{1}(Y_t^{(24)})$ is described with
$(7s\times 7s)$-matrix with one nonzero element that is of the
following form{\rm:}
$$b_{6s,6s}=w_{(j+m)n\ra (j+m+1)n}\otimes e_{jn+5}.$$

$(3)$ If $r_0=2$, then $\Omega^{2}(Y_t^{(24)})$ is described with
$(6s\times 6s)$-matrix with one nonzero element that is of the
following form{\rm:}
$$b_{5s,5s}=w_{(j+m)n\ra (j+m+1)n}\otimes e_{jn+5}.$$

$(4)$ If $r_0=3$, then $\Omega^{3}(Y_t^{(24)})$ is described with
$(8s\times 8s)$-matrix with the following elements $b_{ij}${\rm:}

If $0\le j<4s$, then $b_{ij}=0$.

If $j=4s$, then $$b_{ij}=
\begin{cases}
w_{(j+m)n\ra (j+m+1)n}\otimes e_{jn+g(j)+1},\quad i=j;\\
0,\quad\text{otherwise.}\end{cases}$$

If $4s<j<5s$, then $b_{ij}=0$.

If $j=5s$, then $$b_{ij}=
\begin{cases}
w_{(j+m)n\ra (j+m+1)n}\otimes e_{jn+g(j)+1},\quad i=j;\\
0,\quad\text{otherwise.}\end{cases}$$

If $5s<j<8s$, then $b_{ij}=0$.

$(5)$ If $r_0=4$, then $\Omega^{4}(Y_t^{(24)})$ is described with
$(9s\times 9s)$-matrix with one nonzero element that is of the
following form{\rm:}
$$b_{s,s}=w_{(j+m)n\ra (j+m+1)n}\otimes e_{jn}.$$

$(6)$ If $r_0=5$, then $\Omega^{5}(Y_t^{(24)})$ is described with
$(8s\times 8s)$-matrix with the following elements $b_{ij}${\rm:}

If $0\le j<2s$, then $b_{ij}=0$.

If $j=2s$, then $$b_{ij}=
\begin{cases}
w_{(j+m)n\ra (j+m+1)n}\otimes e_{jn+g(j)},\quad i=j;\\
0,\quad\text{otherwise.}\end{cases}$$

If $2s<j<3s$, then $b_{ij}=0$.

If $j=3s$, then $$b_{ij}=
\begin{cases}
w_{(j+m)n\ra (j+m+1)n}\otimes e_{jn+g(j)},\quad i=j;\\
0,\quad\text{otherwise.}\end{cases}$$

If $3s<j<8s$, then $b_{ij}=0$.

$(7)$ If $r_0=6$, then $\Omega^{6}(Y_t^{(24)})$ is described with
$(9s\times 9s)$-matrix with the following elements $b_{ij}${\rm:}

If $j=0$, then $$b_{ij}=
\begin{cases}
w_{(j+m)n\ra (j+m+1)n}\otimes e_{jn},\quad i=j;\\
0,\quad\text{otherwise.}\end{cases}$$

If $0<j<8s$, then $b_{ij}=0$.

If $j=8s$, then $$b_{ij}=
\begin{cases}
w_{(j+m)n\ra (j+m+1)n}\otimes e_{jn+5},\quad i=j;\\
0,\quad\text{otherwise.}\end{cases}$$

If $8s<j<9s$, then $b_{ij}=0$.

$(8)$ If $r_0=7$, then $\Omega^{7}(Y_t^{(24)})$ is described with
$(8s\times 8s)$-matrix with the following elements $b_{ij}${\rm:}

If $0\le j<4s$, then $b_{ij}=0$.

If $j=4s$, then $$b_{ij}=
\begin{cases}
w_{(j+m)n\ra (j+m+1)n}\otimes e_{jn+g(j)+1},\quad i=j;\\
0,\quad\text{otherwise.}\end{cases}$$

If $4s<j<5s$, then $b_{ij}=0$.

If $j=5s$, then $$b_{ij}=
\begin{cases}
w_{(j+m)n\ra (j+m+1)n}\otimes e_{jn+g(j)+1},\quad i=j;\\
0,\quad\text{otherwise.}\end{cases}$$

If $5s<j<8s$, then $b_{ij}=0$.

$(9)$ If $r_0=8$, then $\Omega^{8}(Y_t^{(24)})$ is described with
$(6s\times 6s)$-matrix with one nonzero element that is of the
following form{\rm:}
$$b_{5s,5s}=w_{(j+m)n\ra (j+m+1)n}\otimes e_{jn+5}.$$

$(10)$ If $r_0=9$, then $\Omega^{9}(Y_t^{(24)})$ is described with
$(7s\times 7s)$-matrix with the following elements $b_{ij}${\rm:}

If $0\le j<s$, then $b_{ij}=0$.

If $j=s$, then $$b_{ij}=
\begin{cases}
w_{(j+m)n\ra (j+m+1)n}\otimes e_{jn+g(j+s)},\quad i=j;\\
0,\quad\text{otherwise.}\end{cases}$$

If $s<j<2s$, then $b_{ij}=0$.

If $j=2s$, then $$b_{ij}=
\begin{cases}
w_{(j+m)n\ra (j+m+1)n}\otimes e_{jn+g(j+s)},\quad i=j;\\
0,\quad\text{otherwise.}\end{cases}$$

If $2s<j<7s$, then $b_{ij}=0$.

$(11)$ If $r_0=10$, then $\Omega^{10}(Y_t^{(24)})$ is described with
$(6s\times 6s)$-matrix with one nonzero element that is of the
following form{\rm:}
$$b_{0,0}=w_{(j+m)n\ra (j+m+1)n}\otimes e_{jn}.$$

\medskip
$({\rm II})$ Represent an arbitrary $t_0\in\N$ in the form
$t_0=11\ell_0+r_0$, where $0\le r_0\le 10.$ Then
$\Omega^{t_0}(Y_t^{(24)})$ is a $\Omega^{r_0}(Y_t^{(24)})$, whose
left components twisted by $\sigma^{\ell_0}$, and coefficients
multiplied by $(-1)^{\ell_0}$.
\end{pr}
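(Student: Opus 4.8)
The plan is to verify directly that the matrices $\Omega^{r_0}(Y_t^{(24)})$ listed above, extended to arbitrary $t_0$ as in part~(II), assemble into a lifting of the cocycle $Y_t^{(24)}\in\Ker\delta^t$ to a chain endomorphism of the minimal bimodule resolution $Q_\bullet\to R$ of Section~\ref{sect_res}. Recall that a $t$-cocycle $f$ lifts, uniquely up to homotopy, to a family $\{\varphi_i:Q_{t+i}\to Q_i\}_{i\ge 0}$ characterized by $\varepsilon\varphi_0=f$ and $d_{i-1}\varphi_i=\varphi_{i-1}d_{t+i-1}$ for all $i\ge 1$; existence is automatic because $Q_\bullet$ is a complex of projectives resolving $R$, and it is precisely this translate that enters the product formula~\eqref{mult_formula}. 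So it is enough to produce one such family and to recognize it as the displayed one.

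First I would dispose of the base case $\Omega^0(Y_t^{(24)})$: its unique nonzero entry $b_{0,0}=w_{(j+m)n\to(j+m+1)n}\otimes e_{jn}$ maps under the multiplication map $\varepsilon$ to the path $w_{(j+m)n\to(j+m+1)n}$, which under the identification $\Hom_\Lambda(Q_t,R)\cong\bigoplus_{i,j}\Hom_\Lambda(P_{i,j},R)$ is exactly the corresponding entry of $Y_t^{(24)}$ (the shift by $m$ being the internal index shift recorded in the description of $Q_t$ through Theorem~\ref{resol_thm}); hence $\varepsilon\,\Omega^0(Y_t^{(24)})=Y_t^{(24)}$ by inspection. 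Then, for each $r_0=1,\dots,10$, I would check the matrix identity
\[
d_{r_0-1}\cdot\Omega^{r_0}(Y_t^{(24)})=\Omega^{r_0-1}(Y_t^{(24)})\cdot d_{t+r_0-1},
\]
together with the analogous identity linking $r_0=10$ to $r_0=0$ one period lower, reading both sides off the componentwise formulas for the $d_r$ of Section~\ref{sect_res}. Since every nonzero entry of $\Omega^{r_0}(Y_t^{(24)})$ and of the $d_r$ is a single tensor $w\otimes w'$ of paths (or zero), each entry of each matrix product is again a single tensor of concatenated paths, and the identity reduces to matching source and target vertices and the lengths of the composed $\g$-, $\a$-, $\b$-segments, keeping track of the combinatorial functions $g$, $\lambda$, $h$, $f$, $f_0$, $f_1$ and of the residues $(\cdot)_s$, $(\cdot)_{2s}$. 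Because the nonzero part of $Y_t^{(24)}$ is so sparse --- a single $\g$-path joining consecutive blocks --- almost all of these comparisons are $0=0$, and the genuine content in each degree is a short list of commuting squares.

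For part~(II) I would invoke the periodicity built into Theorem~\ref{resol_thm}: $Q_{11\ell_0+r}$ and $d_{11\ell_0+r}$ arise from $Q_r$ and $d_r$ by applying $\sigma^{\ell_0}$ to every left tensor factor, and every nonzero left tensor factor occurring in the matrices $\Omega^{r_0}(Y_t^{(24)})$ is a $\g$-path of the form $w_{kn\to(k+1)n}$, on which $\sigma^{\ell_0}$ acts by the scalar $(-1)^{\ell_0}$ since $\sigma(\g_i)=-\g_{i+n}$. Thus applying $\sigma^{\ell_0}$ to the left components and multiplying by $(-1)^{\ell_0}$ transports a solution of the chain-map equations in syzygy degrees $0\le r_0\le 10$ to one in syzygy degree $11\ell_0+r_0$; uniqueness up to homotopy of the lifting then identifies it with $\Omega^{t_0}(Y_t^{(24)})$.

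The hard part will be the index bookkeeping in the inductive step rather than anything conceptual. The blocks of the $Q_i$ are split into the ranges $0\le j<s$, $s\le j<3s$, $3s\le j<5s$, $5s\le j<6s$ (and their higher-syzygy analogues), across whose boundaries the definitions of $g(j)$, $\lambda$ and the position functions change, so that checking that the composed paths on the two sides of the displayed identity land in the same indecomposable summand $P_{i,j}$ and agree there --- while mechanical --- is exacting and error-prone, and it is the only place where real computation is needed. Everything else (existence and homotopy-uniqueness of the lifting, and its periodic extension) is formal.
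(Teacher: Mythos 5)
Your proposal is correct and follows essentially the same route as the paper: the paper proves these translate propositions by exhibiting the matrices of the products $d_{t_0-1}f_{t_0}$ and checking componentwise that they coincide with $f_{t_0-1}d_{t+t_0-1}$ (carried out explicitly only for case 3, the others being declared analogous), with degrees $t_0\ge 11$ handled exactly as you do, via the $\sigma^{\ell_0}$-twist of the differentials and the accompanying sign. Your additional framing (the base-case identity $\varepsilon\,\Omega^0(Y_t^{(24)})=Y_t^{(24)}$ and homotopy-uniqueness of the lifting) is a harmless elaboration of the same verification, not a different method.
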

\begin{pr}[Translates for the case 25]
$({\rm I})$ Let $r_0\in\N$, $r_0<11$. $r_0$-translates of the
elements $Y^{(25)}_t$ are described by the following way.

$(1)$ If $r_0=0$, then $\Omega^{0}(Y_t^{(25)})$ is described with
$(6s\times 6s)$-matrix with one nonzero element that is of the
following form{\rm:}
$$b_{s+z_0(0,\ell_0,n),s+z_0(0,\ell_0,n)}=w_{(j+m)n+g(j+s)\ra (j+m+1)n+g(j+s)}\otimes e_{jn+g(j+s)}.$$

$(2)$ If $r_0=1$, then $\Omega^{1}(Y_t^{(25)})$ is described with
$(7s\times 7s)$-matrix with one nonzero element that is of the
following form{\rm:}
$$b_{z_0(0,\ell_0,n),z_0(0,\ell_0,n)}=w_{(j+m)n+g(j)\ra (j+m+1)n+g(j)}\otimes e_{jn}.$$

$(3)$ If $r_0=2$, then $\Omega^{2}(Y_t^{(25)})$ is described with
$(6s\times 6s)$-matrix with one nonzero element that is of the
following form{\rm:}
$$b_{3s+z_1(0,\ell_0,n),3s+z_1(0,\ell_0,n)}=w_{(j+m)n+g(j)\ra (j+m+1)n+g(j)}\otimes e_{jn+g(j+s)+1}.$$

$(4)$ If $r_0=3$, then $\Omega^{3}(Y_t^{(25)})$ is described with
$(8s\times 8s)$-matrix with one nonzero element that is of the
following form{\rm:}
$$b_{6s+z_0(0,\ell_0,n),6s+z_0(0,\ell_0,n)}=w_{(j+m)n+g(j)\ra (j+m+1)n+g(j)}\otimes e_{jn+5}.$$

$(5)$ If $r_0=4$, then $\Omega^{4}(Y_t^{(25)})$ is described with
$(9s\times 9s)$-matrix with the following elements $b_{ij}${\rm:}

If $0\le j<2s+z_1(0,\ell_0,n)$, then $b_{ij}=0$.

If $j=2s+z_1(0,\ell_0,n)$, then $$b_{ij}=
\begin{cases}
w_{(j+m)n+g(j+s)\ra (j+m+1)n+g(j+s)}\otimes e_{jn+g(j)},\quad i=j;\\
0,\quad\text{otherwise.}\end{cases}$$

If $2s+z_1(0,\ell_0,n)<j<4s+z_0(0,\ell_0,n)$, then $b_{ij}=0$.

If $j=4s+z_0(0,\ell_0,n)$, then $$b_{ij}=
\begin{cases}
w_{(j+m)n+g(j)\ra (j+m+1)n+g(j)}\otimes e_{jn+g(j)+1},\quad i=j;\\
0,\quad\text{otherwise.}\end{cases}$$

If $4s+z_0(0,\ell_0,n)<j<9s$, then $b_{ij}=0$.

$(6)$ If $r_0=5$, then $\Omega^{5}(Y_t^{(25)})$ is described with
$(8s\times 8s)$-matrix with one nonzero element that is of the
following form{\rm:}
$$b_{z_0(0,\ell_0,n),z_0(0,\ell_0,n)}=w_{(j+m)n+g(j)\ra (j+m+1)n+g(j)}\otimes e_{jn}.$$

$(7)$ If $r_0=6$, then $\Omega^{6}(Y_t^{(25)})$ is described with
$(9s\times 9s)$-matrix with one nonzero element that is of the
following form{\rm:}
$$b_{s+z_0(0,\ell_0,n),s+z_0(0,\ell_0,n)}=w_{(j+m)n+g(j+s)\ra (j+m+1)n+g(j+s)}\otimes e_{jn+g(j+s)}.$$

$(8)$ If $r_0=7$, then $\Omega^{7}(Y_t^{(25)})$ is described with
$(8s\times 8s)$-matrix with one nonzero element that is of the
following form{\rm:}
$$b_{6s+z_0(0,\ell_0,n),6s+z_0(0,\ell_0,n)}=w_{(j+m)n+g(j)\ra (j+m+1)n+g(j)}\otimes e_{jn+5}.$$

$(9)$ If $r_0=8$, then $\Omega^{8}(Y_t^{(25)})$ is described with
$(6s\times 6s)$-matrix with one nonzero element that is of the
following form{\rm:}
$$b_{3s+z_1(0,\ell_0,n),3s+z_1(0,\ell_0,n)}=w_{(j+m)n+g(j)\ra (j+m+1)n+g(j)}\otimes e_{jn+g(j+s)+1}.$$

$(10)$ If $r_0=9$, then $\Omega^{9}(Y_t^{(25)})$ is described with
$(7s\times 7s)$-matrix with one nonzero element that is of the
following form{\rm:}
$$b_{3s+z_1(0,\ell_0,n),3s+z_1(0,\ell_0,n)}=w_{(j+m)n+g(j)\ra (j+m+1)n+g(j)}\otimes e_{jn+g(j+s)+1}.$$

$(11)$ If $r_0=10$, then $\Omega^{10}(Y_t^{(25)})$ is described with
$(6s\times 6s)$-matrix with one nonzero element that is of the
following form{\rm:}
$$b_{s+z_1(0,\ell_0,n),s+z_1(0,\ell_0,n)}=w_{(j+m)n+g(j)\ra (j+m+1)n+g(j)}\otimes e_{jn+g(j+s)}.$$

\medskip
$({\rm II})$ Represent an arbitrary $t_0\in\N$ in the form
$t_0=11\ell_0+r_0$, where $0\le r_0\le 10.$ Then
$\Omega^{t_0}(Y_t^{(25)})$ is a $\Omega^{r_0}(Y_t^{(25)})$, whose
left components twisted by $\sigma^{\ell_0}$, and coefficients
multiplied by $(-1)^{\ell_0}$.
\end{pr}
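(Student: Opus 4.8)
The plan is to verify directly that the chain maps described in part $(\mathrm{I})$ actually lift the cocycles $Y^{(25)}_t$, and that the twisting rule in part $(\mathrm{II})$ is forced by the periodicity of the resolution. First I would recall from Theorem \ref{resol_thm} that, for each $\ell_0 \in \N$ and $0 \le r_0 \le 10$, the term $Q_{11\ell_0 + r_0}$ is obtained from $Q_{r_0}$ by replacing every summand $P_{i,j}$ with $P_{\sigma^{\ell_0}(i),j}$, and $d_{11\ell_0 + r_0}$ is obtained from $d_{r_0}$ by applying $\sigma^{\ell_0}$ to all left tensor components. Consequently, once the $r_0$-translates $\Omega^{r_0}(Y^{(25)}_t)\colon Q_{t+r_0} \to Q_{r_0}$ are known for $0 \le r_0 \le 10$, the general translate $\Omega^{t_0}(Y^{(25)}_t)$ with $t_0 = 11\ell_0 + r_0$ is determined by the lifting property: a lift of a cocycle along the $\sigma^{\ell_0}$-twisted portion of the resolution is obtained from the untwisted lift by applying $\sigma^{\ell_0}$ to the left tensor factors, possibly up to a sign. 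This is exactly the content of assertion $(\mathrm{II})$, so the bulk of the work is in $(\mathrm{I})$, and $(\mathrm{II})$ then follows formally.

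Next I would establish $(\mathrm{I})$ by descending induction on $r_0$, or rather by checking the defining commuting squares one at a time. Recall that a chain map $\{\varphi_i\}$ lifting a cocycle $f \in \Ker\delta^t$ must satisfy $\varphi_i d_{t+i} = d_i \varphi_{i+1}$ for all $i \ge 0$, together with $\varphi_0 = f$ (here $f = Y^{(25)}_t$ with its matrix given in item $(25)$ of the list of generator matrices). So for each $r_0$ with $1 \le r_0 \le 10$ I would take the matrices $B^{(r_0)} = \Omega^{r_0}(Y^{(25)}_t)$ and $B^{(r_0-1)} = \Omega^{r_0-1}(Y^{(25)}_t)$ as written in the statement, and verify the single matrix identity
\[
B^{(r_0-1)}\, d_{t+r_0-1} \;=\; d_{r_0-1}\, B^{(r_0)}.
\]
Because every $B^{(r_0)}$ has exactly one nonzero entry (except for $r_0 = 4$, where it has two, one in each of two consecutive blocks of columns), each side of this identity is a very sparse product of the explicit differential matrices $d_\bullet$ described in Section \ref{sect_res} with a near-diagonal matrix; the comparison reduces to checking, for each nonzero matrix position, that the two paths obtained by composing the relevant arrows of $\mathcal Q_s$ coincide in $R$ after the ideal relations $\a^3 = \b^3$, $\a\g\b = \b\g\a = 0$, and vanishing of length-$5$ paths are imposed. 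The index bookkeeping uses the helper functions $g$, $f_0$, $f_1$, $z_0$, $z_1$, $\lambda$ and the congruence classes of $j$ modulo $s$ and $2s$; I would organize the verification by the same case split on the ranges $0 \le j < s$, $s \le j < 3s$, etc., that appears in the descriptions of the $d_\bullet$.

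The base of this induction is the equation $\varphi_0 d_t = d_0 \varphi_1$, i.e. $Y^{(25)}_t \, d_t = d_0\, B^{(1)}$, which I would check from the matrix of $Y^{(25)}_t$ in item $(25)$, the matrix $B^{(1)} = \Omega^1(Y^{(25)}_t)$ in part $(\mathrm{I})(2)$, and the description of $d_0$; and I would separately confirm that $Y^{(25)}_t$ is indeed a cocycle, i.e. $\delta^t(Y^{(25)}_t) = 0$, which amounts to $Y^{(25)}_t \, d_t = 0$ in $\Hom_\Lambda(Q_{t+1},R)$ — consistent with the fact that the right-hand side $d_0 B^{(1)}$, being a coboundary contribution that must itself be killed after projecting, vanishes under $\varepsilon$. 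Finally, to close the induction I use that the resolution has period $M = 11\frac{2s}{\myNod(2n,s)}$ (the Remark after the definition of $M$, proved in Section \ref{sect_res}) together with $\Omega^{11}({}_\Lambda R) \simeq {}_1 R_\sigma$: after $11$ steps the whole diagram repeats up to the twist $\sigma$, which is precisely why $\Omega^{11\ell_0 + r_0} = \sigma^{\ell_0}\text{-twist of }\Omega^{r_0}$ and the coefficient sign $(-1)^{\ell_0}$ appears (this sign tracks the $-1$ in $\sigma(\g_i) = -\g_{i+n}$ and in $\sigma(\a_i),\sigma(\b_i)$, accumulated $\ell_0$ times along the one-dimensional image of $Y^{(25)}_t$).

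The main obstacle will be the sheer volume of the index verifications: for each of the eleven residues $r_0$, and within each the four-or-five subranges of $j$, one must chase that the source and target idempotents of the single nonzero matrix entry match up correctly after reduction modulo $s$ and $2s$, and that the composite path equals the claimed one in $R$. There is no conceptual difficulty — it is the same kind of direct matrix-product check invoked in the proof of Theorem \ref{resol_thm} ("easy to verify by a straightforward calculation of matrixes products") — but getting every application of $g$, $f_0$, $f_1$, $z_0$, $z_1$ and every occurrence of $(j+1)_s$, $(j+m)n$, $\lambda(\cdot)$ exactly right is where all the care goes. I would therefore present the argument as: (a) reduce $(\mathrm{II})$ to $(\mathrm{I})$ via the periodicity/twist structure of the resolution; (b) reduce $(\mathrm{I})$ to the eleven square identities $B^{(r_0-1)} d_{t+r_0-1} = d_{r_0-1} B^{(r_0)}$ plus the cocycle condition; and (c) dispatch each square identity by the block-wise path computation, exhibiting one representative block in detail and remarking that the others are analogous.
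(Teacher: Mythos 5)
Your proposal follows essentially the same route as the paper: the paper proves all the translate propositions by checking commutativity of the squares, i.e.\ by computing the matrix products $d_{t_0-1}f_{t_0}$ and verifying they coincide with $f_{t_0-1}d_{t+t_0-1}$ blockwise (exhibited in detail for case 3, with the other cases, including case 25, declared analogous), and it handles $t_0\ge 11$ via the $\sigma^{\ell_0}$-twisted periodic structure of the resolution together with the sign $(-1)^{\ell_0}$, exactly as in your reduction of (II) to (I). The only minor imprecision is at the base: $\Omega^{0}(Y^{(25)}_t)$ is the lift of the cocycle through $\varepsilon$ (the condition is $\varepsilon\,\Omega^{0}(f)=f$ rather than $\varphi_0=f$), but this does not change your verification scheme.
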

\begin{pr}[Translates for the case 26]
$({\rm I})$ Let $r_0\in\N$, $r_0<11$. $r_0$-translates of the
elements $Y^{(26)}_t$ are described by the following way.

$(1)$ If $r_0=0$, then $\Omega^{0}(Y_t^{(26)})$ is described with
$(6s\times 6s)$-matrix with one nonzero element that is of the
following form{\rm:}
$$b_{s+z_1(0,\ell_0,n),s+z_1(0,\ell_0,n)}=-w_{(j+m)n+g(j+s)\ra (j+m+1)n+g(j+s)}\otimes e_{jn+g(j+s)}.$$

$(2)$ If $r_0=1$, then $\Omega^{1}(Y_t^{(26)})$ is described with
$(7s\times 7s)$-matrix with one nonzero element that is of the
following form{\rm:}
$$b_{z_1(0,\ell_0,n),z_1(0,\ell_0,n)}=-w_{(j+m)n+g(j)\ra (j+m+1)n+g(j)}\otimes e_{jn}.$$

$(3)$ If $r_0=2$, then $\Omega^{2}(Y_t^{(26)})$ is described with
$(6s\times 6s)$-matrix with one nonzero element that is of the
following form{\rm:}
$$b_{3s+z_0(0,\ell_0,n),3s+z_0(0,\ell_0,n)}=-w_{(j+m)n+g(j)\ra (j+m+1)n+g(j)}\otimes e_{jn+g(j+s)+1}.$$

$(4)$ If $r_0=3$, then $\Omega^{3}(Y_t^{(26)})$ is described with
$(8s\times 8s)$-matrix with one nonzero element that is of the
following form{\rm:}
$$b_{6s+z_1(0,\ell_0,n),6s+z_1(0,\ell_0,n)}=-w_{(j+m)n+g(j)\ra (j+m+1)n+g(j)}\otimes e_{jn+5}.$$

$(5)$ If $r_0=4$, then $\Omega^{4}(Y_t^{(26)})$ is described with
$(9s\times 9s)$-matrix with the following elements $b_{ij}${\rm:}

If $0\le j<2s+z_0(0,\ell_0,n)$, then $b_{ij}=0$.

If $j=2s+z_0(0,\ell_0,n)$, then $$b_{ij}=
\begin{cases}
-w_{(j+m)n+g(j+s)\ra (j+m+1)n+g(j+s)}\otimes e_{jn+g(j)},\quad i=j;\\
0,\quad\text{otherwise.}\end{cases}$$

If $2s+z_0(0,\ell_0,n)<j<4s+z_1(0,\ell_0,n)$, then $b_{ij}=0$.

If $j=4s+z_1(0,\ell_0,n)$, then $$b_{ij}=
\begin{cases}
-w_{(j+m)n+g(j)\ra (j+m+1)n+g(j)}\otimes e_{jn+g(j)+1},\quad i=j;\\
0,\quad\text{otherwise.}\end{cases}$$

If $4s+z_1(0,\ell_0,n)<j<9s$, then $b_{ij}=0$.

$(6)$ If $r_0=5$, then $\Omega^{5}(Y_t^{(26)})$ is described with
$(8s\times 8s)$-matrix with one nonzero element that is of the
following form{\rm:}
$$b_{z_1(0,\ell_0,n),z_1(0,\ell_0,n)}=-w_{(j+m)n+g(j)\ra (j+m+1)n+g(j)}\otimes e_{jn}.$$

$(7)$ If $r_0=6$, then $\Omega^{6}(Y_t^{(26)})$ is described with
$(9s\times 9s)$-matrix with one nonzero element that is of the
following form{\rm:}
$$b_{s+z_1(0,\ell_0,n),s+z_1(0,\ell_0,n)}=-w_{(j+m)n+g(j+s)\ra (j+m+1)n+g(j+s)}\otimes e_{jn+g(j+s)}.$$

$(8)$ If $r_0=7$, then $\Omega^{7}(Y_t^{(26)})$ is described with
$(8s\times 8s)$-matrix with one nonzero element that is of the
following form{\rm:}
$$b_{6s+z_1(0,\ell_0,n),6s+z_1(0,\ell_0,n)}=-w_{(j+m)n+g(j)\ra (j+m+1)n+g(j)}\otimes e_{jn+5}.$$

$(9)$ If $r_0=8$, then $\Omega^{8}(Y_t^{(26)})$ is described with
$(6s\times 6s)$-matrix with one nonzero element that is of the
following form{\rm:}
$$b_{3s+z_0(0,\ell_0,n),3s+z_0(0,\ell_0,n)}=-w_{(j+m)n+g(j)\ra (j+m+1)n+g(j)}\otimes e_{jn+g(j+s)+1}.$$

$(10)$ If $r_0=9$, then $\Omega^{9}(Y_t^{(26)})$ is described with
$(7s\times 7s)$-matrix with one nonzero element that is of the
following form{\rm:}
$$b_{3s+z_0(0,\ell_0,n),3s+z_0(0,\ell_0,n)}=-w_{(j+m)n+g(j)\ra (j+m+1)n+g(j)}\otimes e_{jn+g(j+s)+1}.$$

$(11)$ If $r_0=10$, then $\Omega^{10}(Y_t^{(26)})$ is described with
$(6s\times 6s)$-matrix with one nonzero element that is of the
following form{\rm:}
$$b_{s+z_0(0,\ell_0,n),s+z_0(0,\ell_0,n)}=-w_{(j+m)n+g(j)\ra (j+m+1)n+g(j)}\otimes e_{jn+g(j+s)}.$$

\medskip
$({\rm II})$ Represent an arbitrary $t_0\in\N$ in the form
$t_0=11\ell_0+r_0$, where $0\le r_0\le 10.$ Then
$\Omega^{t_0}(Y_t^{(26)})$ is a $\Omega^{r_0}(Y_t^{(26)})$, whose
left components twisted by $\sigma^{\ell_0}$, and coefficients
multiplied by $(-1)^{\ell_0}$.
\end{pr}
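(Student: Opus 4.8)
The plan is to establish part (I) by verifying directly that, for each fixed $r_0$ with $0\le r_0\le 10$, the matrices displayed in items $(1)$--$(11)$ assemble into a chain map lifting the cocycle $Y^{(26)}_t$, and then to deduce part (II) from the $11$-periodicity of the bimodule resolution \eqref{resolv} recorded in Theorem \ref{resol_thm}. Recall that an $r_0$-translate is characterised by $\varepsilon\circ\Omega^{0}(Y^{(26)}_t)=Y^{(26)}_t$ together with the commuting-square identities
$$d_{r_0-1}\circ\Omega^{r_0}(Y^{(26)}_t)=\Omega^{r_0-1}(Y^{(26)}_t)\circ d_{t+r_0-1}\qquad(r_0\ge 1).$$
First I would treat $r_0=0$: since $Y^{(26)}_t$ is concentrated on a single indecomposable summand of $Q_0$ with value a path of length $n$, the lift $\Omega^{0}(Y^{(26)}_t)$ through $\varepsilon$ is forced, up to homotopy, to be the diagonal matrix displayed in item $(1)$, and applying $\varepsilon$ returns $Y^{(26)}_t$. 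Then, proceeding inductively on $r_0$, I would substitute the componentwise formulas for $d_{r_0-1}$ and $d_{t+r_0-1}$ from paragraph \ref{sect_res} into the commuting-square identity and compare the two sides entry by entry. Because each $\Omega^{r_0}(Y^{(26)}_t)$ has only one or two nonzero blocks, every such comparison collapses to checking equality of a small number of products of paths in $K[\mathcal Q_s]$; the helper indices $z_0,z_1$ and the reductions modulo $s$ and $2s$ are exactly what make the source and target summands on the two sides coincide, and for the one value $r_0=6$ where the matrix has two nonzero blocks the check splits into a handful of subcases indexed by ranges of the column index. This componentwise bookkeeping is the bulk of the argument but it is entirely mechanical, being identical in spirit to the verification $d_md_{m+1}=0$ already used in the proof of Theorem \ref{resol_thm}.

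For part (II) I would invoke Theorem \ref{resol_thm}: for $\ell_0\in\N$ the term $Q_{11\ell_0+r}$ is obtained from $Q_r$ by replacing every summand $P_{i,j}$ with $P_{\sigma^{\ell_0}(i),j}$, and $d_{11\ell_0+r}$ is obtained from $d_r$ by applying $\sigma^{\ell_0}$ to all left tensor components. Lifting $Y^{(26)}_t$ through this twisted part of \eqref{resolv} therefore yields precisely $\Omega^{r_0}(Y^{(26)}_t)$ with its left tensor components twisted by $\sigma^{\ell_0}$; the remaining scalar $(-1)^{\ell_0}$ records the signs that $\sigma$ introduces on the arrows occurring in the nonzero blocks (for instance $\sigma(\g_i)=-\g_{i+n}$), accumulated once per application of $\sigma$ and hence $\ell_0$ times in total. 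Equivalently, one checks that $\Omega^{11}$ acts on this cocycle as the $\sigma$-twist composed with a sign, so that $\Omega^{11\ell_0+r_0}$ is the $\ell_0$-fold iterate of that operation followed by $\Omega^{r_0}$, which is the stated formula.

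The step I expect to be the main obstacle is the size of the case analysis in part (I): there are eleven levels $r_0$ to handle, and for each the differentials $d_{r_0-1}$ and $d_{t+r_0-1}$ are themselves defined by long case splits, so keeping the index shifts $z_0,z_1$, $(-)_s$, $(-)_{2s}$ consistent throughout is delicate and is where a slip is most likely. A convenient running check is that every nonzero entry of $\Omega^{r_0}(Y^{(26)}_t)$ must land in a summand actually present in $Q_{r_0}$ as described in paragraph \ref{sect_res}, and that --- since $Y^{(26)}_t$ represents a nonzero cohomology class and $Q_\bullet$ is a minimal resolution --- none of the translates may vanish; a cruder consistency check is compatibility of the supports with the dimension computations of $\Hom_\Lambda(Q_\bullet,R)$ carried out earlier.
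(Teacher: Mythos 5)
Your proposal is correct and follows essentially the same route as the paper: the paper proves these translate propositions (explicitly for Proposition \ref{shifts_3}, with the others ``proved similarly'') by checking componentwise that the displayed matrices make the squares $d_{t_0-1}\circ f_{t_0}=f_{t_0-1}\circ d_{t+t_0-1}$ commute, which is exactly your part (I), and it handles $t_0\ge 11$ by observing that the products are the $\sigma^{\ell_0}$-twisted (and sign-adjusted) versions of the $r_0$-level products, which is your part (II) via Theorem \ref{resol_thm}. No essential difference in method; only the bookkeeping details are left implicit in both.
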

\begin{pr}[Translates for the case 27]
$({\rm I})$ Let $r_0\in\N$, $r_0<11$. $r_0$-translates of the
elements $Y^{(27)}_t$ are described by the following way.

$(1)$ If $r_0=0$, then $\Omega^{0}(Y_t^{(27)})$ is described with
$(6s\times 6s)$-matrix with one nonzero element that is of the
following form{\rm:}
$$b_{3s+z_1(0,\ell_0,n),3s+z_1(0,\ell_0,n)}=-w_{(j+m)n+g(j+s)+1\ra (j+m+1)n+g(j+s)+1}\otimes e_{jn+g(j+s)+1}.$$

$(2)$ If $r_0=1$, then $\Omega^{1}(Y_t^{(27)})$ is described with
$(7s\times 7s)$-matrix with one nonzero element that is of the
following form{\rm:}
$$b_{2s+z_1(0,\ell_0,n),2s+z_1(0,\ell_0,n)}=-w_{(j+m)n+g(j)+1\ra (j+m+1)n+g(j)+1}\otimes e_{jn+g(j)}.$$

$(3)$ If $r_0=2$, then $\Omega^{2}(Y_t^{(27)})$ is described with
$(6s\times 6s)$-matrix with one nonzero element that is of the
following form{\rm:}
$$b_{s+z_1(0,\ell_0,n),s+z_1(0,\ell_0,n)}=-w_{(j+m)n+g(j+s)+1\ra (j+m+1)n+g(j+s)+1}\otimes e_{jn+g(j+s)}.$$

$(4)$ If $r_0=3$, then $\Omega^{3}(Y_t^{(27)})$ is described with
$(8s\times 8s)$-matrix with one nonzero element that is of the
following form{\rm:}
$$b_{z_1(0,\ell_0,n),z_1(0,\ell_0,n)}=-w_{(j+m)n+g(j)+1\ra (j+m+1)n+g(j)+1}\otimes e_{jn}.$$

$(5)$ If $r_0=4$, then $\Omega^{4}(Y_t^{(27)})$ is described with
$(9s\times 9s)$-matrix with one nonzero element that is of the
following form{\rm:}
$$b_{6s+z_0(0,\ell_0,n),6s+z_0(0,\ell_0,n)}=-w_{(j+m)n+g(j+s)+1\ra (j+m+1)n+g(j+s)+1}\otimes e_{jn+g(j)+1}.$$

$(6)$ If $r_0=5$, then $\Omega^{5}(Y_t^{(27)})$ is described with
$(8s\times 8s)$-matrix with one nonzero element that is of the
following form{\rm:}
$$b_{6s+z_1(0,\ell_0,n),6s+z_1(0,\ell_0,n)}=-w_{(j+m)n+g(j)+1\ra (j+m+1)n+g(j)+1}\otimes e_{jn+5}.$$

$(7)$ If $r_0=6$, then $\Omega^{6}(Y_t^{(27)})$ is described with
$(9s\times 9s)$-matrix with the following elements $b_{ij}${\rm:}

If $0\le j<3s+z_0(0,\ell_0,n)$, then $b_{ij}=0$.

If $j=3s+z_0(0,\ell_0,n)$, then $$b_{ij}=
\begin{cases}
-w_{(j+m)n+g(j)+1\ra (j+m+1)n+g(j)+1}\otimes e_{jn+g(j+s)},\quad i=j;\\
0,\quad\text{otherwise.}\end{cases}$$

If $3s+z_0(0,\ell_0,n)<j<5s+z_1(0,\ell_0,n)$, then $b_{ij}=0$.

If $j=5s+z_1(0,\ell_0,n)$, then $$b_{ij}=
\begin{cases}
-w_{(j+m)n+g(j+s)+1\ra (j+m+1)n+g(j+s)+1}\otimes e_{jn+g(j+s)+1},\quad i=j;\\
0,\quad\text{otherwise.}\end{cases}$$

If $5s+z_1(0,\ell_0,n)<j<9s$, then $b_{ij}=0$.

$(8)$ If $r_0=7$, then $\Omega^{7}(Y_t^{(27)})$ is described with
$(8s\times 8s)$-matrix with one nonzero element that is of the
following form{\rm:}
$$b_{z_1(0,\ell_0,n),z_1(0,\ell_0,n)}=-w_{(j+m)n+g(j)+1\ra (j+m+1)n+g(j)+1}\otimes e_{jn}.$$

$(9)$ If $r_0=8$, then $\Omega^{8}(Y_t^{(27)})$ is described with
$(6s\times 6s)$-matrix with one nonzero element that is of the
following form{\rm:}
$$b_{s+z_1(0,\ell_0,n),s+z_1(0,\ell_0,n)}=-w_{(j+m)n+g(j+s)+1\ra (j+m+1)n+g(j+s)+1}\otimes e_{jn+g(j+s)}.$$

$(10)$ If $r_0=9$, then $\Omega^{9}(Y_t^{(27)})$ is described with
$(7s\times 7s)$-matrix with one nonzero element that is of the
following form{\rm:}
$$b_{5s+z_1(0,\ell_0,n),5s+z_1(0,\ell_0,n)}=-w_{(j+m)n+g(j+s)+1\ra (j+m+1)n+g(j+s)+1}\otimes e_{jn+5}.$$

$(11)$ If $r_0=10$, then $\Omega^{10}(Y_t^{(27)})$ is described with
$(6s\times 6s)$-matrix with one nonzero element that is of the
following form{\rm:}
$$b_{3s+z_0(0,\ell_0,n),3s+z_0(0,\ell_0,n)}=-w_{(j+m)n+g(j)+1\ra (j+m+1)n+g(j)+1}\otimes e_{jn+g(j+s)+1}.$$

\medskip
$({\rm II})$ Represent an arbitrary $t_0\in\N$ in the form
$t_0=11\ell_0+r_0$, where $0\le r_0\le 10.$ Then
$\Omega^{t_0}(Y_t^{(27)})$ is a $\Omega^{r_0}(Y_t^{(27)})$, whose
left components twisted by $\sigma^{\ell_0}$, and coefficients
multiplied by $(-1)^{\ell_0}$.
\end{pr}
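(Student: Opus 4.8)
The plan is to use the standard inductive construction of $\Omega$-translates. By definition $\Omega^{0}(Y^{(27)}_t)$ is the cocycle itself, presented as the $(6s\times 6s)$-matrix of case~(1); one then builds $\Omega^{r_0}(Y^{(27)}_t)$ for $r_0=1,\dots,10$ one step at a time. If $\varphi_{r_0-1}=\Omega^{r_0-1}(Y^{(27)}_t)\colon Q_{t+r_0-1}\to Q_{r_0-1}$ is already known as an explicit matrix, then $\varphi_{r_0}=\Omega^{r_0}(Y^{(27)}_t)\colon Q_{t+r_0}\to Q_{r_0}$ is determined --- uniquely up to homotopy, and uniquely as a \emph{minimal} lift, since the resolution \eqref{resolv} is minimal --- by commutativity of the relevant square, i.e.\ by $d_{r_0-1}\,\varphi_{r_0}=\varphi_{r_0-1}\,d_{t+r_0-1}$, where the $d_i$ are the explicit matrices of Theorem~\ref{resol_thm}. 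Since each $Q_i$ is a direct sum of projective bimodules $P_{a,b}=\Lambda(e_a\otimes e_b)$ and each matrix entry is a homomorphism given by right multiplication by an element $w\otimes w'$, this equation reduces to a finite system of identities between compositions of paths of $\mathcal Q_s$; I would verify, entry by entry, that the matrix written down in each of cases~(1)--(11) satisfies it, keeping track of the signs $\kappa,\kappa_0$ and of the index bookkeeping $(\cdot)_s$, $(\cdot)_{2s}$, $g(\cdot)$, $\lambda(\cdot)$, $z_0,z_1$ fixed in \S\ref{sect_res}.

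For part~(II) I would argue by uniqueness of minimal lifts together with the recursive structure of the resolution. By Theorem~\ref{resol_thm}, $Q_{11\ell_0+r_0}$ is obtained from $Q_{r_0}$ by replacing each summand $P_{i,j}$ with $P_{\sigma^{\ell_0}(i),j}$, and $d_{11\ell_0+r_0}$ is obtained from $d_{r_0}$ by applying $\sigma^{\ell_0}$ to all left tensor components. Hence, if one takes the matrices $\Omega^{r_0}(Y^{(27)}_t)$ of part~(I), twists their left tensor components by $\sigma^{\ell_0}$, and multiplies all entries by $(-1)^{\ell_0}$, the resulting family of maps $Q_{t+11\ell_0+r_0}\to Q_{11\ell_0+r_0}$ again satisfies all the commutativity squares for the $\sigma^{\ell_0}$-twisted differentials; the factor $(-1)^{\ell_0}$ is precisely what compensates the sign that $\sigma$ introduces on the arrows $\gamma_i$ (and on $\alpha_i,\beta_i$) appearing in those differentials. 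Being a minimal lift of $Y^{(27)}_t$, this family must coincide with $\Omega^{11\ell_0+r_0}(Y^{(27)}_t)$, which is the assertion of~(II); formally this is an induction on $\ell_0$, the case $\ell_0=0$ being part~(I).

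The genuinely laborious part is the eleven verifications in~(I): although every $\Omega^{r_0}(Y^{(27)}_t)$ is supported on only one or two columns, the matching differential in $d_{r_0-1}\varphi_{r_0}=\varphi_{r_0-1}d_{t+r_0-1}$ typically has several nonzero entries in the relevant rows, so each check expands a sum of terms of the form (path)$\,\otimes\,$(path) that must be reduced modulo the defining relations of $R$ --- in particular one must confirm that the surviving paths are nonzero in $R$ (not killed by the length-$5$ relations nor by the relations $\alpha^3-\beta^3$, $\alpha\gamma\beta$, $\beta\gamma\alpha$) and that the claimed zero entries really do cancel. I expect the main obstacle to be the index arithmetic: one must choose the residue representatives modulo $s$ and modulo $2s$ so that the eleven formulas are mutually consistent and, above all, so that they glue across the period --- i.e.\ that the lift of $\Omega^{10}(Y^{(27)}_t)$ through $d_{10}$ is exactly $-\sigma$ applied (on left tensor components) to $\Omega^{0}(Y^{(27)}_t)$. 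Once that seam is checked, the proposition follows.
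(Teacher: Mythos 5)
Your proposal follows essentially the same route as the paper: the paper, too, proves these statements (explicitly for the case of $Y^{(3)}$, declaring the others analogous) by computing the product matrices $d_{t_0-1}f_{t_0}$ entrywise and checking they coincide with $f_{t_0-1}d_{t+t_0-1}$, and it handles $t_0\ge 11$ exactly by twisting left tensor components by $\sigma^{\ell_0}$ and inserting the sign $(-1)^{\ell_0}$. One small remark: since translates are only defined up to homotopy, the verification of the commuting squares already suffices, so your appeal to uniqueness of a ``minimal lift'' is unnecessary (and, strictly speaking, lifts are unique only up to homotopy even over a minimal resolution).
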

\begin{pr}[Translates for the case 28]
$({\rm I})$ Let $r_0\in\N$, $r_0<11$. $r_0$-translates of the
elements $Y^{(28)}_t$ are described by the following way.

$(1)$ If $r_0=0$, then $\Omega^{0}(Y_t^{(28)})$ is described with
$(6s\times 6s)$-matrix with one nonzero element that is of the
following form{\rm:}
$$b_{5s,5s}=w_{(j+m)n+5\ra (j+m+1)n+5}\otimes e_{jn+5}.$$

$(2)$ If $r_0=1$, then $\Omega^{1}(Y_t^{(28)})$ is described with
$(7s\times 7s)$-matrix with the following elements $b_{ij}${\rm:}

If $0\le j<4s$, then $b_{ij}=0$.

If $j=4s$, then $$b_{ij}=
\begin{cases}
w_{(j+m)n+5\ra (j+m+1)n+5}\otimes e_{jn+g(j)+1},\quad i=j;\\
0,\quad\text{otherwise.}\end{cases}$$

If $4s<j<5s$, then $b_{ij}=0$.

If $j=5s$, then $$b_{ij}=
\begin{cases}
w_{(j+m)n+5\ra (j+m+1)n+5}\otimes e_{jn+g(j)+1},\quad i=j;\\
0,\quad\text{otherwise.}\end{cases}$$

If $5s<j<7s$, then $b_{ij}=0$.

$(3)$ If $r_0=2$, then $\Omega^{2}(Y_t^{(28)})$ is described with
$(6s\times 6s)$-matrix with one nonzero element that is of the
following form{\rm:}
$$b_{0,0}=w_{(j+m)n+5\ra (j+m+1)n+5}\otimes e_{jn}.$$

$(4)$ If $r_0=3$, then $\Omega^{3}(Y_t^{(28)})$ is described with
$(8s\times 8s)$-matrix with the following elements $b_{ij}${\rm:}

If $0\le j<2s$, then $b_{ij}=0$.

If $j=2s$, then $$b_{ij}=
\begin{cases}
w_{(j+m)n+5\ra (j+m+1)n+5}\otimes e_{jn+g(j)},\quad i=j;\\
0,\quad\text{otherwise.}\end{cases}$$

If $2s<j<3s$, then $b_{ij}=0$.

If $j=3s$, then $$b_{ij}=
\begin{cases}
w_{(j+m)n+5\ra (j+m+1)n+5}\otimes e_{jn+g(j)},\quad i=j;\\
0,\quad\text{otherwise.}\end{cases}$$

If $3s<j<8s$, then $b_{ij}=0$.

$(5)$ If $r_0=4$, then $\Omega^{4}(Y_t^{(28)})$ is described with
$(9s\times 9s)$-matrix with the following elements $b_{ij}${\rm:}

If $j=0$, then $$b_{ij}=
\begin{cases}
w_{(j+m)n+5\ra (j+m+1)n+5}\otimes e_{jn},\quad i=j;\\
0,\quad\text{otherwise.}\end{cases}$$

If $0<j<8s$, then $b_{ij}=0$.

If $j=8s$, then $$b_{ij}=
\begin{cases}
w_{(j+m)n+5\ra (j+m+1)n+5}\otimes e_{jn+5},\quad i=j;\\
0,\quad\text{otherwise.}\end{cases}$$

If $8s<j<9s$, then $b_{ij}=0$.

$(6)$ If $r_0=5$, then $\Omega^{5}(Y_t^{(28)})$ is described with
$(8s\times 8s)$-matrix with the following elements $b_{ij}${\rm:}

If $0\le j<4s$, then $b_{ij}=0$.

If $j=4s$, then $$b_{ij}=
\begin{cases}
w_{(j+m)n+5\ra (j+m+1)n+5}\otimes e_{jn+g(j)+1},\quad i=j;\\
0,\quad\text{otherwise.}\end{cases}$$

If $4s<j<5s$, then $b_{ij}=0$.

If $j=5s$, then $$b_{ij}=
\begin{cases}
w_{(j+m)n+5\ra (j+m+1)n+5}\otimes e_{jn+g(j)+1},\quad i=j;\\
0,\quad\text{otherwise.}\end{cases}$$

If $5s<j<8s$, then $b_{ij}=0$.

$(7)$ If $r_0=6$, then $\Omega^{6}(Y_t^{(28)})$ is described with
$(9s\times 9s)$-matrix with one nonzero element that is of the
following form{\rm:}
$$b_{7s,7s}=w_{(j+m)n+5\ra (j+m+1)n+5}\otimes e_{jn+5}.$$

$(8)$ If $r_0=7$, then $\Omega^{7}(Y_t^{(28)})$ is described with
$(8s\times 8s)$-matrix with the following elements $b_{ij}${\rm:}

If $0\le j<2s$, then $b_{ij}=0$.

If $j=2s$, then $$b_{ij}=
\begin{cases}
w_{(j+m)n+5\ra (j+m+1)n+5}\otimes e_{jn+g(j)},\quad i=j;\\
0,\quad\text{otherwise.}\end{cases}$$

If $2s<j<3s$, then $b_{ij}=0$.

If $j=3s$, then $$b_{ij}=
\begin{cases}
w_{(j+m)n+5\ra (j+m+1)n+5}\otimes e_{jn+g(j)},\quad i=j;\\
0,\quad\text{otherwise.}\end{cases}$$

If $3s<j<8s$, then $b_{ij}=0$.

$(9)$ If $r_0=8$, then $\Omega^{8}(Y_t^{(28)})$ is described with
$(6s\times 6s)$-matrix with one nonzero element that is of the
following form{\rm:}
$$b_{0,0}=w_{(j+m)n+5\ra (j+m+1)n+5}\otimes e_{jn}.$$

$(10)$ If $r_0=9$, then $\Omega^{9}(Y_t^{(28)})$ is described with
$(7s\times 7s)$-matrix with one nonzero element that is of the
following form{\rm:}
$$b_{0,0}=w_{(j+m)n+5\ra (j+m+1)n+5}\otimes e_{jn}.$$

$(11)$ If $r_0=10$, then $\Omega^{10}(Y_t^{(28)})$ is described with
$(6s\times 6s)$-matrix with one nonzero element that is of the
following form{\rm:}
$$b_{5s,5s}=w_{(j+m)n+5\ra (j+m+1)n+5}\otimes e_{jn+5}.$$

\medskip
$({\rm II})$ Represent an arbitrary $t_0\in\N$ in the form
$t_0=11\ell_0+r_0$, where $0\le r_0\le 10.$ Then
$\Omega^{t_0}(Y_t^{(28)})$ is a $\Omega^{r_0}(Y_t^{(28)})$, whose
left components twisted by $\sigma^{\ell_0}$, and coefficients
multiplied by $(-1)^{\ell_0}$.
\end{pr}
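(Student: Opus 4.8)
The class represented by $Y^{(28)}_0$ lies in degree $0$, i.e.\ in $\HH^0(R)=Z(R)$ (when $s=1$ this space is $7$-dimensional, and $Y^{(28)}_0$ is one of its basis elements). The content of the proposition is that the matrices $(b_{ij})$ displayed in items $(1)$--$(11)$, together with the $\sigma$-twisting rule in $({\rm II})$, give a lift of this cocycle to a chain endomorphism $\{\varphi_{r_0}\colon Q_{r_0}\to Q_{r_0}\}_{r_0\ge 0}$ of the minimal bimodule resolution \eqref{resolv}. The plan is to build this lift one homological degree at a time and to check that the displayed formulas do the job. Set $\varphi_0=Y^{(28)}_0$; then $\varepsilon\varphi_0=f$ for the cocycle $f$ holds by the very definition of $Y^{(28)}_0$. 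For $1\le r_0\le 10$ take $\varphi_{r_0}$ to be the matrix in item $(r_0{+}1)$; since $\deg Y^{(28)}_0=0$, the only thing to verify is the square
$$ d_{r_0-1}\,\varphi_{r_0}=\varphi_{r_0-1}\,d_{r_0-1},\qquad 1\le r_0\le 10. $$

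The verification is a direct matrix computation, made very short by the shape of the data: each $\varphi_{r_0}$ ($0\le r_0\le 10$) is \emph{diagonal} with at most two nonzero entries, every such entry being a single path (tensored with an idempotent). Hence for fixed $r_0$ the identity above splits into two kinds of checks: (i) for each active diagonal position $k$ of $\varphi_{r_0}$ and each $i$ with $(d_{r_0-1})_{ik}\ne 0$, the equality $(d_{r_0-1})_{ik}\cdot(\varphi_{r_0})_{kk}=(\varphi_{r_0-1})_{ii}\cdot(d_{r_0-1})_{ik}$; and (ii) for each active position $i$ of $\varphi_{r_0-1}$ and each non-active column $k$, the vanishing $(\varphi_{r_0-1})_{ii}\cdot(d_{r_0-1})_{ik}=0$. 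Both are computations in the path algebra $K[\mathcal Q_s]/I$: one concatenates the relevant arrows — composing the $\otimes$-notation maps with the right-multiplication convention of Remark~\ref{note_brev} — and reduces modulo $I$, i.e.\ kills any path of length $5$ and applies $\a^3=\b^3$, $\a\g\b=\b\g\a=0$. The surviving entries are exactly the paths recorded on each side, and the signs (carried by $f_1$, $\kappa$, $\kappa_0$) match. Prior to this one must also confirm that each displayed $\varphi_{r_0}$ is a genuine $\Lambda$-homomorphism $Q_{r_0}\to Q_{r_0}$: from the summand lists of $Q_{r_0}$ in Section~\ref{sect_res} (governed by $f,h,\lambda,g$) one reads off the source and target vertices of the indicated projective summands and checks that the path $w_{a\ra b}$ in entry $b_{ij}$ starts at the source vertex, ends at the target vertex, and is nonzero in $R$.

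Part $({\rm II})$ then needs no new work: by Theorem~\ref{resol_thm} the resolution is $11$-periodic with the period realized by $\sigma$ acting on left tensor factors, so $\Omega^{11\ell_0+r_0}(Y^{(28)}_0)$ is obtained from $\Omega^{r_0}(Y^{(28)}_0)$ by applying $\sigma^{\ell_0}$ to all left tensor components, the sign $(-1)^{\ell_0}$ coming from $\sigma(\g_i)=-\g_{i+n}$ (each relevant path carries exactly one $\g$, the unique inter-block arrow); the chain-map squares for an arbitrary translate degree $t_0$ follow from those for $r_0\le 10$ by functoriality of the $\sigma$-twist. The real difficulty here is clerical rather than conceptual: one must keep the index arithmetic of the ten differentials $d_0,\dots,d_{10}$ (the $(j{+}m)n+\cdots$, $(-3\ell_0)_s$, $z_0,z_1$, etc.) perfectly aligned with the positions of the two nonzero entries of each $\varphi_{r_0}$, and track every sign. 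As running sanity checks I would confirm that the ranks forced by these $\varphi_{r_0}$ are consistent with the dimension counts of Proposition~\ref{dim_hom} and that $d_{r_0-1}d_{r_0}=0$ is respected; either catches an index or sign slip immediately. A complementary route, if the direct check becomes unwieldy for some $r_0$, is to exhibit the central element $z\in R$ corresponding to this $\HH^0$-class and observe that ``multiplication by $z$'' on $Q_\bullet$ is forced degree by degree up to a homotopy that does not alter the cohomology class, thereby recovering the displayed matrices.
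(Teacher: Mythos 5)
Your proposal is correct and follows essentially the same route as the paper: the paper proves Proposition \ref{shifts_3} precisely by writing out the products $d_{t_0-1}f_{t_0}$ and observing they coincide with $f_{t_0-1}d_{t+t_0-1}$, and treats the remaining translate propositions (including this one, where $t=0$ makes the squares $d_{r_0-1}\varphi_{r_0}=\varphi_{r_0-1}d_{r_0-1}$) as analogous routine checks in $K[\mathcal Q_s]/I$. One caveat on part $({\rm II})$: the factor $(-1)^{\ell_0}$ in the statement is \emph{in addition to} the $\sigma^{\ell_0}$-twist of the left components (which already flips the sign of the unique $\g$ in each path), so it is not "explained by" $\sigma(\g_i)=-\g_{i+n}$; rather, the normalization is pinned down by the one extra square crossing the period boundary (the one involving $d_{10}$), which does require a short explicit check, as in item $(12)$ of the paper's proof of Proposition \ref{shifts_3}.
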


\begin{proof}
We prove the proposition \ref{shifts_3}, the other propositions are
proved similarly. Show that the following squares are commutative:
$$\begin{CD} Q_{t+t_0} @>{d_{t+t_0-1}}>>Q_{t+t_0-1} \\ @V{f_{t_0}}VV @VV{f_{t_0-1}}V\\ Q_{t_0} @>{d_{t_0-1}}>> Q_{t_0-1}.\end{CD}$$
We shall describe the matrixes of products $d_{t_0-1}f_{t_0}$ and
see that they coincide with $f_{t_0-1}d_{t+t_0-1}$.

$(1)$ If $t_0<11$ and $r_0=0$, then the product $d_{r_0-1}f_{r_0}$
is an $(7s\times 6s)$-matrix $C=(c_{ij})$ with the following
elements $c_{ij}$:

If $0\le j<6s+(-3\ell_0)_s$, then $c_{ij}=0$.

If $j=6s+(-3\ell_0)_s$, then $$c_{ij}=
\begin{cases}
-\kappa w_{(j+m)n\ra (j+m+1)n}\otimes w_{jn+5\ra (j+1)n},\quad i=(j+1)_s;\\
-\kappa w_{(j+m)n+g(j)\ra (j+m+1)n}\otimes w_{jn+5\ra (j+1)n+g(j)},\quad i=(j+1)_s+s;\\
-\kappa w_{(j+m)n+g(j+s)\ra (j+m+1)n}\otimes w_{jn+5\ra (j+1)n+g(j+s)},\quad i=(j+1)_s+2s;\\
-\kappa w_{(j+m)n+g(j)+1\ra (j+m+1)n}\otimes w_{jn+5\ra (j+1)n+g(j)+1},\quad i=(j+1)_s+3s;\\
-\kappa w_{(j+m)n+g(j+s)+1\ra (j+m+1)n}\otimes w_{jn+5\ra (j+1)n+g(j+s)+1},\quad i=(j+1)_s+4s;\\
-\kappa w_{(j+m)n+5\ra (j+m+1)n}\otimes w_{jn+5\ra (j+1)n+5},\quad i=(j+1)_s+5s;\\
0,\quad\text{otherwise.}\end{cases}$$

If $6s+(-3\ell_0)_s<j<7s$, then $c_{ij}=0$.

$(2)$ If $t_0<11$ and $r_0=1$, then the product $d_{r_0-1}f_{r_0}$
is an $(6s\times 6s)$-matrix $C=(c_{ij})$ with the following
elements $c_{ij}$:

If $0\le j<s+(-3\ell_0)_s$, then $c_{ij}=0$.

If $j=s+(-3\ell_0)_s$, then $$c_{ij}=
\begin{cases}
w_{(j+m)n+5\ra (j+m+1)n+g(j+s)+1}\otimes w_{jn+g(j+s)\ra jn+5},\quad i=j+4s;\\
0,\quad\text{otherwise.}\end{cases}$$

If $s+(-3\ell_0)_s<j<2s+(-3\ell_0)_s$, then $c_{ij}=0$.

If $j=2s+(-3\ell_0)_s$, then $$c_{ij}=
\begin{cases}
w_{(j+m)n+5\ra (j+m+1)n+g(j+s)+1}\otimes w_{jn+g(j+s)\ra jn+5},\quad i=j+3s;\\
0,\quad\text{otherwise.}\end{cases}$$

If $2s+(-3\ell_0)_s<j<3s+(-3\ell_0)_s$, then $c_{ij}=0$.

If $j=3s+(-3\ell_0)_s$, then $$c_{ij}=
\begin{cases}
w_{(j+m)n+5\ra (j+m+1)n+g(j)}\otimes w_{jn+g(j+s)+1\ra jn+5},\quad i=j+2s;\\
0,\quad\text{otherwise.}\end{cases}$$

If $3s+(-3\ell_0)_s<j<4s+(-3\ell_0)_s$, then $c_{ij}=0$.

If $j=4s+(-3\ell_0)_s$, then $$c_{ij}=
\begin{cases}
w_{(j+m)n+5\ra (j+m+1)n+g(j)}\otimes w_{jn+g(j+s)+1\ra jn+5},\quad i=j+s;\\
0,\quad\text{otherwise.}\end{cases}$$

If $4s+(-3\ell_0)_s<j<5s+(-1-3\ell_0)_s$, then $c_{ij}=0$.

If $j=5s+(-1-3\ell_0)_s$, then $$c_{ij}=
\begin{cases}
\kappa w_{(j+m+1)n+5\ra (j+m+2)n}\otimes w_{jn+5\ra (j+1)n+5},\quad i=(j+1)_s+5s;\\
0,\quad\text{otherwise.}\end{cases}$$

If $5s+(-1-3\ell_0)_s<j<6s$, then $c_{ij}=0$.

$(3)$ If $t_0<11$ and $r_0=2$, then the product $d_{r_0-1}f_{r_0}$
is an $(8s\times 7s)$-matrix $C=(c_{ij})$ with the following
elements $c_{ij}$:

If $0\le j<(-3\ell_0)_s$, then $c_{ij}=0$.

If $j=(-3\ell_0)_s$, then $$c_{ij}=
\begin{cases}
w_{(j+m-1)n+g(j)+1\ra (j+m)n+g(j)+1}\otimes w_{jn\ra jn+g(j)},\quad i=j+2s;\\
w_{(j+m-1)n+5\ra (j+m)n+g(j)+1}\otimes w_{jn\ra jn+g(j)+1},\quad i=j+4s;\\
-w_{(j+m-1)n+5\ra (j+m)n+g(j)+1}\otimes w_{jn\ra jn+g(j+s)+1},\quad i=j+5s;\\
0,\quad\text{otherwise.}\end{cases}$$

If $(-3\ell_0)_s<j<s+(-3\ell_0)_s$, then $c_{ij}=0$.

If $j=s+(-3\ell_0)_s$, then $$c_{ij}=
\begin{cases}
-w_{(j+m-1)n+g(j)+1\ra (j+m)n+g(j)+1}\otimes w_{jn\ra jn+g(j)},\quad i=j+2s;\\
w_{(j+m-1)n+5\ra (j+m)n+g(j)+1}\otimes w_{jn\ra jn+g(j+s)+1},\quad i=j+3s;\\
-w_{(j+m-1)n+5\ra (j+m)n+g(j)+1}\otimes w_{jn\ra jn+g(j)+1},\quad i=j+4s;\\
0,\quad\text{otherwise.}\end{cases}$$

If $s+(-3\ell_0)_s<j<6s+(-1-3\ell_0)_s$, then $c_{ij}=0$.

If $j=6s+(-1-3\ell_0)_s$, then $$c_{ij}=
\begin{cases}
w_{(j+m)n+5\ra (j+m+1)n+g(j)}\otimes w_{jn+5\ra (j+1)n+g(j+s)+1},\quad i=(j+1)_s+5s;\\
w_{(j+m+1)n\ra (j+m+1)n+g(j)}\otimes w_{jn+5\ra (j+1)n+5},\quad i=(j+1)_s+6s;\\
0,\quad\text{otherwise.}\end{cases}$$

If $6s+(-1-3\ell_0)_s<j<7s+(-1-3\ell_0)_s$, then $c_{ij}=0$.

If $j=7s+(-1-3\ell_0)_s$, then $$c_{ij}=
\begin{cases}
w_{(j+m)n+5\ra (j+m+1)n+g(j)}\otimes w_{jn+5\ra (j+1)n+g(j+s)+1},\quad i=(j+1)_s+4s;\\
w_{(j+m+1)n\ra (j+m+1)n+g(j)}\otimes w_{jn+5\ra (j+1)n+5},\quad i=(j+1)_s+6s;\\
0,\quad\text{otherwise.}\end{cases}$$

If $7s+(-1-3\ell_0)_s<j<8s$, then $c_{ij}=0$.

$(4)$ If $t_0<11$ and $r_0=3$, then the product $d_{r_0-1}f_{r_0}$
is an $(9s\times 6s)$-matrix $C=(c_{ij})$ with the following
elements $c_{ij}$:

If $0\le j<6s+(-1-3\ell_0)_s$, then $c_{ij}=0$.

If $j=6s+(-1-3\ell_0)_s$, then $$c_{ij}=
\begin{cases}
-w_{(j+m)n+5\ra (j+m+1)n+g(j+s)+1}\otimes w_{jn+g(j)+1\ra (j+1)n},\quad i=(j+1)_s;\\
w_{(j+m+1)n+g(j+s)\ra (j+m+1)n+g(j+s)+1}\otimes w_{jn+g(j)+1\ra (j+1)n+g(j)+1},\quad i=(j+1)_s+3s;\\
0,\quad\text{otherwise.}\end{cases}$$

If $6s+(-1-3\ell_0)_s<j<7s+(-1-3\ell_0)_s$, then $c_{ij}=0$.

If $j=7s+(-1-3\ell_0)_s$, then $$c_{ij}=
\begin{cases}
w_{(j+m)n+5\ra (j+m+1)n+g(j+s)+1}\otimes w_{jn+g(j)+1\ra (j+1)n},\quad i=(j+1)_s;\\
w_{(j+m+1)n+g(j+s)\ra (j+m+1)n+g(j+s)+1}\otimes w_{jn+g(j)+1\ra (j+1)n+g(j)+1},\quad i=(j+1)_s+4s;\\
0,\quad\text{otherwise.}\end{cases}$$

If $7s+(-1-3\ell_0)_s<j<8s+(-1-3\ell_0)_s$, then $c_{ij}=0$.

If $j=8s+(-1-3\ell_0)_s$, then $$c_{ij}=
\begin{cases}
\kappa w_{(j+m)n+5\ra (j+m+1)n+5}\otimes w_{jn+5\ra (j+1)n},\quad i=(j+1)_s;\\
-\kappa w_{(j+m+1)n+g(j+s)\ra (j+m+1)n+5}\otimes w_{jn+5\ra (j+1)n+g(j)+1},\quad i=(j+1)_s+3s;\\
\kappa w_{(j+m+1)n+g(j)\ra (j+m+1)n+5}\otimes w_{jn+5\ra (j+1)n+g(j+s)+1},\quad i=(j+1)_s+4s;\\
0,\quad\text{otherwise.}\end{cases}$$

If $8s+(-1-3\ell_0)_s<j<9s$, then $c_{ij}=0$.

$(5)$ If $t_0<11$ and $r_0=4$, then the product $d_{r_0-1}f_{r_0}$
is an $(8s\times 8s)$-matrix $C=(c_{ij})$ with the following
elements $c_{ij}$:

If $0\le j<2s+(-1-3\ell_0)_s$, then $c_{ij}=0$.

If $j=2s+(-1-3\ell_0)_s$, then $$c_{ij}=
\begin{cases}
\kappa w_{(j+m)n+g(j+s)+1\ra (j+m+1)n}\otimes w_{jn+g(j)\ra (j+1)n},\quad i=(j+1)_s+s;\\
0,\quad\text{otherwise.}\end{cases}$$

If $2s+(-1-3\ell_0)_s<j<3s+(-1-3\ell_0)_s$, then $c_{ij}=0$.

If $j=3s+(-1-3\ell_0)_s$, then $$c_{ij}=
\begin{cases}
-\kappa w_{(j+m)n+g(j+s)+1\ra (j+m+1)n}\otimes w_{jn+g(j)\ra (j+1)n},\quad i=(j+1)_s;\\
0,\quad\text{otherwise.}\end{cases}$$

If $3s+(-1-3\ell_0)_s<j<4s+(-1-3\ell_0)_s$, then $c_{ij}=0$.

If $j=4s+(-1-3\ell_0)_s$, then $$c_{ij}=
\begin{cases}
\kappa w_{(j+m)n+g(j+s)+1\ra (j+m)n+5}\otimes w_{jn+g(j)+1\ra (j+1)n},\quad i=(j+1)_s+s;\\
\kappa w_{(j+m)n+g(j)\ra (j+m)n+5}\otimes w_{jn+g(j)+1\ra jn+5},\quad i=j+2s;\\
-\kappa w_{(j+m)n+g(j+s)\ra (j+m)n+5}\otimes w_{jn+g(j)+1\ra jn+5},\quad i=j+3s;\\
0,\quad\text{otherwise.}\end{cases}$$

If $4s+(-1-3\ell_0)_s<j<5s+(-1-3\ell_0)_s$, then $c_{ij}=0$.

If $j=5s+(-1-3\ell_0)_s$, then $$c_{ij}=
\begin{cases}
-\kappa w_{(j+m)n+g(j+s)+1\ra (j+m)n+5}\otimes w_{jn+g(j)+1\ra (j+1)n},\quad i=(j+1)_s;\\
-\kappa w_{(j+m)n+g(j+s)\ra (j+m)n+5}\otimes w_{jn+g(j)+1\ra jn+5},\quad i=j+s;\\
\kappa w_{(j+m)n+g(j)\ra (j+m)n+5}\otimes w_{jn+g(j)+1\ra jn+5},\quad i=j+2s;\\
0,\quad\text{otherwise.}\end{cases}$$

If $5s+(-1-3\ell_0)_s<j<8s$, then $c_{ij}=0$.

$(6)$ If $t_0<11$ and $r_0=5$, then the product $d_{r_0-1}f_{r_0}$
is an $(9s\times 9s)$-matrix $C=(c_{ij})$ with the following
elements $c_{ij}$:

If $0\le j<s+(-1-3\ell_0)_s$, then $c_{ij}=0$.

If $j=s+(-1-3\ell_0)_s$, then $$c_{ij}=
\begin{cases}
-w_{(j+m)n+5\ra (j+m+1)n+g(j+s)}\otimes w_{jn+g(j+s)\ra (j+1)n},\quad i=(j+1)_s;\\
w_{(j+m+1)n\ra (j+m+1)n+g(j+s)}\otimes w_{jn+g(j+s)\ra (j+1)n},\quad i=(j+1)_s+s;\\
0,\quad\text{otherwise.}\end{cases}$$

If $s+(-1-3\ell_0)_s<j<2s+(-1-3\ell_0)_s$, then $c_{ij}=0$.

If $j=2s+(-1-3\ell_0)_s$, then $$c_{ij}=
\begin{cases}
-w_{(j+m+1)n\ra (j+m+1)n+g(j+s)}\otimes w_{jn+g(j+s)\ra (j+1)n},\quad i=(j+1)_s+s;\\
0,\quad\text{otherwise.}\end{cases}$$

If $2s+(-1-3\ell_0)_s<j<3s+(-1-3\ell_0)_s$, then $c_{ij}=0$.

If $j=3s+(-1-3\ell_0)_s$, then $$c_{ij}=
\begin{cases}
w_{(j+m+1)n\ra (j+m+1)n+g(j)+1}\otimes w_{jn+g(j+s)\ra (j+1)n},\quad i=(j+1)_s+s;\\
-w_{(j+m)n+5\ra (j+m+1)n+g(j)+1}\otimes w_{jn+g(j+s)\ra jn+5},\quad i=j+5s;\\
0,\quad\text{otherwise.}\end{cases}$$

If $3s+(-1-3\ell_0)_s<j<4s+(-1-3\ell_0)_s$, then $c_{ij}=0$.

If $j=4s+(-1-3\ell_0)_s$, then $$c_{ij}=
\begin{cases}
-w_{(j+m+1)n\ra (j+m+1)n+g(j)+1}\otimes w_{jn+g(j+s)\ra (j+1)n},\quad i=(j+1)_s+s;\\
w_{(j+m)n+5\ra (j+m+1)n+g(j)+1}\otimes w_{jn+g(j+s)\ra jn+5},\quad i=j+4s;\\
0,\quad\text{otherwise.}\end{cases}$$

If $4s+(-1-3\ell_0)_s<j<5s+(-1-3\ell_0)_s$, then $c_{ij}=0$.

If $j=5s+(-1-3\ell_0)_s$, then $$c_{ij}=
\begin{cases}
-w_{(j+m)n+5\ra (j+m+1)n+g(j+s)+1}\otimes w_{jn+g(j+s)+1\ra (j+1)n},\quad i=(j+1)_s;\\
w_{(j+m)n+5\ra (j+m+1)n+g(j+s)+1}\otimes w_{jn+g(j+s)+1\ra jn+5},\quad i=j+3s;\\
0,\quad\text{otherwise.}\end{cases}$$

If $5s+(-1-3\ell_0)_s<j<6s+(-1-3\ell_0)_s$, then $c_{ij}=0$.

If $j=6s+(-1-3\ell_0)_s$, then $$c_{ij}=
\begin{cases}
-w_{(j+m)n+5\ra (j+m+1)n+g(j+s)+1}\otimes w_{jn+g(j+s)+1\ra jn+5},\quad i=j+2s;\\
0,\quad\text{otherwise.}\end{cases}$$

If $6s+(-1-3\ell_0)_s<j<9s$, then $c_{ij}=0$.

$(7)$ If $t_0<11$ and $r_0=6$, then the product $d_{r_0-1}f_{r_0}$
is an $(8s\times 8s)$-matrix $C=(c_{ij})$ with the following
elements $c_{ij}$:

If $0\le j<(-1-3\ell_0)_s$, then $c_{ij}=0$.

If $j=(-1-3\ell_0)_s$, then $$c_{ij}=
\begin{cases}
w_{(j+m)n+g(j)\ra (j+m)n+g(j)+1}\otimes w_{jn\ra (j+1)n},\quad i=(j+1)_s;\\
w_{(j+m)n\ra (j+m)n+g(j)+1}\otimes w_{jn\ra jn+g(j+s)},\quad i=j+3s;\\
-w_{(j+m-1)n+5\ra (j+m)n+g(j)+1}\otimes w_{jn\ra jn+g(j)+1},\quad i=j+4s;\\
-w_{(j+m-1)n+5\ra (j+m)n+g(j)+1}\otimes w_{jn\ra jn+g(j+s)+1},\quad i=j+5s;\\
0,\quad\text{otherwise.}\end{cases}$$

If $(-1-3\ell_0)_s<j<s+(-1-3\ell_0)_s$, then $c_{ij}=0$.

If $j=s+(-1-3\ell_0)_s$, then $$c_{ij}=
\begin{cases}
w_{(j+m)n+g(j)\ra (j+m)n+g(j)+1}\otimes w_{jn\ra (j+1)n},\quad i=(j+1)_s+s;\\
w_{(j+m)n\ra (j+m)n+g(j)+1}\otimes w_{jn\ra jn+g(j+s)},\quad i=j+s;\\
-w_{(j+m-1)n+5\ra (j+m)n+g(j)+1}\otimes w_{jn\ra jn+g(j+s)+1},\quad i=j+3s;\\
-w_{(j+m-1)n+5\ra (j+m)n+g(j)+1}\otimes w_{jn\ra jn+g(j)+1},\quad i=j+4s;\\
0,\quad\text{otherwise.}\end{cases}$$

If $s+(-1-3\ell_0)_s<j<2s+(-1-3\ell_0)_s$, then $c_{ij}=0$.

If $j=2s+(-1-3\ell_0)_s$, then $$c_{ij}=
\begin{cases}
-\kappa w_{(j+m)n+g(j)\ra (j+m)n+5}\otimes w_{jn+g(j)\ra (j+1)n},\quad i=(j+1)_s;\\
\kappa w_{(j+m)n\ra (j+m)n+5}\otimes e_{jn+g(j)},\quad i=j;\\
0,\quad\text{otherwise.}\end{cases}$$

If $2s+(-1-3\ell_0)_s<j<3s+(-1-3\ell_0)_s$, then $c_{ij}=0$.

If $j=3s+(-1-3\ell_0)_s$, then $$c_{ij}=
\begin{cases}
-\kappa w_{(j+m)n+g(j)\ra (j+m)n+5}\otimes w_{jn+g(j)\ra (j+1)n},\quad i=(j+1)_s+s;\\
\kappa w_{(j+m)n\ra (j+m)n+5}\otimes e_{jn+g(j)},\quad i=j;\\
0,\quad\text{otherwise.}\end{cases}$$

If $3s+(-1-3\ell_0)_s<j<8s$, then $c_{ij}=0$.

$(8)$ If $t_0<11$ and $r_0=7$, then the product $d_{r_0-1}f_{r_0}$
is an $(6s\times 9s)$-matrix $C=(c_{ij})$ with the following
elements $c_{ij}$:

If $0\le j<s+(-2-3\ell_0)_s$, then $c_{ij}=0$.

If $j=s+(-2-3\ell_0)_s$, then $$c_{ij}=
\begin{cases}
-w_{(j+m+1)n\ra (j+m+1)n+g(j+s)+1}\otimes w_{jn+g(j+s)\ra (j+1)n},\quad i=(j+1)_s;\\
w_{(j+m+1)n+g(j+s)\ra (j+m+1)n+g(j+s)+1}\otimes w_{jn+g(j+s)\ra (j+1)n+g(j+s)},\quad i=(j+1)_s+s;\\
0,\quad\text{otherwise.}\end{cases}$$

If $s+(-2-3\ell_0)_s<j<2s+(-2-3\ell_0)_s$, then $c_{ij}=0$.

If $j=2s+(-2-3\ell_0)_s$, then $$c_{ij}=
\begin{cases}
-w_{(j+m+1)n\ra (j+m+1)n+g(j+s)+1}\otimes w_{jn+g(j+s)\ra (j+1)n},\quad i=(j+1)_s;\\
w_{(j+m+1)n+g(j+s)\ra (j+m+1)n+g(j+s)+1}\otimes w_{jn+g(j+s)\ra (j+1)n+g(j+s)},\quad i=(j+1)_s+2s;\\
0,\quad\text{otherwise.}\end{cases}$$

If $2s+(-2-3\ell_0)_s<j<5s+(-2-3\ell_0)_s$, then $c_{ij}=0$.

If $j=5s+(-2-3\ell_0)_s$, then $$c_{ij}=
\begin{cases}
\kappa w_{(j+m+1)n\ra (j+m+2)n}\otimes w_{jn+5\ra (j+1)n},\quad i=(j+1)_s;\\
-\kappa w_{(j+m+1)n+g(j+s)\ra (j+m+2)n}\otimes w_{jn+5\ra (j+1)n+g(j+s)},\quad i=(j+1)_s+s;\\
-\kappa w_{(j+m+1)n+g(j)\ra (j+m+2)n}\otimes w_{jn+5\ra (j+1)n+g(j)},\quad i=(j+1)_s+2s;\\
0,\quad\text{otherwise.}\end{cases}$$

If $5s+(-2-3\ell_0)_s<j<6s$, then $c_{ij}=0$.

$(9)$ If $t_0<11$ and $r_0=8$, then the product $d_{r_0-1}f_{r_0}$
is an $(7s\times 8s)$-matrix $C=(c_{ij})$ with the following
elements $c_{ij}$:

If $0\le j<(-2-3\ell_0)_s$, then $c_{ij}=0$.

If $j=(-2-3\ell_0)_s$, then $$c_{ij}=
\begin{cases}
\kappa w_{(j+m)n+g(j)+1\ra (j+m)n+5}\otimes w_{jn\ra (j+1)n},\quad i=(j+1)_s;\\
-\kappa w_{(j+m)n+g(j+s)+1\ra (j+m)n+5}\otimes w_{jn\ra (j+1)n},\quad i=(j+1)_s+s;\\
0,\quad\text{otherwise.}\end{cases}$$

If $(-2-3\ell_0)_s<j<s+(-2-3\ell_0)_s$, then $c_{ij}=0$.

If $j=s+(-2-3\ell_0)_s$, then $$c_{ij}=
\begin{cases}
-\kappa w_{(j+m)n+g(j+s)+1\ra (j+m+1)n}\otimes w_{jn+g(j+s)\ra (j+1)n},\quad i=(j+1)_s;\\
-\kappa w_{(j+m)n+g(j+s)\ra (j+m+1)n}\otimes w_{jn+g(j+s)\ra jn+5},\quad i=j+5s;\\
-\kappa w_{(j+m)n+g(j)\ra (j+m+1)n}\otimes w_{jn+g(j+s)\ra jn+5},\quad i=j+6s;\\
0,\quad\text{otherwise.}\end{cases}$$

If $s+(-2-3\ell_0)_s<j<2s+(-2-3\ell_0)_s$, then $c_{ij}=0$.

If $j=2s+(-2-3\ell_0)_s$, then $$c_{ij}=
\begin{cases}
-\kappa w_{(j+m)n+g(j+s)+1\ra (j+m+1)n}\otimes w_{jn+g(j+s)\ra (j+1)n},\quad i=(j+1)_s+s;\\
-\kappa w_{(j+m)n+g(j)\ra (j+m+1)n}\otimes w_{jn+g(j+s)\ra jn+5},\quad i=j+4s;\\
-\kappa w_{(j+m)n+g(j+s)\ra (j+m+1)n}\otimes w_{jn+g(j+s)\ra jn+5},\quad i=j+5s;\\
0,\quad\text{otherwise.}\end{cases}$$

If $2s+(-2-3\ell_0)_s<j<3s+(-2-3\ell_0)_s$, then $c_{ij}=0$.

If $j=3s+(-2-3\ell_0)_s$, then $$c_{ij}=
\begin{cases}
w_{(j+m)n+g(j)\ra (j+m+1)n+g(j)}\otimes w_{jn+g(j+s)+1\ra jn+5},\quad i=j+4s;\\
0,\quad\text{otherwise.}\end{cases}$$

If $3s+(-2-3\ell_0)_s<j<4s+(-2-3\ell_0)_s$, then $c_{ij}=0$.

If $j=4s+(-2-3\ell_0)_s$, then $$c_{ij}=
\begin{cases}
w_{(j+m)n+g(j)\ra (j+m+1)n+g(j)}\otimes w_{jn+g(j+s)+1\ra jn+5},\quad i=j+2s;\\
0,\quad\text{otherwise.}\end{cases}$$

If $4s+(-2-3\ell_0)_s<j<7s$, then $c_{ij}=0$.

$(10)$ If $t_0<11$ and $r_0=9$, then the product $d_{r_0-1}f_{r_0}$
is an $(6s\times 6s)$-matrix $C=(c_{ij})$ with the following
elements $c_{ij}$:

If $0\le j<(-2-3\ell_0)_s$, then $c_{ij}=0$.

If $j=(-2-3\ell_0)_s$, then $$c_{ij}=
\begin{cases}
\kappa w_{(j+m)n+5\ra (j+m+1)n}\otimes w_{jn\ra (j+1)n},\quad i=(j+1)_s;\\
\kappa w_{(j+m)n+g(j)+1\ra (j+m+1)n}\otimes w_{jn\ra jn+g(j)},\quad i=j+s;\\
-\kappa w_{(j+m)n+g(j+s)\ra (j+m+1)n}\otimes w_{jn\ra jn+g(j)+1},\quad i=j+3s;\\
\kappa e_{(j+m+1)n}\otimes w_{jn\ra jn+5},\quad i=j+5s;\\
0,\quad\text{otherwise.}\end{cases}$$

If $(-2-3\ell_0)_s<j<2s+(-2-3\ell_0)_s$, then $c_{ij}=0$.

If $j=2s+(-2-3\ell_0)_s$, then $$c_{ij}=
\begin{cases}
w_{(j+m)n+g(j)\ra (j+m+1)n+g(j)}\otimes w_{jn+g(j+s)\ra jn+g(j+s)+1},\quad i=j+2s;\\
-w_{(j+m+1)n\ra (j+m+1)n+g(j)}\otimes w_{jn+g(j+s)\ra jn+5},\quad i=j+3s;\\
0,\quad\text{otherwise.}\end{cases}$$

If $2s+(-2-3\ell_0)_s<j<6s$, then $c_{ij}=0$.

$(11)$ If $t_0<11$ and $r_0=10$, then the product $d_{r_0-1}f_{r_0}$
is an $(6s\times 7s)$-matrix $C=(c_{ij})$ with the following
elements $c_{ij}$:

If $0\le j<(-2-3\ell_0)_s$, then $c_{ij}=0$.

If $j=(-2-3\ell_0)_s$, then $$c_{ij}=
\begin{cases}
\kappa w_{(j+m)n\ra (j+m+1)n}\otimes w_{jn\ra jn+g(j)},\quad i=j+s;\\
-\kappa w_{(j+m)n\ra (j+m+1)n}\otimes w_{jn\ra jn+g(j+s)},\quad i=j+2s;\\
0,\quad\text{otherwise.}\end{cases}$$

If $(-2-3\ell_0)_s<j<s+(-3-3\ell_0)_s$, then $c_{ij}=0$.

If $j=s+(-3-3\ell_0)_s$, then $$c_{ij}=
\begin{cases}
w_{(j+m+1)n\ra (j+m+1)n+g(j)}\otimes w_{jn+g(j+s)\ra (j+1)n+g(j+s)},\quad i=(j+1)_s+s;\\
0,\quad\text{otherwise.}\end{cases}$$

If $s+(-3-3\ell_0)_s<j<2s+(-3-3\ell_0)_s$, then $c_{ij}=0$.

If $j=2s+(-3-3\ell_0)_s$, then $$c_{ij}=
\begin{cases}
w_{(j+m+1)n\ra (j+m+1)n+g(j)}\otimes w_{jn+g(j+s)\ra (j+1)n+g(j+s)},\quad i=(j+1)_s+2s;\\
0,\quad\text{otherwise.}\end{cases}$$

If $2s+(-3-3\ell_0)_s<j<3s+(-3-3\ell_0)_s$, then $c_{ij}=0$.

If $j=3s+(-3-3\ell_0)_s$, then $$c_{ij}=
\begin{cases}
-w_{(j+m+1)n\ra (j+m+1)n+g(j)+1}\otimes w_{jn+g(j+s)+1\ra (j+1)n+g(j+s)},\quad i=(j+1)_s+s;\\
0,\quad\text{otherwise.}\end{cases}$$

If $3s+(-3-3\ell_0)_s<j<4s+(-3-3\ell_0)_s$, then $c_{ij}=0$.

If $j=4s+(-3-3\ell_0)_s$, then $$c_{ij}=
\begin{cases}
-w_{(j+m+1)n\ra (j+m+1)n+g(j)+1}\otimes w_{jn+g(j+s)+1\ra (j+1)n+g(j+s)},\quad i=(j+1)_s+2s;\\
0,\quad\text{otherwise.}\end{cases}$$

If $4s+(-3-3\ell_0)_s<j<5s+(-3-3\ell_0)_s$, then $c_{ij}=0$.

If $j=5s+(-3-3\ell_0)_s$, then $$c_{ij}=
\begin{cases}
-\kappa w_{(j+m+1)n\ra (j+m+1)n+5}\otimes w_{jn+5\ra (j+1)n+g(j+s)},\quad i=(j+1)_s+s;\\
\kappa w_{(j+m+1)n\ra (j+m+1)n+5}\otimes w_{jn+5\ra (j+1)n+g(j)},\quad i=(j+1)_s+2s;\\
0,\quad\text{otherwise.}\end{cases}$$

If $5s+(-3-3\ell_0)_s<j<6s$, then $c_{ij}=0$.

$(12)$ If $t_0\ge 11$, then the product matrix $d_{t_0-1}f_{t_0}$ is
a $d_{r_0-1}f_{r_0}$, whose left components twisted by
$\sigma^{\ell_0}$, and coefficients multiplied by $(-1)^{\ell_0}$.

The described matrixes of products $d_{t_0-1}f_{t_0}$ coincide with
$f_{t_0-1}d_{t+t_0-1}$.
\end{proof}

\section{Multiplications in $\HH^*(R)$}

From the descriptions of elements $Y^{(i)}_t$ and
its$\Omega$-translates we can find multiplications of the elements
using the formula \eqref{mult_formula}.

To find the multiplications we need the description of $\sigma^t$
for an arbitrary $t\in\N$. From the description of an automorphism
$\sigma$ we have:
\begin{align*}
\sigma^{2t}(\a_i)&=\begin{cases} (-1)^t\a_{i+6tn},\quad i\equiv
0,2(3);\\\a_{i+6tn},\quad i\equiv 1(3),
\end{cases}
\sigma^{2t}(\b_i)=\begin{cases} (-1)^t\b_{i+6tn},\quad i\equiv
0,2(3);\\\b_{i+6tn},\quad i\equiv 1(3),
\end{cases}\\
\sigma^{2t+1}(\a_i)&=\begin{cases} (-1)^{t+1}\b_{i+3(2t+1)n},\quad
i\equiv 0(3);\\-\b_{i+3(2t+1)n},\quad i\equiv 1(3);\\
(-1)^t\b_{i+3(2t+1)n},\quad i\equiv 2(3),
\end{cases}
\sigma^{2t+1}(\b_i)=\begin{cases} (-1)^t\a_{i+3(2t+1)n},\quad
i\equiv 0(3);\\-\a_{i+3(2t+1)n},\quad i\equiv 1(3);\\
(-1)^{t+1}\a_{i+3(2t+1)n},\quad i\equiv 2(3),
\end{cases}\\
\sigma^t(\g_i)&=(-1)^t\g_{i+tn}.
\end{align*}

We will find a multiplication of elements of the types 4 and 3 for
$s>1$.

Consider two arbitrary elements $Y_{t_4}^{(4)}$ and $Y_{t_3}^{(3)}$.
For its degrees $t_4$ and $t_3$ we have:
\begin{align*}
t_4&=11\ell_4+1,\text{ }\ell_4 n\equiv 0(s),\text{ }\ell_4\ndiv 2;\\
t_3&=11\ell_3+1,\text{ }\ell_3 n\equiv 0(s),\text{ }\ell_3\div 2.
\end{align*}
Let $t=t_4+t_3$; this is the degree of an element
$Y_{t_4}^{(4)}Y_{t_3}^{(3)}$. Then $t=11(\ell_4+\ell_3)+2$. Group of
the degree $t$ has type (5).

Denote by $B=(b_{ij})$ translate matrix of an element
$Y_{t_4}^{(4)}$ by degree $t_3$. This matrix is of the form.

If $0\le j<s$, then
\begin{align*}
b_{ij}&=
\begin{cases}
\kappa \sigma^{\ell_3}(w_{(j+m_4)n+g(j+s)\ra (j+m_4)n+5})\otimes e_{jn},\quad i=j+s;\\
\kappa \sigma^{\ell_3}(w_{(j+m_4)n+g(j+s)+1\ra (j+m_4)n+5})\otimes w_{jn\ra jn+g(j+s)},\quad i=j+3s;\\
0,\quad\text{otherwise;}\end{cases}\\
&=\begin{cases}
\kappa \sigma^{\ell_3}(w_{(j+m_4)n+3\ra (j+m_4)n+5})\otimes e_{jn},\quad i=j+s;\\
\kappa \sigma^{\ell_3}(w_{(j+m_4)n+4\ra (j+m_4)n+5})\otimes w_{jn\ra jn+3},\quad i=j+3s;\\
0,\quad\text{otherwise;}\end{cases}\\
&=\begin{cases}
\kappa \sigma^{\ell_3}(\b_{3(j+m_4)+2}\b_{3(j+m_4)+1})\otimes e_{jn},\quad i=j+s;\\
\kappa \sigma^{\ell_3}(\b_{3(j+m_4)+2})\otimes \b_{3j},\quad i=j+3s;\\
0,\quad\text{otherwise;}\end{cases}\\
&=\begin{cases}
(-1)^{\frac{\ell_4-1}{2}}(-1)^{\frac{\ell_3}{2}} \b_{3(j+\ell_3n)+2}\b_{3(j+\ell_3n)+1}\otimes e_{jn},\quad i=j+s;\\
(-1)^{\frac{\ell_4-1}{2}}(-1)^{\frac{\ell_3}{2}} \b_{3(j+\ell_3n)+2}\otimes \b_{3j},\quad i=j+3s;\\
0,\quad\text{otherwise;}\end{cases}\\
&=\begin{cases}
(-1)^{\frac{\ell_4+\ell_3-1}{2}}\b_{3j+2}\b_{3j+1}\otimes e_{jn},\quad i=j+s;\\
(-1)^{\frac{\ell_4+\ell_3-1}{2}}\b_{3j+2}\otimes \b_{3j},\quad i=j+3s;\\
0,\quad\text{otherwise.}\end{cases}
\end{align*}

If $s\le j<2s$, then
\begin{align*}b_{ij}&=
\begin{cases}
\sigma^{\ell_3}(w_{(j+m_4+1)n+g(j)\ra (j+m_4+1)n+g(j)+1})\otimes w_{jn+g(j+s)\ra (j+1)n},\quad i=(j+1)_s+s;\\
0,\quad\text{otherwise;}\end{cases}\\
&=\begin{cases}\sigma^{\ell_3}(w_{(j+m_4+1)n+3\ra (j+m_4+1)n+4})\otimes w_{jn+1\ra (j+1)n},\quad i=(j+1)_s+s;\\
0,\quad\text{otherwise;}\end{cases}\\
\end{align*}
\begin{align*}&=\begin{cases}
\sigma^{\ell_3}(\b_{3(j+m_4+1)+1})\otimes \g_j\a_{3j+2}\a_{3j+1},\quad i=(j+1)_s+s;\\
0,\quad\text{otherwise;}\end{cases}\\
&=\begin{cases}
\b_{3(j+1)+1}\otimes \g_j\a_{3j+2}\a_{3j+1},\quad i=(j+1)_s+s;\\
0,\quad\text{otherwise.}\end{cases}
\end{align*}

If $2s\le j<4s$, then $b_{ij}=0$.

If $4s\le j<5s$, then
\begin{align*}b_{ij}&=
\begin{cases}
\sigma^{\ell_3}(w_{(j+m_4)n+5\ra (j+m_4+1)n+g(j+s)})\otimes e_{jn+g(j+s)+1},\quad i=j+s;\\
\sigma^{\ell_3}(w_{(j+m_4+1)n\ra (j+m_4+1)n+g(j+s)})\otimes w_{jn+g(j+s)+1\ra jn+5},\quad i=j+2s;\\
0,\quad\text{otherwise;}\end{cases}\\
&=\begin{cases}
\sigma^{\ell_3}(\b_{3(j+m_4+1)}\g_{j+m})\otimes e_{jn+4},\quad i=j+s;\\
\sigma^{\ell_3}(\b_{3(j+m_4+1)})\otimes \b_{3j+2},\quad i=j+2s;\\
0,\quad\text{otherwise;}\end{cases}\\
&=\begin{cases}
(-1)^{\frac{\ell_3}{2}}\b_{3(j+1)}\g_j\otimes e_{jn+4},\quad i=j+s;\\
(-1)^{\frac{\ell_3}{2}}\b_{3(j+1)}\otimes \b_{3j+2},\quad i=j+2s;\\
0,\quad\text{otherwise.}\end{cases}
\end{align*}

If $5s\le j<6s$, then
\begin{align*}b_{ij}&=
\begin{cases}
\kappa \sigma^{\ell_3}(w_{(j+m_4+1)n\ra (j+m_4+2)n})\otimes e_{jn+5},\quad i=j+s;\\
0,\quad\text{otherwise;}\end{cases}\\
&=\begin{cases}
\kappa \sigma^{\ell_3}(\g_{j+m_4+1}\a_{3(j+m_4+1)+2}\a_{3(j+m_4+1)+1}\a_{3(j+m_4+1)})\otimes e_{jn+5},\quad i=j+s;\\
0,\quad\text{otherwise;}\end{cases}\\
&=\begin{cases}
(-1)^{\frac{\ell_4-1}{2}}\g_{j+1}\a_{3(j+1)+2}\a_{3(j+1)+1}\a_{3(j+1)}\otimes e_{jn+5},\quad i=j+s;\\
0,\quad\text{otherwise.}\end{cases}
\end{align*}

Multiply this matrix by an element $Y_{t_3}^{(3)}$, which is
$(7s\times 6s)$-matrix with single nonzero element
$y_{5s,6s}=(-1)^{\frac{\ell_3}{2}}\g_j\otimes e_{jn+5}.$

Multiplication is the matrix $C=(c_{ij})$ with single nonzero
element $c_{5s,4s}=\b_{3(j+1)}\g_j\otimes \b_{3j+2}.$ We must show
that this element coincide with $Y^{(5)}_t$ for degree of type (5).
Element $Y^{(5)}_t$ is matrix with single nonzero element
$y_{3s,3s}=\a_{3(j+1)}\g_j\a_{3j+2}\otimes e_{jn+2}.$ Thus we must
show that
$Y^{(5)}_t-C=\a_{3(j+1)}\g_j\a_{3j+2}-\b_{3(j+1)}\g_j\b_{3j+2}$ is
in $\Im\delta^{t-1}$.

Consider the matrix $D=(d)_{ij}$, which is obtained from
differential matrix $d_{t-1}$ by replacing elements which turns into
$0$ in $\Im\delta^{t-1}$, by $0$. Elements of this matrix are of the
form.

If $0\le j<s$, then
\begin{align*}(d)_{ij}&=\begin{cases}
\sigma^{\ell_3+\ell_4}(w_{jn+1+j_1+2f(j_1,2)\ra jn+5})\otimes w_{jn\ra jn+j_1},\\\quad\quad\quad  i=j+2sj_1,\text{ }0\le j_1< 3;\\
-\sigma^{\ell_3+\ell_4}(w_{jn+3+j_1\ra jn+5})\otimes w_{jn\ra jn+j_1+2(1-f(j_1,0))},\\\quad\quad\quad  i=j+2sj_1+s,\text{ }0\le j_1< 3;\\
0\quad\text{otherwise;}\end{cases}\\
&=\begin{cases}
(-1)^{\frac{\ell_3+\ell_4+1}{2}}\b_{3j+2}\b_{3j+1}\otimes e_{jn},\quad i=j;\\
e_{jn+5}\otimes \a_{3j+1}\a_{3j},\quad i=j+4s;\\
(-1)^{\frac{\ell_3+\ell_4+1}{2}}\a_{3j+2}\a_{3j+1}\otimes e_{jn},\quad i=j+s;\\
-e_{jn+5}\otimes \b_{3j+1}\b_{3j},\quad i=j+5s;\\
0\quad\text{otherwise.}\end{cases}
\end{align*}

If $s\le j<3s$, then $(d)_{ij}=0$.

If $3s\le j<5s$, then $$(d)_{ij}=\begin{cases}
(-1)^{\frac{\ell_3+\ell_4+1}{2}}\a_{3(j+1)}\g_j\otimes e_{jn+1},\quad i=j+s,\text{ }j<4s;\\
(-1)^{\frac{\ell_3+\ell_4-1}{2}}\b_{3(j+1)}\g_j\otimes e_{jn+4},\quad i=j+s,\text{ }j\ge 4s;\\
(-1)^{\frac{\ell_3+\ell_4-1}{2}}\a_{3(j+1)}\otimes \a_{3j+1},\quad i=(j)_s+6s,\text{ }j<4s;\\
(-1)^{\frac{\ell_3+\ell_4+1}{2}}\b_{3(j+1)}\otimes \b_{3j+1},\quad i=(j)_s+6s,\text{ }j\ge 4s;\\
e_{(j+1)n+g(j)}\otimes w_{jn+g(j+s)+1\ra}^{(2)},\quad i=(j+1)_s+sf_0(j,4s);\\
0\quad\text{otherwise.}\end{cases}$$

If $5s\le j<6s$, then $(d)_{ij}=0$.

$6s$-th row of image matrix multiplied by $(-1)^{\frac{\ell_3+\ell_4-1}{2}}$, coincide with
$Y^{(5)}_t-C$, hence, $Y^{(5)}_t-Y_{t_4}^{(4)}Y_{t_3}^{(3)}\in\Im\delta^{t-1}$, i. e.
$Y^{(5)}_t-Y_{t_4}^{(4)}Y_{t_3}^{(3)}=0$ in the cohomology ring.


Multiplications of other elements, except $Y^{(5)}$, $Y^{(10)}$, $Y^{(17)}$, $Y^{(19)}$ and
$Y^{(21)}$, are similarly considered. To get the whole picture we should prove the following lemma.

\begin{lem}$\text{ }$

$($a$)$ Let $Y^{(5)}$ be an arbitrary element from generators of the
corresponding type. Then there are elements $Y^{(3)}$ and $Y^{(4)}$
such as $Y^{(5)}=Y^{(3)}Y^{(4)}$.

$($b$)$ Let $Y^{(10)}$ be an arbitrary element from generators of
the corresponding type. Then there are elements $Y^{(3)}$ and
$Y^{(6)}$ such as $Y^{(10)}=Y^{(3)}Y^{(6)}$.

$($c$)$ Let $Y^{(17)}$ be an arbitrary element from generators of
the corresponding type. Then there are elements $Y^{(3)}$ and
$Y^{(15)}$ such as $Y^{(17)}=Y^{(3)}Y^{(15)}$.

$($d$)$ Let $Y^{(19)}$ be an arbitrary element from generators of
the corresponding type. Then there are elements $Y^{(3)}$ and
$Y^{(18)}$ such as $Y^{(19)}=Y^{(3)}Y^{(18)}$.

$($e$)$ Let $Y^{(21)}$ be an arbitrary element from generators of
the corresponding type. Then there are elements $Y^{(3)}$ and
$Y^{(20)}$ such as $Y^{(21)}=Y^{(3)}Y^{(20)}$.

\end{lem}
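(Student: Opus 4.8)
Each of the five items (a)--(e) will be proved by the same three-step scheme that underlies the worked computation preceding this lemma; in fact item (a) is exactly what that computation establishes, once one observes that a suitable splitting of the degree always exists. \emph{Step 1 (degree arithmetic).} Given an arbitrary target generator, say $Y^{(5)}_t$ with $t=11\ell+2$, I must produce degrees $t_3$ and $t_4$ with $t_3+t_4=t$, where $t_3$ satisfies condition (3) and $t_4$ satisfies condition (4). Since each of the conditions (3)--(22) fixes the residue modulo $11$ and $1+1=2$, the only freedom is in the quotients, and one may simply take $\ell_3=0$, $t_3=1$ (which satisfies condition (3) vacuously) and $\ell_4=\ell$, $t_4=t-1<11\deg\sigma$, so that $Y^{(4)}_{t_4}$ is an honest generator. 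The parity and congruence requirements of condition (4) for $t_4$ follow from those of condition (5) for $t$: $\ell$ odd makes $\ell_4$ odd, and $\ell n+m\equiv 1(s)$ with $m=1$ reads $\ell n\equiv 0(s)$, which is the congruence condition of type (4). The same pattern handles the other items: with $\ell_3=0$, $t_3=1$, put $t'=t-1$ and read off from the list that $t'$ satisfies condition (6), (15), (18), (20) respectively (the residues add up, $1+3=4$, $1+6=7$, $1+8=9$, $1+9=10$, and the parity, congruence and characteristic requirements of the companion condition coincide with those of the target condition).

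\emph{Step 2 (the product).} By formula \eqref{mult_formula} the class of $Y^{(3)}_{t_3}Y^{(\cdot)}_{t'}$ is represented by $\Omega^0\!\bigl(Y^{(3)}_{t_3}\bigr)\,\Omega^{t_3}\!\bigl(Y^{(\cdot)}_{t'}\bigr)$. With $t_3=1$ the left factor is the $(7s\times 6s)$ matrix with the single block $w_{jn+5\to(j+1)n}\otimes e_{jn+5}$ in position $(5s,6s)$, and the right factor is the $\Omega^1$-translate of the companion generator, read off from part $(2)$ of the appropriate ``Translates'' proposition (case 4 for (a), case 6 for (b), case 15 for (c), case 18 for (d), case 20 for (e)); no $\sigma$-twist occurs since $\ell_0=0$. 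Carrying out the matrix multiplication selects the relevant row-block of that translate, composes the two path--tensor factors, and simplifies them using the formulas for $\sigma^t$ on the arrows $\a_i,\b_i,\g_i$ displayed at the beginning of this section. The outcome is a matrix with a single nonzero entry, which is compared directly with the explicit description of $Y^{(5)}_t$ (resp.\ $Y^{(10)}_t$, $Y^{(17)}_t$, $Y^{(19)}_t$, $Y^{(21)}_t$).

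\emph{Step 3 (discarding the coboundary).} The product and the target generator will in general differ by a single cocycle of the same shape as in the sample computation (there it was $\a_{3(j+1)}\g_j\a_{3j+2}-\b_{3(j+1)}\g_j\b_{3j+2}$). One then forms the matrix obtained from the differential $d_{t-1}$ by deleting every entry that is sent to $0$ by $\Hom_\Lambda(-,R)$, and identifies, up to an overall sign $\pm 1$, the residual cocycle with one of its rows; this exhibits it in $\Im\delta^{t-1}$, so that $Y^{(3)}_{t_3}Y^{(\cdot)}_{t'}$ equals the target generator in $\HH^*(R)$. The case $s=1$ is covered by the same computations, the congruence conditions being then vacuous.

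There is no conceptual difficulty beyond the sample computation; the real labour — and the only place errors can creep in — is in Steps 2 and 3, where the many index shifts $(j+m)n$, the auxiliary functions $g$, $\lambda$, $h$, $f_0$, $f_1$, and the signs $\kappa,\kappa_0$ must be tracked correctly five separate times, and where in each case the residual term has to be pinned down precisely as a concrete row of the trimmed differential matrix $d_{t-1}$.
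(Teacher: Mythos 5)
Your proposal is correct and matches the paper's argument: the paper's proof consists precisely of your Step 1 (the degree $1$ is always of type (3), and $t-1$ then satisfies the companion condition (4), (6), (15), (18) or (20) with the same $\ell$), after which it simply invokes the multiplication relations for type (3) already verified by the translate computations carried out before the lemma (your Steps 2--3 being a re-derivation of exactly those computations, of which the $Y^{(4)}Y^{(3)}$ case is the worked example). So the only difference is that the paper cites the previously established relations $X^{(3)}X^{(4)}=\widetilde X^{(5)}$, etc., instead of redoing the matrix products, which is immaterial.
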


\begin{proof}
The degree 1 has type 3, for all $s$. It only remains to use the
relations for type (3).
\end{proof}

\end{document}